\theoremstyle{plain}
  \newtheorem{thm}{Theorem}[section]
  \newtheorem{lem}[thm]{Lemma}
  \newtheorem{slem}[thm]{Sublemma}
  \newtheorem{cor}[thm]{Corollary}
  \newtheorem{prop}[thm]{Proposition}
  \newtheorem{ass}[thm]{Assertion}
\theoremstyle{definition}
  \newtheorem{defn}[thm]{Definition}
  \newtheorem{ex}[thm]{Example}
  \newtheorem{prob}[thm]{Problem}
  \newtheorem{rem}[thm]{Remark}
\theoremstyle{remark}
\newcommand{\wangle}[0]{\tilde{\angle}}
\newcommand{\diam}[0]{\mathrm{diam}\,}
\newcommand{\e}[0]{\epsilon}
\newcommand{\vol}[0]{\mathrm{vol}}
\newcommand{\supp}[0]{\mathrm{supp}}
\newcommand{\ca}{\mathcal}
\newcommand{\ve}{\varepsilon}
\newcommand{\pa}{\partial}
\newcommand{\tri}{\triangle}
\newcommand{\inrad}[0]{{\rm inrad}}
\newcommand{\pmed}[0]{\par\medskip}
\newcommand{\psmall}[0]{\par\smallskip}
\newcommand{\pbig}[0]{\par\bigskip}
\newcommand{\n}[0]{\noindent}
\newcommand{\C}[0]{\mathbb C}
\newcommand{\R}[0]{\mathbb R}
\newcommand{\Z}[0]{\mathbb Z}
\newcommand{\N}[0]{\mathbb N}
\newcommand{\beq}[0]{\begin{equation}}
\newcommand{\eeq}[0]{\end{equation}}
\newcommand{\beqq}[0]{\begin{equation*}}
\newcommand{\eeqq}[0]{\end{equation*}}
\newcommand{\bali}[0]{\begin{align}}
\newcommand{\eali}[0]{\end{align}}
\newcommand{\balii}[0]{\begin{align*}}
\newcommand{\ealii}[0]{\end{align*}}
\newcommand{\benu}[0]{\begin{enumerate}}
\newcommand{\eenu}[0]{\end{enumerate}}
\begin{document}
%%%%%%%%%%%%%%%%%%%%%%%%%%

\title[Limits of manifolds with boundary]{Limits of manifolds with boundary I
%\\-- Infinitesimal structure
}

\author{Takao Yamaguchi \and Zhilang Zhang }
%\author[T.Yamaguchi]{Takao Yamaguchi}
%\author[Z.Zhang]{Zhilang Zhang}

\address{Takao Yamaguchi, Department of Mathematics, University of Tsukuba, Tsukuba 305-8571, Japan}

\email{takao@math.tsukuba.ac.jp}

\address{Zhilang Zhang, School of Mathematics, Foshan University, Foshan, China}
 \email{zhilangz@fosu.edu.cn}

\subjclass[2010]{53C20, 53C21, 53C23}
\keywords{collapse; Gromov-Hausdroff convergence; manifold with boundary}

\thanks{This work was supported by JSPS KAKENHI Grant Numbers 18H01118, 21H00977  and  NSFC No.11901089}

\date{\today}

\begin{abstract}
In this  paper, 
we develop the infinitesimal geometry of the limit spaces of compact Riemannian  manifolds with boundary, 
where we assume lower bounds on the sectional curvatures of manifolds and boundaries and the second fundamental forms of boundaries and an upper diameter bound.  
We mainly focus on the 
case when inradii of manifolds are uniformly bounded away from zero. 
In this case, many limit spaces have wild geometry, which arise  as the boundary 
 singular points of the limit spaces.
 We determine the infinitesimal structure at  those boundary singular points.
We also determine the Hausdorff dimensions of the boundary singular sets.
\end{abstract}

\maketitle

\setcounter{tocdepth}{2}

\tableofcontents
\section{Introduction} \label{sec:intro}
%%%%%%%% %%%%%%%%

%%%%%%%% Background on the study of Riemannian manifolds with boundary %%%%%%%%%%%%%%%%
The study of Riemannian manifolds with boundary related to collapsing began with the 
works due to Gromov
\cite{G:synthetic} and Alexander-Bishop \cite{AB}
on thin Riemannian manifolds.
In \cite{Kodani}, Kodani investigated 
the Lipschitz convergence of Riemannian manifolds with boundary. From a different point of view,  Anderson-Katsuda-Kurylev-Lassas and Tayler \cite{AKKLT} proved a precompactness theorem for a certain family of Riemannian manifolds with uniformly bounded Ricci curvature.  
In \cite{Kodani} and \cite{AKKLT}, 
they assumed lower bounds on some geometric invariants like injectivity radii.
See also Knox \cite{Kn}.
There is also a pioneering work by
J. Wong (\cite{wong0}, \cite{wong2}) 
on this subject based on the gluing construction
explained below.
%%%
In the three dimension,  Mitsuishi and Yamaguchi
\cite{MY2:dim3bdy} has made clear all the collapses of three-dimensional Alexandrov spaces with boundary.
%%%%
 For other approaches to the convergence 
of Riemannian manifolds with boundary,
see Perales \cite{Perales}, \cite{Perales2} and  Perales-Sormani \cite{PerSor}. 
\cite{Perales} is 
 based on Sormani-Wenger intrinsic flat distance  (see \cite{SW})
and \cite{PerSor}, \cite{Perales2} are based on  the convergences of open domains away from boundaries.
In Yamaguchi and Zhang \cite{YZ:inrdius},
we made clear the structure of manifolds 
with boundary whose inradii are sufficiently small. 

 Here is a short comment on recent researches
on the convergence of manifolds with boundary.
In \cite{Xu:precpt},  Xu obtained the precompactness of certain families of domains of Riemannian manifolds whose Ricci curvatures are uniformly bounded below.
%%%%%%%
He showed that 
the precompactness of the boundaries implies the precompactness of the closed domains of the manifolds. Our condition \eqref{eq:curv-assum} is related with this result.
In \cite{HY},  Huang and Yamaguchi have
discussed inradius collapsed manifolds whose Ricci curvatures are uniformly bounded below.

In the general dimension, not much is known 
about the geometric structure of  the
general limit spaces, especially in the collapsing situation.
The purpose of this paper is to develop
the geometry on the limit spaces in a 
certain family of Riemanian manifolds with 
boundary.
In the study of convergence and collapsing Riemannian manifolds with boundary,
the main problem is to control the boundary behavior in a geometric way.
In \cite{wong0}, Wong carried out a nice extension procedure over the boundary to
study collapsed manifolds with boundary under a lower sectional or Ricci curvature bound.

In the present paper, we are  
concerned with the infinitesimal structure of
the  limit spaces of complete 
Riemannian manifolds $M$ with boundary.
For $n\ge 2, \kappa, \nu\in\R$ and 
$\lambda\ge 0$,
let us consider the following  lower bounds
on the  sectional curvatures of $M$ and $\pa M$ and the second fundamental forms of $\pa M$:
\beq\label{eq:curv-assum}
      K_M \ge \kappa, \,\,\, K_{\pa M} \ge \nu,\,\,\,
 \Pi_{\partial M} \ge -\lambda.
\eeq
Here when $n=\dim M=2$,
we do not need the condition of the lower curvature bound $K_{\pa M} \ge \nu$.
%%%%%%%
For an additional constant $d>0$,
let $\ca M(n,\kappa,\nu, \lambda, d)$ denote the set of all isometry classes of 
$n$-dimensional compact  Riemannian manifolds $M$ with boundary    
satisfying \eqref{eq:curv-assum} and 
$\diam(M)\le d$.

%%%%%%%%%%% The previous family %%%%%%%%%
As indicated above,  Wong \cite{wong0} carried out the gluing construction for manifolds satisfying
\beq \label{eq:curv-assum2}
 K_M \ge \kappa, \,\,\,
 |\Pi_{\partial M}|\le \lambda.
\eeq
Note that \eqref{eq:curv-assum2} is stronger than 
\eqref{eq:curv-assum}  since \eqref{eq:curv-assum2}
 implies $K_{\pa M} \ge \nu(\kappa,\lambda)$ via 
the Gauss equation.
However, this gluing construction still works  
under the assumption \eqref{eq:curv-assum}.
Thus, the following results in \cite{wong0}
still hold with no additional argument:

\begin{itemize}
\item The family $\ca M(n,\kappa,\nu,\lambda,d)$ is precompact 
with respect  to the Gromov-Hausdorff distance.
\item  The set consisting of  all 
elements $M$ in $\ca M(n,\kappa,\nu,\lambda,d)$ having volume $\vol(M)\ge v>0$  contains only finitely many homeomorphism classes.
\end{itemize}

%%%%%% Inradius and the previous paper %%%% 
The {\it inradius} of a Riemannian manifold $M$ with boundary is defined as 
\[
                    \inrad(M):=\sup_{x\in M}{d(x,\pa M)}.
\]
Suppose that a sequence $M_i$ in $\ca M(n,\kappa,\nu,\lambda, d)$ converges to a compact  geodesic space $N$.
We say that $M_i$ {\it inradius collapses} if and only if 
$\lim_{i\to\infty} \inrad(M_i)=0$.
In \cite{YZ:inrdius}, 
we investigated the structure of 
inradius collapsed manifolds under \eqref{eq:curv-assum2}.
It should also be pointed out that
the main results in \cite{YZ:inrdius} also hold 
in $\ca M(n,\kappa,\nu, \lambda, d)$ 
without any change in the argument.
In particular, 
it was  proved in \cite{YZ:inrdius} that every limit space $N$ of inradius collapsed manifolds
is an Alexandrov space with curvature uniformly bounded below (actually curvature 
$\ge\nu$ in the present assumption \eqref{eq:curv-assum}).
%%%%

%%%%% Towards main results %%%%%%
Let $m:=\dim N$ denote the topological dimension of $N$.  It will become clear (Lemma  \ref{lem:dimNdimN0}) that $m\le n$.
In the present paper, we consider the  convergence 
\beq \label{eq:conv-intro}
\ca M(n,\kappa,\nu,\lambda, d)\ni M_i\to N.
\eeq
If $m=n$, then $\inrad(M_i)$ must be uniformly bounded away from $0$.
We call the convergence \eqref{eq:conv-intro} a {\it non-inradius convergence} in this case.
On the other hand, 
we call  \eqref{eq:conv-intro} a {\it non-inradius collapse} when $m<n$ and 
$\inrad(M_i)$ are uniformly bounded away from $0$.
Therefore,  we simply call  \eqref{eq:conv-intro} a {\it non-inradius collapse/convergence} in any case when 
$\inrad(M_i)$ are uniformly bounded away from $0$.
We also say that $M_i$  non-inradius collapses /converges  to $N$ in that case.
The purpose of this  paper is to 
determine  the infinitesimal  structure of  
the limit spaces 
$N$ under the
non-inradius collapse/convergence.
In that case,  $N$ is not necessarily an Alexandrov space. 
This is because of the presence of  the {\em boundary}  $N_0$ of $N$,
which is defined as the limit of $\pa M_i$ under the convergence \eqref{eq:conv-intro}.
%$M_i \to N$. 
It is easily seen that the  {\em interior}  ${\rm int}N:=N\setminus N_0$
satisfies the local Alexandrov curvature condition $\ge \kappa$,
although it is not complete. 
Obviously, it is not the case for points of $N_0$.
Thus our main concern is on the infinitesimal  structure at points
of $N_0$.

\psmall 
%%%%%%%%%%%%%%   Main results %%%%%%%%%
 Now we state our main results.

%%%%%%%%  Infinitesimal structure %%%%%%%%
\pmed\n
{\bf  Infinitesimal Alexandrov structure}.
We say that a 
geodesic space $X$ is {\it infinitesimally Alexandrov} if  for any 
 $x\in X$,  the space of directions $\Sigma_x(X)$  can be defined in such a way that it  is an Alexandrov space with curvature $\ge 1$
and the tangent cone $T_x(X)$ is isometric to 
the Euclidean cone over $\Sigma_x(X)$
(for the precise definition, see Section \ref{sec:int-ext}).
The nonnegative integer defined as 
\[
      {\rm rank}(X):= \sup_{x,y \in X}|\dim \Sigma_x(X)-\dim \Sigma_y(X)|
\]
is called the {\it rank} of $X$.

%%%%%%%%%%
We also say that a closed subset $X_0$ of  an infinitesimally Alexandrov space $X$ is {\it infinitesimally sub-Alexandrov} if 
for any $x\in X_0$, 
the space of directions $\Sigma_x(X_0)$ 
can be defined as a closed subset of 
$\Sigma_x(X)$ an
the intrinsic metric 
$\Sigma_x(X_0)^{\rm int}$ of $\Sigma_x(X_0)$
induced from $\Sigma_x(X)$ is 
an Alexandrov space with 
curvature $\ge 1$.
The {\it rank} of $X_0$ is similarly defined.
From here on, in the present paper, we use the terminology
intrinsic/extrinsic metrics instead of interior/exterior metrics
used in \cite{YZ:inrdius}.

 \pmed

%%%%%%%%%%%%%%%%%%%%%%%
Let $\ca M(n,\kappa,\nu,\lambda)$ (resp. $\ca M_{\rm pt}(n,\kappa,\nu,\lambda)$) denote the set of all isometry classes of (resp. pointed) 
$n$-dimensional complete  Riemannian manifolds
$M$ (resp. $(M,p)$) with boundary satisfying    
$K_M \ge \kappa$, $K_{\pa M} \ge \nu$, $\Pi_{\partial M}\ge-\lambda$.
Let a sequence $(M_i,p_i)$ in 
$\ca M_{\rm pt}(n,\kappa,\nu,\lambda)$ converge to a pointed  geodesic  space $(N,x_0)$  under a
non-inradius collapse/convergence.

\begin{thm} \label{thm:alex}
Under the above situation, we have  the following$:$
\begin{enumerate}
\item $N$ is infinitesimally Alexandrov with ${\rm rank}(N)\le 1\,;$
\item  $N_0$ is infinitesimally sub-Alexandrov with ${\rm rank}(N_0)=0$.
\end{enumerate}
\end{thm}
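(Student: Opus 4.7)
The plan is to reduce the infinitesimal study to the standard Alexandrov theory by means of the gluing construction of \cite{wong0}. For each $M_i$, attach along $\pa M_i$ a curvature-completing collar so that the enlarged space $\tilde M_i$ becomes a complete Alexandrov space of curvature $\ge \kappa'$ for some $\kappa' = \kappa'(\kappa,\nu,\lambda)$, with $M_i \subset \tilde M_i$ a closed subset and $\pa M_i$ the junction locus. Since the construction is uniform in $i$, after passing to a subsequence $(\tilde M_i, p_i)$ converges in the pointed Gromov--Hausdorff topology to an Alexandrov space $(\tilde N, \tilde x)$ of curvature $\ge \kappa'$, and the closed subsets $M_i$ and $\pa M_i$ converge to closed subsets identified with $N$ and $N_0$ inside $\tilde N$. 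The intrinsic metric on $\inte N$ then agrees locally with the restriction of the metric on $\tilde N$.

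For $x \in \inte N$, $\Sigma_x(N)$ coincides with $\Sigma_x(\tilde N)$ and is Alexandrov of curvature $\ge 1$ by the classical infinitesimal theory, of dimension $\tilde m - 1$ where $\tilde m = \dim \tilde N$. For $x \in N_0$, consider the tangent cone $T_x \tilde N = C(\Sigma_x(\tilde N))$, obtained as a pGH limit of rescalings $(\tfrac{1}{r_i} \tilde M_i, p_i)$ with $r_i \to 0$ and $p_i \to x$. The rescaled images of $M_i$ and $\pa M_i$ converge, after further extraction, to closed subcones $T_x N$ and $T_x N_0$ of $T_x \tilde N$. Define $\Sigma_x(N)$ and $\Sigma_x(N_0)$ as the respective bases under the cone structure. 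Since Wong's collar is attached on one side of $\pa M_i$, the subcone $T_x N$ is a closed half-cone in $T_x \tilde N$ bounded by $T_x N_0$, and a localization argument shows that $\Sigma_x(N)$ with the induced interior metric from $\Sigma_x(\tilde N)$ is Alexandrov of curvature $\ge 1$. The same reasoning applied to $T_x N_0$ inside $T_x N$, now using the lower bound $K_{\pa M} \ge \nu$ on the intrinsic geometry of the boundaries, yields the corresponding Alexandrov structure on $\Sigma_x(N_0) \subset \Sigma_x(N)$.

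For the rank bounds, since $\tilde N$ is Alexandrov, $\dim \Sigma_y(\tilde N) = \tilde m - 1$ is constant in $y$. Thus at $x \in \inte N$, $\dim \Sigma_x(N) = \tilde m - 1$. At $x \in N_0$, $\Sigma_x(N)$ is a top-dimensional closed subset of $\Sigma_x(\tilde N)$ bounded by $\Sigma_x(N_0)$, so $\dim \Sigma_x(N) \in \{\tilde m - 1, \tilde m - 2\}$, giving $\mathrm{rank}(N) \le 1$. For $\mathrm{rank}(N_0) = 0$, the intrinsic limit of the boundaries $(\pa M_i, g_{\pa M_i})$ is an Alexandrov space of curvature $\ge \nu$ of fixed dimension, and compatibility of this intrinsic structure with the induced one on $N_0 \subset \tilde N$ forces $\dim \Sigma_x(N_0)$ to be constant over $x \in N_0$. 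The hard part will be the second paragraph: identifying $\Sigma_x(N)$ and $\Sigma_x(N_0)$ canonically inside $\Sigma_x(\tilde N)$ at boundary points and verifying that their induced interior metrics are Alexandrov of curvature $\ge 1$. All three lower bounds in \eqref{eq:curv-assum} enter, and the non-inradius hypothesis is essential to prevent $\Sigma_x(N)$ from degenerating to $\Sigma_x(N_0)$ at boundary points, which is precisely what distinguishes the present setting from the inradius-collapsed case treated in \cite{YZ:inrdius}.
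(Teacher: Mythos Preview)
Your setup via Wong's extension $\tilde M_i \to \tilde N = Y$ is exactly the paper's framework, and the argument for part~(1) is essentially correct in outline: $\Sigma_x(X) = \Sigma_x(Y) \setminus \mathring{B}(\bm{\xi}_x, \pi/2)$ is convex in the Alexandrov space $\Sigma_x(Y)$, hence Alexandrov of curvature $\ge 1$, and the dimension is either $m-1$ or $m-2$ depending on whether $\mathrm{rad}(\xi_x^+) > \pi/2$ or $= \pi/2$. You do skip the intrinsic side of the definition of ``infinitesimally Alexandrov'': one must show that angles between $N$-almost-minimal curves exist and that the resulting $\Sigma_x(N)$ coincides with the extrinsic $\Sigma_x(X) \subset \Sigma_x(Y)$. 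The paper handles this via a comparison of $N$- and $Y$-comparison angles (Sublemma~\ref{slem:distance-raio}) and a careful identification (Lemmas~\ref{lem:define-Sigma(N)}--\ref{lem:int=ext}), together with an explicit verification that $(\tfrac{1}{\e}N,x) \to K(\Sigma_x(N))$.

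The genuine gap is in part~(2). Your sentence ``the same reasoning applied to $T_x N_0$ inside $T_x N$ \ldots\ yields the corresponding Alexandrov structure on $\Sigma_x(N_0)$'' does not work: $\Sigma_x(N_0)$ is \emph{not} in general a convex subset of $\Sigma_x(Y)$, and whether the boundary of an Alexandrov space is itself Alexandrov is an open question (the paper notes this explicitly in Remark~\ref{rem:Sigma=Alex}). What the paper actually does is construct an isometric involution $f_*$ on $\Sigma_p(C_0)$ (with $\eta_0(p) = x$) and prove that $\Sigma_x(N_0)^{\mathrm{int}} \cong \Sigma_p(C_0)/f_*$ (Theorem~\ref{thm:X1-f} and Corollary~\ref{cor:Sigma0=Alex}); since $\Sigma_p(C_0)$ is Alexandrov of curvature $\ge 1$, so is the quotient. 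This construction of $f_*$ is the substantive content, and it also immediately gives $\mathrm{rank}(N_0) = 0$ since $\dim \Sigma_p(C_0) = m-2$ for every $p \in C_0$. Your appeal to ``compatibility of the intrinsic structure'' is too vague to replace this.

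Finally, your last sentence is mistaken: the non-inradius hypothesis does \emph{not} prevent $\Sigma_x(N)$ from degenerating to $\Sigma_x(N_0)$ --- this degeneration occurs precisely at points of $\mathcal S^1 \cup \mathcal C$ (Lemma~\ref{lem:S1-basic1}, Example~\ref{ex:cusp}), and analyzing it is one of the main themes of the paper.
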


Theorem \ref{thm:alex} yields the possibility of developing the Alexandrov geometry at least  infinitesimally at 
any point of $N$.
\pmed\n
%%%%%%%  Boundary points and Boundary singular points %%%%%%%
{\bf Infinitesimal structure at boundary singular points.}\,
As observed above, our main task is to
make clear the infinitesimal structure at boundary points,
i.e.,  points of $N_0$.
The {\it boundary singular set} $\ca S\subset N_0$ is defined in a 
natural way as follows.
By \eqref{eq:curv-assum}, we may assume that 
the intrinsic metric $(\pa M_i)^{\rm int}$ converges to some
Alexandrov space $C_0$. We may also assume that the natural map
$\eta_i: (\pa M_i)^{\rm int} \to (\pa M_i)^{\rm ext}$
to the extrinsic metric,
converges to a surjective $1$-Lipschitz map 
$$
     \eta_0:C_0\to N_0.
$$
It is shown in \cite{YZ:inrdius}  that 
$\# \eta_0^{-1}(x)\le 2$  for 
all $x\in N_0$, and 
that $\eta_0$  preserves  the length of curves, and therefore induces a $1$-Lipschitz map  $d\eta_0:\Sigma_p(C_0) \to \Sigma_x(N)$ for any $p\in \eta_0^{-1}(x)$.
%%%%

% 
We call a point $x\in N_0$ {\it single}  (resp. {\it double})    if $\# \eta_0^{-1}(x)=1$
(resp. $\# \eta_0^{-1}(x)=2$).
We denote by $N_0^1$ (resp. by $N_0^2$) 
the set of all single points (resp. double points)
in $N_0$.
For $k=1$ or $2$,
let ${\rm int}\, N_0^k$ denote the interior of $N_0^k$ in $N_0$, and let 
$\pa N_0^k=\bar N_0^k\setminus {\rm int}\, N_0^k$ be the topological boundary of 
$N_0^k$ in $N_0$.
We set  
$$
\mathcal S^k :=\pa N_0^k \cap N_0^k.
$$ 
We call a point of $\mathcal S^1$ (resp. of 
$\mathcal S^2$)
a {\it single  singular point} 
(resp. a {\it double singular point}).
Then we define the {\it boundary singular set} $\ca S$ as $\ca S = \ca S^1 \cup \ca S^2$.
Here is a simple figure illustrating a non-inradius convergence of a surface 
$M_\e$  to the limit $N$ as $\e\to 0$:
\pbig
\begin{center}
\begin{tikzpicture}
[scale = 0.4]

\draw [-, thick] (-3.2,0) to [out=180, in=-90] (-3.7,0.5);
\draw [-, thick] (-2.8,0) to [out=180, in=00] (-3.2,0);
\draw [-, thick] (-2.1,0.4) to [out=180, in=00] (-2.8,0);
\draw [-, thick] (-1.6,0) to [out=180, in=00] (-2.1,0.4);
\draw [-, thick] (-0.8,0.6) to [out=180, in=00] (-1.6,0);
\draw [-, thick] (0,0) to [out=180, in=00] (-0.8,0.6);
\draw [-, thick] (0,0) to [out=00, in=180] (0.8,0.6);
\draw [-, thick] (0.8,0.6) to [out=00, in=180] (1.6,0);
\draw [-, thick] (1.6,0) to [out=00, in=180] (2.1,0.4);
\draw [-, thick] (2.1,0.4) to [out=00, in=180] (2.8,0);
\draw [-, thick] (2.8,0) to [out=00, in=180] (3.2,0);
\draw [-, thick] (3.2,0) to [out=00, in=-90] (3.7,0.5);

%[shift={(0,0.6)}]
\draw [-, thick] (-3.2,1) to [out=180, in=90] (-3.7,0.5);
\draw [shift={(0,1)}][-, thick] (-2.8,0) to [out=180, in=00] (-3.2,0);
\draw[shift={(0,1)}] [-, thick] (-2.1,0.4) to [out=180, in=00] (-2.8,0);
\draw[shift={(0,1)}] [-, thick] (-1.6,0) to [out=180, in=00] (-2.1,0.4);
\draw[shift={(0,1)}] [-, thick] (-0.8,0.6) to [out=180, in=00] (-1.6,0);
\draw[shift={(0,1)}] [-, thick] (0,0) to [out=180, in=00] (-0.8,0.6);
\draw[shift={(0,1)}] [-, thick] (0,0) to [out=00, in=180] (0.8,0.6);
\draw[shift={(0,1)}] [-, thick] (0.8,0.6) to [out=00, in=180] (1.6,0);
\draw[shift={(0,1)}] [-, thick] (1.6,0) to [out=00, in=180] (2.1,0.4);
\draw[shift={(0,1)}] [-, thick] (2.1,0.4) to [out=00, in=180] (2.8,0);
\draw[shift={(0,1)}] [-, thick] (2.8,0) to [out=00, in=180] (3.2,0);
\draw[-, thick] (3.2,1) to [out=00, in=90] (3.7,0.5);
  
\draw[-, thick] (-3.7,0.5) to [out=-90, in=90] (-3.7,-0.5);
\draw[-, thick] (3.7,0.5) to [out=-90, in=90] (3.7,-0.5);

\draw[-, thick] (-3.7,-0.5) to [out=-90, in=180] (-3.4,-0.8);
\draw[-, thick] (3.7,-0.5) to [out=-90, in=00] (3.4,-0.8);

\draw[-, thick] (-3.4,-0.8) -- (3.4,-0.8);

\fill (-3.6,-0.2) circle (0pt) node [left] {$\e$};
\fill (-5.5,0.5) circle (0pt) node [left] {$M_\e$};

\draw[->, thick] (0,-1.5) to  (0,-3);
\fill (0,-2.2) circle (0pt)  node[right] {\scriptsize{$GH$}};

\draw[shift={(0,-4)}]  [-, thick] (-2.6,0) to [out=180, in=00] (-3,0.2);
\draw[shift={(0,-4)}]  [-, thick] (-3,0.2) to [out=180, in=00] (-3.3,0);
\draw[shift={(0,-4)}]  [-, thick] (-3.3,0) to [out=180, in=00] (-3.5, 0.1);
\draw[shift={(0,-4)}]  [-, thick] (-3.5,0.1) to [out=180, in=00] (-3.7,0);
\draw[shift={(0,-4)}]  [-, thick] (-2.1,0.4) to [out=180, in=00] (-2.6,0);
\draw[shift={(0,-4)}]  [-, thick] (-1.6,0) to [out=180, in=00] (-2.1,0.4);
\draw[shift={(0,-4)}]  [-, thick] (-0.8,0.6) to [out=180, in=00] (-1.6,0);
\draw[shift={(0,-4)}]  [-, thick] (0,0) to [out=180, in=00] (-0.8,0.6);
\draw[shift={(0,-4)}]  [-, thick] (0,0) to [out=00, in=180] (0.8,0.6);
\draw[shift={(0,-4)}]  [-, thick] (0.8,0.6) to [out=00, in=180] (1.6,0);
\draw[shift={(0,-4)}]  [-, thick] (1.6,0) to [out=00, in=180] (2.1,0.4);
\draw[shift={(0,-4)}]  [-, thick] (2.1,0.4) to [out=00, in=180] (2.6,0);
\draw[shift={(0,-4)}]  [-, thick] (2.6,0) to [out=00, in=180] (3,0.2);
\draw[shift={(0,-4)}]  [-, thick] (3,0.2) to [out=00, in=180] (3.3,0);
\draw[shift={(0,-4)}]  [-, thick] (3.3,0) to [out=00, in=180] (3.5, 0.1);
\draw[shift={(0,-4)}]  [-, thick] (3.5,0.1) to [out=00, in=180] (3.7,0);

%%%%%%

\draw[shift={(0,-4)}]  [-, thick] (-3.5,-0.1) to [out=180, in=00] (-3.7,0);
\draw[shift={(0,-4)}]  [-, thick] (-3.3,0) to [out=180, in=00] (-3.5, -0.1);
\draw[shift={(0,-4)}]  [-, thick] (-3,-0.2) to [out=180, in=00] (-3.3,0);
\draw[shift={(0,-4)}]  [-, thick] (-2.6,0) to [out=180, in=00] (-3,-0.2);

\draw[shift={(0,-4)}]  [-, thick] (-2.1,-0.4) to [out=180, in=00] (-2.6,0);
\draw[shift={(0,-4)}]  [-, thick] (-1.6,0) to [out=180, in=00] (-2.1,-0.4);
\draw[shift={(0,-4)}]  [-, thick] (-0.8,-0.6) to [out=180, in=00] (-1.6,0);
\draw[shift={(0,-4)}]  [-, thick] (0,0) to [out=180, in=00] (-0.8,-0.6);
\draw[shift={(0,-4)}]  [-, thick] (0,0) to [out=00, in=180] (0.8,-0.6);
\draw[shift={(0,-4)}]  [-, thick] (0.8,-0.6) to [out=00, in=180] (1.6,0);
\draw[shift={(0,-4)}]  [-, thick] (1.6,0) to [out=00, in=180] (2.1,-0.4);
\draw[shift={(0,-4)}]  [-, thick] (2.1,-0.4) to [out=00, in=180] (2.6,0);
\draw[shift={(0,-4)}]  [-, thick] (2.6,0) to [out=00, in=180] (3,-0.2);
\draw[shift={(0,-4)}]  [-, thick] (3,-0.2) to [out=00, in=180] (3.3,0);
\draw[shift={(0,-4)}]  [-, thick] (3.3,0) to [out=00, in=180] (3.5, -0.1);
\draw[shift={(0,-4)}]  [-, thick] (3.5,-0.1) to [out=00, in=180] (3.7,0);

\fill [shift={(0,-4)}](-6,0) circle (0pt) node [left] {$N$};
\fill (3.7, -4) circle (3pt)  node [right] {\small{$\ca S^1$}};
\fill (-3.7, -4) circle (3pt)  node [left] {\small{$\ca S^1$}};
%\draw [->](0,-5.2) to (0,-4.3);
\draw [->,thick,dotted](0,-5.2) to (0,-4.3);
\draw [thick,dotted](-2.6,-5.2) to (2.6,-5.2);
\draw [->,thick,dotted](1.6,-5.2) to (1.6,-4.3);
\draw [->,thick,dotted](-1.6,-5.2) to (-1.6,-4.3);
\draw [->,thick,dotted](2.6,-5.2) to (2.6,-4.3);
\draw [->,thick,dotted](-2.6,-5.2) to (-2.6,-4.3);
\fill (0,-5.1) circle (0pt)  node [below] {\small{$\ca S^2$}};

\draw[-, thick] (13,-4) circle [x  radius=4,y  radius=0.6];
%\fill (12, -3.1) circle (0pt)  node [above] {\small{$C_0$}};
\draw [->, thick] (8, -4) -- (6,-4);
\fill (7, -4) circle (0pt)  node [above] {\small{$\eta_0$}};

\fill (11, 0) circle (0pt)  node [right] {\small{$(\pa M_\e)^{\rm int}$}};
\fill (17,-4) circle (0pt) node [right] {$C_0$};
\fill (12.5,-2.2) circle (0pt)  node[right] {\scriptsize{$GH$}};
\draw[->, thick] (12.5,-1.5) to  (12.5,-3);

\end{tikzpicture}
%\caption{Topology of $(M_i,\ba M_i)$}
	%\label{table:FB}
\end{center}
\vspace{-1cm}  
\begin{figure}[htbp]
  \centering
  \caption{}
  \label{fig:conv-surface}  
\end{figure}

The existence of the boundary singularities
defined above affects the geometry of the limit space. 
Therefore it is quite important to determine the 
infinitesimal structure at those boundary singular points.

 The infinitesimal structure at  a point of $N_0^2$ is rather simple (Lemma \ref{lem:double-sus}, see also Sublemma \ref{slem:eta-inter}). 
On the other hand, the infinitesimal structure
at points of  $\ca S^1$ is  a priori unclear.
Thus towards the infinitesimal characterization and classification of boundary singular points,  it is a key to describe the infinitesimal  structure  at  points of  
$\ca S^1$. 

To carry out it, for any $x\in N_0^1$ and $p\in C_0$ with $\eta_0(p)=x$,
consider the differential $d\eta_0:\Sigma_p(C_0)\to \Sigma_x(N_0)$.
It is turned out that this map is realized as the quotient map
via some isometric involution $f_*:\Sigma_p(C_0)\to \Sigma_p(C_0)$ (Theorem \ref{thm:X1-f}).
Let $\tilde{\ca F}_p$ be the fixed point set of $f_*$, and set 
$\ca F_x:=d\eta_0(\tilde{\ca F}_p)$.

The following result shows that any point $x\in\ca S^1$ is actually very singular.

\begin{thm}\label{thm:S1-nontrivial-f*}
For any $x\in\ca S^1$,  we have 
\begin{enumerate}
\item $f_*:\Sigma_p(C_0)\to \Sigma_p(C_0)$ is not the identity\,$;$
\item $\Sigma_x(N)=\Sigma_x(N_0)$ and it is isometric to the quotient
space 
$$\Sigma_p(C_0)/f_*.
$$
\end{enumerate}
\end{thm}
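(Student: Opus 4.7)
The two assertions are linked, and the plan is to prove (2) first and then derive (1) by a contradiction argument using (2).

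\emph{For (2).} The identification $\Sigma_x(N_0) \cong \Sigma_p(C_0)/f_*$ is a direct consequence of Theorem \ref{thm:X1-f}, which realizes $d\eta_0$ as the quotient map by $f_*$. For the equality $\Sigma_x(N) = \Sigma_x(N_0)$ I plan to exploit $x \in \partial N_0^1 = \overline{N_0^2} \cap N_0^1$: at a nearby double point $x_n \in N_0^2$, Lemma \ref{lem:double-sus} describes $\Sigma_{x_n}(N)$ as a suspension-type space over $\Sigma_{x_n}(N_0)$ with a distinguished ``interior perpendicular'' transverse to $\Sigma_{x_n}(N_0)$. Since $x$ is single, such a transverse direction cannot persist at the limit, as its presence would produce a second preimage of $x$ in $C_0$. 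Hence $\Sigma_x(N)$ loses the transverse part and coincides with $\Sigma_x(N_0)$; the reverse inclusion is built into the definition of $\Sigma_x(N_0) \subset \Sigma_x(N)$.

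\emph{For (1).} Suppose, for contradiction, that $f_* = \mathrm{id}_{\Sigma_p(C_0)}$. Then by Theorem \ref{thm:X1-f} the map $d\eta_0 : \Sigma_p(C_0) \to \Sigma_x(N_0)$ is an isometry, and by (2) so is $d\eta_0 : \Sigma_p(C_0) \to \Sigma_x(N)$; passing to cones yields an isometry $d\eta_0 : T_pC_0 \to T_xN$. Using $x \in \partial N_0^1$, pick a sequence of double points $x_n \to x$ with two distinct preimages $p_n, p_n' \in C_0$; by continuity and $\eta_0^{-1}(x) = \{p\}$, both $p_n, p_n' \to p$. Choose a rescaling $r_n \to 0$ so that in the pointed Gromov--Hausdorff convergence $(C_0, p, r_n^{-1}) \to (T_pC_0, 0)$ the rescaled preimages have distinct limits $\tilde p \ne \tilde p' \in T_pC_0$, while $x_n$ has a limit $\tilde x \in T_xN$. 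Then $d\eta_0(\tilde p) = d\eta_0(\tilde p') = \tilde x$ and the isometry property of $d\eta_0$ gives the contradiction
\[
0 < d_{T_pC_0}(\tilde p, \tilde p') = d_{T_xN}(\tilde x, \tilde x) = 0.
\]

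\emph{Main obstacle.} The critical step is securing $\tilde p \ne \tilde p'$. With $r_n := \max\{d(p, p_n), d(p, p_n'), d(p_n, p_n')\}$ one of the three distances survives with value $1$ in the rescaled metric; if along a subsequence $d(p_n, p_n')/r_n \to 1$ then $d(\tilde p, \tilde p') = 1$ and we finish immediately. In the degenerate regime $d(p_n, p_n') = o(\max\{d(p, p_n), d(p, p_n')\})$, however, the first-order tangent cone at $p$ merges $\tilde p$ and $\tilde p'$, and an iterated blowup at their common image inside $T_pC_0$ is needed. The tangent map of an isometry is still an isometry, so the contradiction propagates at each iteration; making this iterated rescaling rigorous via simultaneous pointed Gromov--Hausdorff convergence of $(C_0, \eta_0)$ along re-centered subsequences is where the bulk of the technical work will go.
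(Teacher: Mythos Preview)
Your treatment of (2) is roughly on track but unnecessarily indirect. The clean route is the contrapositive of Lemma~\ref{lem:single-interior}: since $x\in\ca S^1\subset\pa X_0^1$, one has ${\rm rad}(\xi_x^+)=\pi/2$, and then Proposition~\ref{prop:perp+horiz} (or Lemma~\ref{lem:S1-basic1}) gives $\Sigma_x(X)=\Sigma_x(X_0)$ directly. Your appeal to Lemma~\ref{lem:double-sus} is slightly garbled: for $x_n\in X_0^2$ that lemma already gives $\Sigma_{x_n}(X)=\Sigma_{x_n}(X_0)$, so there is no ``transverse direction'' in $\Sigma_{x_n}(N)$ to lose in the limit. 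The isometry with $\Sigma_p(C_0)/f_*$ then follows from Theorem~\ref{thm:X1-f}(2), exactly as you say.

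Your approach to (1), however, has a genuine gap, and it is precisely the one the paper flags in Remark~\ref{rem:indirect}. The ``degenerate regime'' $|p_n,p_n'|=o(|p,p_n|)$ is not an edge case to be patched by iteration; it is the generic obstruction, and the iterated blowup does not resolve it. The problem is that re-centering the rescaling at $p_n$ with scale $|p_n,p_n'|$ gives a limit of $\eta_0$ that is \emph{not} $d(d\eta_0)_{\tilde p}$: the latter is the tangent map of the already-taken first blowup, while the former is a fresh limit of the original map along a moving basepoint. There is no reason this fresh limit inherits injectivity from the hypothesis $f_*={\rm id}$ at $p$; indeed, the limit map identifies the images of $p_n$ and $p_n'$ by construction, so you would need to know it is an isometry \emph{a priori}, which is exactly what is in question. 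Iterating only reproduces the same dichotomy at each stage without forcing termination.

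The paper's proof of (1) abandons this direct line entirely. It reduces (1) to Theorem~\ref{thm:never-happen}, whose content is that the infinitesimal condition \eqref{eq:paSigma-never} (which follows from $f_*={\rm id}$ via Theorem~\ref{thm:X1-f}) forces an entire ball $B^{X_0}(x,r)\subset {\rm int}\,X_0^1\setminus\ca C$, contradicting $x\in\ca S^1$. The proof of Theorem~\ref{thm:never-happen} occupies all of Subsection~\ref{ssec:proof} and rests on the geometry of \emph{almost parallel domains} (Definition~\ref{defn:DC}): one shows that points near $x$ admitting such parallel pairs are confined to a thin cone near $\pa\Sigma_x(X_0)$ (Lemma~\ref{lem:CsubC}), and then that the boundary of the ``good'' region $\ca E_0(x,r,\theta)$ lies in $\pa_* X_0$ (Proposition~\ref{prop:paC-subset-paX}). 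Both steps use splitting arguments (Theorem~\ref{thm:splitting}) under rescalings chosen to exploit the parallel structure, not the naive blowup at $p$. This machinery is what replaces your iterated-rescaling idea, and it is substantially more involved than your proposal anticipates.
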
  

Surprisingly, $f_*:\Sigma_p(C_0)\to \Sigma_p(C_0)$ can be nontrivial 
at some point $x\in {\rm int}\, N_0^1$.
We call such a point a {\it cusp}, and denote by $\ca C$ the set of all cusps (see Examples \ref{rem:X1-cust}
and \ref{ex:general-cusp}).
Cusp points have properties similar to those of 
$\ca S^1$.

Although $\ca C$ is not necessary closed
(see Example \ref{ex:smallS3}), it is easy to verify that $\ca S^1$ is closed in 
$N_0$ (see Lemma \ref{lem:closed-S1}).
Using the proof of Theorem \ref{thm:S1-nontrivial-f*}, we  prove that $\ca S^1\cup\ca C$ is closed in $N_0$ (see Theorem \ref{thm:CS1=closed}).
Moreover we show that $\ca S^1\cup\ca C$ is  contained in 
an  ``extremal subset'' of $N_0$ in the infinitesimal sense.  
Note that $\ca F_x$ is a 
proper extremal subset of the Alexandrov space $\Sigma_x(N_0)$ (see Perelman-Petrunin \cite{PtPt:extremal}).
\pmed

\begin{thm}\label{thm:S1extremal}
Let a sequence $M_i$ in $\ca M(n,\kappa,\nu,\lambda, d)$  non-inradius collapses /converges
 to a compact geodesic space $N$.
Then 
 $\Sigma_x(\ca S^1\cup\ca C)$ is  contained in $\ca F_x$  for any $x\in \ca S^1\cup\ca C$.
\end{thm}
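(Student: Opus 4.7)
The plan is to lift the direction $\xi$ through the quotient map $d\eta_0$ and show that the lift must be fixed by the involution $f_*$. Fix $\xi\in\Sigma_x(\ca S^1\cup\ca C)$ and realize it as the limit of directions $\xi_n=\uparrow_x^{y_n}$ along a sequence $y_n\in\ca S^1\cup\ca C$ with $y_n\to x$. Since $x,y_n$ are single points of $N_0$, set $p=\eta_0^{-1}(x)$ and $q_n=\eta_0^{-1}(y_n)$, so $q_n\to p$ in $C_0$. By Theorem \ref{thm:S1-nontrivial-f*}(2), $\Sigma_x(N)=\Sigma_p(C_0)/f_*$ and $\ca F_x=d\eta_0(\tilde{\ca F}_p)$, so it suffices to produce $\tilde\xi\in\tilde{\ca F}_p$ with $d\eta_0(\tilde\xi)=\xi$.

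For the lift, let $\tilde\gamma_n$ be a minimizing $C_0$-geodesic from $p$ to $q_n$ with initial direction $\tilde\xi_n$, and extract a subsequence so $\tilde\xi_n\to\tilde\xi\in\Sigma_p(C_0)$. The identity $d\eta_0(\tilde\xi)=\xi$ will follow once we establish $d_N(x,y_n)=d_{C_0}(p,q_n)$: the inequality $d_N(x,y_n)\le d_{C_0}(p,q_n)$ is immediate since $\eta_0\circ\tilde\gamma_n$ is a curve from $x$ to $y_n$ of length $d_{C_0}(p,q_n)$, while the reverse inequality comes from lifting a minimizing $N$-geodesic back to $C_0$ using the single-preimage structure at both endpoints (for $n$ large, Theorem \ref{thm:S1-nontrivial-f*}(2) forces this geodesic to lie in $N_0$, so it admits such a lift).

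The crux is $f_*(\tilde\xi)=\tilde\xi$. By Theorem \ref{thm:S1-nontrivial-f*}(1) and the definition of cusp, each $q_n$ is a point at which the local involution $f_{*,q_n}$ on $\Sigma_{q_n}(C_0)$ is nontrivial; moreover the single-preimage condition $\eta_0^{-1}(y_n)=\{q_n\}$ means $q_n$ itself is a ``fixed point'' of the local fold structure at $q_n$. I would rescale at $p$ by the factor $1/d_{C_0}(p,q_n)$: the rescaled pointed spaces converge to the tangent cone $T_p C_0=K(\Sigma_p(C_0))$, the points $q_n$ converge to a point $\bar q$ at unit distance from the apex in direction $\tilde\xi$, and the involutions $f_{*,q_n}$ converge to the canonical cone extension of $f_*$ acting on $T_p C_0$. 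The fixed-point property at each $q_n$ passes to the limit, forcing $\bar q$ to lie in the fixed set of the limiting involution, which is exactly $K(\tilde{\ca F}_p)$. Hence $\tilde\xi\in\tilde{\ca F}_p$, completing the argument.

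I expect the main obstacle to be justifying the convergence of the family of local involutions $\{f_{*,q_n}\}$, each defined on its own tangent space $\Sigma_{q_n}(C_0)$ and based at a moving point $q_n$, to a single action of $f_*$ on $T_p C_0$ under the rescaling. The approach is to trace $f_*$ back to the manifold level: in the proof of Theorem \ref{thm:X1-f} it arises as the limit of concrete reflection-type isometric maps on the approximating manifolds $M_i$, and a diagonal-subsequence argument in $(i,n)$ with simultaneous rescaling should yield the required compatible convergence. Cusp points $x\in\ca C$ require no separate treatment, since $\ca S^1$ and $\ca C$ are both characterized by the same nontriviality condition on $f_*$.
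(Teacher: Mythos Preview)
Your central step has a genuine gap. The involution $f_*$ at $p$ is defined purely on the space of directions $\Sigma_p(C_0)$ via the multiplicity of $d\eta_0$ (see \eqref{eq:defn-df}); it is \emph{not} the differential of a map defined on a neighbourhood of $p$ in $C_0$. Indeed, the global map $f:C_0\to C_0$ of \eqref{eq:defn-f} is the identity on all of $C_0^1$, hence on $p$ and on every $q_n$; for $p\in\tilde{\ca C}\subset{\rm int}\,C_0^1$ this already shows that $f_*\neq df_p$. Consequently there is no ``canonical cone extension of $f_*$ acting on $T_pC_0$'' to which the infinitesimal involutions $f_{*,q_n}$ (each living on its own $\Sigma_{q_n}(C_0)$) could converge, and your diagonal-in-$(i,n)$ suggestion does not help: in the proof of Theorem~\ref{thm:X1-f}, $f_*$ arises from the branching of minimal geodesics $\xi_x^+v$ inside $\Sigma_x(Y)$, not from any reflection on the $M_i$. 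The sentence ``$q_n$ is a fixed point of the local fold structure'' conflates two unrelated things: the condition $q_n\in C_0^1$ is a statement about $\eta_0$ at $q_n$, and carries no information about how $q_n$ sits relative to the involution at the different point $p$. Your argument never produces a constraint on the \emph{direction} $\tilde\xi$ at $p$, which is what $\tilde\xi\in\tilde{\ca F}_p$ requires. (There is also a smaller issue: Theorem~\ref{thm:S1-nontrivial-f*}(2) gives $\Sigma_x(N)=\Sigma_x(N_0)$ only at the level of directions; it does not force an $N$-minimizing geodesic from $x$ to $y_n$ to lie in $N_0$, so your lifting argument for $d_N(x,y_n)=d_{C_0}(p,q_n)$ is incomplete.)

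The paper's proof takes a completely different route and never tracks the involutions $f_{*,q_n}$. It argues by contradiction: if $v\in\Sigma_x(\ca S^1\cup\ca C)\setminus\ca F_x$, then $d\eta_0^{-1}(v)=\{\tilde v,f_*(\tilde v)\}$ with $\tilde v\neq f_*(\tilde v)$. Choosing $p_i\to p$ with $\uparrow_p^{p_i}\to\tilde v$ and $p_i'\to p$ with $\uparrow_p^{p_i'}\to f_*(\tilde v)$, the images $U_i=\eta_0(B(p_i,\delta r_i/10))$ and $U_i'=\eta_0(B(p_i',\delta r_i/10))$ become \emph{almost parallel} pieces of $X_0$ inside the ambient Alexandrov space $Y$: a rescaling argument together with the splitting theorem (Theorem~\ref{thm:splitting}) shows that perpendiculars to $X_0$ join them at angle close to $\pi$ (Sublemma~\ref{slem:perpatw}). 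Because $p_i\in\tilde{\ca S}^1\cup\tilde{\ca C}$, a \emph{second} pair of almost parallel domains $W_{ij},W_{ij}'$ appears near $\eta_0(p_i)$ inside $U_i$, and the two levels of almost-parallelism are geometrically incompatible (a geodesic realising one forces passage through the other). The key geometric idea---producing nested almost parallel domains from the two $d\eta_0$-preimages of a direction and deriving a contradiction via perpendiculars in $Y$---has no counterpart in your proposal.
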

\psmall

In Theorem \ref{thm:S1extremal},
$\Sigma_x(\ca S^1\cup\ca C)$ does not coincide with $\ca F_x$ in general. See Example \ref{ex:non-closed}(2).
\pmed

\pmed\n
{\bf Hausdorff dimensions.}\,
Finally we discuss  the Hausdorff dimensions
of the boundary singular sets together with 
the metric singular set of $N_0$.
%
%Let $m$ denote the topological dimension of $N$,
%$m:=\dim N$, and 
Let $\dim_HN$ denote the Hausdorff dimension of $N$.
For Alexandrov spaces, it is equal to
the topological dimension $\dim N$
 (\cite[Corollary 6.5]{BGP}). 
In our case of non-inradius
collapse/convergence, we have the following (see 
Lemma  \ref{lem:dimNdimN0} for more details):
$$
\dim N=\dim_H N=m, \qquad   \dim N_0=\dim_H N_0=m-1.
$$
We call a point $x\in N_0$ {\it metrically regular} if  the intrinsic metric
$\Sigma_x(N_0)^{\rm int}$ of $\Sigma_x(N_0)$ is isometric to $\mathbb S^{m-2}$. 
Otherwise we call $x$ {\it metrically singular}.
We denote by $N_0^{\rm reg}$ (resp. by $N_0^{\rm sing}$) the set of all metrically regular 
(resp. all metrically singular) points of  $N_0$.
%%%%%%%%%%%%%%%%%%%%%%%%%%

For $0\le k\le m-3$, let us denote
by $\ca S^1(k)$  (resp. $\ca C(k)$) the set of points $x\in\ca S^1$ (resp. $x\in\ca C$)
such that $\dim\ca F_x=k$.

 We define  the {\it interior} ${\rm int}\,N_0$ 
of $N_0$ as the set of all points $x\in N_0$
such that the Alexandrov space 
$\Sigma_x(N_0)^{\rm int}$ has no boundary
 (see Definition \ref{defn:int-pa-X0}).
Throughout the paper, let $\dim_H$ denote the 
Hausdorff dimension with respect to the metric of 
$N$ unless otherwise stated.

\begin{thm} \label{thm:dim(metric-sing)}
Let a sequence $M_i$ in $\ca M(n,\kappa,\nu,\lambda, d)$ non-inradius collapses /converges
to a compact geodesic space $N$.
% while ${\rm inrad}(M_i)$ have a positive lower bound.
Then we have $:$
\begin{enumerate}
\item $\dim_H N_0^{\rm sing} \le m-2$. In particular,
$\dim_H (\ca S^1\cup\ca C)\le m-2\,;$
\item  $\dim_H (\ca S^1(k)\cup\ca C(k))\le k+1$ for each $0\le k\le m-3\,;$
\item $\dim_H (N_0^{\rm sing} \cap {\rm int}\, N_0) \le m-3$.
\end{enumerate}
\end{thm}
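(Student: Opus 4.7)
The plan is to deduce all three bounds from the infinitesimal Alexandrov structure of Theorem~\ref{thm:alex} together with the extremal containment of Theorem~\ref{thm:S1extremal}, running three parallel stratification arguments in the spirit of Burago--Gromov--Perelman \cite{BGP}. Throughout I would work in the intrinsic metric of $N_0$; since it dominates the ambient metric of $N$, any upper bound on $\dim_H$ obtained intrinsically transfers to the extrinsic bound stated in the theorem.

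I would first attack (2), from which the ``in particular'' clause of (1) is immediate: since $\ca S^1\cup\ca C=\bigcup_{k=0}^{m-3}(\ca S^1(k)\cup\ca C(k))$ and $k+1\le m-2$ in this range, countable stability of $\dim_H$ gives $\dim_H(\ca S^1\cup\ca C)\le m-2$. For (2), fix $x\in \ca S^1(k)\cup\ca C(k)$. Theorem~\ref{thm:S1extremal} supplies $\Sigma_x(\ca S^1\cup\ca C)\subseteq \ca F_x$, a closed subset of the $(m-2)$-dimensional Alexandrov space $\Sigma_x(N_0)$ of dimension exactly $k$. Passing to cones, the infinitesimal model of $\ca S^1\cup\ca C$ at $x$ is contained in the $(k+1)$-dimensional cone $C(\ca F_x)\subseteq K_x(N)$. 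To convert this infinitesimal fact into a Hausdorff-dimension bound I would upgrade the pointwise inclusion to a uniform multiscale statement: for every $\varepsilon>0$ there exists $r_0=r_0(x,\varepsilon)>0$ such that, whenever $0<r<r_0$, the set $B_r(x)\cap(\ca S^1\cup\ca C)$ lies in the $\varepsilon r$-neighbourhood of $C(\ca F_x)$ inside the rescaled space $(N,r^{-1}d_N)$. A standard ball-covering count then yields $\dim_H(\ca S^1(k)\cup\ca C(k))\le k+1$. The hard part is exactly this upgrade: given a sequence $x_i\in \ca S^1\cup\ca C$ with $x_i\to x$ and $r_i:=d(x,x_i)\to 0$, one must show that, after rescaling by $r_i^{-1}$, the $x_i$ subconverge to a point of $C(\ca F_x)$. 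I would establish this by a double-rescaling argument on the approximating manifolds $M_i$: using the precompactness of $\ca M_{\rm pt}(n,\kappa,\nu,\lambda)$, extract a blow-up sequence of pointed manifolds realizing $K_x(N)$ as a Gromov--Hausdorff limit, and combine the construction of the involution $f_*$ from Theorem~\ref{thm:X1-f} with a semi-continuity statement for the associated fixed-point sets $\ca F_y$ as $y$ varies, forcing any singular sequence accumulating at $x$ to limit to a point infinitesimally fixed by $f_*$.

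For (1) and (3) I would work directly inside $N_0$. By Theorem~\ref{thm:alex}(2), $N_0$ is infinitesimally sub-Alexandrov with ${\rm rank}(N_0)=0$, so every $\Sigma_x(N_0)^{\rm int}$ is an $(m-2)$-dimensional Alexandrov space of curvature $\ge 1$ and each tangent cone $K_x(N_0)$ is an honest Alexandrov $(m-1)$-space of curvature $\ge 0$; metric regularity is precisely $K_x(N_0)\cong \R^{m-1}$. A $\delta$-strainer and volume-comparison argument in the style of \cite[\S 10]{BGP}, transplanted to $N_0$ using only its rank-$0$ sub-Alexandrov structure, then shows that the complement of the metrically regular set has Hausdorff codimension at least one in $N_0$, giving $\dim_H N_0^{\rm sing}\le m-2$. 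For (3), the extra assumption $x\in {\rm int}\,N_0$ excludes the codimension-one boundary stratum of $N_0$ by Definition~\ref{defn:int-pa-X0}; iterating the BGP stratification inside this boundary-free locus improves the bound by one to $\dim_H(N_0^{\rm sing}\cap {\rm int}\,N_0)\le m-3$. The only subtlety here is to verify that the BGP machinery (existence of strainers, Bishop--Gromov type volume comparison) truly applies to $N_0$; this should follow from Theorem~\ref{thm:alex}(2), reducing all local computations to tangent cones which are genuine Alexandrov spaces.
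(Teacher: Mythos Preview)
Your plan has a structural gap: you propose to run BGP-style stratification arguments directly inside $N_0$, but $N_0$ is only \emph{infinitesimally} sub-Alexandrov, not an Alexandrov space. The BGP dimension bounds (\cite[\S10]{BGP}) rely on Toponogov comparison at finite scales---strainer extension, bi-Lipschitz strainer maps, covering arguments---none of which follows from merely knowing that tangent cones are Alexandrov. Your remark that one can ``reduce all local computations to tangent cones'' is precisely where this breaks down: the covering argument that bounds $\dim_H$ of the non-$(k,\delta)$-strained set is not a tangent-cone computation. (Indeed the intrinsic length metric on $N_0$ can be quite wild; see Example~\ref{ex:notAlex}.) Similarly, for (2) your ``upgrade'' from $\Sigma_x(\ca S^1\cup\ca C)\subseteq\ca F_x$ to a uniform multiscale containment is the entire content of a dimension bound, and the double-rescaling sketch does not explain why a sequence $x_i\in\ca S^1(k)\cup\ca C(k)$ with $x_i\to x$ and $r_i:=|x,x_i|\to 0$ must, after rescaling by $1/r_i$, accumulate on $K(\ca F_x)$ rather than elsewhere in $K(\Sigma_x(N_0))$; Theorem~\ref{thm:S1extremal} supplies no quantitative rate, and the strata $\ca S^1(k)\cup\ca C(k)$ need not be closed.

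The paper avoids all of this by never working intrinsically in $N_0$; it exploits the genuine Alexandrov spaces $C_0$, $Y$, and $D(Y)$, where BGP applies directly. For (1) one writes $N_0^{\rm sing}=\eta_0(C_0^{\rm sing})\cup\ca S^1\cup\ca C$: the first piece is handled by Theorem~\ref{thm:dim-sing} for $C_0$ together with the $1$-Lipschitz property of $\eta_0$, while for the second piece Theorem~\ref{thm:S1-nontrivial-f*} gives $\ca H^{m-1}(\Sigma_x(D(Y)))\le\omega_{m-1}/2$ at every $x\in\ca S^1\cup\ca C$, so $\ca S^1\cup\ca C\subset D(Y)^{\rm sing}$ and Theorem~\ref{thm:dim-sing} in $D(Y)$ finishes. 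For (2) the paper does \emph{not} use Theorem~\ref{thm:S1extremal}; it instead proves $\ca S^1(k)\cup\ca C(k)\subset Y\setminus Y(k+2,\delta)$ by contradiction: if $x_i\in\ca S^1(k)\cup\ca C(k)$ were $(k+2,\delta_i)$-strained with $\delta_i\to0$, the strainer directions lift to $\Sigma_{p_i}(C_0)$ with nearly antipodal pairs, a fixed-point lemma (Lemma~\ref{lem:convexity}) pushes them into $\tilde{\ca F}_{p_i}$, and passing to a limit forces $\dim\tilde{\ca F}_\infty\ge k+1$, contradicting semicontinuity of the extremal dimension. Then \cite[Theorem~10.7]{BGP} in $Y$ gives $\dim_H(Y\setminus Y(k+2,\delta))\le k+1$. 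Part (3) is obtained from (2) via the observation (Lemma~\ref{lem:charact-paX0}) that the top stratum $\ca S^1(m-3)\cup\ca C(m-3)$ already lies in $\pa X_0$.
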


See Burago-Gromov-Perelman \cite{BGP}, Otsu-Shioya \cite{OS} for the results in
 Alexandrov geometry corresponding to Theorem \ref{thm:dim(metric-sing)}(1),  (3) (see also Theorem \ref{thm:dim-sing}).

\begin{thm}\label{thm:nowhere}
If \,${\rm int}\, N_0^2$ is nonempty, then
 $\dim_H \ca S^2\ge m-2$.
%\begin{enumerate}  
%\item $\dim_H \ca S^2\ge m-2$  if \,${\rm int}\, N_0^2$ is nonempty\,$;$
% \item $\ca S^2$ is a nowhere dense subset of $N_0$.
%\end{enumerate}
\end{thm}

\begin{rem} \label{rem:randomS2}   
(1)\, The dimension estimates in Theorems \ref{thm:dim(metric-sing)} and \ref{thm:nowhere} are sharp. See Examples \ref{ex:non-closed}(1).
See also Corollary \ref{cor:nowhere}.
\par\n
(2)
In contrast with $\ca S^1$ and $\ca C$, 
the $(m-1)$-dimensional Hausdorff measure $\ca H^{m-1}(\ca S^2)$ could be positive
in some cases.  On the other hand, the case $\dim_H \ca S^2< \dim_H \ca S^1$
may occur in some other cases. See Examples \ref{ex:Cantor} and \ref{ex:smallS2}.
\end{rem}

\pmed\n
 {\bf Basic strategy.}\,
For the limit space $N$, we consider an Alexandrov
space $Y$ extending $N$ due to \cite{wong0}.
Developing  Alexandrov geometry of $Y$ with the help of
the infinitesimal Alexandrov structure of 
$N$ and the infinitesimal sub-Alexandrov structure of $N_0$,  we determine the infinitesimal 
structures of $N$ and $N_0$. 
This kind of geometry seems of independent interest in contrast with
the  geometry of submanifolds in Riemannian manifolds.

%%%%%%%%%%%   The organization of the paper  %%%%%
The organization of the paper 
is as follows.

In section \ref{sec:prelim},  we first recall basic materials on 
Alexandrov spaces with curvature bounded below and Wong's extension procedure for 
Riemannian manifold with boundary. 

 We begin Section \ref{sec:non-inradius} with the study of  the limit spaces of  
non-inradius collapse/convergence,
%
%the convergence where the inradii have a positive lower bound,  
following the basic approach employed in \cite{YZ:inrdius}.

In Section \ref{sec:gluing}, we describe the 
basic  properties of the gluing map $\eta_0:C_0\to N_0$, which will be 
needed in later sections.

The study of the infinitesimal structure of 
$N$ starts from 
Section \ref{sec:int-ext}.
In Section \ref{sec:int-ext}, we 
give the proof of Theorem \ref{thm:alex}.

In Section \ref{sec:Isometric},
we first construct the isometric involution $f_*$ on 
$\Sigma_p(C_0)$ with $\eta_0(p)\in N_0^1$
%%%
such that the quotient map
$\Sigma_p(C_0)\to\Sigma_p(C_0)/f_*$ coincides with the differential 
$d\eta_0:\Sigma_p(C_0)\to\Sigma_x(X_0)$.
Then we introduce the notions of cusps and 
boundary points of $X_0$.

In Section \ref{sec:inf-SC},  we prove
Theorem \ref{thm:S1extremal}, where the notion 
of almost parallel domains plays an important role.

%%%%
Section \ref{sec:local-str-S1} is a key section of the paper. Towards the proof of Theorem \ref{thm:S1-nontrivial-f*},
we refine the notion of almost parallel domains used in Section \ref{sec:inf-SC}, and 
develop the geometry of almost parallel domains.
This contains a new and key idea in the present paper.

In Section \ref{sec:dimension}, we discuss the Hausdorff dimensions of the metric singular set  and the boundary singular sets of 
$N_0$, and prove Theorems \ref{thm:dim(metric-sing)} and \ref{thm:nowhere}.

We provide several examples, which plays  important 
roles throughout  the paper.

In the continuation \cite{YZ:partII}  of 
the present paper, 
we describe the local structure of the limit spaces, and discuss some global convergence/collapsing in  
$\mathcal M(n,\kappa,\nu,\lambda, d)$  including 
Lipschitz homotopy stability.
% and diffeomorphism stability.

\psmall\n 
{\bf Acknowledgements}. The authors would like to thank Raquel Perales  for informing us 
the paper \cite{Perales2}.

 %%%%%%%%%%%%%%%%%%%%%%%%%%%%%%%%
\setcounter{equation}{0}

\section{Preliminaries}\label{sec:prelim}

%%%%%%%%%%%%%%%%%%%%% Alexandrov spaces   
\par\n
{\bf Notations and conventions} 
\psmall
We fix some notations and terminologies used in the paper.
\begin{itemize}
\item $\mathbb R^n_+$ denotes the $n$-dimensional Euclidean half space,
 \[
 \R^n_+=\{ (x_1,\ldots, x_n)\in\R^n\,|\, x_n\ge 0\} \,;
\] 
\item $I$ denotes a closed interval;
\item $D^n=\{ x\in \R^n\,|\,||x||\le 1\}$ and $D^n_+=D^n\cap \R^n_+$\,;
\item  $\mathbb S^n$ (resp. 
$\mathbb S^n_+$) denotes the $n$-dimensional unit 
sphere  (resp.  $n$-dimensional unit 
hemisphere);
\item $I_\ell$ denotes a closed interval of length $\ell$, 
and $S^1_{\ell}$ (resp. $S^1(r)$) denotes the circle of length $\ell$ (resp. of radius $r$) in 
$\C$ around the origin;
\item For a closed subset $K$ of a metric space $X$, let  $B(K,r)$ denote
the  closed metric ball around $K$ of radius $r$.
We also use the symbol $S(K,r)$ to denote the metric $r$-sphere around $K$;
$S(K,r)=\{ x\in X| |x, K|=r\}$. Sometimes, we write as $B^X(K,r)$ to 
emphasize that it is a ball in $X$;
\item For a metric space $\Sigma$, let $K(\Sigma)$ denote the Euclidean cone over $\Sigma\,;$
\item For a subset $A$ of a topological space, $\mathring A$ denotes the interior of $A\,;$
\item  $L(c)$ denotes the length of a curve $c\,;$
\item $o_i$  denotes a sequence of positive numbers
satisfying  $\lim_{i\to\infty}o_i=0\,;$
\item $\tau_{a_1,\ldots,a_m}(\e_1,\ldots,\e_n)$  denotes a function depends on $a_1,\ldots,a_m$ 
satisfying  $\lim_{\e_1,\ldots,\e_n\to0}
\tau_{a_1,\ldots,a_m}(\e_1,\ldots,\e_n)=0$.

\end{itemize}

\subsection{Alexandrov spaces and GH-convergence.}\label{ssec:Alex}
For basics of Alexandrov spaces, we refer to \cite{BGP},  \cite{BBI}, \cite{AKP}.

Let $X$ be a geodesic space.  
The distance between two points $p, q\in X$ is denoted
by $|p,q|$ or $|p,q|_X$.
Sometime we use $d=d_X$ to denote the distance of 
$X$.  For $\lambda>0$, we often use 
the symbol $\lambda X$ to denote the rescaling
$(X,\lambda d)$.

For a fixed real number $\kappa$, $M_\kappa^2$ denotes
the complete simply connected surface  with constant curvature
$\kappa$.
For  a geodesic triangle
$\tri pqr$ in $X$ with vertices $p$, $q$ and $r$, we
denote by $\tilde\tri pqr$ 
 a {\em comparison triangle} in 
$M_\kappa^2$ having
the same side lengths as the corresponding ones in $\tri pqr$.
Here we suppose that the perimeter of $\tri pqr$ is less than $2\pi/\sqrt\kappa$
if $\kappa> 0$.
The metric space $X$ is called an {\em Alexandrov space
with curvature $\geq \kappa$}
if each point of $X$
has a neighborhood $U$ satisfying the following:
For any geodesic triangle in $U$ with vertices
$p$, $q$ and $r$ and for any point $x$ on the segment $qr$,
we have $|p,x| \geq |\tilde{p}, \tilde{x}|$,
where $\tilde x$ is the point on the geodesic $\tilde{q}\tilde{r}$ corresponding to $x$.
From now on we assume that an Alexandrov space is always finite dimensional.

 Let $X$ be an $m$-dimensional Alexandrov space $X$ with curvature $\ge \kappa$. 
For two geodesics  $\alpha:[0,s_0]\to X$ and $\beta:[0,t_0]\to X$ starting from a point $x\in X$,
the {\em angle} between $\alpha$ and $\beta$ is defined by 
\[
      \angle(\alpha,\beta)  :=\lim_{s, t\to0} \tilde\angle \alpha(s)x\beta(t),
\]
where $\tilde\angle \alpha(s)x\beta(t)$ denotes the angle of a 
comparison triangle $\tilde \tri \alpha(s)x\beta(t)$ at the point 
$\tilde x$.
For $x,y,z\in X$, we denote by $\angle xyz$ (resp. $\tilde\angle xyz$) the angle
between  geodesics $yx$ and $yz$ at $y$ (resp. the geodesics $\tilde y\tilde x$ and 
$\tilde y\tilde z$ at $\tilde y$ in the comparison triangle 
$\tilde\triangle xyz=\triangle \tilde x \tilde y \tilde z$).
We often use the symbol $\tilde\angle^X xyz$
to emphasize that it is a comparison angle 
with respect to $X$. 
We denoted by $\Sigma_x'(X)$ the set of equivalent classes of geodesics $\alpha$ emanating from $x$,
where  $\alpha$ and  $\beta$ are called {\it equivalent}  if
 $\angle(\alpha,\beta)=0$. 
The {\em space of directions} at $x$, denoted by $\Sigma_x=\Sigma_x(X)$, is the completion of
$\Sigma_x'(X)$ with the angle metric, which is known to be an $(m-1)$-dimensional compact
Alexandrov space with curvature $\ge 1$.
A direction of minimal geodesic from $p$ to $x$ is also denoted by $\uparrow_p^x$ or $\dot\gamma_{p,x}(0)$. 
The set of all directions $\uparrow_p^x$ 
from $p$ to $x$ is denoted by 
$\Uparrow_p^x$.
A minimal geodesic $\gamma$ joining $p$
to $x$  has unit speed and is  parametrized on $[0, |p,x|]$
with $\gamma(0)=p$ unless otherwise stated.
 As indicated above, we denote a $\gamma$ by $\gamma_{p,x}$, $\gamma^X_{p,x}$ or simply $px$ as above. 
Such a $\gamma$ is often called 
$X$-minimal.
%%%%
The direction at $p$ represented by $\gamma$
is denoted by $\dot\gamma(0)$.
 For $v\in\Sigma_x(X)$, we denote by $-v$
the unique element of $\Sigma_x(X)$ such that
$\angle(v,-v)=\pi$ if it exists.
In this case, $v$ and $-v$ are called {\it opposite}
to each other.

The {\em tangent cone} at $x\in X$, denoted by $T_x(X)$, 
is the Euclidean cone $K(\Sigma_x)$ over $\Sigma_x$.
It is known that $T_x(X)$ coincides with 
the Gromov-Hausdorff limit
$\lim_{r\to0}\left( \frac{1}{r}X,x \right)$
(see the later paragraph for the Gromov-Hausdorff convergence).

For a closed subset $A$ of $X$ and $p\in A$, the space of directions  $\Sigma_p(A)$
of $A$ at $p$ is defined as the set of 
all $\xi\in \Sigma_p(X)$ which can be written as the limit of 
directions $\uparrow_p^{p_i}\in\Sigma_p(X)$ from $p$ to points $p_i\in A$ with $|p,p_i|\to 0$:
\[
         \xi =\lim_{i\to\infty} \uparrow_p^{p_i}.
\]

A point $x\in X$ is called {\em regular} if $\Sigma_x$ is isometric to the unit sphere  $\mathbb{S}^{m-1}$.
Otherwise we call $x$ {\em singular}. We denote by $X^{\rm reg}$ (resp. $X^{\rm sing}$)
the set of all regular points (resp. singular points) of $X$. 

For  $\delta>0$ and $1\le k\le m$, a system of $k$ pairs of points, $\{(a_i,b_i)\}_{i=1}^k$ is called a
$(k,\delta)$-\emph{strainer} at $x\in X$ if it satisfies
\begin{align*}
   \tilde\angle a_ixb_i > \pi - \delta, \quad &
                \tilde\angle a_ixa_j > \pi/2 - \delta, \\
      \tilde\angle b_ixb_j > \pi/2 - \delta,\quad &
                \tilde\angle a_ixb_j > \pi/2 - \delta,
\end{align*}
for all $1\le i\neq j\le k$.
If  $x\in X$ has a $(k,\delta)$-strainer, then 
it is called $(k,\delta)$-strained.
If  $x\in X$  is $(m,\delta)$-strained, it is called 
a {\em $\delta$-regular} point.
We call $X$ {\em almost regular} if any point of $X$ is $\delta_m$-regular 
for some $\delta_m \ll 1/m$.
It is known that a small neighborhood of any almost regular point is  biLipschitz homeomorphic to an open subset in $\mathbb R^m$  with biLipschitz constant close to $1$ (see \cite{BGP}).

The boundary $\partial X$ is  inductively defined as the set of points $x\in X$ 
such that $\Sigma_x$ has non-empty boundary $\partial\Sigma_x$.
We denote by $D(X)$ the double of $X$, which is also
an Alexandrov space with curvature $\ge \kappa$
(see \cite{Pr:alexII}). By definition, $D(X)=X\cup_{\partial X} X$, where two copies of $X$ are glued along their boundaries.
A boundary point $x\in\partial X$ is called {\em $\delta$-regular} if $x$ is $\delta$-regular
in $D(X)$. We say that $X$ is {\em almost regular with almost regular boundary}
if every point of $D(X)$ is $\delta$-regular with $\delta\ll 1/m$.

In Section \ref{sec:non-inradius}, we need the following result on the dimension of the interior singular point sets.

\begin{thm} [\cite{BGP}, \cite{Per}, cf. \cite{OS}]   \label{thm:dim-sing}
$$
            \dim_H(X^{\rm sing}\cap {\rm int}X) \le n-2,  \,\,\dim_H(\partial X)^{\rm sing} \le n-2,
$$  
where $(\partial X)^{\rm sing} :=D(X)^{\rm sing}\cap \pa X$.
\end{thm}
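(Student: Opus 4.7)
The plan is to stratify $X$ by the maximal strainer number. For each small $\delta > 0$ and each $0 \le k \le n$, set
$$
S^k_\delta := \{ x \in X : x \text{ is } (k,\delta)\text{-strained but not } (k{+}1,\delta)\text{-strained}\}.
$$
The first ingredient is the bound $\dim_H S^k_\delta \le k$, proved by fixing a $(k,\delta)$-strainer $\{(a_i,b_i)\}_{i=1}^k$ at a point $x \in S^k_\delta$ and checking that the distance map
$$
\Phi(y) = (|a_1,y|,\ldots,|a_k,y|) \in \R^k
$$
is bi-Lipschitz on a neighborhood of $x$ in $S^k_\delta$. This uses the first variation formula together with the obstruction that no $(k{+}1)$-st strainer direction is available to detect coordinate ambiguity. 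Consequently $S^k_\delta$ is covered by countably many bi-Lipschitz images of bounded subsets of $\R^k$.

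The crux is to show that for small $\delta$ the stratum $S^{n-1}_\delta$ does not meet ${\rm int}(X)$. Given an $(n{-}1,\delta)$-strainer $\{(a_i,b_i)\}_{i=1}^{n-1}$ at $x \in {\rm int}(X)$, the induced directions $\uparrow_x^{a_i},\uparrow_x^{b_i}$ form an $(n{-}1,\tau(\delta))$-strainer in $\Sigma_x(X)$, so $\Sigma_x(X)$ is $\tau(\delta)$-Hausdorff close to a spherical suspension over an $(n{-}2)$-dimensional Alexandrov space of curvature $\ge 1$. Since $x$ lies in the interior, $\Sigma_x(X)$ has empty boundary, and an $(n{-}1)$-dimensional Alexandrov space of curvature $\ge 1$ with no boundary having this suspension-like structure is forced, by a compactness-and-rigidity argument, to contain an antipodal pair of directions transverse to the existing strainer. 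Lifting these directions to short minimizing geodesics in $X$ produces a pair $(a_n,b_n)$ that promotes the strainer to an $(n,\tau(\delta))$-strainer, contradicting $x \in S^{n-1}_\delta$. Hence $X^{\rm sing}\cap {\rm int}(X)\subset\bigcup_{k\le n-2}S^k_\delta$, and the first ingredient gives $\dim_H(X^{\rm sing}\cap{\rm int}\,X) \le n-2$.

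For the boundary statement, pass to the double $D(X) = X\cup_{\pa X}X$, which is an Alexandrov space of curvature $\ge\kappa$ without boundary, and in which $\pa X$ sits as the fixed set of the natural reflective involution. For $x\in\pa X$ one has $\Sigma_x(D(X)) = D(\Sigma_x(X))$, and by definition $x\in(\pa X)^{\rm sing}$ precisely means that $x$ is a singular (necessarily interior) point of $D(X)$. Applying the already-established interior estimate to $D(X)$ then yields $\dim_H (\pa X)^{\rm sing}\le n-2$.

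The main obstacle in this sketch is the closing step: that an $(n{-}1)$-dimensional Alexandrov space of curvature $\ge 1$ with empty boundary which is $\tau(\delta)$-close to a suspension must admit a further nearly-antipodal pair transverse to the existing strainer. This is the inductive structural statement underlying the Burago-Gromov-Perelman bound and is sharpened by Otsu-Shioya; I would invoke it as a black box rather than re-derive it, so the proof reduces to the stratification bookkeeping above.
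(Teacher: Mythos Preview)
This theorem is not proved in the paper; it is quoted from \cite{BGP}, \cite{Per} (cf.\ \cite{OS}) and used as a black box. So there is no ``paper's own proof'' to compare against, and your sketch is really an outline of the argument in those references.

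As such an outline it is essentially correct. The stratification $S^k_\delta$ by maximal strainer number and the bound $\dim_H S^k_\delta\le k$ via the strainer map $\Phi$ is exactly the mechanism of \cite[\S10]{BGP}; your remark that the failure of a $(k{+}1)$-st strainer direction is what forces $\Phi$ to be injective on $S^k_\delta$ is the right idea (two nearby points of $S^k_\delta$ with the same $\Phi$-value would themselves supply the missing $(k{+}1)$-st pair). The promotion step---that an interior $(n{-}1,\delta)$-strained point is automatically $(n,\tau(\delta))$-strained---is indeed the crux, and you correctly flag it as the nontrivial input. One small caution: the link between ``$(n,\delta)$-strained for small $\delta$'' and ``$\Sigma_x\cong\mathbb S^{n-1}$'' needs the volume-rigidity/stability statement (an $(n{-}1)$-dimensional Alexandrov space of curvature $\ge 1$ with a global $(n{-}1,\delta)$-strainer and no boundary is, for $\delta$ small depending only on $n$, isometric to $\mathbb S^{n-1}$); without it you only conclude that $X^{\rm sing}\cap{\rm int}\,X$ lies in the set of non-$(n,\delta)$-strained points, which a priori could be larger. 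This rigidity is precisely what \cite{OS} supplies, so your plan to invoke it as a black box is appropriate. The doubling argument for the boundary estimate is standard and correct.
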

 \pmed
A subset $E$ of an Alexandrov space $X$ is called {\em extremal} (\cite{PtPt:extremal})
if every distance function $f={\rm dist}_q$ with $q\in M\setminus E$ has the property
that if $f|_E$ has a local minimum at $p\in E$, then $df_p(\xi)\le 0$
for all $\xi\in\Sigma_p(X)$. Extremal subsets possess quite important properties.

Suppose that  a compact group $G$ acts on $X$ as isometries. Then the quotient space
$X/G$ is an Alexandrov space (\cite{BGP}). Let $F$ denote the set of $G$-fixed points.

\begin{prop} [\cite{PtPt:extremal}] \label{prop:fixed-ext}
$\pi(F)$ is an extremal subset of $X/G$, where $\pi:X\to X/G$ is the projection.
\end{prop}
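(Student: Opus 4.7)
The plan is to establish extremality of $E := \pi(F)$ via the tangent-gradient criterion: I will show that for every $\bar q \in X/G \setminus E$ the Petrunin gradient $\nabla d_{\bar q}$ is tangent to $E$ at every point of $E$, which is equivalent (by the standard Perelman--Petrunin analysis of semi-concave functions) to the local-minimum criterion used in the definition of extremal subsets. To start, I fix $\bar p \in E$ and $\bar q \in X/G \setminus E$, pick lifts $p \in F$ of $\bar p$ and $q \in X$ of $\bar q$, and introduce the orbit-distance function
\[
h(x) := d(x, Gq) = \min_{g \in G} d(x, gq).
\]
Because $p$ is $G$-fixed, $d(p, gq) = d(p, q)$ for every $g \in G$, so $h = d_{\bar q} \circ \pi$. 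The function $h$ is semi-concave, being the infimum of a uniformly semi-concave family of distance functions, and it is $G$-invariant.

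The heart of the argument is the $G$-equivariance of the Petrunin gradient flow. Let $\Phi_t$ denote the $h$-gradient flow on $X$. Since both $h$ and the ambient metric are preserved by $G$ and the flow is canonically determined by these data, $\Phi_t$ commutes with the $G$-action. For $p \in F$ this gives $g \Phi_t(p) = \Phi_t(gp) = \Phi_t(p)$ for every $g \in G$, so $\Phi_t(p) \in F$ for all $t \ge 0$. The initial velocity of the curve $t \mapsto \Phi_t(p)$ in $F$ is precisely $\nabla h(p)$, so $\nabla h(p)$ is tangent to $F$ at $p$.

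To finish, I use that $h$ is $G$-invariant, which makes its gradient horizontal with respect to the submetry $\pi : X \to X/G$, and consequently $d\pi(\nabla h(p)) = \nabla d_{\bar q}(\bar p)$. Tangency to $F$ upstairs therefore descends to tangency of $\nabla d_{\bar q}(\bar p)$ to $\pi(F) = E$ downstairs, and this holds at every $\bar p \in E$ for every $\bar q \in X/G \setminus E$, giving extremality. The technical point requiring the most care---and the main obstacle---is justifying the submetry-compatibility of Petrunin gradient flows for $G$-invariant semi-concave functions, so that $\pi \circ \Phi_t$ genuinely coincides with the gradient flow of $d_{\bar q}$ on $X/G$; once this is in hand, the rest is a clean application of equivariance.
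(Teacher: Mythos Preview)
The paper itself does not prove this proposition; it simply quotes the result from Perelman--Petrunin.

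Your gradient-flow strategy is natural, but the equivalence you invoke in the first sentence is false as stated: tangency of $\nabla d_{\bar q}$ to $E$ for every $\bar q\notin E$ is strictly weaker than the Perelman--Petrunin local-minimum criterion. Take $E=\{y\ge 0\}\subset\mathbb R^2$. For every $q=(a,b)$ with $b<0$ and every boundary point $p=(x_0,0)$, the gradient $\nabla d_q(p)=(x_0-a,-b)/|p-q|$ has positive second coordinate and hence lies in $T_pE$, while at interior points of $E$ tangency is trivial; yet $E$ is not extremal, since $q=(0,-1)$ has foot $p=(0,0)$ on $E$ and $\xi=(0,1)$ gives $d(d_q)_p(\xi)=1>0$. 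Petrunin's actual characterization requires tangency of $\nabla f$ for \emph{every} semiconcave $f$, not only distance functions from points outside $E$. The obvious repair---run your equivariance argument for $h=\bar f\circ\pi$ with arbitrary semiconcave $\bar f$ on $X/G$---also fails, because such pullbacks through quotient submetries need not be semiconcave: on $S^2\to[0,\pi]=S^2/S^1$, the linear function $\bar f(s)=-s$ pulls back to $-d_N$ (with $N$ the north pole), and along geodesics perpendicular to the meridian at colatitude $r$ one computes $(-d_N)''=-\cot r$, which is unbounded above near the south pole.

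The cleanest fix, and essentially the original Perelman--Petrunin argument, is induction on dimension via their recursive characterization of extremality: for $p\in F$ and $\bar p=\pi(p)$ one has $\Sigma_{\bar p}(X/G)=\Sigma_p(X)/G$, and $\Sigma_{\bar p}\bigl(\pi(F)\bigr)$ is the image of the $G$-fixed-point set of $\Sigma_p(X)$, which is extremal by the inductive hypothesis.
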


Boundaries of  Alexandrov spaces are typical examples of extremal subsets. 

\pmed\n 
{\bf Gromov-Hausdorff convergence.}\,

For metric spaces $X$ and $X'$,
a not necessarily continuous map $f:X\to X'$ is called
$\e$-approximation if 
\begin{itemize}
\item $||f(x), f(x')|-|x,x'|| < \epsilon$ for all $x, x' \in X\,;$ 
\item $f(X)$ is $\e$-dense in $Y$.
\end{itemize} 
The Gromov-Hausdorff distance $d_{GH}(X,Y)<\e$
iff there are $\e$-approximations $\varphi:X\to Y$ and 
$\psi:Y\to X$ (see \cite{GLP} for more details).
 
For compact subsets $A_1,\ldots,A_k\subset X$
and $A'_1,\ldots,A'_k\subset X'$,
the Gromov-Hausdorff distance 
$d_{GH}((X,A_1,\ldots,A_k),(X',A'_1,\ldots,A'_k))<\e$ 
iff there are $\e$-approximations $\varphi:X\to X'$ and 
$\psi:X'\to X$ such that the restrictions
$\varphi|_{A_i}$ and $\psi|_{A'_i}$ give
$\e$-approximations between $A_i$ and $A'_i$ for any
$1\le i\le k$.
%%%%
See \cite{GoChe}  for more detail on the 
Gromov-Hausdorff convergence of metric pairs.

\begin{thm} [\cite{Pr:alexII}, \cite{Per}, cf.\cite{Kap:stab}]      \label{thm:stability}
If a sequence $X_i$ of $n$-dimensional compact Alexandrov spaces 
with curvature $\ge \kappa$ Gromov-Hausdorff converges to an $n$-dimensional compact Alexandrov space $X$,
then $X_i$  is homeomorphic to $X$ for large enough $i$.
\end{thm}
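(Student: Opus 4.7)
The plan is to follow Perelman's original strategy: construct local topological charts on $X$ from distance functions, and then, using the non-collapsing hypothesis $\dim X_i = \dim X$, build analogous charts on $X_i$ that assemble into a global homeomorphism $X_i \to X$. The whole argument is by induction on $n$, with the inductive hypothesis applied to spaces of directions (which have one less dimension and curvature $\ge 1$).

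The first step is to upgrade the dimensional hypothesis into pointwise non-collapsing: for any $x\in X$ and any sequence $x_i\in X_i$ with $x_i \to x$, the spaces of directions $\Sigma_{x_i}(X_i)$ subconverge to an $(n-1)$-dimensional Alexandrov space of curvature $\ge 1$ which, by the cone-to-tangent correspondence, must be $\Sigma_x(X)$. This follows from Bishop--Gromov volume comparison and the equality $\dim_H X_i = \dim_H X = n$ together with volume convergence.

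Next I would handle the $(n,\delta)$-strained points, which form an open dense subset of $X$. Given a strainer $\{(a_j,b_j)\}_{j=1}^n$ at a point $x$, the map $f = (\dist_{a_1},\ldots,\dist_{a_n})$ is a bi-Lipschitz open embedding of a neighborhood $U$ of $x$ into $\R^n$. Lifting the strainer to $X_i$ via the approximations produces an analogous map $f_i$ on $U_i\subset X_i$, and for large $i$ the composition $f_i\circ f^{-1}$ is uniformly close to the identity, giving a canonical local homeomorphism between $U_i$ and $U$ on the regular part of $X$.

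The essential difficulty is at singular points, where no $(n,\delta)$-strainer exists. Here one replaces $f$ by Perelman's admissible maps, built from distance functions to points whose directions span $\Sigma_x(X)$ in a quantitative sense. Such a map is a topological submersion, and Perelman's local fibration theorem shows that a small neighborhood of $x$ in $X$ is homeomorphic to a product of a disk with the cone $K(\Sigma_x(X))$. The pointwise non-collapsing step, combined with the inductive hypothesis applied to $\Sigma_{x_i}(X_i) \to \Sigma_x(X)$ in dimension $n-1$, lets us lift the cone structure to $X_i$ near any $x_i\to x$.

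The main obstacle, and where I expect most of the work to go, is the gluing step: local homeomorphisms built near different points need not agree on overlaps. To assemble them into a single global homeomorphism $X_i \to X$, one uses Perelman's technique of a controlled partition of unity subordinate to a finite good cover of $X$ (whose scale is fixed by the strainer geometry of $X$), together with a straightening argument that modifies the local maps on overlaps without destroying the homeomorphism property. Because $X$ is a fixed compact space, the relevant strainer radii, coordinate Lipschitz constants, and conic scales are uniform; passing to sufficiently large $i$ ensures all the quantitative estimates on $X_i$ match those on $X$ within the tolerance required for the gluing to succeed.
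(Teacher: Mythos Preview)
The paper does not prove this theorem; it is stated in the preliminaries as a known result due to Perelman, with citations to \cite{Pr:alexII}, \cite{Per}, and Kapovitch's exposition \cite{Kap:stab}. There is no argument in the paper to compare your sketch against.

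Your outline is broadly in the spirit of Perelman's proof, but one step is wrong as written: it is not true that $\Sigma_{x_i}(X_i)$ subconverges to $\Sigma_x(X)$ when $x_i \to x$, even in the noncollapsing case, and even when $X_i = X$ for all $i$. The assignment $x \mapsto \Sigma_x$ is only lower semicontinuous in general, so you cannot feed the inductive hypothesis directly to spaces of directions in this way. Perelman's actual argument avoids this by working with admissible (later, in Kapovitch's treatment, regular) maps built from distance functions and proving that their regularity is stable under Gromov--Hausdorff approximation; the induction on dimension enters through the local cone/fibration structure of point preimages and sublevel sets of such maps, not through convergence of $\Sigma_{x_i}$. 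The gluing step is also more delicate than a partition-of-unity straightening: it relies on Siebenmann-type deformation theorems for MCS spaces. If you want to fill in the details, Kapovitch's survey \cite{Kap:stab} is the standard reference.
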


%%%%%%%%%%%%%%%%%%%%%%%%%%%%% Gluing    %%%%%%%%%%%%%%%%%%%%%%%%%%%%%%%%%

\subsection{Manifolds with boundary and gluing}\label{ssec:gluing}

A Riemannian manifold with boundary is not necessarily an Alexandrov space.
Based on Kosovski$\breve{i}$'s gluing theorem \cite{Kos}, 
assuming \eqref{eq:curv-assum2}, Wong (\cite{wong0}) carried out a gluing of $M$ and 
 warped  cylinders along their boundaries  in such a way that 
the resulting manifold  becomes an Alexandrov space having totally geodesic boundary.
Here  the upper bound $\Pi_{\partial M}\le\lambda$
was used only to have the lower bound on $K_{\pa M}$ via the Gauss equation.
This is a key in this construction,
and is used to have the lower sectional curvature bound on the cylindrical part.
%%%
Therefore under the new weaker curvature condition
\eqref{eq:curv-assum},
we  can still carry out the gluing construction
without  additional argument,
which we recall below.
Note also that from the Gauss equation,  
$K_M \ge \kappa$ and 
$\Pi_{\partial M}\ge 0$ imply
$K_{\pa M} \ge\kappa$.
%%%%%%%%%

%%%%%%%%%%%%%%%%%%%%

Suppose $M$ is an $n$-dimensional complete  Riemannian manifold satisfying 
\eqref{eq:curv-assum}.
Then for arbitrarily  $t_0>0$ and  $0<\e_0<1$
there exists a monotone non-increasing function  $\phi: [0,t_0]\to (0,1]$ 
satisfying
\beq       \label{eq:phi}
\left\{
 \begin{aligned}
     \phi''(t)+K\phi(t)\leq0, \,\,&\phi(0)=1, \,\,\phi(t_0)=\e_0,\\
     -\infty<\phi'(0)\leq-\lambda,\,\, &\phi'(t_0)=0,
 \end{aligned}
   \right.
\eeq
for some constant  $K=K(\lambda,\ve_0,t_0)$.
Now consider the warped product metric on  
$[0, t_0]\times\partial M$ defined by 
\[
   g(t,x)=dt^2+\phi^2(t)g_{\partial M}(x)
\]
where $g_{\partial M}$ is the Riemannian metric of $\partial M$ induced from 
that of $M$. We denote by   
$C_M:=[0, t_0]\times_\phi\partial M$  the warped product.
It follows from the construction that
\begin{itemize}
 \item the sectional curvature $K_{C_M}$ of $C_M$ is greater than a constant $c(\nu,\lambda,\e_0, t_0);$
 \item the second fundamental form of 
 $\{ t\}\times \partial M$ is given by 
   \[
          II^{C_M}_{\{ t\}\times \partial M}(V,W)=\frac{\phi'(t)}{\phi(t)}g(V,W),
\]
for vector fields $V, W$ on $\{ t\}\times \partial M$. In particular, we have 
\[
  II^{C_M}_{\{ 0\}\times \partial M}\geq \lambda,\,\,\,
 II^{C_M}_{\{t_0\}\times \partial M}\equiv0.
\]
\end{itemize} 
\pmed
\n
We can glue $M$ and $C_M$
along $\partial M$ and  $\{0\}\times \partial M$.
From the construction, we have 
\beq  \label{eq:gluing-cond}
\begin{cases}
\begin{aligned}
&K_M\ge\kappa, \,\,\, K_{C_M}\ge c(\nu,\lambda,\e_0, t_0), \\ 
&II^M_{\pa M}+II^{C_M}_{\{ 0\}\times \partial M}\ge 0
\,\,\, \text{along \,$\pa M=\{ 0\}\times \partial M$}.
\end{aligned}
\end{cases}
\eeq
It follows from \eqref{eq:gluing-cond} 
and \cite{Kos} that the resulting space
\[
    \tilde M := M \cup_{\pa M} C_M
\]
carries the structure of differentiable manifold of class $C^{1,\alpha}$ with $C^0$-Riemannian metric.
%(\cite{Kos}).
Obviously  $M$ is diffeomorphic to $\tilde M$.
%Moreover we have the following.

\begin{prop} [\cite{wong0}] \label{prop:extendAS}
For  any $M$  satisfying \eqref{eq:curv-assum}, 
the following holds$:$
\begin{enumerate}
 \item $\tilde M$  is an  Alexandrov space with curvature 
  $\ge \tilde{\kappa}$,  
         where $\tilde{\kappa}=\tilde{\kappa}(\kappa,\nu, \e_0,\lambda,t_0);$
 \item the extrinsic metric $M^{\rm ext}$ of $M$ in $\tilde M$ is $L$-bi-Lipschitz homeomorphic to $M$ 
         for the uniform constant $L=1/\e_0;$
\item  $\diam(\tilde M)\le \diam(M) + 2t_0$.
\end{enumerate}
\end{prop}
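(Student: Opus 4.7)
The plan is to deduce (1) from Kosovskii's gluing theorem \cite{Kos} applied to the two pieces $M$ and $C_M$, to obtain (2) from a direct length comparison between curves in $\tilde M$ and curves in $M$, and to read off (3) from the product structure of the cap.

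For (1), I would first verify the hypotheses of the gluing theorem on each side of the common boundary $\pa M=\pa M\times\{0\}$. The sectional curvatures are bounded below by $\kappa$ on $M$ and by $c(\nu,\lambda,\e_0,t_0)$ on $C_M$; the boundary identification is an isometry because $\phi(0)=1$; and the matching condition
\[
II^M_{\pa M}+II^{C_M}_{\pa M\times\{0\}}\ge 0
\]
holds by \eqref{eq:gluing-cond}, which was precisely arranged through the choice of $\phi$ with $\phi'(0)\le-\lambda$, so that $II^{C_M}_{\pa M\times\{0\}}\ge\lambda\cdot g_{\pa M}$ compensates for $\Pi_{\pa M}\ge-\lambda$. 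Kosovskii's theorem then delivers a $C^{1,\alpha}$ Riemannian metric on $\tilde M$ whose associated length space is Alexandrov with curvature bounded below by $\tilde\kappa:=\min\{\kappa,c(\nu,\lambda,\e_0,t_0)\}$.

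For (2), the inequality $d_{\tilde M}\le d_M$ on $M\times M$ is immediate. For the reverse, I would argue that any rectifiable curve $\gamma$ in $\tilde M$ joining $x,y\in M$ decomposes into maximal subarcs lying in $M$ and in $C_M$. Each cylindrical subarc joining two points $(p,0),(q,0)\in\pa M\times\{0\}$ has length
\[
\int\sqrt{(t')^2+\phi(t)^2\,|\gamma_{\pa M}'|^2}\,ds\;\ge\;\int \phi(t)\,|\gamma_{\pa M}'|\,ds\;\ge\;\e_0\, d_{\pa M}(p,q)\;\ge\;\e_0\, d_M(p,q),
\]
since $\phi\ge\e_0$ on $[0,t_0]$ and $d_{\pa M}\ge d_M$. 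Summing over all pieces gives $L(\gamma)\ge\e_0\,d_M(x,y)$, hence $d_M\le(1/\e_0)\,d_{\tilde M}$ on $M\times M$, which together with the trivial bound yields the $L$-bi-Lipschitz equivalence with $L=1/\e_0$. For (3), any point $\tilde p\in C_M$ lies on some level set $\pa M\times\{s\}$ with $s\in[0,t_0]$, and the radial segment from $\tilde p$ to its foot point in $\pa M\subset M$ has length $s\le t_0$; hence any two points of $\tilde M$ can be joined through $M$ by a path of total length at most $\diam(M)+2t_0$.

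The main subtlety lies in (1): one must invoke Kosovskii's theorem in the precise form that upgrades the $C^{1,\alpha}$ glued metric to an Alexandrov lower curvature bound, and check the sign convention in the matching of second fundamental forms (outward normals from each side). Both are already encoded in the construction of $\phi$ in \eqref{eq:phi} and the resulting inequalities \eqref{eq:gluing-cond}, so once the setup is in place the verification reduces to a direct appeal to \cite{Kos}.
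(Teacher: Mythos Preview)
Your argument is correct and is precisely the standard one. The paper itself does not reproduce a proof of this proposition; it simply cites \cite{wong0} after recording the construction of $\phi$ and the gluing inequalities \eqref{eq:phi}--\eqref{eq:gluing-cond}, so your write-up essentially supplies what the paper leaves to the reference: invoke Kosovskii for (1), compare lengths using $\phi\ge\e_0$ for (2), and project along the $[0,t_0]$-factor for (3).
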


%%%%%%%%%%%%%%%

Let $\partial^\alpha M$ be a component of $\pa M$.
We denote by 
$(\pa^\alpha M)^{\rm int}$ the intrinsic  length metric
of $\pa^\alpha M$.

\begin{prop} [\cite{wong0}]\label{prop:cpn-diam}
For any $M\in\ca M(n,\kappa,\nu,\lambda,d)$,
we have the following:
\begin{enumerate}
 \item There exists a constant $D=D(n,\kappa,\nu,\lambda,d)$ such that  any boundary component $\pa^\alpha M$ has intrinsic diameter bound
    \[
             \diam((\pa^\alpha M)^{\rm int})\leq D;
    \]
 \item $\partial M$ has at most $J$ components, where $J = J(n, \kappa,\nu, \lambda, d)$.
\end{enumerate}
\end{prop}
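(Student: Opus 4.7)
The plan is to transport both assertions to the Alexandrov extension $\tilde M$ of $M$ given by Wong's gluing construction (Proposition \ref{prop:extendAS}), which is an $n$-dimensional compact Alexandrov space of curvature $\ge \tilde\kappa$ and diameter $\le d + 2t_0$, with all constants depending only on $n,\kappa,\nu,\lambda,d$ once the auxiliary parameters $t_0$ and $\varepsilon_0$ are fixed. Under the gluing, each boundary component $\partial^\alpha M$ is identified with the slice $\partial^\alpha M \times \{t_0\}$, which is a connected component of $\partial\tilde M$, and the induced intrinsic metric on this slice is $\phi(t_0)^2 g_{\partial^\alpha M} = \varepsilon_0^2\, g_{\partial^\alpha M}$, i.e., a uniform rescaling of the original intrinsic metric on $\partial^\alpha M$.

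For (1), it therefore suffices to bound the intrinsic diameter of a connected component of $\partial\tilde M$ in terms of $n,\tilde\kappa$ and $d+2t_0$. Since $\Pi_{\partial\tilde M}\equiv 0$, the double $D(\tilde M)$ is again an Alexandrov space of curvature $\ge \tilde\kappa$ and diameter $\le 2(d+2t_0)$, in which $\partial\tilde M$ appears as the extremal subset of fixed points of the canonical involution. I would exploit this extremal structure to project an ambient minimizing geodesic in $D(\tilde M)$ between two points of $\partial^\alpha\tilde M$ onto $\partial^\alpha\tilde M$ itself, with uniformly controlled length distortion, using Petrunin's gradient flow of the semi-concave function $-\mathrm{dist}_{\partial^\alpha\tilde M}$. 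The resulting length estimate bounds $\mathrm{diam}((\partial^\alpha\tilde M)^{\mathrm{int}})$ by some $D_0 = D_0(n,\tilde\kappa,d+2t_0)$, and rescaling gives $\mathrm{diam}((\partial^\alpha M)^{\mathrm{int}}) \le \varepsilon_0^{-1} D_0 =: D$.

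For (2), the Riccati comparison for $\mathrm{dist}(\cdot,\partial M)$ under $K_M\ge\kappa$ and $\Pi_{\partial M}\ge -\lambda$ yields a uniform two-sided collar of width $c_0=c_0(\kappa,\lambda)>0$ around each boundary component on which the nearest-boundary projection is well-defined; in particular, the collars of distinct components are pairwise disjoint, so $d_M(\partial^\alpha M,\partial^\beta M) \ge c_0$ for $\alpha\ne\beta$. Choosing one point from each component produces a $c_0$-separated subset of $M$, whose cardinality is bounded by Bishop-Gromov volume comparison applied in the Alexandrov extension $\tilde M$ (using the bi-Lipschitz embedding $M^{\mathrm{ext}}\hookrightarrow\tilde M$ and the uniform curvature/diameter bounds), yielding the desired $J=J(n,\kappa,\nu,\lambda,d)$.

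The principal obstacle is the intrinsic diameter bound of boundary components in (1): a minimizing ambient geodesic between two boundary points can wander deep into the interior, so a naive nearest-point projection is inadequate and one must control the length expansion of Petrunin's gradient flow globally. A clean fallback, if the gradient-flow estimate proves awkward, is a contradiction argument via Gromov-Hausdorff precompactness — an unbounded sequence of intrinsic diameters on components of $\partial\tilde M_i$ would, after passing to a subsequence, force the GH-limit of $D(\tilde M_i)$ to contain an extremal subset of infinite intrinsic diameter, which is impossible in a compact Alexandrov space.
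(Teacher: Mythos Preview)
The paper does not prove this proposition; it is cited from \cite{wong0}. I assess your proposal on its own merits.

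Your argument for (2) contains a genuine error. Riccati comparison under $K_M\ge\kappa$ and $\Pi_{\partial M}\ge -\lambda$ controls the growth rate of normal Jacobi fields from above, but it gives no lower bound on the cut radius of $\partial M$ in $M$ and hence no separation of boundary components. The thin slab $M=[-\varepsilon,\varepsilon]\times T^{n-1}$ satisfies all the hypotheses (with $\kappa=\nu=\lambda=0$ and suitable $d$), yet its two boundary components are at distance $2\varepsilon$, which is arbitrarily small. The fix is to run the separation argument in $\tilde M$ rather than in $M$: the warped collars $C_M^\alpha=\partial^\alpha M\times_\phi[0,t_0]$ are pairwise disjoint by construction, and any path in $\tilde M$ between distinct components $\partial^\alpha\tilde M$ and $\partial^\beta\tilde M$ must traverse both collars radially, so $d_{\tilde M}(\partial^\alpha\tilde M,\partial^\beta\tilde M)\ge 2t_0$. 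Your packing argument then goes through in $\tilde M$.

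Your argument for (1) is also incomplete in both branches. The function $d(\cdot,\partial\tilde M)$ is concave in the Alexandrov space $\tilde M$ (and in $D(\tilde M)$), so $-d(\cdot,\partial^\alpha\tilde M)$ is convex; Petrunin's gradient flow, which is forward $e^{\lambda t}$-Lipschitz only for $\lambda$-concave functions, flows \emph{away} from the boundary and does not furnish a Lipschitz retraction onto $\partial^\alpha\tilde M$. The compactness fallback has a separate gap: intrinsic diameter of extremal subsets is not lower semicontinuous under Gromov--Hausdorff convergence (the boundaries can collapse), so a sequence with intrinsic diameters tending to infinity need not produce an extremal subset of infinite intrinsic diameter in the limit. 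A route that does work uses the warped product structure more directly: since $\phi$ is decreasing with $\phi'(t_0)=0$, one checks via the geodesic equation $\tau''=\phi(\tau)\phi'(\tau)|\sigma'|^2\le 0$ that any $\tilde M$-geodesic between two points of $\partial^\alpha\tilde M$ either stays in the totally geodesic slice $\{t=t_0\}$ or must descend all the way to $t=0$ and back, picking up length $\ge 2t_0$; combining this dichotomy with the $1/\varepsilon_0$-Lipschitz projection $(y,t)\mapsto y$ on $C_M^\alpha$ and a chain argument over a $\tilde M$-minimizing geodesic of length $\le d+2t_0$ yields the desired uniform bound on $d^{\rm int}_{\partial^\alpha M}(p,q)$.
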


It follows from  Proposition \ref{prop:cpn-diam} that  the set of boundary components of $M$ when 
$M$ runs over the family $\ca M(n,\kappa,\nu,\lambda,d)$ 
forms a precompact family with respect to the Gromov-Hausdorff distance.
%%%%%%%%%%%%%%%%%%%%%%%%%

We denote by $\ca M_b(n,\kappa,\lambda,d)$ 
the set of all $n$-dimensional compact Riemannian manifolds $M$ with boundary
satisfying \eqref{eq:curv-assum2} and 
$\diam(M)\le d$.
%%%
For the construction of examples, it is convenient to work in the family 
$\ca M_b(n,\kappa,\lambda,d)$  rather than 
$\ca M(n,\kappa,\nu,\lambda,d)$  to check the 
geometric bounds. Therefore  in what follows 
we mainly provide examples of convergence
in $\ca M_b(n,\kappa,\lambda,d)$.

%%%%%%%%%%
The notion of warped product also works for geodesic spaces.

Let $X$ and $Y$ be geodesic spaces, and let 
 $\phi:X\to\mathbb R_+$ be a positive continuous 
function. Then the warped product $X\times_{\phi}Y$ is defined as follows (see \cite{wong1}).
For a curve $\gamma=(\sigma,\nu):[a,b]\to X\times Y$, the length of $\gamma$ is 
defined as  
\[
 L_{\phi}(\gamma)=
   \sup_{|\Delta|\to 0} \sum_{i=1}^k
 \sqrt{|\sigma(t_{i-1}),\sigma(t_i)|^2 +
      \phi^2(\sigma(s_i)) |\nu(t_{i-1}),\nu(t_i)|^2},
\]
where $\Delta:a=t_0<t_1<\cdots<t_k=b$ and $s_i$ is any element of $[t_{i-1},t_i]$.
The  warped product $X\times_{\phi}Y$ is defined as the topological space 
$X\times Y$ equipped with the length metric induced from $L_{\phi}$.

\begin{prop}$($\cite[B.2.6]{wong1}$)$
\label{prop:warped}
Let $Y_i$ be a convergent sequence of geodesic  spaces.
If $X$ is a compact geodesic space, then we have 
\[
                  {\lim}_{GH}(X \times_{\phi} Y_i) = 
X\times_{\phi} ({\lim}_{GH}Y_i).
\]
\end{prop}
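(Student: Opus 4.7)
The plan is to produce explicit Gromov-Hausdorff approximations between $X_i \times_\phi Y$ and $X_\infty \times_\phi Y$, where $X_\infty := \lim_{GH} X_i$. Fix $\varepsilon_i$-approximations $\varphi_i : X_\infty \to X_i$ and $\psi_i : X_i \to X_\infty$ with $\varepsilon_i \to 0$, and define product maps
\[
\Phi_i : X_\infty \times_\phi Y \to X_i \times_\phi Y, \qquad \Phi_i(x,y) = (\varphi_i(x), y),
\]
and analogously $\Psi_i$ using $\psi_i$. Density of the image is immediate from $\varepsilon_i$-density of $\varphi_i(X_\infty)$ in $X_i$, combined with the fact that the $Y$-coordinate is untouched and $\phi$ is bounded away from zero on the compact set $Y$. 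The entire content of the proposition then reduces to proving that the metric distortion of $\Phi_i$ (and $\Psi_i$) tends to zero uniformly.

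For the upper estimate, I would fix $p_j = (x_j, y_j) \in X_\infty \times_\phi Y$ and choose a curve $\gamma = (\sigma, \nu) : [a,b] \to X_\infty \times Y$ whose $L_\phi$-length is within $\eta$ of $d_{X_\infty \times_\phi Y}(p_1, p_2)$. Take a partition $\Delta : a = t_0 < \cdots < t_k = b$ fine enough that the Riemann sum in the definition of $L_\phi$ realizes $L_\phi(\gamma)$ within $\eta$, yet with mesh $|\Delta|$ much larger than $\varepsilon_i$. On each subinterval $[t_{j-1}, t_j]$ replace $\sigma$ by a minimizing geodesic in $X_i$ joining $\varphi_i(\sigma(t_{j-1}))$ and $\varphi_i(\sigma(t_j))$, keeping $\nu$ unchanged, and prepend/append short segments of length $\le \varepsilon_i$ in $X_i$ to connect to the prescribed endpoints $(\varphi_i(x_j), y_j)$. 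Because $\varphi_i$ distorts distances by at most $\varepsilon_i$ and $\phi$ is uniformly continuous and bounded on the compact space $Y$, the Riemann sum for the transported curve differs from the Riemann sum for $\gamma$ by at most $O(k \varepsilon_i) + \eta$; sending $\eta, |\Delta|, \varepsilon_i$ to zero in the correct order gives $d_{X_i \times_\phi Y}(\Phi_i(p_1), \Phi_i(p_2)) \le d_{X_\infty \times_\phi Y}(p_1, p_2) + o_i(1)$. The reverse inequality is obtained symmetrically by starting with a near length-minimizing curve in $X_i \times_\phi Y$ and pushing its $X$-component through $\psi_i$, then using geodesic segments in the (geodesic) space $X_\infty$.

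The main obstacle will be the bookkeeping in this length estimate, because $L_\phi$ is defined as a limit of Riemann sums rather than as an integral of speed, so one must coordinate three small parameters (the partition mesh, the approximation error $\varepsilon_i$, and the tolerance $\eta$) and handle the subtlety that refining the partition costs more approximation error per subinterval. A related point is that the near-minimizing curves may traverse a long path in $Y$, so uniform continuity of $\phi$ on the compact $Y$ (not merely continuity) is essential to control the error $|\phi(\nu(s_j))^2 - \phi(\nu(s_j))^2|$ is irrelevant, but we do need $\phi$ bounded above to turn distance errors into length errors and bounded below to extract density. Once these estimates are in place, specializing to arbitrary $p_1, p_2$ shows $\Phi_i$ is an $o_i(1)$-approximation, and the symmetric argument for $\Psi_i$ concludes.
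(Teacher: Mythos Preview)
The paper does not give a proof of this proposition; it simply cites it from Wong's thesis \cite[B.2.6]{wong1} and uses it as a black box. So there is nothing in the paper to compare your argument against.

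Your approach is the natural one and is essentially how such statements are proved. One point to watch: when you replace $\sigma$ by piecewise geodesics in $X_i$ while keeping $\nu$ unchanged, the Riemann sum you write down for the transported curve, taken with respect to the \emph{same} partition $\Delta$, is not automatically an upper bound for $L_\phi$ of that curve, since $L_\phi$ is a supremum over partitions and refining can increase the sum through the $\nu$-component and the variation of $\phi$. The clean fix is to also replace $\nu$ on each $[t_{j-1},t_j]$ by a minimizing geodesic in $Y$; then on each piece both components have constant speed, and $L_\phi$ of that piece is bounded above by $\sqrt{(\max_{[t_{j-1},t_j]}\phi\circ\nu')^2\,|\sigma'(t_{j-1}),\sigma'(t_j)|^2 + |\nu'(t_{j-1}),\nu'(t_j)|^2}$, which uniform continuity of $\phi$ on the compact $Y$ turns into the desired Riemann-sum term up to $o(1)$ as the mesh shrinks. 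With that adjustment your three-parameter bookkeeping (first $\eta$, then the partition, then $i$) goes through.
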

\pmed

%%%%%%%%Basic structure  %%%%%%%%%%%%%%%%%%%%%%%%%%%%%%%%%%%%%%%%

\setcounter{equation}{0}

\section{Basic structure of limit spaces} \label{sec:non-inradius}

In this section, we consider the situation that 
$M_i \in \ca M(n,\kappa,\nu,\lambda, d)$ converges to a compact geodesic space $N$ 
while ${\rm inrad}(M_i)$ has a positive lower bound. 
%%% 
Let $\tilde M_i$ be the extension of $M_i$ as constructed in Section \ref{ssec:gluing}.
Passing to a subsequence, we may assume that 
\begin{enumerate}
 \item $\pa M_i$ converges to $N_0$ under the convergence $M_i\to N;$
 \item $(\tilde M_i, M_i,\partial M_i)$ converges to a triple $(Y,X,X_0)$ with $Y\supset X\supset X_0$
in the sense that $\tilde M_i$ converges to $Y$ and 
$M_i$ (resp. $\partial M_i$)  also converges to $X$ (resp. to $X_0$), under the convergence  $\tilde M_i\to Y$  (in the sense of Section 
\ref{ssec:Alex}).
\end{enumerate}
Note that $Y$ is an Alexandrov space with curvature $\ge \tilde\kappa$.

\pmed
 In  view of  Proposition \ref{prop:extendAS}, 
passing to a subsequence, we may assume that $C_{M_i}$ converges 
to some compact Alexandrov space $C$ with curvature $\ge c(\nu,\lambda,\ve_0, t_0)$. 
%
%\tilde\kappa=\tilde\kappa(\kappa,\nu,\lambda,t_0)$. 
Here  $C_{M_i}$ is not necessarily connected, and therefore 
the convergence $C_{M_i}\to C$ should be understood componentwisely. 
From Proposition \ref{prop:warped}, we have  
\beq \label{eq:defC}
                    C=[0, t_0]\times_{\phi} C_0, \,\,   C_0=\lim_{i\to\infty} (\pa M_i)^{\rm int}.
\eeq
It follows that $C_0$ is an Alexandrov space with curvature $\ge \nu$.

For simplicity we set  
\[
     C_0:=\{ 0\}\times C_0,\,\,  C_t:=\{ t\}\times C_0\subset C,
\]
and 
\[
       {\rm int}\, X:= X\setminus X_0.
\]
Let $C_{M_i}^{\rm ext}$ denote the extrinsic metric 
on $C_{M_i}$
induced from $\tilde M_i$, which is defined as 
the restriction of the metric of $\tilde M_i$.
Since the identity map $\iota_i: C_{M_i}\to C_{M_i}^{\rm ext}$ is 1-Lipschitz,
we have a 1-Lipschitz map $\eta: C\to Y$ in the limit.
Note that $\eta:C\to Y\setminus {\rm int}\, X$ is surjective.

From now on,  we consider 
\[
        \eta_0: = \eta|_{\{ 0\}\times C_0} : C_0\to X_0,
\]
which is a surjective $1$-Lipschitz map with respect to the extrinsic metrics of 
$C_0$ and $X$, and hence with respect to the intrinsic metrics, too.
Note that $\eta_0$ is the limit of $1$-Lipschitz map $\eta_{0,i}:(\pa M_i)^{\rm int} \to (\pa M_i)^{\rm ext}$, where $(\pa M_i)^{\rm ext}$ denotes the extrinsic  
metric induced from $\tilde M_i$.

Next we recall some basic facts already shown in \cite{YZ:inrdius}.

\begin{lem} [{\cite[Lemma 3.1]{YZ:inrdius}}]  \label{lem:loc-isom}
The map $\eta:C\setminus C_0\to Y\setminus X$ is a bijective local isometry.
\end{lem}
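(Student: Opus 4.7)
My plan rests on a single local observation at the Riemannian level, which then passes straight to the Gromov--Hausdorff limit. Namely, for each $p_i\in C_{M_i}$ with warped-product coordinate $t_{p_i}>0$ and each $r\le t_{p_i}/4$, the inclusion $\iota_i\colon C_{M_i}\to\tilde M_i$ restricts to an isometry
$$
  \iota_i\colon B_{C_{M_i}}(p_i, r)\;\xrightarrow{\ \sim\ }\;B_{\tilde M_i}(\iota_i(p_i), r).
$$
The warped-product metric $dt^2+\phi^2 g_{\pa M_i}$ makes $t$ a distance function from $\pa M_i$ in $C_{M_i}$; since $\tilde M_i$ is glued from $M_i$ and $C_{M_i}$ only along $\pa M_i$, the same identity $d_{\tilde M_i}(\iota_i(p_i),\pa M_i)=t_{p_i}$ holds in the ambient $\tilde M_i$. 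Consequently, any shortest $\tilde M_i$-geodesic between two points of $B_{\tilde M_i}(\iota_i(p_i), r)$ has length $\le 2r$ and starts at $\tilde M_i$-distance $\ge t_{p_i}-r\ge 3r>2r$ from $\pa M_i$, so it is trapped in the open subset $C_{M_i}\setminus\pa M_i$. On this subset the Riemannian metrics of $\tilde M_i$ and $C_{M_i}$ agree by construction, so the $\tilde M_i$-length and $C_{M_i}$-length of such geodesics coincide, and the claim follows.

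The local isometry property of $\eta$ then follows by taking limits. For $p\in C\setminus C_0$ with coordinate $t_p>0$ and any $r<t_p/4$, approximations $p_i\to p$ and $q_i\to q\in B_C(p,r)$ in $C_{M_i}$ satisfy $t_{p_i}\ge 4r$ for large $i$, so the local claim applies to give $d_{\tilde M_i}(\iota_i(p_i),\iota_i(q_i))=d_{C_{M_i}}(p_i,q_i)$; passing to the limit yields $d_Y(\eta(p),\eta(q))=d_C(p,q)$, i.e.\ $\eta|_{B_C(p,r)}$ is an isometric embedding.

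Bijectivity breaks into three standard pieces. First, $\eta(C\setminus C_0)\subset Y\setminus X$ since $d_Y(\eta(p),X)=\lim_i d_{\tilde M_i}(\iota_i(p_i), M_i)=t_p>0$. Surjectivity onto $Y\setminus X$: given $y\in Y\setminus X$ with $s:=d_Y(y,X)>0$, any approximation $y_i\to y$ has $y_i\in C_{M_i}\setminus\pa M_i$ with warped coordinate $>s/2$ for large $i$; any subsequential limit $p\in C$ then lies in $C\setminus C_0$ and satisfies $\eta(p)=y$. Injectivity: if $\eta(p)=\eta(q)$ for $p,q\in C\setminus C_0$, approximations $p_i\to p$, $q_i\to q$ give $d_{\tilde M_i}(\iota_i(p_i),\iota_i(q_i))\to 0$, and applying the local claim to any $\e<\min(t_p,t_q)/4$ converts this into $d_{C_{M_i}}(p_i,q_i)\to 0$, forcing $p=q$.

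The only point really requiring care is the identity $d_{\tilde M_i}(\iota_i(p_i),\pa M_i)=t_{p_i}$, which rests on the fact that $\pa M_i$ separates $M_i$ from the cylinder interior in $\tilde M_i$, combined with the warped radial coordinate being a distance function in the pure cylinder; everything else is routine.
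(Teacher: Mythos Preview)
Your argument is correct. The paper does not supply its own proof of this lemma; it simply cites \cite[Lemma~3.1]{YZ:inrdius} and moves on. Your route---establishing the exact ball identity $B_{C_{M_i}}(p_i,r)=B_{\tilde M_i}(\iota_i(p_i),r)$ for $r\le t_{p_i}/4$ at the smooth level via the separation property of $\partial M_i$ in $\tilde M_i$, and then passing everything (local isometry, containment of the image in $Y\setminus X$, surjectivity, injectivity) through the Gromov--Hausdorff limit---is the natural one and is carried out cleanly.
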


Let $\tilde\pi:C\to C_0$ and $\pi:Y\to X$ be the canonical projections.
To be precise, $\pi$ is defined as
\[
          \pi(y) = \eta_0\circ \tilde \pi(\eta^{-1}(y)), \quad (y\in Y\setminus X).
\]
For any $p\in C_0$ with $x:=\eta_0(p)$, let 
$$
\tilde\gamma_p^{+}(t) := (p,t), \quad
\gamma_x^+(t):=\eta(\tilde\gamma(t)), \quad t\in [0,t_0].
$$
We call the geodesic $\tilde\gamma_p^{+}$
 (resp. $\gamma_x^+$) the {\it perpendicular to} $C_0$  at $p$
(resp. a perpendicular to $X_0$ at $x$).
The maps $\tilde \pi$ and $\pi$ are the projections along perpendiculars.
We also call 
$$
\tilde\xi_p^+:=\dot{\tilde\gamma}_p^+(0), \quad
\xi_x^+:=\dot\gamma_x^+(0)
$$
the perpendicular directions at $p$ and  $x$ respectively.

Obviously, we have  $|\eta(p,t), X|=t$  for  every $(p,t)\in C$.  We set
\beq \label{eq:CtY}
            C_t^Y:= \eta(C_t).
\eeq

The projections along the perpendiculars also provides
a Lipschitz strong deformation of $Y$ to $X$. Thus we have 

\begin{prop}[\cite{wong0}] \label{prop:retractionYX}
$Y$ has the same Lipschitz homotopy type as $X$.
\end{prop}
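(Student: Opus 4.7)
The natural strategy is to construct a Lipschitz strong deformation retraction of $Y$ onto $X$ by first producing one at the pre-limit level on each $\tilde M_i$ and then passing to the Gromov--Hausdorff limit. On each $\tilde M_i=M_i\cup_{\pa M_i}C_{M_i}$, I would define $H_i:\tilde M_i\times [0,1]\to\tilde M_i$ to be the identity on $M_i$ and the ``vertical'' contraction $H_i((p,t),s)=(p,(1-s)t)$ on the warped cylinder $C_{M_i}=\pa M_i\times_\phi[0,t_0]$. The two prescriptions agree on $\pa M_i=\pa M_i\times\{0\}$, so $H_i$ is continuous; by construction $H_i(\cdot,0)=\mathrm{id}_{\tilde M_i}$, $H_i(\cdot,1)$ retracts $C_{M_i}$ onto $\pa M_i\subset M_i$, and $H_i|_{M_i\times[0,1]}$ is constant in $s$.

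\textbf{Uniform Lipschitz bound and passage to the limit.} The key technical point is a uniform Lipschitz bound on $H_i$. A direct length computation in the warped metric $dt^2+\phi^2(t)g_{\pa M_i}$, combined with $\e_0\le\phi\le 1$, shows that the arc-length dilation of $H_i(\cdot,s)$ along any piecewise-smooth curve in $C_{M_i}$ is bounded by $1/\e_0$; hence each slice $H_i(\cdot,s)$ is $(1/\e_0)$-Lipschitz on all of $\tilde M_i$, and $H_i(y,\cdot)$ is $t_0$-Lipschitz in $s$. Since these constants do not depend on $i$, under the triple convergence $(\tilde M_i, M_i, \pa M_i) \to (Y,X,X_0)$ I would fix $\e_i$-approximations $\varphi_i:Y\to\tilde M_i$ and $\psi_i:\tilde M_i\to Y$ compatible with the pairs $X\subset Y$ and $M_i\subset\tilde M_i$. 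An Arzela--Ascoli-type extraction then produces a subsequential limit $H:Y\times[0,1]\to Y$ of $(y,s)\mapsto\psi_i(H_i(\varphi_i(y),s))$; the limit $H$ is Lipschitz, and the defining properties $H(\cdot,0)=\mathrm{id}_Y$, $H(y,1)\in X$, $H(x,s)=x$ for $x\in X$ each persist in the limit.

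\textbf{Conclusion and main obstacle.} Setting $r:=H(\cdot,1):Y\to X$ gives a Lipschitz retraction, and $H$ is a Lipschitz homotopy between $\mathrm{id}_Y$ and $\iota\circ r$, with $r\circ\iota=\mathrm{id}_X$ for the inclusion $\iota:X\hookrightarrow Y$; hence $X$ and $Y$ have the same Lipschitz homotopy type. The main obstacle lies in the limit passage: one must simultaneously preserve the Lipschitz character of $H_i$, the retraction-onto-$X$ property, and the identity-on-$X$ constraint, and this is most delicate near the interface $X_0$ where the genuine manifold part $X$ meets the cylindrical image $\eta(C\setminus C_0)$. Here the compatibility of $\eta$ with projections along the perpendiculars (Lemma \ref{lem:loc-isom}) together with the uniform Lipschitz estimate above work in concert to guarantee that the diagonal subsequential limit is well-defined and globally Lipschitz on all of $Y\times[0,1]$.
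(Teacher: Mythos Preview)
Your approach is correct but takes a more roundabout route than the paper. The paper does not pass through the pre-limit manifolds at all: it defines the deformation retraction \emph{directly on $Y$}. Since $\eta:C\setminus C_0\to Y\setminus X$ is a bijective local isometry (Lemma~\ref{lem:loc-isom}), every $y\in Y\setminus X$ can be written uniquely as $y=\eta(p,t)$ with $(p,t)\in C_0\times(0,t_0]$, and one simply sets $H(y,s)=\eta(p,(1-s)t)$ (and $H\equiv\mathrm{id}$ on $X$). This is the ``projection along perpendiculars'' $\pi:Y\to X$ already introduced just above the proposition; the Lipschitz constant $1/\e_0$ comes from the same warped-product computation you wrote down, but carried out once in $C=C_0\times_\phi[0,t_0]$ rather than in each $C_{M_i}$.

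What your route buys is that you never need to know the intrinsic structure of $Y\setminus X$; what it costs is the limit passage. That step is standard for uniformly Lipschitz maps under GH convergence, but it is not quite as innocent as your sketch suggests: the approximations $\varphi_i,\psi_i$ are typically not continuous, so one should either realize the convergence in a common ambient metric space and apply Arzel\`a--Ascoli there, or argue via graphs. The three endpoint properties do persist in the limit as you claim. By contrast, the paper's argument sidesteps all of this because the perpendicular structure and the map $\eta$ already live on $Y$; the result is literally a one-line observation once Lemma~\ref{lem:loc-isom} is in hand.
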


\psmall
For the multiplicities of the gluing map $\eta_0$, we have 

\begin{lem}  [{\cite[Lemma 3.3]{YZ:inrdius}}]\label{lem:preimage}
For every $x\in X_0$, we have the following:
\begin{enumerate}
 \item $\#\eta_0^{-1}(x)\leq 2;$ 
 \item Suppose $\#\eta_0^{-1}(x)=2$, and take $p_+,p_-\in C_0$ with 
$\eta_0(p_\pm)=x$.  
 If $t\le \phi(t_0)|p_+,p_-|/2$, then 
\[
    |\eta(p_+,t),\eta(p_-,t)|=2t.
\]
Therefore, $\Sigma_x(Y)$ is isometric to the spherical suspension with the 
two vertices $\{ \xi_x^+, \xi_x^-\}$, where 
 \[
     \xi_x^+ :=\uparrow_x^{\eta(p_+, t_0)}, \quad
   \xi_x^- :=\uparrow_x^{\eta(p_-, t_0)}.
 \]
\end{enumerate}
\end{lem}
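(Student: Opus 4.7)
I first prove (2), then derive (1). Set $d := |p_+, p_-|_{C_0}$ and assume $t \le \phi(t_0)\,d/2$. The upper bound $|\eta(p_+,t),\eta(p_-,t)|_Y \le 2t$ is immediate from the triangle inequality at $x$, since $|\eta(p_\pm,t), x|_Y = t$ (both points sit at height $t$ above $C_0$ and $\eta$ is $1$-Lipschitz with $\eta(p_\pm,0)=x$). For the reverse inequality, let $\gamma$ be an arbitrary rectifiable curve in $Y$ from $\eta(p_+,t)$ to $\eta(p_-,t)$. If $\gamma$ meets $X$ at some point $y$, then each subarc has length at least $|\eta(p_\pm,t),X|_Y = t$, so $L(\gamma) \ge 2t$. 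If $\gamma$ avoids $X$, then by Lemma~\ref{lem:loc-isom} the map $\eta: C\setminus C_0 \to Y\setminus X$ is a bijective local isometry, so $\gamma$ lifts through $\eta^{-1}$ to a curve $\tilde\gamma = (\sigma,\nu)$ in $C$ from $(p_+,t)$ to $(p_-,t)$ of the same length. Exploiting the warped-product metric $dt^2 + \phi(t)^2 g_{C_0}$,
\[
L(\tilde\gamma) \ge \int \phi(\nu(s))\,|\dot\sigma(s)|\,ds \ge \phi(t_0)\,L_{C_0}(\sigma) \ge \phi(t_0)\,d \ge 2t,
\]
since $\phi$ is monotone non-increasing with $\nu \le t_0$ and $\sigma$ joins $p_+$ to $p_-$ in $C_0$. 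Hence $|\eta(p_+,t),\eta(p_-,t)|_Y = 2t$.

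Consequently, the concatenation of the two perpendicular segments through $x$ is a length-$2t$ minimizing geodesic, so the perpendicular directions $\xi_x^\pm$ satisfy $\angle(\xi_x^+,\xi_x^-) = \pi$ in $\Sigma_x(Y)$. Because $\Sigma_x(Y)$ is a compact Alexandrov space with curvature $\ge 1$ admitting two points at distance $\pi$, the standard suspension theorem in Alexandrov geometry identifies it with the spherical suspension over the midpoint set, with vertices $\xi_x^\pm$, finishing (2).

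For (1), suppose for contradiction there were three distinct preimages $p_1, p_2, p_3 \in \eta_0^{-1}(x)$. Taking $t$ small enough that $t \le (\phi(t_0)/2)\min_{i\ne j}|p_i,p_j|_{C_0}$, part (2) applies to each of the three pairs and yields perpendicular directions $\xi^{(i)}\in\Sigma_x(Y)$ that are pairwise at angle $\pi$. In the suspension structure forced by any single pair $(\xi^{(1)},\xi^{(2)})$, the antipode of $\xi^{(1)}$ is unique, so we would need $\xi^{(3)} = \xi^{(2)}$; this contradicts the distinctness of the three perpendiculars, which follows from the injectivity of $\eta$ on the relative interiors of the perpendicular segments (Lemma~\ref{lem:loc-isom}).

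\textbf{Main obstacle.} The crux is the lower bound in the avoiding-$X$ case of (2); it rests on the monotonicity of $\phi$, the quantitative threshold $2t \le \phi(t_0)\,d$, and the length-preserving lift supplied by the bijective local isometry of Lemma~\ref{lem:loc-isom}. The remaining pieces---the triangle-inequality upper bound, the Alexandrov suspension theorem, and the pigeonhole argument for (1)---are essentially formal.
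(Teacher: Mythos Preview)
Your proof is correct. The paper does not supply its own argument for this lemma---it is simply cited from the earlier work \cite{YZ:inrdius}---so there is no in-text proof to compare against. Your argument is a clean, self-contained reconstruction: the warped-product length estimate combined with the bijective local isometry of Lemma~\ref{lem:loc-isom} gives the lower bound in (2), and the pigeonhole on antipodal directions handles (1). One minor organizational remark: in your deduction of (1) you invoke (2) for each pair of three hypothetical preimages, whereas (2) is stated under the assumption $\#\eta_0^{-1}(x)=2$; you should note explicitly (as you implicitly do) that your proof of the distance identity in (2) uses only that $p_+\neq p_-$ are preimages, not that they are the \emph{only} ones, so the calculation applies verbatim to any pair.
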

\pmed
 When $\#\eta_0^{-1}(x)=2$, we use the notation 
\[
   \gamma_x^+(t):=\eta(p_+,t),\quad
\gamma_x^-(t):=\eta(p_-,t)
\]
to denote the two perpendiculars at $x$.

Let $X^{\rm int}$ denote the intrinsic metric of $X$.  We now state the relations between $(N, N_0)$ and $(X, X_0)$. The following lemma is 
immediate from Proposition \ref{prop:extendAS}(2).

\psmall
\begin{lem}  \label{lem:XbiLipX}
The canonical map $X\to X^{\rm int}$ is an
$L$-bi-Lipschitz homeomorphism.
\end{lem}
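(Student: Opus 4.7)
The plan is to deduce the lemma from the uniform bi-Lipschitz equivalence in Proposition \ref{prop:extendAS}(2) by passing to the Gromov--Hausdorff limit. For each $i$, the identity map on the underlying set of $M_i$ realizes its Riemannian distance $d_{M_i}$ and its exterior distance $d_{\tilde M_i}|_{M_i}$ as $L$-bi-Lipschitz equivalent metrics, with uniform constant $L = 1/\e_0$. Note that $d_{M_i}$ also coincides with the interior length metric of $M_i$ viewed as a subset of $\tilde M_i$, since the $C^0$-Riemannian metric of $\tilde M_i$ restricts to the original one on $M_i$.

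The first step is to extract a limit identification. Under $M_i \to N$ (Riemannian metrics) and $M_i^{\rm ext} \to X$ (as a subspace of $Y$), the identity maps on the $M_i$ are uniformly $L$-bi-Lipschitz, so a standard diagonal-subsequence argument applied to compatible Gromov--Hausdorff approximations yields a well-defined $L$-bi-Lipschitz homeomorphism $\psi : N \to X$.

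The second step is to compare the metric of $X^{\rm int}$ with the pushforward of $d_N$ via $\psi$. Since $N$ is a geodesic space, for any $n_1, n_2 \in N$ one joins them by a curve $\sigma$ of $d_N$-length close to $d_N(n_1, n_2)$; its image $\psi(\sigma)$ is a curve in $X$ whose $d_X$-length is at most $L$ times the $d_N$-length of $\sigma$, because $\psi$ is $L$-Lipschitz. Taking the infimum over such curves bounds the metric of $X^{\rm int}$ from above by $L\, d_N$. Combined with the trivial $d_X \le d_{X^{\rm int}}$ and the bound $d_N(\psi^{-1}(x_1), \psi^{-1}(x_2)) \le L\, d_X(x_1, x_2)$ from the Lipschitz property of $\psi^{-1}$, this gives
\[
   d_X \;\le\; d_{X^{\rm int}} \;\le\; L^2\, d_X
\]
on $X$, which is the desired bi-Lipschitz comparison (with a uniform constant of the form $L^2$).

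The only delicate point is the clean extraction of the limit map $\psi$ from Gromov--Hausdorff approximations; this is a routine diagonal-subsequence argument and is not a genuine obstacle. The heart of the argument is simply that the interior length structure on the limit $X$ is detected, up to a fixed bi-Lipschitz factor, by the length-space structure of the parallel limit $N$.
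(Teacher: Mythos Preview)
Your argument is essentially correct, but it proves the canonical map is $L^2$-bi-Lipschitz rather than $L$-bi-Lipschitz as stated. The loss of a factor of $L$ comes from routing through $N$ twice: once to push a geodesic forward via $\psi$, and once to pull the distance back via $\psi^{-1}$.

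The paper treats the lemma as immediate from Proposition~\ref{prop:extendAS}(2), and the intended direct argument avoids this detour. For $x_1,x_2\in X$, approximate by $p_i^1,p_i^2\in M_i$ and take an $M_i$-minimal geodesic $\gamma_i$; since the Riemannian metric of $\tilde M_i$ restricts to that of $M_i$, the $\tilde M_i$-length of $\gamma_i$ equals $|p_i^1,p_i^2|_{M_i}\le L\,|p_i^1,p_i^2|_{\tilde M_i}$. Passing these curves to the limit under $\tilde M_i\to Y$ (Arzel\`a--Ascoli plus lower semicontinuity of length) yields a curve in $X$ from $x_1$ to $x_2$ of $Y$-length $\le L\,|x_1,x_2|_Y$, so $|x_1,x_2|_{X^{\rm int}}\le L\,|x_1,x_2|_X$. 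This is the same idea as your Step~2, but applied to the geodesics of $M_i$ directly rather than to their abstract GH-limit $N$, and it gives the sharp constant.

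Alternatively, your approach recovers the constant $L$ once one uses Proposition~\ref{prop:intrinsic} (established in \cite{YZ:inrdius}): the map $\psi$ you construct is then an isometry $N\to X^{\rm int}$, so the canonical map $X\to X^{\rm int}$ is identified with $\psi\circ\psi^{-1}_{\rm ext}$ and inherits the $L$-bi-Lipschitz bound of $\psi^{-1}:X\to N$.
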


\begin{prop} [{\cite[Proposition 3.9, Remark 3.12]{YZ:inrdius}}]\label{prop:intrinsic}
There exists an isometry $f: X^{\rm int} \to N$ such that 
$f(X_0) =N_0$.
\end{prop}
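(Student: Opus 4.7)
The plan is to exhibit the isometry $f$ in two steps: first produce a bi-Lipschitz homeomorphism $F: X \to N$ from the two Gromov-Hausdorff convergences, and then show that $F$ preserves lengths of rectifiable curves, which upgrades it to an isometry between $X^{\rm int}$ and $N$.

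For the first step, observe that the convergences $(M_i, d_{M_i, \tilde M_i}) \to X$ and $(M_i, d_{M_i}) \to N$ use the same underlying sets $M_i$ with $L$-bi-Lipschitz equivalent metrics $d_{M_i, \tilde M_i} \le d_{M_i} \le L \cdot d_{M_i, \tilde M_i}$, where $L = 1/\e_0$ by Proposition \ref{prop:extendAS}(2). A standard diagonal Gromov-Hausdorff extraction then produces a bi-Lipschitz homeomorphism $F: X \to N$ satisfying $d_X \le F^* d_N \le L \cdot d_X$; compatibility of the boundary convergences $\pa M_i \to X_0$ and $\pa M_i \to N_0$ forces $F(X_0) = N_0$.

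For the second step, the key input is a \emph{local identity}: there exists $r_0 > 0$ such that $d_N(F(x), F(y)) = d_X(x, y)$ whenever $d_X(x, y) < r_0$. This follows from the analogous statement for each $M_i$: any $p, q \in M_i$ with $d_{M_i}(p, q) < r_0$ satisfy $d_{M_i, \tilde M_i}(p, q) = d_{M_i}(p, q)$. Indeed, a competing path shortcutting through the warped cylinder $C_{M_i}$ to depth $s$ incurs cost $2s$ along the normal direction while saving at most $(1 - \phi(s)) \cdot d_{\pa}(p', q') \approx \lambda s \cdot d_{\pa}(p', q')$ tangentially (using $\phi(0) = 1$ and $\phi'(0) \le -\lambda$), where $p', q'$ are the foot points of $p, q$ on $\pa M_i$. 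As long as the tangential distance is less than $2/\lambda$, the net cost $s(2 - \lambda d_\pa)$ is strictly positive, so the $\tilde M_i$-geodesic stays inside $M_i$ and coincides with the $M_i$-geodesic. Because the threshold depends only on $\lambda$, the identity passes to the GH limit, giving the local identity on $X$.

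Given the local identity, length preservation follows quickly: for any rectifiable curve $\gamma \subset X$ and any partition $0 = t_0 < \dots < t_k = \ell$ refined so that every $d_X(\gamma(t_j), \gamma(t_{j+1})) < r_0$, one has
\[
\sum_j d_X(\gamma(t_j), \gamma(t_{j+1})) = \sum_j d_N(F(\gamma(t_j)), F(\gamma(t_{j+1}))),
\]
and taking the supremum over such partitions yields $L^{d_X}(\gamma) = L^{d_N}(F \circ \gamma)$. Since both $X^{\rm int}$ and $N$ are length metrics---the former by definition, the latter as a GH limit of Riemannian manifolds---taking infima of these equal lengths over all curves from $x$ to $y$ gives $d_{X^{\rm int}}(x, y) = d_N(F(x), F(y))$, so $f := F$ is the desired isometry. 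The main obstacle is the rigorous justification of the local identity when $p, q$ are interior points of $M_i$ near $\pa M_i$ (not only on the boundary itself), and establishing the uniformity of the threshold $r_0$ across the sequence so that the identity genuinely survives the limiting process.
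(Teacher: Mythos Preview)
Your overall strategy—produce a bi-Lipschitz identification $F\colon X \to N$ from the two convergences and upgrade it to an isometry by showing that $F$ preserves lengths of curves—is the natural route (the paper does not reproduce a proof, citing \cite{YZ:inrdius}). You correctly flag the ``local identity'' as the crux, but the claim $d_{\tilde M_i}(p,q) = d_{M_i}(p,q)$ for $d_{M_i}(p,q) < r_0$ is in fact \emph{false}, not merely hard to justify.

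Take $M$ to be a flat annulus $\{R \le r \le R'\}\subset\R^2$, whose inner boundary has $\Pi^M_{\pa M} = -1/R$. For $p,q$ on the inner circle at angular separation $\Delta\theta$, the $M$-geodesic is the boundary arc of length $R\Delta\theta$ (the Euclidean chord exits $M$). In the attached cylinder $dt^2+\phi^2(t)R^2\,d\theta^2$, the circle $t=0$ has geodesic curvature vector $-\phi'(0)\,\pa_t$ pointing \emph{into} the cylinder, so a first-variation argument produces strictly shorter curves bulging into $t>0$; hence $d_{\tilde M}(p,q) < d_M(p,q)$ for \emph{every} $\Delta\theta>0$, however small. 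Your down--across--up estimate rules out only one crude family of competitors, and since $\phi'(0)\le -\lambda$ yields $1-\phi(s)\ge \lambda s$ (not $\le$), it does not even bound the tangential saving from above.

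The correct replacement is Lemma~\ref{lem:deviation}: projecting a $\tilde M_i$-geodesic between $p,q\in M_i$ back to $M_i$ yields a curve whose length exceeds $d_{\tilde M_i}(p,q)$ by a factor at most $1+O(d_{\tilde M_i}(p,q)^2)$, with the implied constant depending only on $\phi$ and hence uniform in $i$. Passing to the limit gives
\[
d_X(x,y) \le F^*d_N(x,y) \le \bigl(1+O(d_X(x,y)^2)\bigr)\,d_X(x,y),
\]
which still forces $L_{d_X}(\gamma)=L_{F^*d_N}(\gamma)$ for every rectifiable $\gamma$ (the multiplicative error vanishes as the partition mesh goes to zero). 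Since $F^*d_N$ is already a length metric, one obtains $F^*d_N = d_{X^{\rm int}}$. Your outline therefore survives once the false local identity is replaced by this quadratic ratio estimate.
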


From now on, we often identify 
$N=X^{\rm int}$ and  $N_0=X_0$,
or shortly $N=X$ when no confusion 
arises.

\psmall
We always assume $m=\dim Y$.
From a uniform positive lower bound for  ${\rm inrad}(M_i)$, we immediately have 
\beq
      \dim X = m,   \label{eq:dimX} \,\,\, {\rm int}\, X \neq\emptyset.
\eeq
See also Lemma \ref{lem:dimNdimN0}.
It is also easy to see 

\begin{lem} \label{lem:bdy}
$X_0$ coincides with the topological boundary $\partial X$ of $X$ in $Y$.
\end{lem}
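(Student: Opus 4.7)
The plan is to prove the two inclusions $X_0 \subset \partial X$ and $\partial X \subset X_0$ separately, using the structure of the map $\eta: C \to Y$ as the main tool. The key observation to make is the set-theoretic identity $\eta(C) = X_0 \cup (Y \setminus X)$, which follows by combining surjectivity of $\eta_0: C_0 \to X_0$ with the statement of Lemma \ref{lem:loc-isom} that $\eta|_{C \setminus C_0}$ is a bijection onto $Y \setminus X$.

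For the inclusion $\partial X \subset X_0$, I would note that $C = C_0 \times_\phi [0, t_0]$ is compact (since $C_0$ is a compact Alexandrov limit of $(\pa M_i)^{\mathrm{int}}$ under the intrinsic diameter bound of Proposition \ref{prop:cpn-diam}), so the identity above gives $Y \setminus \eta(C) = X \setminus X_0$ as an open subset of $Y$. Hence $X \setminus X_0$ is contained in the topological interior of $X$ in $Y$, which is exactly the desired inclusion.

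For the reverse inclusion $X_0 \subset \partial X$, I would fix $x \in X_0$, pick any $p \in \eta_0^{-1}(x)$, and test with the perpendicular $y_t := \eta(p, t)$. Since $(p,t) \in C \setminus C_0$ for $t > 0$, Lemma \ref{lem:loc-isom} places $y_t \in Y \setminus X$, while the $1$-Lipschitz property of $\eta$ applied to the vertical segment in $C$ gives $|y_t, x|_Y \le t$. Letting $t \to 0$ produces a sequence in $Y \setminus X$ converging to $x$, so $x \in \overline{Y \setminus X}$; combined with the obvious inclusion $X_0 \subset X$ coming from the Gromov--Hausdorff convergence $\pa M_i \subset M_i$, this yields $x \in \partial X$.

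There is no serious obstacle here; the only thing to be careful about is that the identity $\eta(C) = X_0 \cup (Y \setminus X)$ really uses the injectivity half of Lemma \ref{lem:loc-isom} (to ensure the two pieces are disjoint in $\eta(C)$ is unnecessary, but to know that $\eta(C \setminus C_0)$ covers all of $Y \setminus X$ is essential), and that compactness of $C$ is invoked only to conclude that $\eta(C)$ is closed in $Y$.
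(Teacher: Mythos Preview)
Your proof is correct. The paper does not give a proof of this lemma at all (it is introduced with ``It is also easy to see''), so your argument via the identity $\eta(C)=X_0\cup(Y\setminus X)$ together with compactness of $C$ and the perpendiculars $\eta(p,t)$ is a perfectly acceptable way to fill in what the authors left implicit.
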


Let $X\cup_{\eta_0}([0,t_0]\times_{\phi} C_0)$ denote the geodesic space obtained by the result of
gluing of the two geodesic spaces $X^{\rm int}$ and $[0,t_0]\times_{\phi} C_0$ by the map 
$\eta_0:\{ 0\}\times C_0\to X^{\rm int}_0$.
We have a description of $Y$:

\begin{lem} [{\cite[Proposition 3.11]{YZ:inrdius}}]   \label{lem:YX0}
 $Y$ is isometric to the geodesic space 
\[
      X \cup_{\eta_0}([0,t_0]\times_{\phi} C_0),
\]
where $(0,x)\in \{ 0\}\times C_0$ is identified with $\eta_0(x)\in X_0$ for each $x\in C_0$.  
\end{lem}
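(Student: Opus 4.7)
My plan is to construct a natural bijection $\Phi\colon Z\to Y$ and show it is a metric isometry, where $Z:=X\cup_{\eta_0}(C_0\times_\phi [0,t_0])$. Define $\Phi$ to be the identity on the $X^{\rm int}$-component and to be $\eta$ on $C$; these agree on the gluing locus since $\eta|_{C_0}=\eta_0$ lands in $X_0\subset X$. Surjectivity follows from the decomposition $Y=X\cup\eta(C\setminus C_0)$ provided by Lemma~\ref{lem:loc-isom}, and injectivity holds because Lemma~\ref{lem:loc-isom} makes $\eta\colon C\setminus C_0\to Y\setminus X$ bijective while the only identifications in $Z$ are exactly $(p,0)\sim\eta_0(p)$, matching $\eta(p,0)=\eta_0(p)$.

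The inequality $d_Y(\Phi(z_1),\Phi(z_2))\le d_Z(z_1,z_2)$ is the easy direction: any broken curve in $Z$ made of arcs in $X^{\rm int}$ and in $C$ meeting at the gluing locus maps via $\Phi$ to a curve in $Y$ of at most the same length, because $\eta$ is $1$-Lipschitz and because the $Y$-length of any curve in $X$ equals its $X^{\rm int}$-length by the definition of the interior metric. Taking infima over such broken curves yields the inequality.

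For the reverse inequality I would build a length-preserving lift. Given $y_1,y_2\in Y$ and a rectifiable curve $\gamma\colon[0,1]\to Y$ with $L(\gamma)<d_Y(y_1,y_2)+\varepsilon$, the preimage $U:=\gamma^{-1}(Y\setminus X)$ is open and decomposes into countably many disjoint open intervals; on each of these Lemma~\ref{lem:loc-isom} yields a unique continuous length-preserving lift into $C\setminus C_0$ via $\eta^{-1}$, and on the closed complement $\gamma^{-1}(X)$ I set $\tilde\gamma:=\gamma$ in the $X^{\rm int}$-component. The relation $|\eta(p,t),X|=t$ forces the lift to approach $C_0$ as one approaches an endpoint of any open interval from inside, so the pieces glue into a continuous curve in $Z$; the potential two-sheet ambiguity of $\eta_0$ at double points (Lemma~\ref{lem:preimage}) is pinned down by this continuity requirement. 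The main obstacle is globalizing the lift when $\gamma$ crosses $X_0$ infinitely often, in which case $\gamma^{-1}(X)$ may be a complicated closed set. The essential control again comes from $|\eta(p,t),X|=t$: each excursion of $\gamma$ into $Y\setminus X$ of maximum depth $s>0$ contributes length at least $2s$, so finiteness of $L(\gamma)$ keeps the sum of lengths over all open intervals finite, and countable additivity of length along $\gamma$ then yields $L(\tilde\gamma)=L(\gamma)$. Thus $d_Z(\Phi^{-1}(y_1),\Phi^{-1}(y_2))\le L(\gamma)<d_Y(y_1,y_2)+\varepsilon$, and letting $\varepsilon\to0$ finishes the proof.
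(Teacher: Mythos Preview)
The paper does not give its own proof of this lemma; it simply quotes the statement from \cite[Proposition~3.11]{YZ:inrdius}. Your strategy---define the obvious bijection $\Phi$ piecewise, get the easy $1$-Lipschitz direction from the facts that $\eta$ is $1$-Lipschitz and that $Y$-length equals $X^{\rm int}$-length for curves lying in $X$, then lift a near-geodesic $\gamma\subset Y$ back to $Z$ using Lemma~\ref{lem:loc-isom} on $\gamma^{-1}(Y\setminus X)$---is the natural one and is how such gluing isometries are typically established.

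There is, however, one place where the write-up is too quick. After splitting $[0,1]$ into the open set $U=\gamma^{-1}(Y\setminus X)$ and the closed set $F=\gamma^{-1}(X)$, your depth argument (an excursion reaching height $s$ costs length $\ge 2s$) and ``countable additivity'' handle the contribution of $U$, but they say nothing about the contribution of $F$. If $F$ happens to have empty interior but positive measure (a fat-Cantor situation, which the setup does not exclude), every point of $F$ lies in $\partial F$, and you must still check that the $Z$-metric speed of $\tilde\gamma$ agrees with the $Y$-metric speed of $\gamma$ at almost every such point. For $s,t\in F$ you only know
\[
   d_Z(\tilde\gamma(s),\tilde\gamma(t))\le |\gamma(s),\gamma(t)|_{X^{\rm int}},
\]
and you need the right-hand side to be asymptotically $|\gamma(s),\gamma(t)|_Y$. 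That is precisely the content of Lemma~\ref{lem:deviation}(3): projecting a short $Y$-geodesic between two points of $X$ back to $X$ via $\pi$ changes length by a factor $1+O(d_Y^2)$. Combining this with the Lebesgue density theorem gives $|\dot{\tilde\gamma}|_Z(t)=|\dot\gamma|_Y(t)$ for a.e.\ $t\in F$, and then integration of metric speed over $U\cup F$ closes the argument. Without invoking something like Lemma~\ref{lem:deviation}, the final sentence of your proof does not go through.
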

\pmed

%%%%%%%%%%%%%%%%%%%%%%%%%%%%%%%

\begin{defn}
For $k=1,2$, set
\[
    X_0^k:=\{ x \in X_0\,|\, \# \eta_0^{-1}(x)=k\,\}, \,\,\, C_0^k:=\eta_0^{-1}(X_0^k).
\] 
By Proposition \ref{prop:intrinsic}, we often identify $N_0^k \equiv X_0^k$.
We denote by ${\rm int}\, X_0^k$ the interior of $X_0^k$ in $X_0$.
Let $\partial X_0^k$ and $\partial C_0^k$ denote the topological boundaries of 
$X_0^k$ and $C_0^k$ in $X_0$ and $C_0$ respectively.
We denote by  ${\tilde{\mathcal S}}$ the topological boundary of
$C_0^1$ and $C_0^2$  in $C_0$, and by $\mathcal S$ the
topological boundary of $X_0^1$ and $X_0^2$ in $X_0$:
\begin{align*}
       {\tilde{\mathcal S}}:=\partial C_0^1=\partial C_0^2, \,\, \,\,\, \mathcal S:=\partial X_0^1=\partial X_0^2.
\end{align*}
In view of $N=X^{\rm int}$, we often identify 
$N_0 =X_0,\,\,\, N_0^k = X_0^k$.
We call  $\mathcal S$ the {\it topological boundary singular set} of $X$ or $N$.
For each $k=1,2$, a point of 
\[
                \mathcal S^k:=\mathcal S\cap X_0^k
\]
is called a {\it topological boundary singular point of $X$ of type $k$.}
We also set 
\[
    \tilde{\mathcal S}^k :=\tilde{\mathcal S}\cap C_0^k.
\]
\end{defn}
 It is easy to verify $\tilde{\ca S}^1=\eta_0^{-1}(\ca S^1)$. The corresponding statement of $\tilde{\ca S}^2$
also holds true (see Lemma \ref{lem:lift-S2}).

\begin{rem} \upshape
In the case of inradius collapse, since the subset $N_0^2$ is open in $X_0=X$ (see Theorem
\ref{thm:inradius-collapse}), there exists no boundary singular point of type 2. However, in the case of non-inradius collapse, as seen in later examples, both the boundary singular sets $\ca S^1$ and  $\ca S^2$ may arise.
\end{rem}
           
\pmed
The results of Section 4.2 of \cite{YZ:inrdius}
still hold if $X$ is replaced by $X_0$ by the same arguments.
We show those results with only rough outline 
if necessary.

The following lemma is immediate from
the warped product structure of $C$.

\begin{lem} [{\cite[Lemma 4.5]{YZ:inrdius}}]
For any $p\in C_0$, 
%let $\tilde\gamma_p^{+}(t) = (p,t)$.Then
$\Sigma_p(C)$ is isometric to the half-spherical suspension $\{ \dot{\tilde\gamma}_p^{+}(0) \}*\Sigma_p(C_0)$.
\end{lem}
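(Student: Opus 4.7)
The key observation is that $\phi(0) = 1$ and $\phi$ is $C^1$, so the warped product metric $dt^2 + \phi^2(t)\, g_{C_0}$ behaves infinitesimally as the direct product metric $g_{C_0} + dt^2$ near $(p,0)$. The plan is to use this to identify the tangent cone $T_p(C)$ with the Euclidean half-product $T_p(C_0) \times [0,\infty)$, and then recognize this as the Euclidean cone over the half-spherical suspension of $\Sigma_p(C_0)$.

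First I would establish $T_p(C) = T_p(C_0) \times [0,\infty)$ via pointed Gromov--Hausdorff convergence of the rescalings. Fix $R > 0$. For any $(q,t), (q',t') \in B((p,0), rR)\subset C$ with $0 \le t, t' \le rR$, the continuity of $\phi$ at $0$ with $\phi(0) = 1$ gives $\sup_{0 \le t \le rR} |\phi(t) - 1| = o(1)$ as $r \to 0$. A direct length comparison shows that any curve joining such points has length in $r^{-1}C$ differing from its length in $(r^{-1}C_0) \times [0,R]$ by a factor $1 + o(1)$ as $r \to 0$. Hence $(r^{-1}C, (p,0))$ is $o(1)$-close in pointed Gromov--Hausdorff distance to $(r^{-1}C_0 \times [0,R], (p,0))$ on bounded balls. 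Letting $r \to 0$ and exhausting $R$, we obtain $T_p(C) = T_p(C_0) \times [0,\infty)$ as Alexandrov cones.

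Second, writing $T_p(C_0) = K(\Sigma_p(C_0))$, we invoke the standard isometry
\[
K(\Sigma_p(C_0)) \times [0,\infty) \;\cong\; K\bigl(\{v\} * \Sigma_p(C_0)\bigr),
\]
where $\{v\} * \Sigma_p(C_0)$ denotes the half-spherical suspension with apex $v$ (the unit direction along $[0,\infty)$). Since also $T_p(C) = K(\Sigma_p(C))$ and the cone functor determines its base as an Alexandrov space with curvature $\ge 1$, we conclude $\Sigma_p(C) \cong \{v\} * \Sigma_p(C_0)$. Under this identification, the apex $v$ corresponds to the unit direction along the $t$-axis, which is precisely $\dot{\tilde\gamma}_p^+(0) = \tilde\xi_p^+$.

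The main technical point is the first step: justifying that the rescaled warped products converge to the direct product uniformly on bounded balls. This is elementary given $\phi \in C^1$ with $\phi(0) = 1$, but care is needed to ensure the length comparison is valid globally on $B((p,0), rR)$, not just locally. This follows from a standard bi-Lipschitz length-space argument: uniform closeness of $\phi$ to $1$ on $[0, rR]$ yields a $(1+o(1))$-bi-Lipschitz identity map between the two length structures on the same underlying set, which is enough for the Gromov--Hausdorff convergence of the rescalings.
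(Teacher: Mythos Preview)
Your proposal is correct. The paper does not give its own proof of this lemma; it simply records it as ``immediate from the warped product structure of $C$'' and cites \cite[Lemma 4.5]{YZ:inrdius}. Your tangent-cone argument is a standard and valid way to justify the statement: since $\phi(0)=1$ and $\phi$ is continuous, the rescalings $\bigl(\tfrac{1}{r}C,(p,0)\bigr)$ are $(1+o(1))$-bi-Lipschitz to $\bigl(\tfrac{1}{r}C_0\times[0,t_0/r],(p,0)\bigr)$, yielding $T_p(C)=T_p(C_0)\times[0,\infty)$, and then the cone identity $K(\Sigma)\times[0,\infty)\cong K(\{v\}*\Sigma)$ finishes it. One small point worth making explicit in your write-up: the $t$-projection $C\to[0,t_0]$ is $1$-Lipschitz, so points in $B^C((p,0),rR)$ have $t$-coordinate at most $rR$, which is what guarantees $\phi$ is uniformly close to $1$ on the relevant region and makes the bi-Lipschitz comparison go through.
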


For $x\in X_0$, let $\Sigma_x(X_0)$ and $\Sigma_x(X)$ be defined as the closed subsets of $\Sigma_x(Y)$ defined in Section \ref{ssec:Alex}.

\begin{lem} \label{lem:not-perp}
For any $x\in X_0$, 
and for any $\xi\in\Sigma_x(Y)$
with $\angle(\xi_x^+,\xi)<\pi/2$ and $Y$-geodesic $\gamma$ with
$\dot\gamma(0)=\xi$, we have the following$:$
\begin{enumerate}
\item $\gamma(t)\in Y\setminus X$ for all small enough $t>0\,;$
\item  The curve $\sigma(t)=\pi\circ \gamma(t)$ defines a 
unique direction $v\in\Sigma_x(X_0)$ in the sense
$v=\lim_{t\to 0} \uparrow_x^{\sigma(t)}$, and satisfies 
\[
 \angle(\xi_x^{+},\xi)+\angle(\xi, v)=\angle(\xi_x^{+}, v)=\pi/2\,;
\]
\item Under the convergence $(\frac{1}{t} Y, x)\to (K_x(Y),o_x)$ as 
$t\to 0$, $\sigma(t)$ converges to a minimal geodesic
$\sigma_\infty$ from $o_x$ in the direction $v$.
\end{enumerate}
\end{lem}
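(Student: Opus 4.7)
The plan is to prove the three claims in sequence by lifting $\gamma$ to the warped cylinder $C$ via the bijective local isometry $\eta: C\setminus C_0 \to Y\setminus X$ of Lemma~\ref{lem:loc-isom}, and performing a rescaling analysis in the product tangent cone of $C$ at a point $p \in \eta_0^{-1}(x)$.

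For (1), I argue by contradiction: if $\gamma(t_i)\in X$ for some $t_i\to 0$, then $\xi = \lim_i \uparrow_x^{\gamma(t_i)} \in \Sigma_x(X)$, which I will show forces $\angle(\xi_x^+,\xi) \geq \pi/2$. The key estimate is $\angle^Y(\xi_x^+, v') \geq \pi/2$ for every $v' \in \Sigma_x(X)$. For $v' \in \Sigma_x(X_0)$ this is perpendicularity of $\gamma_x^+$: the lower bound $\geq \pi/2$ comes from the first-variation formula applied to the distance $|\gamma_x^+(t),\cdot|$ restricted to $X_0$ (whose unique minimum on $X_0$ is $x$), while the upper bound $\leq \pi/2$ comes from pushing the length-$\pi/2$ suspension arc from $\tilde\xi_p^+$ to any element of $\Sigma_p(C_0)$ in $\Sigma_p(C) = \{\tilde\xi_p^+\}*\Sigma_p(C_0)$ through the 1-Lipschitz differential $d\eta$. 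For $v' \in \Sigma_x(X)\setminus\Sigma_x(X_0)$, the gluing description $Y = X\cup_{\eta_0} C$ of Lemma~\ref{lem:YX0} shows that any curve in $\Sigma_x(Y)$ connecting $\xi_x^+ \in \Sigma_p(C)$ to $v'$ must first exit $\Sigma_p(C)$ through its equator $\Sigma_x(X_0)$, so has length $\geq \pi/2$.

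Given (1), for small $t$ the geodesic $\gamma$ lifts via $\eta^{-1}$ to a geodesic $\tilde\gamma$ in $C$ starting at some $p\in\eta_0^{-1}(x)$, and in warped product coordinates $\tilde\gamma(t) = (\sigma_C(t),r(t))$. Its initial direction $\tilde\xi = \dot{\tilde\gamma}(0)$ decomposes in the suspension as $(\theta, v_p)$, with $\theta = \angle(\tilde\xi_p^+, \tilde\xi) < \pi/2$ and $v_p = \dot\sigma_C(0) \in \Sigma_p(C_0)$. Since $\phi(0)=1$, the tangent cone splits as a Euclidean product $K_p(C) = K_p(C_0) \times [0,\infty)$; under the rescaling $(t^{-1}C, p) \to (K_p(C), o_p)$, $\tilde\gamma$ converges to the straight ray in direction $\tilde\xi$, while $\sigma_C = \tilde\pi\circ\tilde\gamma$ converges to the horizontal ray in direction $v_p$ with speed $\sin\theta$. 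Passing these limits through $d\eta$, which sends $\tilde\xi_p^+ \mapsto \xi_x^+$ and restricts on $\Sigma_p(C_0)$ to $d\eta_0: v_p \mapsto v := d\eta_0(v_p) \in \Sigma_x(X_0) \subset \Sigma_x(Y)$, the rescaled $\sigma(t)$ in $t^{-1}Y$ converge to a ray $\sigma_\infty$ from $o_x$ in direction $v$ in $K_x(Y)$, proving (3).

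Claim (2) then follows: uniqueness of the limit direction $v$ of $\uparrow_x^{\sigma(t)}$ is forced by the uniqueness of $\sigma_\infty$ from (3). The equality $\angle(\xi_x^+, v) = \pi/2$ is the perpendicularity already established, and combining the triangle inequality $\angle(\xi, v) \geq \angle(\xi_x^+, v) - \angle(\xi_x^+, \xi) = \pi/2 - \theta$ with the upper bound $\angle(\xi, v) \leq \pi/2 - \theta$, obtained by pushing through $d\eta$ the length-$(\pi/2-\theta)$ suspension arc from $\tilde\xi$ to $v_p$ in $\Sigma_p(C)$, yields $\angle(\xi, v) = \pi/2 - \theta$, so $\angle(\xi_x^+,\xi) + \angle(\xi, v) = \pi/2 = \angle(\xi_x^+, v)$. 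The main obstacle is this angle analysis: the two one-sided bounds must coincide, and this requires exploiting both the exact perpendicularity at $\xi_x^+$ and the fact that $d\eta$, although only 1-Lipschitz on $\Sigma_p(C)$, does not shorten the specific meridian arcs from $\tilde\xi_p^+$ into $\Sigma_p(C_0)$ (since their images must still reach the equator $\Sigma_x(X_0)$ at distance $\pi/2$ from $\xi_x^+$).
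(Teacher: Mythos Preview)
Your overall approach matches what the paper defers to \cite[Lemma 4.6]{YZ:inrdius} for parts (2) and (3): lift $\gamma$ to $C$ via $\eta^{-1}$ and exploit the warped-product/suspension structure of $\Sigma_p(C)$. The sandwich argument for $\angle(\xi,v)$ is correct, though you should be explicit that a priori $\theta_C := \angle(\tilde\xi_p^+,\tilde\xi)$ and $\theta_Y := \angle(\xi_x^+,\xi)$ satisfy only $\theta_Y \le \theta_C$ (from $1$-Lipschitz); your two one-sided bounds then read $\pi/2 - \theta_Y \le \angle(\xi,v) \le \pi/2 - \theta_C$ and force $\theta_Y=\theta_C$. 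Your meridian observation (the image of the full arc from $\tilde\xi_p^+$ to $v_p$ joins points at distance exactly $\pi/2$, hence is a geodesic and maps isometrically) is precisely what underlies this equality.

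There is, however, a gap in your argument for (1). You invoke the space-level gluing $Y = X \cup_{\eta_0} C$ of Lemma~\ref{lem:YX0} to conclude that any curve in $\Sigma_x(Y)$ from $\xi_x^+$ to $v' \in \Sigma_x(X)\setminus\Sigma_x(X_0)$ must cross $\Sigma_x(X_0)$. But Lemma~\ref{lem:YX0} is a gluing of $Y$, not of $\Sigma_x(Y)$; at this stage nothing is established about how $\Sigma_x(X)$ sits inside $\Sigma_x(Y)$, nor that $d\eta(\Sigma_p(C))$ is bounded by $\Sigma_x(X_0)$ in the required sense, so the separation claim does not follow. The case split is in fact unnecessary: since $|\gamma_x^+(s), X| = s = |\gamma_x^+(s), x|$, the point $x$ minimizes $|\gamma_x^+(s),\cdot|$ on all of $X$, not just $X_0$. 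Hence if $\gamma(t_i)\in X$ then $|\gamma_x^+(s),\gamma(t_i)| \ge s$, and the comparison angle satisfies $\tilde\angle\,\gamma_x^+(s)\,x\,\gamma(t_i) \ge \arccos(t_i/2s)$; letting $t_i \to 0$ and then $s \to 0$ gives $\angle(\xi_x^+,\xi) \ge \pi/2$ directly, without any appeal to the structure of $\Sigma_x(Y)$.
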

\begin{proof} 
(1) easily follows from the first variation formula.
(2) and (3) follow from \cite[Lemma 4.6]{YZ:inrdius}
\end{proof}

\begin{lem} \label{lem:double-sus}
If $x\in X_0^2$, then $\Sigma_x(Y)$ is isometric to the spherical suspension $\{ \xi_x^+, \xi_x^-\}*\Sigma$, where $\xi_x^+, \xi_x^-$ are as in 
Lemma \ref{lem:preimage} (2), and $\Sigma$
is isometric to both $\Sigma_x(X_0)$ and $\Sigma_x(X)$. 
\end{lem}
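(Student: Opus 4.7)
The suspension structure $\Sigma_x(Y) = \{\xi_x^+, \xi_x^-\} * \Sigma$ with equator $\Sigma := \{\eta \in \Sigma_x(Y) : \angle(\xi_x^\pm, \eta) = \pi/2\}$ is already provided by Lemma \ref{lem:preimage}(2), so $\Sigma$ is an Alexandrov space of curvature $\ge 1$. What remains is to identify $\Sigma$ with both $\Sigma_x(X)$ and $\Sigma_x(X_0)$; the plan is to close the loop
\[
\Sigma_x(X_0) \subseteq \Sigma_x(X) \subseteq \Sigma \subseteq \Sigma_x(X_0),
\]
with the first inclusion being automatic.

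The key task is to prove $\Sigma_x(X) \subseteq \Sigma$. Let $\xi = \lim_i \uparrow_x^{x_i} \in \Sigma_x(X)$ with $x_i \in X$, $x_i \to x$. The identity $|\eta(p, t), X| = t$, recorded at the beginning of this section, yields $|\eta(p_+, t), x_i| \ge t$ for every $t$, since any curve in $Y$ from $\eta(p_+, t)$ to a point of $X$ must cross $X_0$. Combined with the triangle inequality $|\eta(p_+, t), x_i| \le t + |x, x_i|$, the law of cosines in $M_{\tilde\kappa}^2$ gives
\[
\cos \tilde\angle \eta(p_+, t)\, x\, x_i \;=\; \frac{|x, x_i|^2 + t^2 - |\eta(p_+, t), x_i|^2}{2\, t\, |x, x_i|} \;\le\; \frac{|x, x_i|}{2 t}.
\]
By the monotonicity of comparison angles, $\angle(\xi_x^+, \uparrow_x^{x_i}) \ge \tilde\angle \eta(p_+, t)\, x\, x_i$; fixing $t > 0$ and sending $i \to \infty$, then invoking continuity of angle in $\Sigma_x(Y)$, yields $\angle(\xi_x^+, \xi) \ge \pi/2$. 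The symmetric argument with $p_-$ gives $\angle(\xi_x^-, \xi) \ge \pi/2$, and the suspension identity $\angle(\xi_x^+, \xi) + \angle(\xi_x^-, \xi) = \pi$ forces both to equal $\pi/2$, so $\xi \in \Sigma$.

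For the reverse inclusion $\Sigma \subseteq \Sigma_x(X_0)$, I invoke Lemma \ref{lem:not-perp}. Given $v \in \Sigma$, join $\xi_x^+$ to $v$ by a geodesic $\{\xi_s\}_{s \in [0, \pi/2]}$ in the curvature $\ge 1$ Alexandrov space $\Sigma_x(Y)$, and set $\xi_i := \xi_{\pi/2 - 1/i}$. Then $\angle(\xi_x^+, \xi_i) = \pi/2 - 1/i < \pi/2$, so Lemma \ref{lem:not-perp} applies: the $Y$-geodesic in direction $\xi_i$ enters $Y \setminus X$ initially, and its projection under $\pi$ produces a direction $v_i \in \Sigma_x(X_0)$ with $\angle(\xi_x^+, \xi_i) + \angle(\xi_i, v_i) = \pi/2$, hence $\angle(\xi_i, v_i) = 1/i$. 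Then
\[
\angle(v_i, v) \;\le\; \angle(v_i, \xi_i) + \angle(\xi_i, v) \;=\; \frac{2}{i} \;\to\; 0,
\]
and closedness of $\Sigma_x(X_0)$ in $\Sigma_x(Y)$ yields $v \in \Sigma_x(X_0)$.

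The main subtlety will be getting the comparison angle inequality in the second paragraph pointing the right way: the crucial input is that $|\eta(p_+, t), X|$ is exactly $t$, not just bounded above by $t$, which forces the comparison angle at $x$ above $\pi/2$ in the limit and thereby excludes directions with a nontrivial component along either $\xi_x^\pm$. Everything else is a routine combination of the spherical suspension geometry of $\Sigma_x(Y)$ furnished by Lemma \ref{lem:preimage}(2) with the projection-to-$X_0$ construction of Lemma \ref{lem:not-perp}.
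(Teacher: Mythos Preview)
Your proof is correct and follows essentially the same route as the paper's: the suspension structure comes from Lemma~\ref{lem:preimage}(2), the inclusion $\Sigma\subseteq\Sigma_x(X_0)$ is obtained by approximating equatorial directions via Lemma~\ref{lem:not-perp}, and the chain $\Sigma_x(X_0)\subseteq\Sigma_x(X)\subseteq\Sigma$ closes the loop. Your argument for $\Sigma_x(X)\subseteq\Sigma$ via the distance identity $|\eta(p_\pm,t),X|=t$ is in fact a clean way to make explicit what the paper leaves terse in its last sentence.

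Two minor technical slips, neither fatal:
\begin{itemize}
\item The displayed law-of-cosines formula is the Euclidean one; for $M^2_{\tilde\kappa}$ with $\tilde\kappa\neq 0$ the expression is different. Your conclusion survives: in any model space, a triangle with sides $a=|x,x_i|\to 0$, $b=t$ fixed, and $c\ge t$ has angle at $x$ tending to at least $\pi/2$, since for instance in the spherical case $\cos C\le \cot(\sqrt{\tilde\kappa}\,t)\tan(\sqrt{\tilde\kappa}\,a/2)\to 0$. You might simply fix a small $t$ and quote the comparison inequality without writing the explicit formula.
\item In the last step, Lemma~\ref{lem:not-perp} requires a $Y$-geodesic with initial direction $\xi_i$, and not every $\xi_i\in\Sigma_x(Y)$ is a geodesic direction. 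The fix is standard: choose geodesic directions $\xi_i$ with $\xi_i\to v$ and $\angle(\xi_x^+,\xi_i)<\pi/2$ (such directions are dense in $\Sigma_x(Y)$), then apply Lemma~\ref{lem:not-perp} to get $v_i\in\Sigma_x(X_0)$ with $\angle(\xi_i,v_i)=\pi/2-\angle(\xi_x^+,\xi_i)\to 0$, hence $v_i\to v$.
\end{itemize}
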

\begin{proof}
 From Lemma \ref{lem:preimage}(2), we have 
$\Sigma_x(Y)=\{ \xi_x^+, \xi_x^-\}*\Sigma$ for some $\Sigma$.
By Lemma \ref{lem:not-perp}, 
$\Sigma=\{ v\in\Sigma_x(Y)\,|\, \angle(v,\xi_x^+)=\pi/2\}=\Sigma_x(X_0)$.
Since $\Sigma=\{ v\in\Sigma_x(Y)\,|\, \angle(v,\xi_x^-)=\pi/2\}$,
we also have $\Sigma=\Sigma_x(X)$.
\end{proof}

For $x\in X_0^1$, we consider the {\it radius}
of $\Sigma_x(Y)$ viewed from $\xi_x^+$ defined as 
\[
{\rm rad}(\xi_x^+):=\sup \{ d(\xi_x^+, \xi)\,|\,\xi\in\Sigma_x(Y)\}.
\]

\begin{lem} \label{lem:single-interior}
For  any $x\in X_0^1$ with  ${\rm rad}(\xi_x^+)>\pi/2$,  let $\xi\in\Sigma_x(Y)$ be  chosen in such a way  that  $\angle(\xi_x^+,\xi)>\pi/2$ and 
the geodesic
$\gamma$ in the direction $\xi$ is defined.
Then we have 

\begin{enumerate}
\item $x\in {\rm int}\,X_0^1\,;$
\item 
$\gamma((0,\e))\subset {\rm int} X$
for some $\e>0$.
\end{enumerate}
\end{lem}

\begin{proof} 
%Take $\xi\in\Sigma_x(Y)$ with $\angle(\xi_x^+,\xi)>\pi/2$.
Note that $\xi\in\Sigma_x(X)$. 
%We may assume that there is a geodesic
%$\gamma$ in the direction $\xi$.

\n
(1)\, Suppose  $x\notin {\rm int}\,X_0^1$. Then we have a sequence $w_m\in X_0^2$
with $\lim_{m\to\infty} w_m = x$.
Fix sufficiently small $\e>0$ such 
that $\tilde\angle \gamma_x^+(\e)x \gamma(\e)>\pi/2$.
Choose  perpendiculars $\gamma_{w_m}^{\pm}$ at $w_m$.
Note that 
$$
\lim_{m\to\infty} \tilde\angle\gamma_{w_m}^{\pm}(\e)w_m\gamma(\e)
=\tilde\angle \gamma_x^+(\e)x \gamma(\e)>\pi/2,
$$
which is a contradiction to 
$\angle(\gamma_{w_m}^{+}, \gamma_{w_m}^{-})=\pi$.

\n
(2)\,
Set $\hat \gamma:=\gamma\setminus\{ x\}$.
If the conclusion does not hold, 
we would have points $z_n\in\hat\gamma \cap X_0$
converging to $x$.
Let $\gamma_{z_n}^+$ be a perpendicular at $z_n$, and set $s_n:=|x,z_n|_Y$.
From the lower semicontinuity of angles, we have 
$\angle_{z_n}(\dot\gamma^+_{z_n}(0), \dot\gamma(s_n))> \pi/2+\e$ for large enough $n$, 
where $\e>0$ is independent of $n$, 
and hence
\[
     \wangle \gamma^+_{z_n}(s) z_n x \le
 \angle_{z_n}(\dot\gamma^+_{z_n}(0), -\dot\gamma(s_n))< \pi/2-\e
\]
for a fixed $s>0$.
Thus for large enough $n$,  we get
\[
  |x,\gamma_{z_n}^+(s)|<|z_n, \gamma_{z_n}^+(s)|=|\gamma_{z_n}^+(s),X|,
\]
which is a contradiction since $x\in X_0$.  
\end{proof}

 From Lemma \ref{lem:single-interior}, 
 we immediately have the following.

\begin{lem}\label{lem:intX01}
If $M_i\in \ca M(n,\kappa,\nu,\lambda,d)$ converges to a compact geodesic space 
$N$ while ${\rm inrad}(M_i)$ has a positive lower bound independent of $i$,
then ${\rm int}\, N_0^1$ is nonempty.
\end{lem}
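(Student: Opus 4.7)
The plan is to produce a point $x\in N_0^1$ at which the radius ${\rm rad}(\xi_x^+)$ exceeds $\pi/2$, and then invoke Lemma~\ref{lem:single-interior} to conclude $x\in{\rm int}\,N_0^1$. The positive lower bound on $\inrad(M_i)$ enters only through \eqref{eq:dimX}, to guarantee that ${\rm int}\,X\ne\emptyset$. Fixing any $q\in{\rm int}\,X$, the compactness of $X_0$ in $Y$ lets me choose a nearest point $x\in X_0$ with $s:=|q,x|_Y>0$.

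First I would verify that the minimal $Y$-geodesic $\gamma:[0,s]\to Y$ from $q$ to $x$ stays in $X$. If $\gamma$ ever left $X$, then by continuity it would have to cross $X_0$ at some time $t_1<s$, giving $|q,\gamma(t_1)|_Y=t_1<s=d_Y(q,X_0)$ and contradicting the choice of $s$. In particular, every minimal direction $\uparrow_x^q$ lies in $\Sigma_x(X)\subset\Sigma_x(Y)$.

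The key step is to compute the directional derivative of $f:=d_Y(q,\cdot)$ at $x$ along the perpendicular direction $\xi_x^+$. The triangle inequality yields $f(\gamma_x^+(\varepsilon))\le s+\varepsilon$. For the matching lower bound, any $Y$-path from $q$ to $\gamma_x^+(\varepsilon)$ must cross $X_0$ at some point $y$, and so has length at least $|q,y|_Y+|y,\gamma_x^+(\varepsilon)|_Y\ge s+\varepsilon$; here $|q,y|_Y\ge s$ because $y\in X_0$, and $|y,\gamma_x^+(\varepsilon)|_Y\ge\varepsilon$ follows from the warped-product structure $C=C_0\times_\phi[0,t_0]$ of the collar, in which vertical displacement is a lower bound for path length, together with the gluing description of $Y$ in Lemma~\ref{lem:YX0}. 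Hence $f(\gamma_x^+(\varepsilon))=s+\varepsilon$ and $df_x(\xi_x^+)=1$. The standard Alexandrov first-variation formula $df_x(v)=-\cos\angle(v,\Uparrow_x^q)$, valid in the Alexandrov space $Y$, then forces $\angle(\xi_x^+,\xi)=\pi$ for every $\xi\in\Uparrow_x^q$; in particular ${\rm rad}(\xi_x^+)\ge\pi$.

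It remains to rule out $x\in X_0^2$. By Lemma~\ref{lem:double-sus}, in the double case $\Sigma_x(Y)=\{\xi_x^+,\xi_x^-\}*\Sigma_x(X)$, and the unique direction at distance $\pi$ from $\xi_x^+$ is $\xi_x^-$. But $\uparrow_x^q\in\Sigma_x(X)$ lies on the suspension equator, at angle $\pi/2$ from $\xi_x^+$, contradicting $\angle(\xi_x^+,\uparrow_x^q)=\pi$. Therefore $x\in X_0^1$, and since ${\rm rad}(\xi_x^+)\ge\pi>\pi/2$, Lemma~\ref{lem:single-interior} gives $x\in{\rm int}\,X_0^1={\rm int}\,N_0^1$, proving nonemptiness. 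The delicate point is the lower bound $|y,\gamma_x^+(\varepsilon)|_Y\ge\varepsilon$ when $y$ and $x$ correspond to preimages in different components of $C_0$; this is where the explicit warped-product description of $C$ and Lemma~\ref{lem:YX0} are essential.
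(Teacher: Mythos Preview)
Your proof is correct and follows the same strategy as the paper: choose $q\in{\rm int}\,X$ (nonempty by \eqref{eq:dimX}), take a nearest point $x\in X_0$, and invoke Lemma~\ref{lem:single-interior}. The paper's two-line argument leaves the verification that $x\in X_0^1$ and ${\rm rad}(\xi_x^+)>\pi/2$ implicit, whereas you spell these out explicitly; note that your lower bound $|y,\gamma_x^+(\varepsilon)|_Y\ge\varepsilon$ already follows from the identity $|\eta(p,t),X|=t$ stated just before \eqref{eq:CtY} (since $y\in X_0\subset X$), so the ``delicate point'' you flag at the end is not actually delicate.
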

\begin{proof}
For any $z\in {\rm int} \,X$, take $x\in X_0$ such that 
$|z,x|=|z, X_0|$. Lemma \ref{lem:single-interior}
implies $x\in {\rm int} \,X_0^1$.
\end{proof}

\pmed

Recall that  $X_0^k\setminus \mathcal S^k$ is open in $X_0$.
Moreover using Lemma \ref{lem:single-interior}, we have the following. 

\begin{lem} \label{lem:X02}
 ${\rm int}\, X_0^2=X_0^2\setminus \mathcal S^2$ is open in $X$. In particular, we have 
 \begin{enumerate}
 \item for every $x\in X_0^2\setminus \mathcal S^2$, there is an $\epsilon>0$ such that
 $\dim_H X \cap B(x, \epsilon) = m-1;$
 \item  $\mathcal S$ is empty if and only if $X_0=X_0^1$.
\end{enumerate}
\end{lem}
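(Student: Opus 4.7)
The plan is to establish the openness of ${\rm int}\, X_0^2$ in $X$ via a first variation argument in the ambient Alexandrov space $Y$, exploiting the suspension structure at points of $X_0^2$ given by Lemma \ref{lem:double-sus}; the two ``in particular'' assertions will then follow readily.

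First I would fix $x \in {\rm int}\, X_0^2$, choose $r > 0$ with $B^{X_0}(x, 2r) \subset {\rm int}\, X_0^2$, and aim to show $B^Y(x, r) \cap X \subset X_0$. Suppose instead there exists $y \in B^Y(x, r) \cap (X \setminus X_0)$. Choosing $x_0 \in X_0$ that realizes $L := |y, x_0|_Y = |y, X_0|_Y < r$, a triangle inequality places $x_0$ in $X_0^2$. Let $\gamma$ be a $Y$-minimizer from $x_0$ to $y$ with initial direction $v := \dot\gamma(0) \in \Sigma_{x_0}(Y)$. The Alexandrov first variation, applied against sequences $z_j \in X_0$ whose directions $\uparrow_{x_0}^{z_j}$ converge to a given $u \in \Sigma_{x_0}(X_0)$, together with $|y, z_j|_Y \ge L$, forces $\angle(v, u) \ge \pi/2$ for every $u \in \Sigma_{x_0}(X_0)$.

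Now the suspension structure from Lemma \ref{lem:double-sus}, $\Sigma_{x_0}(Y) = \{\xi_{x_0}^+, \xi_{x_0}^-\} * \Sigma_{x_0}(X_0)$, implies that the only points at distance $\ge \pi/2$ from the entire equator are the two poles, hence $v \in \{\xi_{x_0}^+, \xi_{x_0}^-\}$. Since the same Lemma \ref{lem:double-sus} also gives $\Sigma_{x_0}(X) = \Sigma_{x_0}(X_0)$, the pole $v$ does not lie in the closed set $\Sigma_{x_0}(X)$, forcing $\gamma(t) \in Y \setminus X$ for all sufficiently small $t > 0$ (otherwise a sequence $\gamma(t_j) \in X$ with $t_j \to 0$ would produce $v \in \Sigma_{x_0}(X)$). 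A connectedness argument then finishes: by minimality $\gamma((0, L])$ avoids $X_0$, so this connected set lies in the open disjoint union $(X \setminus X_0) \sqcup (Y \setminus X)$; since $\gamma(L) = y \in X \setminus X_0$, the whole $\gamma((0, L])$ lies in $X \setminus X_0$, contradicting the escape into $Y \setminus X$ for small $t$. Thus $B^Y(x, r) \cap X \subset X_0 \cap B^Y(x, r) \subset {\rm int}\, X_0^2$.

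For (1), the inclusion $X \cap B^Y(x, r) \subset X_0$ gives $\dim_H \le m - 1$, while Lemma \ref{lem:eta'}(2) realizes ${\rm int}\, X_0^2$ locally as the isometric double-cover image of the $(m-1)$-dimensional Alexandrov space ${\rm int}\, C_0^2$, yielding the matching lower bound. For (2), the direction $X_0 = X_0^1 \Rightarrow \mathcal{S} = \emptyset$ is immediate; for the converse, $\mathcal{S} = \emptyset$ makes $X_0^2 = {\rm int}\, X_0^2$ both open in $X$ (from the main assertion) and closed in $X$ (as the complement of the open $X_0^1 = {\rm int}\, X_0^1$ inside the closed subset $X_0 \subset X$), hence clopen in the connected space $X$, which together with $X_0^2 \subsetneq X$ (since ${\rm int}\, X \neq \emptyset$ under the inradius assumption) forces $X_0^2 = \emptyset$. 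I expect the main obstacle to be the dual use of Lemma \ref{lem:double-sus}: the first variation constraint alone only yields $\angle(v, \Sigma_{x_0}(X_0)) \ge \pi/2$, and upgrading this to $v \in \{\xi_{x_0}^\pm\}$ requires the specific equatorial geometry of the suspension, after which the identity $\Sigma_{x_0}(X) = \Sigma_{x_0}(X_0)$ is needed a second time to exclude the poles as $X$-directions and push $\gamma$ into the cylinder; the connectedness of $\gamma((0, L])$ is then what converts this infinitesimal escape into a global contradiction.
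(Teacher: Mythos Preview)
Your proof is correct but takes a genuinely different route from the paper's.

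The paper argues structurally: for $x\in{\rm int}\,X_0^2$, it takes a small neighborhood $U$ of $x$ in $X_0$ contained in ${\rm int}\,X_0^2$, invokes Lemma~\ref{lem:eta'}(2) so that $\eta_0^{-1}(U)$ is a disjoint union of two open sets in $C_0$, and then observes that $\pi^{-1}(U)$ (the union of the two perpendicular tubes over $U$) is open in $Y$ with $\pi^{-1}(U)\cap X=U$; hence $U$ is open in $X$. Your argument instead supposes a point $y\in B^Y(x,r)\cap{\rm int}\,X$, takes its nearest point $x_0\in X_0$, and uses first variation together with the suspension structure of Lemma~\ref{lem:double-sus} to force $\dot\gamma(0)\in\{\xi_{x_0}^\pm\}$, after which the identification $\Sigma_{x_0}(X)=\Sigma_{x_0}(X_0)$ and a connectedness argument on $\gamma((0,L])\subset (X\setminus X_0)\sqcup(Y\setminus X)$ yields the contradiction. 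The paper's route is shorter once Lemma~\ref{lem:eta'} is available and exhibits the open neighborhood explicitly; your route is more self-contained for the main claim (it needs only Lemma~\ref{lem:double-sus}), and the first-variation-plus-suspension technique you use is exactly the mechanism that reappears later in the paper (e.g.\ in Lemma~\ref{lem:single-interior} and Lemma~\ref{lem:S1-basic1}). For part~(2), your clopen-in-$X$ argument via connectedness of $X$ is slightly cleaner than the paper's, which passes through the intermediate claim $X_0=X_0^2$.
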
 
\begin{proof}  
For any $x\in {\rm int}\, X_0^2$,  take 
an open neighborhood 
$U$ of $x$ in  $X_0$ such that $U\subset  {\rm int}\, X_0^2$. 
Since $\pi^{-1}(U)$ is the union of all perpendiculars 
$\gamma_z^+$ with $z\in U$, we certainly have 
$\pi^{-1}(U)\cap X=U$.
It suffices to show that $\pi^{-1}(U)$ is open in $Y$.
For any $y\in\pi^{-1}(U)$, let $z:=\pi(y)$.

If $y\in Y\setminus X$, then the $\pi$-image of a 
small neighborhood of $y$ in $Y$ is contained in $U$
by the continuity of $\pi:Y\setminus {\rm int}\,X\to
X_0$.

Suppose $y\in X$. Then we have $y=\pi(y)\in U$.
We assert that a small neighborhood of $y$ in $Y$
is contained in $\pi^{-1}(U)$.
Otherwise, there is a sequence $y_i$ converging to $y$
such that $\pi(y_i)\notin U$.
Since $\pi(y_i)\to y$,  in view of the above argument, $y_i$ must be in ${\rm int}\,X$. Let $z_i\in X_0$ be a nearest point of $X_0$ from 
$y_i$. Since $z_i\to y$, $z_i\in U$ for large enough $i$.
However $z_i\in {\rm int}\,X_0^1$ by Lemma \ref{lem:single-interior}. This is a contradiction.

Now $(1)$ is obvious. To show $(2)$, suppose $\ca S$ is empty.
If $X_0^2$ is not empty, then $X_0=X_0^2$. It follows from the above that 
$X=X_0$. This is a contradiction to \eqref{eq:dimX}.
\end{proof}  

\psmall

\begin{rem} \label{rem:X1-cust}
As the following example shows, 
the condition  $x\in {\rm int}\,X_0^1$
does not imply   
${\rm rad}(\xi_+)>\pi/2.$
See also Example \ref{ex:X01cosh}.
\end{rem}

\begin{ex} \label{ex:cusp} 
Let $I:=[0,2]$, and 
let $g:I\to \R_+$ be a smooth function satisfying the following properties:
\begin{enumerate}
 \item $g(1-x)=g(1+x)$ for $x\in[0,1],$
 \item $g^{-1}(0)=\pa I$, $g'=0$ on $\pa I$.
\end{enumerate}
Let $B_1$, $B_2$ be two copies of $\{(x,y)|\, x\in I,0\leq y\leq g(x)\}$. Let $A_\epsilon$ be the intersection of $\pa B(I\times\{(0, 0)\}, \e)\subset\mathbb{R}^3$ and the half space $z\leq0$ of $\mathbb{R}^3$.  
Then we glue $A_\epsilon$, $B_1$ and $B_2$ together by gluing $I\times\{ (-\epsilon,0)\}\subset A_\epsilon$ with $I\times\{0\}\subset B_1$,
and identifying $I\times\{ (\epsilon,0)\}\subset A_\epsilon$ with $I\times\{0\}\subset B_2$ respectively. Obviously, the resulting space, denoted as $M_\epsilon$, is in $\mathcal{M}_b(2,0,\lambda,3)$ for some $\lambda$.
Let $N$ be the gluing of $B_1$ and $B_2$ along $[0,2]\times\{0\}$. 
Then $M_\epsilon$ converges to $N$ as $\epsilon\to0$. Note that $N_0=N_0^1$.
At any cusp $p$ of the boundary $N_0$ of $N$,
$\Sigma_p(X)=\Sigma_p(X_0)$ is a point,
and $\Sigma_p(Y)$ is a circle of length $\pi$.
In particular, ${\rm rad}(\xi_+)=\pi/2$.
\end{ex}

\pmed
%%%%%%%%%%%%%%%%%%%%
\begin{center}
\begin{tikzpicture}
[scale = 0.5]

\filldraw[fill=lightgray] 
%[fill=gray, opacity=.1] 
 (-4,0.3) to [out=270, in=180] (-3,0)
 to [out=0, in=180] (0, 1)
to [out=0, in=180] (3,0)
to [out=0, in=270] (4,0.3)
 to [out=270, in=0] (3,-0.4)
to   [out=180, in=0] (-3,-0.4)
to   [out=180, in=270] (-4,0.3);

\filldraw[fill=lightgray, opacity=.1] 
 (-4,0.3) to [out=90, in=180] (-3,0.6)
 to [out=0, in=180] (0, 1.5)
to [out=0, in=180] (3,0.6)
to [out=0, in=90] (4,0.3)
 to [out=270, in=0] (3,-0.4)
to   [out=180, in=0] (-3,-0.4)
to   [out=180, in=270] (-4,0.3);

\filldraw[fill=lightgray] 
%\filldraw[fill=gray, opacity=.1] 
(7,0.5) to [out=0, in=180] (10, 1.3)
to [out=0, in=180] (13,0.5)
to [out=180, in=0] (10, -0.3)
to [out=180, in=0] (7,0.5);

%\draw [dotted] (-3,0.6) -- (3,0.6);
%\draw [dotted] (-3,0) -- (3,0);
\draw [-, thick] (-3,0) to [out=180, in=270] (-4,0.3);
\draw [-, thick] (3,0) to [out=0, in=270] (4,0.3);
\draw [-, thick] (-3,0.6) to [out=180, in=90] (-4,0.3);
\draw [-, thick] (3,-0.4) to [out=0, in=270] (4,0.3);
\draw [-, thick] (-3,-0.4) to [out=180, in=270] (-4,0.3);
\draw [-, thick] (-3,-0.4) -- (3,-0.4);
\draw [-, thick] (3,0.6) to [out=0, in=90] (4,0.3);
\draw [-, thick] (-3,0) to [out=0, in=180] (-0, 1);
\draw [-, thick] (3,0) to [out=180, in=0] (0, 1);

\draw [-, thick] (-3,0.6) to [out=0, in=180] (0, 1.5);
\draw [-, thick] (3,0.6) to [out=180, in=0] (0, 1.5);
\fill (-4.5, 0.3) circle (0pt) node [left] {$M_\e$};

\draw [very thick] (7,0.5) -- (13,0.5);
\draw [-, thick] (7,0.5) to [out=0, in=180] (10, 1.3);
\draw [-, thick] (10,1.3) to [out=0, in=180] (13,0.5);
\draw [-, thick] (13,0.5) to [out=180, in=0] (10, -0.3);
\draw [-, thick] (10,-0.3) to [out=180, in=0] (7,0.5);

\fill (13.5, 0.5) circle (0pt) node [right] {$N$};
\end{tikzpicture}
\end{center}
\vspace{-0.5cm}  
\begin{figure}[htbp]
  \centering
  \caption{}
  \label{fig:conv-boat}  
\end{figure}  

\psmall

From Lemmas \ref{lem:not-perp}, \ref{lem:double-sus} and 
\ref{lem:single-interior}, we obtain the following.

\begin{prop}    \label{prop:perp+horiz}
Let $x\in X_0$.
\begin{enumerate}
 \item For every  $\xi\in\Sigma_x(Y)\setminus \Sigma_x(X)$ which is not a perpendicular direction, there is a unique perpendicular direction  $\xi_x^{+}\in \Sigma_x(Y)$ at $x$
and a unique  $v\in\Sigma_x(X_0)$ such that
      \begin{equation*}
         \angle(\xi_x^{+},\xi)+\angle(\xi, v)=\angle(\xi_x^{+}, v)=\pi/2;
                      % \label{eq:sus3}
     \end{equation*}
 \item For any perpendicular direction $\xi_x^+\in\Sigma_x(Y)$, we have
     \[
              \Sigma_x(X_0) = \{ v\in\Sigma_x(Y)\,|\,\angle(\xi_x^{+}, v)=\pi/2\}.
    \]
 \item If $x\in X_0^1$ and $\xi_x^+$ is the perpendicular direction at $x$, then  
    \[
              \Sigma_x(X) = \{ v\in\Sigma_x(Y)\,|\,\angle(\xi_x^{+}, v)\ge \pi/2\}.
    \]
\end{enumerate}
\end{prop}

Now we roughly describe the spaces of directions at the points of $\ca S^1$.
In Section \ref{sec:Isometric}, we give more details on it.

\pmed
\begin{lem}  \label{lem:S1-basic1} 
For $x\in \ca S^1$, let $\xi_x^+$ be the perpendicular direction at $x$.
Then we have the following:
 \begin{enumerate}
  \item $x\in Y^{\rm sing}$. More precisely,
$\Sigma_x(Y)=\{ \xi_x^+\} \star \Sigma$ holds, where 
$\Sigma=\Sigma_x(X_0)=\Sigma_x(X)\,;$
  \item $\Sigma$ is an Alexandrov space with curvature $\ge 1$,
 \end{enumerate}
where $\{ \xi_x^+\} \star \Sigma$ denotes the
union of all geodesics
joining $\xi_x^+$ and the points of $\Sigma$.
\end{lem}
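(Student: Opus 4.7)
The plan is to first establish (1), which gives a half-spherical join structure on $\Sigma_x(Y)$, and then deduce (2) by a tangent-cone analysis using a sequence of double points approaching $x$.

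For (1): since $x\in\ca S^1\subset X_0^1\setminus\inte X_0^1$, the contrapositive of Lemma \ref{lem:single-interior} gives $\rad(\xi_x^+)\le\pi/2$, so every $\xi\in\Sigma_x(Y)$ satisfies $\angle(\xi_x^+,\xi)\le\pi/2$. For the equality $\Sigma_x(X)=\Sigma_x(X_0)=:\Sigma$, take $v=\lim_i\uparrow_x^{x_i}$ with $x_i\in X$: the bound $|\gamma_x^+(t),x_i(s)|\ge|\gamma_x^+(t),X|=t$, combined with the monotonicity of $\tilde\angle\gamma_x^+(t)xx_i(s)$ as $s,t\to 0$, forces $\angle(\xi_x^+,v)\ge\pi/2$; together with the radius bound this gives $\angle(\xi_x^+,v)=\pi/2$, so $v\in\Sigma_x(X_0)$ by Proposition \ref{prop:perp+horiz}(2). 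Any $\xi\in\Sigma_x(Y)\setminus(\{\xi_x^+\}\cup\Sigma)$ is non-perpendicular (by uniqueness of $\xi_x^+$, which follows from $\sharp\eta_0^{-1}(x)=1$) and not in $\Sigma_x(X)=\Sigma$, so Proposition \ref{prop:perp+horiz}(1) places $\xi$ on a length-$\pi/2$ geodesic from $\xi_x^+$ to some $v\in\Sigma$; this yields $\Sigma_x(Y)=\{\xi_x^+\}\star\Sigma$. Finally $x\in Y^{\rm sing}$ follows since $\rad(\xi_x^+)\le\pi/2$ precludes $\Sigma_x(Y)\cong\mathbb S^{m-1}$.

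For (2): since $x\in\ca S^1\subset\ol{X_0^2}$, pick $w_m\in X_0^2$ with $w_m\to x$. By Lemma \ref{lem:double-sus}, each $\Sigma_{w_m}(X_0)$ is Alexandrov with curvature $\ge 1$, so the change-of-coordinates $(r,\theta,b)\mapsto(r\cos\theta,r\sin\theta,b)$ splits the tangent cone as $T_{w_m}(Y)=K(\Sigma_{w_m}(Y))=\R\times K(\Sigma_{w_m}(X_0))$ in the Euclidean sense. Rescaling $Y$ at $x$ by $1/r_m$ with $r_m=|x,w_m|$ and passing to the iterated blow-up limit, the two perpendicular directions $\xi_{w_m}^{\pm}$ at $w_m$ merge into the unique $\xi_x^+$ at $x$, and one identifies $T_x(Y)=\R_+\times K(\Sigma)$. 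Crucially, the equality $\Sigma_x(X)=\Sigma_x(X_0)$ established in (1) is what ensures that the contribution of the $X$-side to $T_x(Y)$ is absorbed into the interface of the warped-cylinder part and does not spoil the splitting. Since $T_x(Y)$ is Alexandrov with curvature $\ge 0$, so is the factor $K(\Sigma)$, and therefore $\Sigma$ is Alexandrov with curvature $\ge 1$.

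The main obstacle is the rigorous identification $T_x(Y)=\R_+\times K(\Sigma)$: one must carefully control the merging of $\xi_{w_m}^{\pm}$ into $\xi_x^+$ and the convergence of the bases $\Sigma_{w_m}(X_0)$ to $\Sigma$ through a judicious choice of rescaling, while simultaneously ensuring that the $X$-part does not contribute an extra ``interior'' piece. This analysis anticipates the isometric involution $f_*$ on $\Sigma_p(C_0)$ to be constructed in Theorem \ref{thm:X1-f}, under which $\Sigma=\Sigma_p(C_0)/f_*$ realizes the desired Alexandrov structure as a quotient of an Alexandrov space with curvature $\ge 1$.
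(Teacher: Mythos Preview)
Your proof of (1) is correct and follows the same route as the paper: $x\notin\inte X_0^1$ forces $\rad(\xi_x^+)=\pi/2$ via Lemma~\ref{lem:single-interior}, and then Proposition~\ref{prop:perp+horiz} yields $\Sigma_x(X)=\Sigma_x(X_0)$ together with the join structure.

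For (2), however, you have taken an unnecessarily complicated and ultimately incomplete detour. The iterated blow-up through $w_m\in X_0^2$ is never made rigorous (you yourself flag the obstacle of controlling the merger of $\xi_{w_m}^{\pm}$ and the convergence of the bases), and your closing appeal to Theorem~\ref{thm:X1-f} is a forward reference to a result proved much later in the paper. The paper's argument is immediate from what you have already established in (1): since $\rad(\xi_x^+)=\pi/2$, the set $\Sigma=\Sigma_x(X_0)=\Sigma_x(X)$ coincides with $\{v\in\Sigma_x(Y):\angle(\xi_x^+,v)\ge\pi/2\}$. This superlevel set is \emph{convex} in the Alexandrov space $\Sigma_x(Y)$ (curvature $\ge 1$): for $a,b\in\Sigma$ and $m$ on a minimal geodesic $ab$, Toponogov comparison gives $|\xi_x^+,m|\ge|\tilde\xi_x^+,\tilde m|$, and on $\mathbb S^2$ the far hemisphere $\{|\tilde\xi_x^+,\cdot|\ge\pi/2\}$ is convex, so $|\tilde\xi_x^+,\tilde m|\ge\pi/2$. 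A closed convex subset of an Alexandrov space with curvature $\ge 1$ is itself an Alexandrov space with curvature $\ge 1$, which is (2) with no rescaling, no double points, and no forward references.
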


It should be noted that $\{ \xi_x^+\} \star \Sigma$ is not the half suspension.

\begin{proof}[Proof of Lemma \ref{lem:S1-basic1}]
By Lemma \ref{lem:single-interior},
we have ${\rm rad}(\xi_x^+)=\pi/2$, 
and hence $x\in Y^{\rm sing}$.
It follows that $\Sigma_x(X_0)$ is the farthest set from $\xi_x^+$ with 
distance $\pi/2$  and therefore it is convex in 
$\Sigma_x(Y)$. Now (2) is obvious.
Proposition \ref{prop:perp+horiz} then implies (1).
\end{proof}

\psmall

From now on, we denote by $B^{X_0}(x,r)$ the $r$-ball around $x$ with respect to the 
extrinsic metric induced from $X$
(see also Lemma \ref{lem:same-topology}).

\pmed
\begin{lem} \label{lem:S2character}
For any $x\in\ca S^2$, there exists $\e>0$ such 
that for all
$y\in B^{X_0}(x, \e)$, we have   ${\rm rad}(\xi_y^+)>\pi-\tau_x(\e)>\pi/2$.

In particular,  
$B^{X_0}(x, \e)\cap X_0^1\subset {\rm int }\, X_0^1$.
\end{lem}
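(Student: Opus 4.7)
The plan is to exhibit, at each $y \in B^{X_0}(x,\varepsilon)$ and for each perpendicular direction $\xi_y^+$, an almost antipodal direction in $\Sigma_y(Y)$ by using the two opposing perpendiculars at $x$ itself. Since $x \in \mathcal{S}^2 \subset X_0^2$, Lemma \ref{lem:preimage} gives $\eta_0^{-1}(x) = \{p_+, p_-\}$; I will fix a small $t \in (0, t_0]$ with $t \le \phi(t_0)|p_+, p_-|/2$ so that the endpoints $q_\pm := \eta(p_\pm, t)$ satisfy $|q_+, q_-| = 2t$, while $\angle(\xi_x^+, \xi_x^-) = \pi$ makes the broken geodesic $q_- x q_+$ a $Y$-geodesic of length $2t$.

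The heart of the argument will be the following compactness claim: for every $\delta > 0$ there exists $\varepsilon > 0$ such that for every $y \in B^{X_0}(x, \varepsilon)$ and every perpendicular direction $\xi_y^+$ at $y$, writing $z_y := \gamma_y^+(t)$, one has $\min(|z_y, q_+|, |z_y, q_-|) < \delta$. I plan to prove this by contradiction: if $y_n \to x$ and $\xi_{y_n}^+$ were perpendiculars with $\min_\pm |z_{y_n}, q_\pm| \ge \delta_0$, then extracting a subsequential limit $z_{y_n} \to z_\infty \in Y$, Lemma \ref{lem:loc-isom} realizes each $z_{y_n} = \eta(p_{y_n}, t)$ with $\eta_0(p_{y_n}) = y_n$; by compactness of $C_0$ one may assume $p_{y_n} \to p_\infty$, and continuity of $\eta_0$ forces $\eta_0(p_\infty) = x$, hence $p_\infty \in \{p_+, p_-\}$ and $z_\infty \in \{q_+, q_-\}$, contradicting the lower bound on $|z_{y_n}, q_\pm|$.

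Granting the claim, for $y \in B^{X_0}(x, \varepsilon)$ and any perpendicular $\xi_y^+$, after relabelling I may assume $|z_y, q_+| < \delta$, which gives $|z_y, q_-| \ge 2t - \delta$, while $|y, z_y| = t$ and $|y, q_-| \in [t, t + \varepsilon]$. The $M_{\tilde\kappa}^2$-comparison triangle with these side lengths degenerates to a segment as $\varepsilon, \delta \to 0$, so $\tilde\angle^Y z_y y q_- \ge \pi - \tau_x(\varepsilon)$, and Alexandrov comparison in $Y$ (curvature $\ge \tilde\kappa$) yields $\angle(\xi_y^+, \uparrow_y^{q_-}) \ge \pi - \tau_x(\varepsilon)$. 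This gives ${\rm rad}(\xi_y^+) > \pi - \tau_x(\varepsilon) > \pi/2$ once $\varepsilon$ is chosen small, and the ``in particular'' assertion then follows directly from Lemma \ref{lem:single-interior}. I expect the compactness claim to be the main obstacle, since one must rule out that the perpendicular direction $\xi_y^+$ can pick up an arbitrarily tilted orientation within the cone $T_x(Y)$ as $y \to x$; this is resolved by transferring the question to the warped-product factor $C_0$ via Lemma \ref{lem:loc-isom} and exploiting finiteness of $\eta_0^{-1}(x)$.
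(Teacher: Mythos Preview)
Your proposal is correct and follows essentially the same route as the paper's proof: both fix the two opposing perpendicular endpoints $q_\pm=\gamma_x^\pm(t)$, argue that any perpendicular at a nearby $y$ must point close to one of them, and then use a degenerate comparison triangle to force $\angle(\xi_y^+,\uparrow_y^{q_\mp})>\pi-\tau$. The only difference is one of detail: you make the convergence of perpendiculars explicit by lifting to $C_0$ via Lemma~\ref{lem:loc-isom} and invoking compactness of $\eta_0^{-1}(x)$, whereas the paper simply asserts ``we may assume that the perpendicular $\gamma_{x_i}^+$ converges to $\gamma_x^+$'' and proceeds by contradiction on a sequence.
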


\begin{proof}
Take the perpendiculars $\gamma_x^{\pm}$ at $x$, and fix $t>0$
such that $\tilde\angle \gamma_x^+(t) x\gamma_x^{-}(t) =\pi$.
Choose a sequence $x_i\in X_0^1$ with $x_i\to x$.
Then we have $\tilde\angle \gamma_x^+(t) x_i\gamma_x^{-}(t) >\pi-o_i$.
%for all large enough $i$.
We may assume that the perpendicular $\gamma_{x_i}^+$ converges to 
$\gamma_x^+$ as $i\to\infty$.
It follows that $\angle(\dot\gamma_{x_i}^+(0), \uparrow_{x_i}^{\gamma_x^{-}(t)})>\pi-o_i$.
%If $x_n\in X_0^1$,
Lemma \ref{lem:single-interior} shows that 
$x_i\in {\rm int}\, X_0^1$.
This completes the proof by contradiction.
\end{proof}
\psmall
The following lemma is an immediate consequence
of Lemma \ref{lem:S2character}.

\begin{lem} \label{lem:closed-S1}
$\ca S^1$ is closed in $X_0$. 
\end{lem}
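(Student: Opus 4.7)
My plan is to prove $\mathcal{S}^1$ is closed by a direct sequential argument: take any sequence $x_i \in \mathcal{S}^1$ converging to some $x \in X_0$ and verify that $x \in \mathcal{S}^1 = \partial X_0^1 \cap X_0^1$. Since membership in $\mathcal{S}^1$ has two parts, I will verify them separately: first that $x \in X_0^1$, and then that $x$ lies in the topological boundary of $X_0^1$ in $X_0$.

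The first part is the only step with real content, and it is where Lemma \ref{lem:S2character} does all the work. Suppose for contradiction that $x \in X_0^2$. Then Lemma \ref{lem:S2character} supplies an $\varepsilon>0$ with the property that
\[
B^{X_0}(x,\varepsilon)\cap X_0^1 \subset \mathrm{int}\, X_0^1.
\]
For all sufficiently large $i$, we have $x_i \in B^{X_0}(x,\varepsilon)$; moreover $x_i \in \mathcal{S}^1 \subset X_0^1$, so the displayed inclusion forces $x_i \in \mathrm{int}\, X_0^1$. This directly contradicts $x_i \in \partial X_0^1$, which is part of the definition of $\mathcal{S}^1$. Hence $x \in X_0^1$.

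For the second part, since each $x_i \in \partial X_0^1$ is in particular in the closure of $X_0 \setminus X_0^1 = X_0^2$, I can choose $y_i \in X_0^2$ with $|x_i,y_i| < 1/i$. The triangle inequality then gives $|x,y_i|\le |x,x_i|+1/i \to 0$, so $x$ is a limit of points of $X_0^2$, hence $x\in \overline{X_0^2} \subset \partial X_0^1 \cup X_0^2$. Combined with $x\in X_0^1$, this gives $x \in \partial X_0^1 \cap X_0^1 = \mathcal{S}^1$, as desired.

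I expect the argument to be short; the only non-cosmetic ingredient is Lemma \ref{lem:S2character}, which was designed precisely to prevent double points from accumulating on $\mathcal{S}^1$. Note that the analogous statement for $\mathcal{S}^2$ fails (as flagged in Remark \ref{rem:S2=notclosed}), which is consistent with the asymmetry of Lemma \ref{lem:S2character}: the ${\rm rad}(\xi_y^+)>\pi/2$ conclusion there pushes nearby single points into the interior of $X_0^1$, but no parallel mechanism forces nearby single points to be double points near a single singular point.
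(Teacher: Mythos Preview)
Your proof is correct, with one small caveat: Lemma~\ref{lem:S2character} is stated only for $x\in\ca S^2$, not for all of $X_0^2$. This is harmless here, since the $x_i\in\ca S^1\subset X_0^1$ converge to $x$, so $x\in\overline{X_0^1}$; if $x\in X_0^2$ this forces $x\in\overline{X_0^1}\cap X_0^2\subset\ca S\cap X_0^2=\ca S^2$, and the lemma applies. You might make this explicit.

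Your route differs from the paper's. The paper argues directly with the radius: for each $x_i\in\ca S^1$ one has ${\rm rad}(\xi_{x_i}^+)=\pi/2$ (by Lemma~\ref{lem:single-interior}/Lemma~\ref{lem:S1-basic1}), and a semicontinuity argument gives ${\rm rad}(\xi_x^+)=\pi/2$, which forces $x\in X_0^1$. The paper also observes that $\ca S=\partial X_0^1$ is automatically closed (being a topological boundary), so $x\in\ca S$ comes for free and only $x\in X_0^1$ needs verification; your second step is correct but could be shortened to this remark. Your approach via Lemma~\ref{lem:S2character} is really a packaged form of the same radius idea (that lemma is itself proved by the radius argument), so the two proofs are close cousins rather than genuinely different strategies.
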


Lemma \ref{lem:S2character} also  
implies the following immediately.

\begin{cor} \label{cor:nowhere}
$\ca S^2$ is a nowhere dense subset of $N_0$.
\end{cor}
\pmed
\n 
{\bf Tangent cones $T_x(X_0)$ and $T_x(X)$.}\,

\begin{defn} For  $x\in X_0$, 
let $\sigma:[0, \e]\to X_0$ be a Lipschitz curve starting from $x$ in $X_0$
and having a unique direction $[\sigma]\in \Sigma_x(X_0)$  in the sense
that for any sequence $t_i\to 0$, $\uparrow_x^{\sigma(t_i)}$ converges to $[\sigma]$
in $\Sigma_x(Y)$.
We call such a direction $[\sigma]$ an {\it intrinsic direction} at $x$.  
Let $\Sigma_x^0(X_0)$ denote the set of all intrinsic directions $[\sigma]\in\Sigma_x(X_0)$.
The set $\Sigma_x^0(X)$ of all intrinsic directions $[\sigma]\in\Sigma_x(X)$
is defined similarly.
\end{defn}

The next result shows that every element in $\Sigma_x(X_0)$ or $\Sigma_x(X)$ can be approximated by
intrinsic directions.

\begin{prop} \label{prop:closure} 
For every $x\in X_0$, 
 $\Sigma_x(X_0)$ and $\Sigma_x(X)$ coincide with the closure of $\Sigma_x^0(X_0)$ and 
$\Sigma_x^0(X)$ in $\Sigma_x(Y)$ respectively.
\end{prop}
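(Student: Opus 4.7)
The inclusion $\overline{\Sigma_x^0(X_0)}\subseteq\Sigma_x(X_0)$, and the analogous one for $X$, is immediate from the definitions: an intrinsic direction $[\sigma]=\lim_{t\to 0^+}\uparrow_x^{\sigma(t)}$ with $\sigma(t)\in X_0$ is itself a limit of directions to points of $X_0$, so it lies in $\Sigma_x(X_0)$, and a diagonal extraction shows $\Sigma_x(X_0)$ is closed in $\Sigma_x(Y)$. Thus only the density $\Sigma_x(X_0)\subseteq\overline{\Sigma_x^0(X_0)}$ needs work.

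Given $\xi\in\Sigma_x(X_0)$, write $\xi=\lim_n\uparrow_x^{x_n}$ with $x_n\in X_0$, $x_n\to x$. The plan is to lift this sequence to the Alexandrov space $C_0$ and push Alexandrov geodesics forward via $\eta_0$. Using Lemma~\ref{lem:preimage} and continuity of $\eta_0$, I fix $p\in\eta_0^{-1}(x)$ and, after extraction, choose lifts $p_n\in\eta_0^{-1}(x_n)$ with $p_n\to p$ in $C_0$ (any subsequential limit of the at-most-two-point fiber $\eta_0^{-1}(x_n)$ must map to $x$). Since $C_0$ is Alexandrov of curvature $\ge\nu$, there is a minimal geodesic $\tilde\gamma_n:[0,L_n]\to C_0$ from $p$ to $p_n$, with $L_n=|p,p_n|_{C_0}\to 0$. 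Setting $\sigma_n:=\eta_0\circ\tilde\gamma_n$, one obtains a $1$-Lipschitz curve in $X_0$ from $x$ to $x_n$ of length $L_n$. Because $\tilde\gamma_n$ is a geodesic in an Alexandrov space and $\eta_0$ preserves the length of curves, $\sigma_n$ has a well-defined initial direction $[\sigma_n]\in\Sigma_x(Y)$, equal to $d\eta_0(\dot{\tilde\gamma}_n(0))$ via the $1$-Lipschitz differential $d\eta_0:\Sigma_p(C_0)\to\Sigma_x(N)$ introduced just before Theorem~\ref{thm:S1-nontrivial-f*}; in particular $[\sigma_n]\in\Sigma_x^0(X_0)$.

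Passing to a further subsequence, $\dot{\tilde\gamma}_n(0)=\uparrow_p^{p_n}\to\tilde\xi\in\Sigma_p(C_0)$, and by continuity of $d\eta_0$, $[\sigma_n]\to d\eta_0(\tilde\xi)$. The heart of the argument is the identification $d\eta_0(\tilde\xi)=\xi$, which I would establish by a blow-up: under $(L_n^{-1}Y,x)\to(T_xY,o_x)$, the unit-length rescaled curves $\sigma_n$ converge to a $1$-Lipschitz curve in $T_xY$ from $o_x$ to the blow-up limit $\xi^*$ of $x_n$; the parallel blow-up $(L_n^{-1}C_0,p)\to(T_pC_0,o_p)$ converts $\tilde\gamma_n$ into the straight radial ray in direction $\tilde\xi$, and length preservation under $\eta_0$ forces the limit curve on the $Y$ side to be the radial ray from $o_x$ to $\xi^*$. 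Its direction is at once $d\eta_0(\tilde\xi)$ and $\xi=\uparrow_{o_x}^{\xi^*}$, so $d\eta_0(\tilde\xi)=\xi$, as desired. The $\Sigma_x(X)$ case proceeds by the same strategy with $\eta:C\to Y$ in place of $\eta_0$: for $y_n\in X\setminus X_0$, the local isometry $\eta:C\setminus C_0\to Y\setminus X$ of Lemma~\ref{lem:loc-isom} provides a lift to the ambient Alexandrov space $C$, where a minimal geodesic from a lift of $x$ to the lift of $y_n$ produces a curve whose image in $Y$ lies in $X$ for small time and plays the role of $\sigma_n$.

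The main obstacle will be verifying $d\eta_0(\tilde\xi)=\xi$ in the blow-up, which amounts to showing $|x,x_n|_Y/L_n\to 1$, i.e.\ that $\sigma_n$ is asymptotically length-minimizing in $Y$. This forces the lifts $p_n$ to be chosen to minimize $|p,p_n|_{C_0}$ over the at-most-two-point fiber and uses the infinitesimal compatibility of $\eta_0$ with the warped product structure of $C$ at $p\in C_0$.
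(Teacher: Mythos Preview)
Your approach for $\Sigma_x(X_0)$ is valid and genuinely different from the paper's. The paper produces intrinsic directions by taking $Y$-geodesics $\gamma$ from $x$ heading into the collar (so $\angle(\xi_x^+,\dot\gamma(0))<\pi/2$) and projecting via $\pi:Y\setminus{\rm int}\,X\to X_0$; Lemma~\ref{lem:not-perp} shows $\sigma=\pi\circ\gamma$ has a unique direction $[\sigma]$ with $\angle(\xi_x^+,\dot\gamma(0))+\angle(\dot\gamma(0),[\sigma])=\pi/2$, and letting $\dot\gamma(0)$ approach a given $v\in\Sigma_x(X_0)$ gives density. Your route via $C_0$-geodesics pushed forward by $\eta_0$ yields the same curves (cf.\ Remark~\ref{rem:lift-sigma}) but justifies the existence and identification of $[\sigma_n]$ through the differential $d\eta_0$ and the ratio $|x,x_n|_Y/|p,p_n|_{C_0}\to1$, which is precisely Proposition~\ref{prop:length'}. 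That proposition appears after the present one but does not depend on it, so there is no circularity, only a forward reference; the paper's argument has the advantage of using only the already-established Lemma~\ref{lem:not-perp}.

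The $\Sigma_x(X)$ case contains a genuine error. You propose to lift $y_n\in X\setminus X_0={\rm int}\,X$ via the local isometry $\eta:C\setminus C_0\to Y\setminus X$ of Lemma~\ref{lem:loc-isom}, but the image of that map is $Y\setminus X$, the \emph{collar region}, not the interior of $X$. No point of ${\rm int}\,X$ lies in $\eta(C)$, so there is no lift of $y_n$ to $C$ and the argument collapses. The paper handles this case directly in $Y$: a $Y$-geodesic $\gamma$ from $x$ with $\gamma((0,\e))\subset{\rm int}\,X$ is already a Lipschitz curve in $X$ with unique direction $\dot\gamma(0)\in\Sigma_x(X)$, hence an intrinsic direction; these directions, together with the set $\Sigma_0$ obtained above, are dense in $\Sigma_x(X)$.
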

\begin{proof}
By Lemma \ref{lem:not-perp}, the set $\Sigma_0$ of all intrinsic directions $[\sigma]$ at $x$ such that $\sigma$ is written as $\sigma=\pi(\gamma)$ for $Y$-geodesics $\gamma$ in  $Y\setminus {\rm int}\, X$
is dense in $\Sigma_x(X_0)$. 
Moreover, if $\Sigma$ denotes the set of all directions of $Y$-geodesics
$\gamma:[0,\e)\to X$ such that $\gamma((0,\e))\subset {\rm int}\, X$
for some $\e>0$, 
then the union of $\Sigma$ and $\Sigma_0$ is dense in $\Sigma_x(X)$.
\end{proof}
\pmed

\begin{lem}$($\cite{YZ:inrdius}$)$ \label{lem:deviation}
For arbitrary $x, y\in X$ and any  minimal geodesic
$\mu:[0,\ell]\to Y$ joining them, let $\sigma=\pi\circ\mu$
and set $\rho(t)=|\mu(t), X|$. Then we have
\begin{enumerate}
 \item  $\rho(t)\le Ct|x,y|_Y$, where $C=C(\lambda)$. In particular,
\[ \max \rho\le O(|x,y|_Y^2);\]
 \item $\angle(\mu'(0), [\sigma])\le O(|x,y|_Y);$
 \item$\displaystyle {\left| \frac{L(\sigma)}{L(\mu)} - 1\right|< O(|x,y|_Y^2)}$.
\end{enumerate}
\end{lem}

\begin{proof}
(1)\, We may assume $\mathring{\mu}$
does not meet $X$. Let $\tilde\mu:=\eta^{-1}\circ\mu$.
% joining $p, q\in C_0$.
We apply \cite[Lemmas 4.1, 4.2]{YZ:inrdius}
to $\tilde\rho(s):=|\tilde\mu(s), C_0|=\rho(s)$ to 
obtain $\rho(t)\le t\rho'(0)\le Ct|x,y|_Y$.

(2) and (3) are  identical with
\cite[Lemma 4.14]{YZ:inrdius}.
\end{proof}

\psmall
\begin{prop} \label{prop:tang-cone}
For every $x\in X_0$, under the convergence  
\[
    \lim_{\e\to 0}\biggl(\frac{1}{\e} Y, x\biggr) =(T_x(Y),o_x),
\]
$(X_0, x)$ and $(X, x)$ converge to the Euclidean cones $K(\Sigma_x(X_0)),o_x)$ and $K(\Sigma_x(X)),o_x)$ respectively as $\e\to 0$.
\end{prop}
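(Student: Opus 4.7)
The plan is to verify the pointed Hausdorff identification inside the ambient rescaling $\frac{1}{\e}Y\to T_x(Y)=K(\Sigma_x(Y))$. Since $\Sigma_x(X_0)$ and $\Sigma_x(X)$ are closed subsets of $\Sigma_x(Y)$ by their very definition (each is a set of limits of directions $\uparrow_x^{p_i}$), the Euclidean sub-cones $K(\Sigma_x(X_0))$ and $K(\Sigma_x(X))$ are closed in $T_x(Y)$ and inherit their cone metric from $K(\Sigma_x(Y))$. Accordingly it suffices to show that, in any bounded ball $B(o_x,R)\subset T_x(Y)$, the closed subsets $\frac{1}{\e}X_0$ and $\frac{1}{\e}X$ converge in the Hausdorff distance to the respective sub-cones.

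For the $\limsup$ inclusion, let $\e_i\to 0$ and take $y_i\in X_0$ with $|x,y_i|_Y/\e_i\to r<\infty$. If $r=0$, the limit is $o_x\in K(\Sigma_x(X_0))$. If $r>0$, then after passing to a subsequence $\uparrow_x^{y_i}\to\xi$ in $\Sigma_x(Y)$; by the definition of $\Sigma_x(X_0)$ recalled in Subsection \ref{ssec:Alex} we have $\xi\in\Sigma_x(X_0)$, and the limit point $r\xi$ lies in $K(\Sigma_x(X_0))$. The identical argument applied to sequences $y_i\in X$ yields the analogous inclusion for $X$.

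For the $\liminf$ inclusion, fix $v=r\xi\in K(\Sigma_x(X_0))$ with $r>0$ and $\xi\in\Sigma_x(X_0)$. By Proposition \ref{prop:closure} I may approximate $\xi$ by intrinsic directions $[\sigma_n]\in\Sigma_x^0(X_0)$, each represented by a Lipschitz curve $\sigma_n\colon[0,a_n]\to X_0$ with $\uparrow_x^{\sigma_n(t)}\to[\sigma_n]$ as $t\to 0^+$. For each small $\e>0$ pick $t=t_n(\e)$ with $|x,\sigma_n(t)|_Y=r\e$; then in the rescaled metric $\frac{1}{\e}Y$ the point $\sigma_n(t_n(\e))$ has distance $r$ from $x$ and direction converging to $[\sigma_n]$, so it converges in $T_x(Y)$ to $r[\sigma_n]$ as $\e\to 0$. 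A diagonal choice $\e=\e(n)\to 0$ then produces points in $\frac{1}{\e(n)}X_0$ converging to $v$. The verbatim argument with $\Sigma_x^0(X)$ in place of $\Sigma_x^0(X_0)$ handles $X$: the subset $X_0\subset X$ already realizes $K(\Sigma_x(X_0))\subset K(\Sigma_x(X))$, while directions genuinely interior to $X$, produced by $Y$-geodesics that immediately enter ${\rm int}\,X$, fill out the remainder through the same density.

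The main (mild) obstacle is confined to Proposition \ref{prop:closure}: one must know that intrinsic directions are dense in both $\Sigma_x(X_0)$ and $\Sigma_x(X)$, which is granted. Lemma \ref{lem:deviation} is available if a quantitative approximation of $Y$-geodesics by projected curves in $X_0$ is desired, but for the qualitative cone identification here it is not strictly needed; once density of $\Sigma_x^0(X_0)$ and $\Sigma_x^0(X)$ is in hand, both inclusions are direct unwindings of the definitions of the spaces of directions and the tangent cone.
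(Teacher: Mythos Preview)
Your argument is correct and more streamlined than the paper's. You work purely by Hausdorff convergence of the closed subsets $X_0,X\subset Y$ under the ambient rescaling $(\frac{1}{\e}Y,x)\to(T_x(Y),o_x)$: the $\limsup$ inclusion is the very definition of $\Sigma_x(X_0)$ and $\Sigma_x(X)$ as sets of limiting directions, and the $\liminf$ follows from the density of intrinsic directions (Proposition~\ref{prop:closure}) by evaluating the representing Lipschitz curves at the appropriate scale. The paper instead refers to \cite[Proposition~4.12]{YZ:inrdius} for the $X_0$ case (via Lemma~\ref{lem:deviation}), and for $X$ splits on whether ${\rm rad}(\xi_x^+)=\pi/2$; when ${\rm rad}(\xi_x^+)>\pi/2$ it constructs an explicit approximation $\varphi_\e:B^{X/\e}(x,R)\to B^{K(\Sigma_x(X))}(o_x,R)$ using a $\delta$-dense net of directions in the convex set $\Sigma_x(Y)\setminus\mathring B(\xi_x^+,\pi/2)$ realized by $Y$-geodesics entering ${\rm int}\,X$, and checks it is an $O_R(\e,\delta)$-approximation. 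Your route avoids both the case split and the explicit net; the paper's construction has the minor advantage of producing a concrete approximation map with quantified error, a template reused verbatim in the proof of Theorem~\ref{thm:alex}(1). One cosmetic point: your diagonal ``$\e=\e(n)\to 0$'' should be rephrased to cover every sequence $\e_i\to 0$ --- simply fix $n$ large first so that $r[\sigma_n]$ is $\delta/2$-close to $v$, and then note that $\sigma_n(t_n(\e))$ is $\delta/2$-close to $r[\sigma_n]$ for all sufficiently small $\e$.
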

\begin{proof} 
We abbreviate $|\,\,,\,\,|_{Y/\e}$ as 
$|\,\,,\,\,|_\e$.
%Note that 
%$\Sigma_x(X)=\Sigma_x(Y)\setminus\mathring{B}(\xi_x^+,\pi/2)$ is convex, and hence 
%is an Alexandrov space with curvature $\ge 1$.
For $\delta>0$, choose a $\delta$-dense set $\{ w_j\}_{j=1}^J$ of $\Sigma_x(X)$ in such a way that:
\begin{itemize}
\item $\{ w_j\}_{j=1}^K$ \,$(K\le J)$ is a  
 $\delta$-dense subset of $\Sigma_x(X_0)$
for some $K\,;$
\item $\{ w_j\}_{j=K+1}^J\subset \Sigma_x(X)\setminus \Sigma_x(X_0)$. 
\end{itemize}
Let $\zeta>0$ be small 
enough, which will be determined later.
For each $1\le j\le J$, choose $\xi_j\in\Sigma_x(Y)$ with $|\xi_j,w_j|<\zeta$  satisfying
\begin{itemize}
\item the geodesic $\gamma_j$ in the direction $\xi_j$ is defined on an interval $[0,\e_j]\,;$
\item $\gamma_j((0,\e_j])\subset Y\setminus X$
for all $1\le j\le K\,;$
\item $\gamma_j((0,\e_j])\subset {\rm int}X$
for all $K+1\le j\le J$.
\end{itemize}
%%%%%%%%%%%%%%%%%%%
\pmed
\n
For any fixed $R>0$, choose small enough 
$\e >0$ such that $R\e<\min \{ \e_j\}$.
We consider the curve 
$\sigma_j:=\pi\circ\gamma_j$ in 
$X$\,$(1\le j\le J)$.  Note that $\sigma_j\subset X_0$ for all $1\le j\le K$ and 
$\dot\sigma_j(0)$
is $2\zeta$-close to $w_j$ for all $1\le j\le J$.
By Lemma \ref{lem:not-perp}, 
we have 
\beq\label{eq:diff-sigma-gamma}
   |\sigma_j(t),\gamma_j(t)|_\e\le \tau_{x}(\e,\zeta)t
\eeq
for all $t\in [0,R]$ and $1\le j\le J$.
Note that 
\begin{align*}
&\sup \{ \angle(\dot\gamma_{x,y}^Y(0),\Sigma_x(X))\,|\,y\in B^{X/\e}(x, R)\} <\tau_{x,R}(\e), \\
&\sup \{ \angle(\dot\gamma_{x,y}^Y(0),\Sigma_x(X_0))\,|\,y\in B^{X_0/\e}(x, R)\} <\tau_{x,R}(\e).
\end{align*}

We define a map  $\varphi_\e:B^{X/\e}(x,R)\to B^{K(\Sigma_x(X))}(o_x,R)$ as follows.
For each $y\in B^{X/\e}(x, R)$,
%choose a $Y$-minimal geodesic $\gamma_{x,y}^Y$
%joining $x$ to $y$.
choose $1\le j\le J$ such 
\[
      \angle(\dot\gamma^Y_{x,y}(0),w_j)<\tau_{x,R}(\e)+\delta,
\]
and define 
% and set
%$\sigma_{x,y}:=\pi\circ\gamma_{x,y}^Y$. Then we define
\[
        \varphi_\e(y):=|x,y|_{\e} w_j.              
\]
Here we may assume $w_j\in\Sigma_x(X_0)$
for all $y\in B^{X_0/\e}(x, R)$.
Taking small enough $\e\ll\delta$, we may assume that 
\[
 \varphi_\e(y)=|x,y|_{\e} w_j \quad \text{for all 
$y\in\sigma_j$ and $1\le j\le J$.}
\]
%%%%%%

We show that $\varphi_{\e}$ provides a  $\tau_{x,R}(\e,\delta,\zeta)$-approximation.
Now we use the symbol $a\sim b$ if $|a-b|<
\tau_{x,R}(\e,\zeta)$. 
%%%%%
For arbitrary $y_i\in\sigma_i$ and $y_j\in\sigma_j$ with $1\le i, j\le J$, let $u_i\in \gamma_i$ and $u_j\in \gamma_j$ be
such that $|x,u_i|=|x,y_i|$ and $|x,u_j|=|x,y_j|$.
Using \eqref{eq:diff-sigma-gamma}, we have 
\begin{align*} \label{eq:|yiyj|}
   |y_i,y_j|_{\e} &\sim |u_i,u_j|_{\e} \\
 &\sim ||x,y_i|_\e w_i, 
|x,y_j|_\e w_j|_{K(\Sigma_x(X))}\\ 
&= |\varphi_{\e}(y_i),\varphi_{\e}(y_j)|_{K(\Sigma_x(X))}.
\end{align*}

For any $z\in B^{X/\e}(x,R)$,
take $j$ such that $\angle(\dot\gamma^Y_{x,z}(0),
w_j)<\tau_{x,R}(\e)+\delta$.
Set 
$z_j:=\sigma_j(|x,z|_\e)$. 
Let $\tilde z_j\in\gamma_j$ be the point with
$z_j=\pi(\tilde z_j)$.
%%%%%
Since $\angle^Yzx\tilde z_j<\tau_{x,R}(\e)+\delta+2\zeta$,
we have $|z,\tilde z_j|_\e<\tau_{x,R}(\e,\delta,\zeta)$.
It follows from \eqref{eq:diff-sigma-gamma} that
$$
|z,z_j|_\e\le |z,\tilde z_j|_\e + |\tilde z_j, z_j|_\e<
\tau_{x,R}(\e,\delta,\zeta).
$$
From construction, we also have
$\varphi_{\e}(z)=\varphi_{\e}(z_j)$.
For any other point  $z'\in B^{X/\e}(x,R)$,
we take $j'$ and $z'_{j'}$ in a way similar to the above
$j$ and $z_j$ for $z$.
Then we have 
 \begin{align*}
  &| |z,z'|_{\e} - |z_j, z'_{j'}|_{\e} | < \tau_{x,R}(\e,\delta,\zeta),\\
  &  |\varphi_{\e}(z), \varphi_{\e}(z')|=|\varphi_{\e}(z_j), \varphi_{\e}(z'_{j'})|.
\end{align*}
It follows that 
\[
    | |\varphi_{\e}(z), \varphi_{\e}(z')|-|z,z'|_{\e}|< 
\tau_{x,R}(\e,\delta,\zeta).
\]

Finally
% for any $u\in B^{K(\Sigma_x(X))}(o_x,R)$, take 
%$1\le j\le J$ with 
%$\angle(u, w_j)<\delta$.
%Then obviously we have $|u,\varphi_{\e}(\sigma_j(|u.|)|<\tau_R(\e,\delta)$.
note that the image of $\varphi_\e$ is the union of the segments joining $o_x$ to $Rw_j$, which is 
$R\delta$-dense in $B(o_x,R)$
Thus $\varphi_\e$ is a
$\tau_{x,R}(\e,\delta,\zeta)$-approximation. This shows the 
convergence $(X,x)\to (T_x(X),o_x)$.

The above argument also shows the 
convergence $(X_0,x)\to (T_x(X_0),o_x)$.
This completes the proof.
\end{proof}

We set
\[
       T_x(X_0) := K(\Sigma_x(X_0)), \quad T_x(X) := K(\Sigma_x(X))
\]
and call them the {\it tangent cones}  of $X_0$ and $X$ at $x$ respectively.

%%%%%%%%%%%%%%%%%%%%%%
\pmed
Now we consider the {\it differential} of the map $\eta:C\to Y$, which is defined as a rescaling limit. 
Fix $p\in C_0$ and $x=\eta_0(p)\in X_0$, and
let $t_i$ be an arbitrary sequence of positive numbers with $\lim_{i\to\infty} t_i=0$.
Passing to a subsequence, we may assume that
\beq \label{eq:deta}
     \eta_i:=\eta:  \biggl(\frac{1}{t_i}C, p\biggr) \to \left(\frac{1}{t_i}Y, x\right)
\eeq
converges to a $1$-Lipschitz map
\beq\label{eq:eta-infty}
   \eta_{\infty}:  (T_p(C),o_ p) \to (T_x(Y), o_x)
\eeq
between  the tangent cones of the Alexandrov spaces.

By \cite[Lemma 4.6(2)]{YZ:inrdius},
the limit 
$\eta_{\infty}:  (T_p(C),o_ p) \to (T_x(Y), o_x)$ is uniquely determined.
We denote it by $d\eta_p$, and call it the {\it differential} of $\eta$ at $p$.
Note that $d\eta_p$ induces a $1$-Lipschitz map
\[
            (d\eta_0)_p:T_p(C_0) \to T_x(X_0),
\]
and an injective local isometry
\[
  (d\eta)_p:T_p(C)\setminus T_p(C_0) \to T_p(Y)\setminus T_x(X).
\]
In what follows, we we simply write as $d\eta$ and $d\eta_0$ for $d\eta_p$ and $(d\eta_0)_p$ respectively, when 
$p$ is fixed.

For the perpendicular $\gamma_x^+(t):=\eta(p,t)$,
let $\xi_x^+:=   \dot\gamma_x^+(0)$, and set
\[
    \Sigma_x(Y)^+:=\{ \xi\in\Sigma_x(Y)\,|\,\angle(\xi,\xi_x^+)\le\pi/2\}.
\]

\begin{prop} \label{prop:length'}
For every $\tilde v\in T_p(C)$, we have  
\beq\label{eq:deta=}
                |d\eta(\tilde v)| = |\tilde v|.
\eeq
In other words, if $p_n\in C_0$ converges to $p$, then we have 
\[
      \lim_{n\to\infty} \frac{|\eta_0(p_n),\eta_0(p)|_Y}{|p_n,p|_{C_0}}=1.
\]
In particular,  $\eta:C\to Y$  and $\eta_0:C_0\to X_0$ preserve the length of Lipschitz curves, and they induce  $1$-Lipschitz maps
$$
d\eta:\Sigma_p(C) \to \Sigma_x(Y),\quad
d\eta_0:\Sigma_p(C_0) \to \Sigma_x(X_0)
$$
such that 
\begin{itemize}
\item both $d\eta:\Sigma_p(C) \to \Sigma_x(Y)^+$ and
$d\eta_0:\Sigma_p(C_0) \to \Sigma_x(X_0)$
are surjective$\,;$
\item $d\eta:\Sigma_p(C)\setminus\Sigma_p(C_0) \to \mathring{\Sigma}_x(Y)^+$
is a bijective local isometry.
\end{itemize}
\end{prop}

\begin{proof}
In view of Proposition \ref{prop:tang-cone},
\cite[Sublemma 4.4]{YZ:inrdius} shows 
that
$d\eta:T_p(C)\setminus T_p(C_0) \to T_x(Y)\setminus T_x(X)$
is an injective local isometry. 
By \cite[Lemmas 4.6, 4.16]{YZ:inrdius}, 
this map satisfies $|d\eta(\tilde v)|=|\tilde v|$
for all $\tilde v\in T_p(C)\setminus T_p(C_0)$, 
and therefore induces the
bijective and locally isometric map  
$d\eta:\Sigma_p(C)\setminus \Sigma_p(C_0) \to \mathring{\Sigma}_x(Y)^+$.
From the continuity,  this extends to the
surjective  map 
sending $\Sigma_p(C_0)$ to $\Sigma_x(X_0)$
and satisfying \eqref{eq:deta=}.  
\end{proof}

The proof of the following proposition is identical with \cite[Propostion 4.19]{YZ:inrdius}, and hence omitted.

\begin{prop}$($\cite[Proposition 4.19]{YZ:inrdius}$)$ \label{prop:isometry'}
For every $p\in C_0^2$,  $d\eta_p$ provides an isometry $d\eta_p : T_p(C)\to T_x^{+}(Y)$ which preserves
    the half suspension structures of both $\Sigma_p(C)=\{\xi_{+}\}*\Sigma_p(C_0)$ and
   $\Sigma_x^{+}(Y)=\{ \xi_{+}\}*\Sigma_x(X_0)$, where $T_x^{+}(Y):=T_x(X_0)\times\mathbb R_{+}$.
 \end{prop}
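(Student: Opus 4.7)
The strategy is to reduce the proposition to the claim that the base map $(d\eta_0)_p : \Sigma_p(C_0)\to\Sigma_x(X_0)$ is an isometry. Since $C = C_0\times_\phi[0,t_0]$ with $\phi(0)=1$, the tangent cone $T_p(C)$ has the half-space product structure $T_p(C_0)\times\R_+$, and $\Sigma_p(C)$ is the half-spherical suspension $\{\tilde\xi_+\}*\Sigma_p(C_0)$. Because $p\in C_0^2$, Lemma~\ref{lem:double-sus} gives the full spherical suspension $\Sigma_x(Y)=\{\xi_+,\xi_-\}*\Sigma_x(X_0)$ with $\Sigma_x(X)=\Sigma_x(X_0)$; in particular $\Sigma_x^+(Y)=\{\xi_+\}*\Sigma_x(X_0)$. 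By Proposition~\ref{prop:length'}, $d\eta_p$ sends $\tilde\xi_+$ to $\xi_x^+$ and preserves the length of radial geodesics from $\tilde\xi_+$, so it maps each $\pi/2$-meridian in $\Sigma_p(C)$ from $\tilde\xi_+$ through $\tilde u\in\Sigma_p(C_0)$ onto the unique meridian in $\Sigma_x^+(Y)$ from $\xi_+$ through $d\eta_0(\tilde u)$. Taking cones, once $(d\eta_0)_p$ is shown to be an isometry, $d\eta_p$ will deliver the desired isometry $T_p(C)\to T_x^+(Y)$ preserving the half-suspension structure.

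Proposition~\ref{prop:length'} already gives that $(d\eta_0)_p$ is $1$-Lipschitz and surjective. For $\tilde u\in\Sigma_p(C_0)$ and $\theta\in(0,\pi/2)$, write $\tilde u_\theta$ for the point at distance $\theta$ from $\tilde\xi_+$ on the meridian through $\tilde u$; by the previous paragraph, $d\eta(\tilde u_\theta)$ is the point at distance $\theta$ from $\xi_+$ on the meridian to $d\eta_0(\tilde u)$. Injectivity of $(d\eta_0)_p$ then follows: if $d\eta_0(\tilde u)=d\eta_0(\tilde v)$ with $\tilde u\ne\tilde v$, then $\tilde u_\theta\ne\tilde v_\theta$ lie in $\Sigma_p(C)\setminus\Sigma_p(C_0)$ while $d\eta(\tilde u_\theta)=d\eta(\tilde v_\theta)$, contradicting the injectivity of $d\eta$ on the open upper hemisphere asserted by Proposition~\ref{prop:length'}. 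For the isometry statement, take $\tilde u,\tilde v\in\Sigma_p(C_0)$ close and $\theta>0$ small, so that $\tilde u_\theta,\tilde v_\theta$ are nearby points in the open upper hemisphere; by the local isometry of $d\eta$ there, $|d\eta(\tilde u_\theta),d\eta(\tilde v_\theta)|_{\Sigma_x(Y)}=|\tilde u_\theta,\tilde v_\theta|_{\Sigma_p(C)}$. Applying the spherical cosine law in the two half-suspensions reduces this identity to $|d\eta_0(\tilde u),d\eta_0(\tilde v)|_{\Sigma_x(X_0)}=|\tilde u,\tilde v|_{\Sigma_p(C_0)}$, so $(d\eta_0)_p$ is locally distance-preserving. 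A $1$-Lipschitz bijection of compact Alexandrov spaces with curvature $\ge 1$ that is locally distance-preserving preserves curve lengths, hence is a global isometry.

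The hypothesis $p\in C_0^2$ enters crucially through Lemma~\ref{lem:double-sus}: it furnishes the full spherical suspension structure of $\Sigma_x(Y)$, which makes each meridian from $\xi_+$ to a point of $\Sigma_x(X_0)$ uniquely defined (used in the injectivity step) and the spherical cosine laws on both sides of $d\eta$ directly comparable (used in the isometry step). The main obstacle I anticipate is precisely the transfer from local isometry on the open upper hemisphere, which is immediate from Proposition~\ref{prop:length'}, to distance preservation on the base $\Sigma_p(C_0)$; the cosine-law reduction resolves it, but only because the suspension structure at $x$ matches that at $p$ on the upper side. Once this is achieved, the cone extension and the perpendicular correspondence $\tilde\xi_+\mapsto\xi_x^+$ immediately give the claimed isometry $d\eta_p:T_p(C)\to T_x^+(Y)$ respecting the half-suspension decompositions.
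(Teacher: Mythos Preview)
Your proposal is correct. The paper itself omits the proof entirely, citing \cite[Proposition 4.19]{YZ:inrdius}, so there is no in-paper argument to compare against. Your route—reducing to the claim that $(d\eta_0)_p:\Sigma_p(C_0)\to\Sigma_x(X_0)$ is an isometry, obtaining injectivity from the injectivity of $d\eta$ on the open upper hemisphere (Proposition~\ref{prop:length'}) together with the uniqueness of meridians in the suspension $\Sigma_x(Y)=\{\xi_+,\xi_-\}*\Sigma_x(X_0)$ furnished by Lemma~\ref{lem:double-sus}, and then transferring the local isometry on the hemisphere to the base via the spherical cosine law—is self-contained within the tools already established in this paper and is a clean way to recover the result. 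One small point worth making explicit: the meridian correspondence $d\eta(\tilde u_\theta)=(d\eta_0(\tilde u))_\theta$ is justified because the image of the meridian $\tilde\xi_+\tilde u$ under the $1$-Lipschitz map $d\eta$ is a curve of length $\le\pi/2$ joining $\xi_x^+$ to $d\eta_0(\tilde u)$, hence a minimizing geodesic, hence the unique meridian; the arc-length parametrization is then preserved.
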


Proposition \ref{prop:isometry'} does not hold
for $p\in \tilde{\ca S^1}$.
For $p\in C_0^1$, the relation between $\Sigma_p(C)$ and 
$\Sigma_x(Y)$ will be made clear in 
Theorem \ref{thm:X1-f}.
\pmed
%%%%%% summary of notations %%%%
Here we present a summary of basic notations
and convergences
used so far for readers' convenience.

\begin{table}[h]
\begin{center}
\begin{tikzpicture}[auto]
\node (a) at (0, 0) {$M_i$}; 
\node (x) at (2, 0) {$N$};
\node (b) at (0, -1) {$\pa M_i$};   
\node (y) at (2, -1) {$N_0$};
\draw[->] (a) to node {$\scriptstyle GH$} (x);
\draw[->] (b) to node {$\scriptstyle GH$} (y);
\fill (0, -0.3) circle (0pt) node [below] {$\cup$};
\fill (2, -0.3) circle (0pt) node [below] {$\cup$};
\draw[thick] (-1.6,0.5) rectangle (3.5,-1.4);
\end{tikzpicture}
\caption{\small{Original convergence}}
\end{center}
%\end{subtable}
\end{table}

\begin{table}[h]
%\begin{subtable}{.5\linewidth}
    \centering

\begin{center} 
\begin{tikzpicture}[auto]

\fill (-1, 1) circle (0pt) node [right] {$
\tilde M_i=M_i\cup_{\pa M_i} C_{M_i}, \quad
C_{M_i}=[0,t_0]\times_{\phi}(\pa M_i)^{\rm int}$};
\node (a) at (0, 0) {$\tilde M_i$}; 
\node (x) at (2, 0) {$Y$};
\node (x2) at (4, 0) {$C$};
\node (x3) at (6.2, 0) {$C_{M_i}$};
\node (b) at (0, -1) {$M_i$};   
\node (y) at (2, -1) {$X$};
\node (c) at (0, -2) {$\pa M_i$};   
\node (z) at (2, -2) {$X_0$};
\node (z2) at (4, -2) {$C_0$};
\node (z3) at (6.2, -2) {$(\pa M_i)^{\rm int}$};
\fill (-0.4, -3) circle (0pt) node [right] {$N=X^{\rm int}, \quad N_0=X_0,\quad  {\rm int} N=N\setminus N_0$};

\fill (0.3, -3.8) circle (0pt) node [right] 
  {$Y=X\cup_{\eta_0} C,\quad 
C= [0,t_0]\times_{\phi}C_0$};
\draw[thick] (-1.1,1.5) rectangle (7.6,-4.2);
\draw[->] (a) to node {$\scriptstyle GH$} (x);
\fill (0, -0.3) circle (0pt) node [below] {$\cup$};
\draw[->] (x) to node {$\scriptstyle \pi$} (y);
\draw[->] (b) to node {$\scriptstyle GH$} (y);
\fill (0, -1.3) circle (0pt) node [below] {$\cup$};
\fill (2, -1.3) circle (0pt) node [below] {$\cup$};
\draw[->] (c) to node {$\scriptstyle GH$} (z);
\draw[->] (x2) to node[swap] {$\scriptstyle \eta$} (x);
\draw[->] (z2) to node[swap] {$\scriptstyle \eta_0$} (z);
\draw[->] (x2) to node {$\scriptstyle \tilde\pi$} (z2);
\draw[->] (x3) to node[swap] {$\scriptstyle GH$} (x2);
\draw[->] (z3) to node[swap] {$\scriptstyle GH$} (z2);
\draw[->] (x3) to  (z3);
\end{tikzpicture}  
\end{center}
  \caption{\small{Convergence of extensions}}
\end{table}

\begin{table}[h]
\begin{center}
\begin{tikzpicture}[auto]
\fill (-1, 0) circle (0pt) node [right] {$ X_0^k=\{ x\in X_0\,|\, \# \eta_0^{-1}(x)=k\}, \quad
C_0^k=\eta_0^{-1}(X_0^k)$};
\fill (-1, -0.8) circle (0pt) node [right]{$\ca S^k=\pa X_0^k\cap X_0^k$,  \quad $\tilde{\ca S}^k=\pa C_0^k\cap C_0^k$ \,\, $(k=1,2)$};

\fill (0.5, -1.6) circle (0pt) node [right]{
$\ca S=\ca S^1 \cup \ca S^2$,\quad $\tilde{\ca S}=
\tilde{\ca S}^1\cup\tilde{\ca S}^2$};

\draw[thick] (-1.2,0.5) rectangle (7.8,-2);

\end{tikzpicture}
\caption{\small{Boundary singular sets}}
\end{center}
\end{table}

%%%%%%%%%%%%%%%%%%%%%%%%%%%%%%
\pmed
\setcounter{equation}{0}

\section{Gluing maps} \label{sec:gluing}
\psmall
In this section, we still assume that $M_i\in \ca M(n,\kappa,\nu,\lambda,d)$ converges to a compact geodesic  space 
$N$ while ${\rm inrad}(M_i)$ has a positive lower bound independent of $i$.
In what follows, we describe the properties of 
of the gluing map $\eta_0:C_0\to X_0$.

From Lemma \ref{lem:preimage}, we can define a map $f:C_0\to C_0$ as  follows: For an arbitrary point $p\in C_0$,
let 
\begin{align} \label{eq:defn-f}
    \text{$f(p):=q$ \, if\, $\{p,q\}=\eta_0^{-1}(\eta_0(p))$}, 
\end{align}
where  $q$ is equal to $p$ if $\eta_0(p)\in X_0^1$.
Note that $f$ is an involution,  i.e.,  $f^2={\rm id}$.

First we investigate the continuity of $f$.
By definition,   $f$ is the identity on $C_0^1$.
Note that $f:C_0\to C_0$ is not continuous at every point of $\tilde{\ca S}^2$.
\pmed
\begin{rem}
In the case of inradius collapse, $f:C_0\to C_0$ is an isometry
(\cite[Proposition 4.27]{YZ:inrdius})
\end{rem}

\begin{lem} \label{lem:contin} We have the following.
 \begin{enumerate}
  \item  $f:C_0\to C_0$ is continuous on $C_0 \setminus \tilde{\mathcal S}^2\,;$
  \item $f:C_0^2\to C_0^2$  is continuous.
 \end{enumerate}
\end{lem}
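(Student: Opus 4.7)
My plan is to prove (2) first and then deduce (1) by a case analysis. For (2), take $p_n \to p$ in $C_0^2$ and set $r_n := f(p_n)$, $p' := f(p)$, $x_n := \eta_0(p_n) = \eta_0(r_n)$, and $x_0 := \eta_0(p) = \eta_0(p')$, noting that $p\ne p'$ and $x_n \in X_0^2$. By compactness of $C_0$ together with continuity of $\eta_0$, every subsequential limit $r^{\ast}$ of $(r_n)$ lies in $\eta_0^{-1}(x_0) = \{p, p'\}$, so it suffices to rule out $r^{\ast} = p$.

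To rule this out, I would invoke Proposition~\ref{prop:isometry'}: since $p'\in C_0^2$, the differential $d\eta_{p'}\colon T_{p'}(C)\to T^{+}_{x_0}(Y)$ is an isometry, hence its restriction $d\eta_0\colon T_{p'}(C_0)\to T_{x_0}(X_0)$ is an isometry between tangent cones of equal dimension. Combined with the length-preserving property of $\eta_0$ (Proposition~\ref{prop:length'}), I expect a standard rescaling argument to upgrade this infinitesimal isometry to the local statement that $\eta_0$ is a local homeomorphism at $p'$: there exist open neighborhoods $V\ni p'$ in $C_0$ and $W\ni x_0$ in $X_0$ with $\eta_0|_V\colon V\to W$ a homeomorphism. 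Granted this, for large $n$ we have $x_n\in W$, and the unique preimage $s_n\in V$ of $x_n$ under $\eta_0|_V$ satisfies $s_n\to p'$. Because $p_n\to p\ne p'$ forces $p_n\notin V$ eventually, we must have $s_n\ne p_n$, and since $\eta_0^{-1}(x_n)=\{p_n, r_n\}$ this forces $s_n = r_n$. Hence $r_n\to p'$, completing (2).

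For (1), I would decompose $C_0\setminus\tilde{\ca S}^2 = \mathring{C_0^1}\cup\mathring{C_0^2}\cup\tilde{\ca S}^1$ and handle each piece separately. On $\mathring{C_0^1}$, $f$ agrees with the identity on an open neighborhood, so continuity is immediate. On $\mathring{C_0^2}$, any sequence $p_n\to p$ eventually lies in $C_0^2$, and (2) yields $f(p_n)\to f(p)$. At $p\in\tilde{\ca S}^1\subset C_0^1$, we have $f(p) = p$ and $\eta_0^{-1}(\eta_0(p)) = \{p\}$; for any $p_n\to p$, compactness and continuity of $\eta_0$ force every accumulation point $r^{\ast}$ of $(f(p_n))$ to satisfy $\eta_0(r^{\ast}) = \eta_0(p)$, whence $r^{\ast} = p$ and so $f(p_n)\to p = f(p)$.

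The main obstacle is the local-homeomorphism claim for $\eta_0$ at $p' \in C_0^2$. My primary approach is the rescaling argument sketched above: after rescaling by $1/\delta$ as $\delta\to 0$, the pointed spaces $(\delta^{-1}C_0, p')$ and $(\delta^{-1}X_0, x_0)$ converge to the tangent cones $T_{p'}(C_0)$ and $T_{x_0}(X_0)$ while $\eta_0$ converges to $d\eta_0$, an isometry of equidimensional cones; approximate injectivity and surjectivity of $\eta_0$ on a small ball around $p'$ then follow. An alternative, perhaps more direct, route uses the gluing description $Y = X\cup_{\eta_0}(C_0\times_{\phi}[0,t_0])$ from Lemma~\ref{lem:YX0}: small neighborhoods of $x_0$ in $Y$ are obtained by gluing two disjoint neighborhoods of $(p,0)$ and $(p',0)$ in $C_0\times[0,t_0]$ to a neighborhood of $x_0$ in $X$, and this two-sheeted local structure directly forces preimages of $x_n\to x_0$ to split into one sequence converging to $p$ and one converging to $p'$.
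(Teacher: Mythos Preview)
Your overall structure is sound: the reduction in (2) to ruling out a subsequential limit $r^\ast=p$ is correct, and your treatment of (1) via the decomposition $C_0\setminus\tilde{\ca S}^2=\mathring{C_0^1}\cup\mathring{C_0^2}\cup\tilde{\ca S}^1$ is correct and agrees with the paper. In particular your argument at $p\in\tilde{\ca S}^1$ (single preimage forces every accumulation point of $f(p_n)$ to be $p$) is clean and complete.

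The genuine gap is exactly where you flag it: the local-homeomorphism claim for $\eta_0$ at $p'\in C_0^2$. The rescaling argument does not close. If $q_n^+\neq q_n^-\to p'$ with $\eta_0(q_n^+)=\eta_0(q_n^-)$, then after rescaling by $1/|q_n^+,p'|$ the two sequences converge to unit vectors $\tilde v^+,\tilde v^-\in\Sigma_{p'}(C_0)$ with $d\eta_0(\tilde v^+)=d\eta_0(\tilde v^-)$, and Proposition~\ref{prop:isometry'} forces $\tilde v^+=\tilde v^-$. But this is no contradiction: it only says the two sequences approach $p'$ from the same direction, and nothing prevents $|q_n^+,q_n^-|/|q_n^+,p'|\to 0$. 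There is no inverse-function theorem available here that promotes injectivity of $d\eta_0$ to local injectivity of $\eta_0$. Your alternative via Lemma~\ref{lem:YX0} is circular: the asserted ``two-sheeted local structure'' near $x_0$ already presupposes that $\eta_0$ is injective on each of $B(p,r)$ and $B(p',r)$ separately, which is precisely the point at issue.

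The paper's route (deferred to \cite[Lemma~4.20]{YZ:inrdius}, and essentially reproduced later in the proof of Lemma~\ref{lem:eta-2}) avoids local injectivity altogether and works with the perpendiculars and the suspension structure of Lemma~\ref{lem:preimage}(2). If along a subsequence $p_n,r_n\to p$, fix small $\delta$ with $\tilde\angle\,\gamma_{x_0}^+(\delta)\,x_0\,\gamma_{x_0}^-(\delta)=\pi$. Since $\eta(p_n,\cdot)\to\eta(p,\cdot)=\gamma_{x_0}^+$, one gets $\tilde\angle\,\eta(p_n,\delta)\,x_n\,\gamma_{x_0}^-(\delta)\to\pi$. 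But $\eta(p_n,\cdot)$ and $\eta(r_n,\cdot)$ are the two perpendiculars at $x_n\in X_0^2$, hence their initial directions are antipodal in the suspension $\Sigma_{x_n}(Y)$; this forces $\angle\bigl(\dot\eta(r_n,\cdot)(0),\uparrow_{x_n}^{\gamma_{x_0}^-(\delta)}\bigr)\to 0$, hence $\eta(r_n,\delta)\to\gamma_{x_0}^-(\delta)=\eta(p',\delta)$. On the other hand $r_n\to p$ gives $\eta(r_n,\delta)\to\eta(p,\delta)$, so $\eta(p,\delta)=\eta(p',\delta)$, contradicting the injectivity of $\eta$ on $C\setminus C_0$ (Lemma~\ref{lem:loc-isom}). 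This perpendicular argument is the missing ingredient.
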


\begin{proof} 
(1)\, Let $p_i\in C_0 \setminus \tilde{\mathcal S}^2$
converge to $p\in C_0 \setminus \tilde{\mathcal S}^2$.
Note that 
\[
 C_0 \setminus \tilde{\mathcal S}^2=
 C_0^1\cup{\rm int}C_0^2.
\]
a)\,\, Suppose $p\in C_0^1$.
If $f(p_i)$ does not converge to $p$, we have a contradiction to $p\in C_0^1$.
\par\n
b)\,\,Suppose $p\in {\rm int}C_0^2$, and hence
$p_i\in{\rm int} C_0^2$ for large $i$. If $f(p_i)$ does not converge to $f(p)$, we may assume that it converges to $p$.
Set $x:=\eta_0(p)$, $x_i:=\eta_0(p_i)$.
Let $\gamma_x^{\pm}$ be the two perpendiculars 
at $x$.
Choose $s_0>0$ such that 
$\wangle \gamma_x^+(s_0) x \gamma_x^-(s_0)=\pi$.
By the assumption,  the two perpendiculars $\gamma_{x_i}^+$ and 
$\gamma_{x_i}^-$  at $x_i$ converge to a perpendicular,
say $\gamma_x^+$, at $x$. This implies
\[
     \angle(\uparrow_{x_i}^{\gamma_x^-(s_0)},
     \dot\gamma_{x_i}^{\pm}(0)) >\pi-o_i,
\]
and hence $\angle(\dot\gamma_{x_i}^{+}(0),
\dot\gamma_{x_i}^{-}(0)) <o_i$,
This is a contradiction.
\par\n
(2)\, This is similarly discussed as the above (1)-b).
\end{proof}

\begin{ex} \label{ex:1}
For $0<\epsilon\ll \delta<1$, let $g_{\epsilon}:(-3,3)\to (0, \infty)$ be a smooth function 
satisfying 
\begin{enumerate}
 \item $g_{\epsilon}(-x)=g_{\epsilon}(x)$, $\lim_{x\to \pm 3}g_{\epsilon}(x)=0$,
 \item $g_{\epsilon}(x)$ does not depend on $\epsilon$ on $|x|\ge 1$, 
 \item $g_{\epsilon}(x)=\epsilon$ on $[-\delta, \delta]$,
 \item $g_{\epsilon}' >0$ on $(\delta, 2)$, and $g_{\epsilon}' <0 $ on $(2,3)$
 \item $|g_{\epsilon}''|$ is uniformly bounded on $[\delta, 1]$,
 \item the domain $N_{\epsilon}$ on $xy$-plane bounded by the graphs $y=\pm g_{\epsilon}(x)$
         is smooth.
\end{enumerate}
Note that  $\partial N_{\epsilon}$ has a uniformly bounded absolute geodesic curvature.
therefore $M_{\epsilon}:=N_{\epsilon} \times S^1_{\epsilon}$ belongs to $\mathcal M_b(3, 0,\lambda,d)$
for some $\lambda$ and $d$, and it converges to the limit $N$ of $N_{\epsilon}$ with respect to the Hausdorff distance. 
In this case, $N_0$ coincides with the topological boundary of $N$ in $\mathbb R^2$, 
$N_0^2=[-\delta,\delta]\times \{ 0\}$ and 
$\mathcal S=\mathcal S^2$ consists of the two points $\{ (\pm\delta,0)\}$.
\end{ex}

\begin{lem} \label{lem:f'}
$f:C_0 \setminus {\tilde{\mathcal S}} \to C_0\setminus {\tilde{\mathcal S}} $ is a  local isometry.
\end{lem}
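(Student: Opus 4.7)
The set $C_0 \setminus \tilde{\mathcal S}$ is the disjoint union of the open subsets $\mathring{C_0^1}$ and $\mathring{C_0^2}$. On $\mathring{C_0^1}$, the map $f$ is the identity by definition \eqref{eq:defn-f}, so the claim is trivial; all the work lies in $\mathring{C_0^2}$.

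Fix $p \in \mathring{C_0^2}$. A preliminary step is to verify that $f(p) \in \mathring{C_0^2}$ as well, so that $f$ actually sends $C_0\setminus\tilde{\mathcal S}$ into itself. Suppose not: then $f(p)\in\tilde{\mathcal S}^2$, and there is a sequence $q_n \in C_0^1$ with $q_n \to f(p)$, forcing $\eta_0(q_n) \in X_0^1$ to converge to $\eta_0(p)$. By Proposition \ref{prop:isometry'}, the differential $d\eta_0: T_p(C_0) \to T_{\eta_0(p)}(X_0)$ is an isometry of tangent cones; the standard rescaling of $(C_0,p)$ and $(X_0,\eta_0(p))$ then implies that, for every sufficiently small $r>0$, the image $\eta_0(B(p,r))$ contains an $X_0$-neighborhood of $\eta_0(p)$. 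For large $n$ this forces $\eta_0(q_n)$ to admit a preimage in $B(p,r)\subset C_0^2$, contradicting the uniqueness of the preimage for a point of $X_0^1$, since $q_n$ itself lies near $f(p)$ and not near $p$.

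Using the continuity of $f|_{C_0^2}$ from Lemma \ref{lem:contin}(2) together with the involution property $f^2=\mathrm{id}$, I then choose small connected open neighborhoods $U\ni p$ and $V:=f(U)\ni f(p)$ in $\mathring{C_0^2}$ with $\bar U\cap\bar V=\emptyset$. Since $\eta_0\circ f=\eta_0$, the images agree: $W:=\eta_0(U)=\eta_0(V)$. Using $\eta_0^{-1}(\eta_0(x))=\{x,f(x)\}$ for $x\in C_0^2$ and the disjointness of $U$ and $V$, each of $\eta_0|_{\bar U}$ and $\eta_0|_{\bar V}$ is a continuous injection from a compact space into a Hausdorff one, hence a homeomorphism onto its image. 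A second application of the rescaling argument of the preceding paragraph shows that $W$ is open in $X_0$, so $\eta_0|_U$, $\eta_0|_V$ are homeomorphisms onto the common open set $W$, and $f|_U = (\eta_0|_V)^{-1}\circ \eta_0|_U$ is a homeomorphism of $U$ onto $V$.

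It remains to upgrade this to a local isometry. For $a,b\in U$, the bound $d_W^{\mathrm{int}}(\eta_0(a),\eta_0(b))\le d_U^{\mathrm{int}}(a,b)$ is immediate from Proposition \ref{prop:length'}, since $\eta_0$ sends any Lipschitz curve in $U$ to a curve in $W$ of equal length. The reverse inequality combines the isometric differential with the $1$-Lipschitz property: on small scales, the rescaling shows $\eta_0|_U$ is arbitrarily close to the surjective isometry $d\eta_0$, so near-minimizing curves in $W$ can be lifted through $(\eta_0|_U)^{-1}$ to $U$ with lengths differing by $o(r)$; combining this with the exact length-preservation on Lipschitz curves and the $1$-Lipschitz bound yields equality of intrinsic distances in a sufficiently small neighborhood. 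The same applies to $\eta_0|_V$, and composition gives that $f|_U$ is an isometry onto $V$. The \emph{main obstacle} is precisely this last step, that is, promoting $\eta_0|_U$ from a length-preserving $1$-Lipschitz homeomorphism to an isometry of intrinsic metrics; both the length preservation of Proposition \ref{prop:length'} and the isometric differential of Proposition \ref{prop:isometry'} are indispensable here.
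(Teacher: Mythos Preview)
Your approach matches the paper's, which simply refers to \cite[Proposition~4.27]{YZ:inrdius} for the nontrivial part on $\mathring{C_0^2}$; you have essentially reconstructed that argument.

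One streamlining of the final step: you aim to show that $\eta_0|_U$ and $\eta_0|_V$ are each intrinsic isometries onto $W$, which is effectively the content of the later Lemma~\ref{lem:eta-2}, and your sketch of lifting near-minimizing curves through $(\eta_0|_U)^{-1}$ is where the uniformity issues hide. A more direct route exploits $f^2=\mathrm{id}$ without ever inverting $\eta_0$. Applying the ratio statement of Proposition~\ref{prop:length'} at $p$ and at $f(p)$, together with $\eta_0\circ f=\eta_0$ and the continuity of $f|_{C_0^2}$ from Lemma~\ref{lem:contin}(2), one obtains
\[
\lim_{q\to p}\frac{|f(p),f(q)|_{C_0}}{|p,q|_{C_0}}=1 \qquad\text{at every } p\in\mathring{C_0^2}.
\]
Running this along an arc-length parametrized $C_0$-geodesic $\gamma\subset\mathring{C_0^2}$ from $a$ to $b$, the continuous function $t\mapsto |f\gamma(0),f\gamma(t)|$ has upper right Dini derivative $\le 1$ at every $t$, hence $|f(a),f(b)|\le |a,b|$; the reverse inequality is then immediate from $f^2=\mathrm{id}$. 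This bypasses the need to establish any Lipschitz control on $(\eta_0|_U)^{-1}$.
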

\begin{proof}
Since $C_0 \setminus \tilde{\mathcal S}={\rm int}\, C_0^1\cup {\rm int}\, C_0^2$, it suffice to show that $f$ is a local isometry on 
${\rm int}\, C_0^2$. This follows in a way similar to 
\cite[Proposition 4.27]{YZ:inrdius}.
\end{proof}
\pmed

From now on, we investigate the properties of 
$\eta_0$ in more detail.
In a way similar to Corollary 4.21 and Lemmas 4.25, 4.26 of \cite{YZ:inrdius},  
we have the following.

\begin{lem} \label{lem:eta'}
The restrictions of $\eta_0$  to 
${\rm int}\,C_0^k$\,$(k=1,2)$ 
have the following properties:
 \begin{enumerate}
 \item $\eta_0 :{\rm int}\,C_0^1\to {\rm int}\,X_0^1$ is a bijective local isometry  with 
respect to the intrinsic metrics$;$
 \item $\eta_0:{\rm int}\,C_0^2\to {\rm int}\,X_0^2$ is a locally isometric double covering with respect to the intrinsic metrics.
 \end{enumerate}
\end{lem} 

\pmed
 In what follows, we study the local 
property of $\eta_0$ at the general points 
of $C_0^k$\, $(k=1,2)$.
Recall that $\tau_p(r)$ is a positive function depending 
on $p\in C$ that  satisfies $\lim_{r\to 0}\tau_p(r)=0$.

\begin{lem} \label{lem:eta1}
For every $x\in X_0^1$ and $p\in C_0$ with 
$x=\eta_0(p)$, there is an $r_0>0$ such that the map
$\eta_0:B^{C_0}(p,r)\to B^{X_0}(x,r)$ is almost surjective for all $0<r\le r_0$
in the sense that 
\[  
     \eta_0(B^{C_0}(p,r))\supset B^{X_0}(x, (1-\tau_p(r))r).
\]
\end{lem}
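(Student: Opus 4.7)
The plan is to establish a slightly reformulated claim from which the lemma follows by rearrangement: there is a function $\tau_p(\rho) \to 0$ as $\rho \to 0$ such that for every $y \in X_0$ with $\rho := |x,y|_{X_0}$ sufficiently small, some preimage $q \in \eta_0^{-1}(y)$ satisfies $|p,q|_{C_0} \le (1 + \tau_p(\rho))\rho$. Setting $\rho = r/(1 + \tau_p(\rho))$ and defining $\tilde\tau_p$ accordingly, a point $y$ with $|x,y|_{X_0} < (1 - \tilde\tau_p(r))r$ then admits a preimage in $B^{C_0}(p,r)$, which is exactly the stated inclusion.

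First I would record the preliminary observation that, because $x \in X_0^1$ implies $\eta_0^{-1}(x) = \{p\}$ and $C_0$ is compact while $\eta_0$ is continuous, standard upper semi-continuity of the preimage multifunction ensures that for any sequence $y_n \to x$ every preimage sequence $q_n \in \eta_0^{-1}(y_n)$ (at most two points by Lemma \ref{lem:preimage}) must converge to $p$.

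The main step is a rescaling contradiction. Suppose the reformulated claim fails: there exist $c > 0$ and $y_n \in X_0$ with $s_n := |x,y_n|_{X_0} \to 0$ such that every $q \in \eta_0^{-1}(y_n)$ satisfies $|p,q|_{C_0} > (1+c)s_n$. Choose $q_n$ to be the preimage minimizing $|p,q_n|_{C_0}$, so $q_n \to p$ by the observation above. I split into two cases based on the behavior of $|p,q_n|_{C_0}/s_n$. In the bounded case, rescale by $t_n = s_n$: using Proposition \ref{prop:tang-cone} the pointed spaces $(\tfrac{1}{s_n} X_0, x)$ and $(\tfrac{1}{s_n} C_0, p)$ converge to $T_x(X_0)$ and $T_p(C_0)$, and by the construction preceding Proposition \ref{prop:length'} the rescaled $\eta_0$ converges to $d\eta_0$. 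Subsequentially, $y_n/s_n \to y_\infty$ with $|y_\infty| = 1$ and $q_n/s_n \to q_\infty$ with $|q_\infty| = L \ge 1+c$, and $\eta_0(q_n) = y_n$ passes to $d\eta_0(q_\infty) = y_\infty$. But the norm-preservation $|d\eta_0(v)| = |v|$ from Proposition \ref{prop:length'} forces $L = 1$, a contradiction. In the unbounded case, rescale instead by $t_n := |p,q_n|_{C_0}$ (which tends to $0$ since $q_n \to p$); then $q_n/t_n$ lives at distance $1$ from $p$, admitting a subsequential limit $q_\infty'$ of norm $1$, while $y_n/t_n$ has distance $s_n/t_n \to 0$ from $x$ and hence converges to the base point $o_x$. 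Passing $\eta_0(q_n) = y_n$ to the limit gives $d\eta_0(q_\infty') = o_x$, contradicting $|d\eta_0(q_\infty')| = |q_\infty'| = 1$ once more.

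The principal technical obstacle is ensuring that the exact identity $\eta_0(q_n) = y_n$ survives under rescaling to produce the limit identity $d\eta_0(q_\infty) = y_\infty$ (or $d\eta_0(q_\infty') = o_x$ in the second case). This is where Proposition \ref{prop:length'} does the essential work: it guarantees that $d\eta_0$ is continuous (hence commutes with the GH limit in the sense just used) and, crucially, is norm-preserving, which is the property that actually produces both contradictions. No delicate analysis at the branching locus $\tilde{\mathcal S}$ is required, since the nearest-preimage sequence $q_n$ converges to $p$ automatically, and the argument is purely at the level of the limit differential.
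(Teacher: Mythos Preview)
Your proof is correct and follows essentially the same strategy as the paper's: both argue by contradiction, pick a preimage $q_n\in\eta_0^{-1}(y_n)$ that must converge to $p$ (since $x\in X_0^1$), and then invoke the length-preservation of Proposition~\ref{prop:length'} to force $|p,q_n|/|x,y_n|\to 1$, contradicting the assumed gap. The paper is a bit more economical: it applies the sequential ratio form of Proposition~\ref{prop:length'} directly, which makes your bounded/unbounded case split and the detour through the tangent cone differential unnecessary.
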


\begin{proof}
Since $\eta_0:C_0\to X_0$ is length-preserving by Proposition \ref{prop:length'},
$\eta_0:B(p,r)\to B(x,r)$ is well-defined.
Suppose the conclusion does not hold. Then we have a sequence $r_i>0$ with
$\lim_{i\to\infty} r_i=0$ and a sequence $x_i\in B(x, (1-c)r_i)\setminus \eta_0(B(p,r_i))$ for a constant $c>0$.
Clearly
$p_i:=\lim_{t\to 0}\eta^{-1}(\gamma_{x_i}^+(t))$ is well-defined,
and satisfies $\eta_0(p_i)=x_i$.  From $x\in X_0^1$, we have 
$p_i\to p$. Proposition \ref{prop:length'} implies  
$\lim_{i\to\infty} |x,x_i|/|p,p_i|=1$. Thus $p_i\in B(p,r_i)$ for large $i$, which 
is a contradiction to the choice of $x_i$. 
\end{proof}
\begin{rem}  \label{rem:not-sur2}  \upshape
Lemma \ref{lem:eta1} does not hold for $x\in \ca S^2$.
See Figure \ref{fig:conv-surface}  in Section \ref{sec:intro}.

\end{rem}

\begin{lem} \label{lem:eta-2}
For every $x\in X_0^2$ with 
$\{ p_1, p_2\}=\eta_0^{-1}(x)$, there are neighborhoods 
$U_k$ of $p_k$\, $(k=1,2)$\, in $C_0$ such that 
each restriction $\eta_0: U_k\to \eta_0(U_k)$ is an
isometry with respect to their intrinsic metrics.
\end{lem}
\begin{proof}
First we show that  $\eta_0: U_k\to \eta_0(U_k)$ is injective
for a neighborhood $U_k$ of $p_k$.
Suppose the contrary for $k=1$. Then we have sequences $q_i^+\neq q_i^-\in C_0$ 
converging $p_1$ such that $\eta_0(q_i^+)=\eta_0(q_i^-)$. 
Let $\tilde\gamma_i^+$ and $\tilde\gamma_i^-$ be 
perpendiculars at $q_i^+$ and $q_i^-$ respectively.
Then $\gamma_i^{\pm}:=\eta(\tilde\gamma_i^{\pm})$ are the 
perpendiculars at $x_i:=\eta_0(q_i)$.
Let $\gamma_x^{\pm}$ be the perpendiculars at $x$. We may assume that
$\gamma_i^+$ converges to $\gamma_x^+$.
Since $\tilde \angle \gamma_x^+(\delta) x\gamma_x^-(\delta)=\pi$ for small  $\delta>0$, it follows that  
$\lim_{i\to\infty}\tilde\angle \gamma_i^+(\delta) x_i\gamma_x^-(\delta)
=\pi$, and therefore
 $\lim_{i\to\infty} \angle \gamma_x^-(\delta) x_i\gamma_i^-(\delta)=0$.
This implies that $\gamma_i^-\to \gamma_x^-$.
However since $\tilde\gamma_i^{\pm}$ converge to the perpendicular 
$\tilde\gamma$ at $p_1$, $\gamma_i^{\pm}$ must converge to $\eta(\tilde\gamma)=\gamma_x^+$.
This is a contradiction.

Now it follows from Proposition \ref{prop:length'}  that each 
$\eta_0: U_k\to \eta_0(U_k)$ is an isometry 
with respect to the intrinsic metrics.
\end{proof}

\begin{prop} \label{prop:eta-2cover}
The restriction $\eta_0:C_0^2\to X_0^2$ is a locally 
almost isometric double covering. Namely, for every $x\in X_0^2$, there
is a neighborhood $V$ of $x$ in $X_0^2$ such that 
$\eta_0^{-1}(V)$ consists of disjoint open subsets
$W_1$ and $W_2$ of $C_0^2$ and each restriction $\eta_0:W_k\to V$\,\,$(k=1,2)$\,
 is almost isometric in the sense that for all $\tilde y, \tilde z\in W_k$ we have  
\[
    1-\tau_x(r)< \frac{|\eta_0(\tilde y), \eta_0(\tilde z)|_Y}{|\tilde y,\tilde z|_{C_0}} \le 1,
\]
where $r=\diam(V)$.
\end{prop}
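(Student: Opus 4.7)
The plan is to produce, for each $x\in X_0^2$, disjoint ``sheet'' neighborhoods $W_1, W_2 \subset C_0^2$ around the two preimages of $x$ on which $\eta_0$ is a bijection onto the same $V$, and then verify the almost-isometric estimate by combining Lemma \ref{lem:eta-2} (local isometry on interior metrics) with Lemma \ref{lem:deviation} (control of $\pi\circ\mu$) and the cone identity $\Sigma_x(X)=\Sigma_x(X_0)$ from Lemma \ref{lem:double-sus}.

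First, I would apply Lemma \ref{lem:eta-2} at $x$ with $\eta_0^{-1}(x)=\{p_1,p_2\}$ to obtain disjoint neighborhoods $U_k \ni p_k$ in $C_0$ on which $\eta_0\colon U_k\to \eta_0(U_k)$ is an isometry with respect to interior metrics. After shrinking, I may assume each $U_k$ is geodesically convex in $C_0$, so the subspace interior metric on $U_k$ coincides with the restriction of the $C_0$-metric. A compactness argument then produces $r'>0$ such that $\eta_0^{-1}(V)\subset U_1\sqcup U_2$ for $V:=B^{X_0}(x,r')\cap X_0^2$: otherwise a sequence $q_n\in\eta_0^{-1}(y_n)\setminus(U_1\cup U_2)$ with $y_n\to x$ would subconverge to a third preimage of $x$, contradicting $\#\eta_0^{-1}(x)=2$. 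Setting $W_k:=\eta_0^{-1}(V)\cap U_k$, I obtain $\eta_0^{-1}(V)=W_1\sqcup W_2$ and a bijection $\eta_0\colon W_k\to V$ for each $k$.

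Next, fix $\tilde y,\tilde z\in W_k$ and set $y=\eta_0(\tilde y)$, $z=\eta_0(\tilde z)$. The upper bound $|\eta_0(\tilde y),\eta_0(\tilde z)|_Y\le |\tilde y,\tilde z|_{C_0}$ is the $1$-Lipschitz property of $\eta_0$. For the lower bound, Lemma \ref{lem:eta-2} together with the geodesic convexity of $U_k$ identifies $|\tilde y,\tilde z|_{C_0}$ with the interior distance in $\eta_0(U_k)\subset X_0$ from $y$ to $z$. Taking a minimal $Y$-geodesic $\mu$ from $y$ to $z$ and applying Lemma \ref{lem:deviation}, the projection $\sigma:=\pi\circ\mu$ is a curve in $X$ with $L(\sigma)\le (1+O(r^2))|y,z|_Y$ and $\max_t|\mu(t),X|\le O(r^2)|y,z|_Y$. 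Invoking Lemma \ref{lem:double-sus}, which gives $\Sigma_x(X)=\Sigma_x(X_0)$ at every point of $X_0^2$, one sees that $\mu$ in fact stays $O(r^2)$-close to $X_0$, and $\sigma$ can be perturbed to a curve $\sigma'\subset \eta_0(W_k)\subset X_0$ with $L(\sigma')$ differing from $L(\sigma)$ by at most $O(r^2)|y,z|_Y$. This yields $|\tilde y,\tilde z|_{C_0}\le L(\sigma')\le (1+\tau_x(r))|y,z|_Y$, equivalent to $|\eta_0(\tilde y),\eta_0(\tilde z)|_Y/|\tilde y,\tilde z|_{C_0}>1-\tau_x(r)$.

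The hard part is justifying that the perturbation $\sigma'$ of $\sigma$ remains in the correct sheet $\eta_0(W_k)$ rather than crossing over to $\eta_0(W_{3-k})$ near points of $\ca S^2$, where both sheets accumulate at $x$. This requires tracking the perpendicular directions along $\mu$ using the continuity of $f\colon C_0^2\to C_0^2$ from Lemma \ref{lem:contin}(2) and leveraging that $\tilde y, \tilde z$ lie on the same sheet $W_k$ by construction, so that the lifts of $\mu$ under $\eta$ emanate from $C_0$ at points in $U_k$ at both endpoints.
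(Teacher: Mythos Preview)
Your approach has a genuine gap at the perturbation step, and it is precisely at points of $\ca S^2$ that it breaks down. Lemma~\ref{lem:deviation} only tells you that $\sigma=\pi\circ\mu$ lies in $X$, not in $X_0$. Your appeal to Lemma~\ref{lem:double-sus} does not bridge this: the identity $\Sigma_x(X)=\Sigma_x(X_0)$ is an \emph{infinitesimal} statement at each point of $X_0^2$, but it does not force a $Y$-geodesic between two points of $X_0^2$ to stay close to $X_0$. In fact, by Lemma~\ref{lem:S2character}, every $y\in X_0$ near a point of $\ca S^2$ satisfies ${\rm rad}(\xi_y^+)>\pi/2$, so ${\rm int}\,X$ is nonempty in every neighborhood of $x\in\ca S^2$; consequently $\sigma$ can pass through ${\rm int}\,X$, and there is no mechanism to ``perturb'' it back into $\eta_0(W_k)\subset X_0^2$ with only $O(r^2)$ length increase. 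The continuity of $f$ on $C_0^2$ that you invoke in the last paragraph does not help here, because the obstruction is not about matching sheets at the endpoints but about the curve leaving $X_0$ altogether along the way. (There is also a minor issue: arbitrarily small \emph{geodesically convex} neighborhoods are not available in general Alexandrov spaces, so the identification $|\tilde y,\tilde z|_{U_k^{\rm int}}=|\tilde y,\tilde z|_{C_0}$ needs separate justification.)

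The paper sidesteps all of this with a blow-up argument that never attempts to produce a short curve in $X_0$. The almost-isometry estimate is proved by contradiction (Lemma~\ref{lem:almost-2cover}): assuming $|\tilde y_i,\tilde z_i|_{C_0}/|y_i,z_i|_Y>1+c$ along a sequence, one rescales by $s_i=|y_i,z_i|_Y$ and passes to a limit $(Y_\infty,y_\infty)$. Using Lemma~\ref{lem:S2character} and the splitting theorem one gets $Y_\infty\cong\mathbb R\times X_\infty$; then the distance $|\tilde y_i,\tilde z_i|_{C_0}$ is read off as the length of a minimal geodesic in the level set $C_{s_i}^Y$ between the perpendicular lifts $\gamma_{y_i}^+(s_i)$ and $\gamma_{z_i}^+(s_i)$, which in the product limit equals $|y_\infty,z_\infty|=1$. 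This yields the ratio $\to 1$ directly, contradicting the assumption. The point is that working with perpendiculars and the warped-product collar lets one compare $C_0$-distances and $Y$-distances without ever needing a curve in $X_0$.
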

\begin{proof}
For every $x\in X_0^2$, let $\{ p_1, p_2 \}:=\eta_0^{-1}(x)$. 
For $0<r< |p_1,p_2|_{C_0}/2$, we set
\[
         U_k(r):= \mathring{B}^{C_0}(p_k, r), \quad (k=1,2).
\]

\begin{slem} \label{slem:eta-ball}
For a small enough $r$, we have 
\begin{align*}
       \eta_0(U_1(r))\cup &\eta_0(U_2(r)) \supset B^{X_0}(x,(1-\tau_x(r))r) \\
        &      \supset \eta_0(U_1((1-\tau_x(r))r))\cup  \eta_0(U_2((1-\tau_x(r))r)) 
\end{align*}
In particular, $\eta_0(U_1(r))\cup \eta_0(U_2(r))$ contains an open neighborhood of $x$ in $X_0$.
\end{slem}
\begin{proof} 
The second inclusion is obvious from the $1$-Lipschitz property of $\eta_0$.
Suppose that the first inclusion does not hold.
Then we have $r_i\to 0$,  $c>0$ and 
$x_i\in  B^{X_0}(x,(1-c)r_i) \setminus  \eta_0(U_1(r_i))\cup \eta_0(U_2(r_i)) $.
Take $p_i\in C_0$ with $\eta_0(p_)=x_i$.
We may assume that 
$p_i\to p_1$.
Since Proposition \ref{prop:length'} implies that 
$|x,x_i|/|p_1,p_i|\to 1$, we have 
$|p_1,p_i|<r_i$ for large $i$, and hence $x_i\in \eta_0(U_1(r_i))$.
This is a contradiction.
\end{proof}

\begin{slem} \label{slem:eta-inter}
For a small enough $r$, we have  
\begin{align*}
      X_0^2&\supset \eta_0(U_1(r))\cap \eta_0(U_2(r))\\
   &\supset X_0^2\cap (\eta_0(U_1((1-\tau_x(r))r))\cup \eta_0(U_2((1-\tau_x(r))r)))\\
   &\supset  \eta_0(U_1((1-\tau_x(r))r))\cap \eta_0(U_2((1-\tau_x(r))r)).
\end{align*} 
In particular, $ X_0^2\cap (\eta_0(U_1(r/2))\cup\eta_0(U_2(r/2)))$ is a closed subset of $X_0$.
\end{slem}
\begin{proof} 
The first and the third inclusions are obvious.
Suppose that the second inclusion does not hold.
Then we have a sequence $r_i\to 0$,  $c>0$ and 
$x_i\in X_0^2\cap (\eta_0(U_1((1-c)r_i))\cup \eta_0(U_2((1-c)r_i))\setminus \eta_0(U_1(r_i))\cap \eta_0(U_2(r_i))$.
We may assume $x_i\in X_0^2\cap \eta_0(U_1((1-c)r_i))$
for all $i$.
Take $p_i^1\in U_1((1-c)r_i)$ with $\eta_0(p_i^1)=x_i$.
Note $|x,x_i|_{X_0}<(1-c)r_i$.
From $x_i\in X_0^2$
we  have a point $p_i^2\in C_0\setminus \{ p_i^1\}$  with $\eta_0(p_i^2)=x_i$.
We may assume that  $p_i^2$ converges to a point $q\in \eta_0^{-1}(x)$.
Since $p_i^1\to p^1$, by Lemma \ref{lem:contin}  (2), we have $q=p_2$.
 Proposition \ref{prop:length'} then implies that 
$|x,x_i|/|p_2,p_i^2|\to 1$.
This implies $p_i^2 \in U_2(r_i)$, and hence
$x_i\in \eta_0(U_1(r_i))\cap \eta_0(U_2(r_i))$,
a contradiction. 
\end{proof}

\begin{rem} \label{rem:S2=notclosed}\upshape
Concerning Sublemma \ref{slem:eta-inter},
 $\ca S^2$ is not necessarily closed in $X_0$.
 See Figure 1 in Section \ref{sec:intro}.
See also  Lemma \ref{lem:closed-S1}.
\end{rem}

\begin{slem} \label{slem:almost-2cover}
%%%%%%
For any $p\in C_0^2$, if $r>0$ is small enough, then $\eta_0:B^{C_0}(p,r)\to X_0$ is injective, and 
for all $\tilde y,\tilde z\in B^{C_0}(p,r)$ we have 
\[
    1-\tau_p(r)\le 
 \frac{|\eta_0(\tilde y), \eta(\tilde z)|_Y}{|\tilde y, \tilde z|_{C_0}}\le 1.
\]
\end{slem}
\begin{proof}
Suppose the sublemma does not hold. Then 
%for some $x\in X_0^2$
we have sequences $\tilde y_i$ and $\tilde z_i$ in $C_0$ converging to $p$ satisfying 
%there are  
%$\tilde x$ and $\tilde y_i, \tilde z_i\in B(\tilde x, 2/i)$ 
%satisfying $\eta(\tilde x)=x$, $\eta(\tilde y_i)=y_i$, $\eta(\tilde z_i)=z_i$ and 
\begin{align} \label{eq:d(yz)-quot}
  \frac{|\eta_0(\tilde y_i), \eta_0(\tilde z_i)|_Y}{|\tilde y_i, \tilde z_i|_{C_0}}   \to 0.
\end{align}
%for some uniform constant $c>0$.
Let $\gamma_{y_i}^+(t):=\eta(\tilde y_i, t)$,
$\gamma_{z_i}^+(t):=\eta(\tilde z_i, t)$ be the perpendiculars
at $y_i:=\eta_0(\tilde y_i)$ and $z_i:=\eta_0(\tilde z_i)$ respectively.
Set $s_i:=|y_i,z_i|_Y$, and let $\sigma_i:[0,1]\to C_{s_i}^Y$ be a minimal geodesic 
from $\gamma_{y_i}^+(s_i)$ to $\gamma_{z_i}^+(s_i)$
in $C_{s_i}^Y$.
Now we may assume that 
$(\frac{1}{s_i} Y, y_i)$ converges to an Alexandrov space $(Y_\infty,y_\infty)$ with nonnegative curvature.
%%%%%%%%%%%%%%%%%
Combining  Lemma \ref{lem:S2character}
and the splitting theorem implies that $Y_\infty$ is isometric to a 
product $\mathbb R\times X_\infty$.
Under this convergence, 
we may also assume that 
$\gamma_{y_i}^+$, $\gamma_{z_i}^+$ and 
$\sigma_i$ converge to geodesic rays $\gamma_{y_\infty}^+$, $\gamma_{z_\infty}^+$ and a minimal geodesic segment $\sigma_\infty$ in $Y_\infty$ joining  $\gamma_{y_\infty}^+(1)$
and $\gamma_{z_\infty}^+(1)$ respectively.
Now we conclude that 
\[
 \lim_{i\to\infty}\frac{\phi(s_i)}{s_i}|\tilde y_i,\tilde z_i|_{C_0}
    =|\sigma_\infty(0), \sigma_\infty(1)|=|y_\infty, z_\infty|=1.
\]
This is a contradiction to \eqref{eq:d(yz)-quot}.
\end{proof}

For small enough $r>0$, set $V_0:=\eta_0(U_1(r))\cap \eta_0(U_2(r))$.
By Sublemmas \ref{slem:eta-ball} and \ref{slem:eta-inter},
 $V_0$ contains an open neighborhood $V$ of $x$
in $X_0^2$.
Put $W_k:= (\eta_0|_{U_k(r)})^{-1}(V)$\, $(k=1,2)$.
Note that $\eta_0^{-1}(V)$ is the disjoint union of $W_1$ and $W_2$.   By Sublemma \ref{slem:almost-2cover}, $\eta_0:W_k\to V$ are almost isometric.
This completes the proof of Proposition \ref{prop:eta-2cover}.
\end{proof}

Now we discuss some topics using the results
proved so far.
 The first two are about $\ca S^2$. 

\begin{lem}\label{lem:lift-S2} It holds that 
 $\tilde{\ca S}^2=\eta_0^{-1}(\ca S^2)$.
\end{lem}
\begin{proof}
For any $\tilde x \in\tilde{\ca S}^2$, choose a sequence $\tilde x_i\in C_0^1$
converging to $\tilde x$. 
Then $x_i:=\eta_0(\tilde x_i)\in X_0^1$
converges to $x:=\eta_0(\tilde x)\in X_0^2$.
Thus we have $x\in \ca S^2$ and hence $\tilde x\in
\eta_0^{-1}(\ca S^2)$.

Conversely, for any $\tilde x\in\eta_0^{-1}(\ca S^2)$,
set $x:=\eta_0(\tilde x)\in\ca S^2$.
Take a sequence $x_i\in X_0^1$ converging to $x$.
If the lift $\tilde x_i\in C_0^1$ of
$x_i$ converges to $\tilde x$, then we certainly have 
$\tilde x\in \tilde{\ca S^2}$.
Otherwise, $\tilde x_i$ converges to the other lift $\tilde x'$
of $x$. Let $U_1(r)$ and $U_2(r)$ be as in Sublemma 
\ref{slem:eta-ball} with $\tilde x\in U_1(r)$ and 
$\tilde x'\in U_2(r)$.

%%%%%%%%%
\begin{slem} \label{slem:spherical-U}
Let $\Sigma$ be such that $\Sigma_x(Y)$ is the spherical suspension 
$\{\xi_x^{\pm}\}*\Sigma$, and let $\Sigma_x^k(r):=\{\uparrow_x^y\,|\,y\in  \eta_0(U_k(r/2))\}$ for each $k\in\{ 1,2\}$.
Then we have 
\[
d_H^{\Sigma_x(Y)} ( \Sigma_x^k(r),\Sigma)<\tau_x(r),
\]
where $d_H^{\Sigma_x(Y)}$ denotes the Hausdorff distance in $\Sigma_x(Y)$.
\end{slem}
\begin{proof} For any $\e>0$, we easily have
$\Sigma_x^k(r)\subset B(\Sigma, \e)$ for small enough $r$ from a limit argument.
Suppose that $\Sigma\subset B(\Sigma_x^k(r),\e)$ 
does not hold for any small enough $r$ and some $\e$. Then we have sequences $r_i\to 0$ 
and $v_i\in \Sigma\setminus B(\Sigma_x^k(r_i),\e)$.
We may assume $v_i\to v\in\Sigma$.
%For $v\in\Sigma\setminus\Sigma_x^k(r)$, 
Take $\xi^\pm\in\Sigma_x(Y)$ such that  
$\angle(\xi_x^+,\xi^\pm)+\angle(\xi^\pm,v)=\pi/2$ and $\angle(\xi^\pm,v)=\pi/4$.
In what follows,  we may assume that the geodesics $\gamma^\pm$ in the directions 
$\xi^\pm$ are defined on some interval
$[0, s]$.
By Lemma \ref{lem:not-perp}, for any small
enough  $0<t<s$,
we have 
$\angle(\uparrow_x^{\pi\circ\gamma^\pm(t)}, v)
<\e/2$ and hence 
$\angle(\uparrow_x^{\pi\circ\gamma^\pm(t)}, v_i)
<\e$ for large $i$.
This is a contradiction.
\end{proof}

%%%%%%%%%
Note that $x_i\in \eta_0(U_2(r))\setminus\eta_0(U_1(r))$ for large $i$.
Let $y_i$ be a nearest point of $\eta_0(U_1(r))$
from $x_i$. Let $r_i:=|x,x_i|$.
Sublemma \ref{slem:spherical-U} implies 
that $\lim_{i\to\infty}|x_i,y_i|/r_i=0$.
%
%Since $\Sigma_x(\eta_0(U_1(r)))=\Sigma_x(\eta_0(U_2(r)))$,
%$\eta_0(U_1(r))$ and $\eta_0(U_2(r))$  are tangent 
%at $x$.
Lemma \ref{lem:single-interior} implies
$y_i\in X_0^1$. From $y_i\to x$, we have
$\eta_0^{-1}(y_i)\to \tilde x$, and hence $\tilde x\in
\tilde{\ca S}^2$.
\end{proof}

%\begin{proof}
%Let a sequence $x_i$ in $\ca S^1$ converge to a point $x\in X_0$.
%Since $x\in\ca S$, we only have to show  $x\in X_0^1$.
%Since ${\rm rad}(\xi_{x_i}^+)=\pi/2$,
%we have ${\rm rad}(\xi_{x}^+)=\pi/2$.
%This implies  $x\in X_0^1$.
%\end{proof} 

%%%%%%%%%%%%%%%%%%%%%%%
 
\begin{lem} \label{lem:non-extend}
Let $\gamma:[0,\ell]\to X_0$ be an $X_0$-minimal geodesic starting from a point 
$x\in {\rm int}\, X_0^2$ such that
\[
  \gamma([0,t_0))\subset {\rm int}\, X_0^2, \,\,\, \gamma(t_0)\in\pa({\rm int}\,X_0^2),
\]
for some $0<t_0<\ell$.
Then we have 
\begin{enumerate}
\item $\gamma(t_0)\in \ca S^2\,;$
\item there is a unique limit
\[
     \lim_{t\to t_0+}\dot\gamma^Y_{\gamma(t_0),\gamma(t)}\in\Sigma_{\gamma(t_0)}(Y).
\]
\end{enumerate}
\end{lem}

\begin{slem} \label{slem:non-branch}
Let $x_i$ and $z_i\in X_0$ be sequences converging to a point $y\in X_0$ such that 
\begin{align} \label{eq:xyz=zyx}
  &  |x_i,y|_{X_0^{\rm int}}=|z_i,y|_{X_0^{\rm int}}, \quad
   \lim_{i\to\infty}\frac{|x_i,y|_{X_0^{\rm int}}+
|y,z_i|_{X_0^{\rm int}}}{|x_i,z_i|_{X_0^{\rm int}}}=1.
\end{align}
Take respective lifts $\tilde y,\tilde x_i,\tilde z_i\in C_0$ of $y,x_i, z_i$ such that 
$\tilde x_i\to \tilde y$ and $\tilde z_i\to \tilde y$.
Then we have 
\[
     \lim_{i\to\infty}\wangle^{C_0}
        \tilde x_i\tilde y\tilde z_i=\pi.
\]
\end{slem}
\begin{proof} 
Since $\eta_0$ is $1$-Lipschitz, Proposition \ref{prop:length'} implies 
\beq \label{eq:yx/yx=1}
\lim_{i\to\infty} |y,x_i|_{X_0^{\rm int}}/|\tilde y,\tilde x_i|_{C_0}=1, \quad
\lim_{i\to\infty} |y,z_i|_{X_0^{\rm int}}/|\tilde y,\tilde z_i|_{C_0}=1.
\eeq
From \eqref{eq:xyz=zyx} and 
the $1$-Lipschitzness of 
$\eta_0$, we have   
\beq \label{eq:xz>xz}
     |\tilde x_i,\tilde z_i|_{C_0}\ge |x_i,z_i|_{X_0^{\rm int}} \ge 2|x_i,y|_{X_0^{\rm int}}(1-o_i).
\eeq
Combining \eqref{eq:yx/yx=1}, \eqref{eq:xz>xz}
and \eqref{eq:xyz=zyx},
we have 
\beqq \label{eq:xz=zy+yz}
\lim_{i\to\infty}\frac{|\tilde x_i,\tilde y|_{C_0}+|\tilde y,\tilde z_i|_{C_0}}{|\tilde x_i,\tilde z_i|_{C_0}}=1,
\eeqq
from which the conclusion follows immediately.
\end{proof}

\begin{proof}[Proof of Lemma \ref{lem:non-extend}] (1)\,
Let $y:=\gamma(t_0)$, and $\sigma:[0,t_0]\to X_0$ be 
defined as $\sigma(t)=\gamma(t _0-t)$.
Suppose $y\in\ca S^1$, and set $q:=\eta_0^{-1}(y)$.
%%%
Since $\sigma((0, t_0])\subset {\rm int}\,X_0^2$, we obtain 
two distinct lifts $\tilde\sigma_k:[0,t_0]\to C_0$ of $\sigma$ from $q$\,
$(k=1,2)$.
Note that $\tilde\sigma_k$ is $C_0$-minimal.
In fact, if it is not the case, we would have a curve
$\tilde \rho$ joining $q$ and $\tilde\sigma_k(t_0)$
shorter than $\tilde \sigma_k$,
Then the curve $\eta_0\circ\tilde\rho$ is 
shorter than $\sigma$, which is a contradiction.

In particular, we have  
$\dot{\tilde\sigma}_1(0)\neq\dot{\tilde\sigma}_2(0)$.
Take a subsequence $t_i\to t_0+$ such that 
$\dot\gamma^Y_{y,\gamma(t_i)}(0)$ converges to a 
direction, say $v\in\Sigma_y(N_0)$.
Choose 
$\tilde v\in\Sigma_{q}(C_0)$ such that $d\eta_0(\tilde v)=v$.
Sublemma \ref{slem:non-branch} yields 
\[
\angle(\dot{\tilde\sigma}_k(0),\tilde v)=\pi.
\]
This is a contradiction to the non-branching property of geodesics in 
the Alexandrov space $\Sigma_q(C_0)$.
\par\n 
(2)\,Suppose there are distinct limits
\[
   v_1:=\lim_{t_i\to t_0+}\dot\gamma^Y_{\gamma(t_0),\gamma(t_i)}\in\Sigma_{\gamma(t_0)}(Y),\quad
v_2:=\lim_{s_i\to t_0+}\dot\gamma^Y_{\gamma(t_0),\gamma(s_i)}\in\Sigma_{\gamma(t_0)}(Y).
\]
Choose any lift $q\in\eta_0^{-1}(y)$ and 
let $\tilde\sigma$ be the lift of $\sigma$ from 
$q$.
Take $\tilde v_k\in\Sigma_q(C_0)$ such that 
$d\eta_0(\tilde v_k)=v_k$\, $(k=1,2)$.
Then as in (1), Sublemma \ref{slem:non-branch}
yields
\[
\angle(\dot{\tilde\sigma}(0),\tilde v_k)=\pi,
\]
which is a contradiction.
\end{proof}
%%%%%%%%%%%%%%%%%%%%%%%
\pmed

Let $X_0^{\rm int}$ and $X_0^{\rm ext}$
denote $X_0$ equipped with the intrinsic metric and
the extrinsic metric induced from $X$ respectively.

 We now make clear that
both $X_0^{\rm int}$ and $X_0^{\rm ext}$ define
the same topology and the same Hausdorff
dimension for $X_0$.

\begin{lem}\label{lem:same-topology}
For any fixed $x\in X_0$, we have 
\[
       B^{X_0^{\rm int}}(x,r) \subset  B^{X_0^{\rm ext}}(x,r) \subset  B^{X_0^{\rm int}}(x,(1+\tau_x(r))r).
\]

In particular, both $X_0^{\rm int}$ and $X_0^{\rm ext}$
define the same topology on $X_0$.
\end{lem}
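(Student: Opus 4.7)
The first inclusion is essentially automatic: the interior metric $X_0^{\rm int}$ is a length metric on $X_0$, and every Lipschitz curve in $X_0$ is also a curve in $Y$, hence has at least the $Y$-length; therefore $|y,x|_{X_0^{\rm ext}} = |y,x|_Y \le |y,x|_{X_0^{\rm int}}$ for every $x,y\in X_0$, which gives $B^{X_0^{\rm int}}(x,r)\subset B^{X_0^{\rm ext}}(x,r)$.

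For the second inclusion, fix $x\in X_0$ and let $y_n\in X_0$ be any sequence with $|x,y_n|_Y\to 0$. I will show $|x,y_n|_{X_0^{\rm int}}/|x,y_n|_Y\to 1$, from which the inclusion follows by setting $\tau_x(r) := \sup\{|x,y|_{X_0^{\rm int}}/|x,y|_Y - 1 : 0 < |x,y|_Y \le r\}$. For each $n$ choose any preimage $q_n\in \eta_0^{-1}(y_n)\subset C_0$, which is non-empty by the surjectivity of $\eta_0$. Since $C_0$ is compact, passing to a subsequence we may assume $q_n\to p$ for some $p\in C_0$, and by the $1$-Lipschitz continuity of $\eta_0$ we have $\eta_0(p)=\lim \eta_0(q_n)=x$, so $p\in \eta_0^{-1}(x)$. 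By Proposition \ref{prop:length'}, the infinitesimal ratio satisfies
\[
\lim_{n\to\infty}\frac{|\eta_0(p),\eta_0(q_n)|_Y}{|p,q_n|_{C_0}}=\lim_{n\to\infty}\frac{|x,y_n|_Y}{|p,q_n|_{C_0}}=1.
\]
Furthermore, since $C_0$ is an Alexandrov space, $|p,q_n|_{C_0}$ is realized by a minimal geodesic $\alpha_n$, whose image $\eta_0\circ\alpha_n$ is a curve in $X_0$ from $x$ to $y_n$; by the length-preserving property in Proposition \ref{prop:length'},
\[
|x,y_n|_{X_0^{\rm int}}\le L(\eta_0\circ\alpha_n)=L(\alpha_n)=|p,q_n|_{C_0}=(1+o(1))|x,y_n|_Y.
\]
Because this conclusion holds along every subsequence producing convergent preimages $q_n\to p$, the full ratio $|x,y_n|_{X_0^{\rm int}}/|x,y_n|_Y$ tends to $1$, as required. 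The final topology statement is then immediate, since each $X_0^{\rm int}$-ball contains an $X_0^{\rm ext}$-ball and vice versa.

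The only delicate point is that for $x\in X_0^2$ the preimages $q_n$ might accumulate at different elements of $\eta_0^{-1}(x)=\{p_1,p_2\}$ along different subsequences; this is the reason for extracting a subsequence before invoking Proposition \ref{prop:length'}. No uniformity of the extraction in $x$ is needed, which is consistent with the $x$-dependence of $\tau_x(r)$ in the statement.
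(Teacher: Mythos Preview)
Your proof is correct and follows exactly the approach the paper intends: the paper simply states that the lemma ``immediately implies'' from Proposition~\ref{prop:length'}, and you have written out precisely the expected details---lifting $y_n$ to $q_n\in C_0$, extracting a limit $p\in\eta_0^{-1}(x)$, and using both the ratio $|x,y_n|_Y/|p,q_n|_{C_0}\to 1$ and the length-preserving property of $\eta_0$ on the $C_0$-geodesic $\alpha_n$. Your handling of the subsequence issue for $x\in X_0^2$ is also appropriate.
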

\begin{proof}
The first inclusion is obvious.
The second inclusion follows from Lemma 
\ref{lem:eta1} for $x\in X_0^1$
and from Sublemma 
\ref{slem:eta-ball} for $x\in X_0^2$.
\end{proof}

Recall that  $m$ is the topological dimension of $X$,
$m:=\dim X$.

\begin{lem}\label{lem:dimNdimN0} 
For any nonempty open subset $U_0$ of $X_0$, we have 
$$
 \dim U_0=\dim_H^{X_0} U_0=\dim_H^{X} U_0=m-1,
$$
where $\dim_H^{X_0}$ and $\dim_H^{X}$ denote the Hausdorff dimension with respect to the intrinsic metric  and the extrinsic metric of $X_0$ respectively.
\end{lem}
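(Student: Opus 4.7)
The plan is to pinch each of $\dim U_0$, $\dim_H^{X_0}U_0$, and $\dim_H^X U_0$ between matching upper and lower bounds equal to $m-1$. A key preliminary is the identification $\dim C_0=m-1$: by Lemma \ref{lem:loc-isom}, $\eta$ restricts to a bijective local isometry from $C\setminus C_0$ onto the open subset $Y\setminus X$ of the $m$-dimensional Alexandrov space $Y$, so $\dim(Y\setminus X)=m$; combining with the warped-product identity $\dim C=\dim C_0+1$ forces $\dim C_0=m-1$, and since $C_0$ is Alexandrov this also equals $\dim_H^{C_0} C_0$.

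For the upper bounds, the map $\eta_0:C_0\to X_0^{\rm int}$ is surjective and $1$-Lipschitz by Proposition \ref{prop:length'}, so
\begin{equation*}
\dim_H^{X_0} U_0 \le \dim_H^{C_0}\eta_0^{-1}(U_0) \le \dim_H^{C_0} C_0=m-1.
\end{equation*}
Since $d^{\rm ext}\le d^{\rm int}$ on $X_0$, every $\delta$-cover in the interior metric is a $\delta$-cover in the exterior metric with smaller diameters, yielding $\dim_H^X U_0\le \dim_H^{X_0}U_0\le m-1$. The Szpilrajn inequality $\dim\le\dim_H$ then gives $\dim U_0\le m-1$.

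The main content is the lower bound, which I reduce to showing that $U_0$ cannot be contained in $\ca S=\ca S^1\cup\ca S^2$. Suppose for contradiction $U_0\subset\ca S$. If $U_0$ meets $\ca S^2$, pick $y_0\in U_0\cap\ca S^2$; by definition of $\ca S^2=\pa X_0^2\cap X_0^2$ every $X_0$-neighborhood of $y_0$ meets $X_0^1$, while Lemma \ref{lem:S2character} provides an $\e>0$ with $B^{X_0}(y_0,\e)\cap X_0^1\subset {\rm int}\,X_0^1$. Shrinking $\e$ so that $B^{X_0}(y_0,\e)\subset U_0$, the resulting nonempty set $B^{X_0}(y_0,\e)\cap X_0^1$ sits both in $U_0\cap X_0^1\subset \ca S^1$ and in ${\rm int}\,X_0^1$, contradicting $\ca S^1\cap {\rm int}\,X_0^1=\emptyset$. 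Otherwise $U_0\subset\ca S^1\subset X_0^1$, so openness of $U_0$ in $X_0$ places $U_0$ inside ${\rm int}\,X_0^1$, again contradicting $U_0\subset\ca S^1$. This dichotomy, in particular the geometric input of Lemma \ref{lem:S2character}, is the main obstacle; the rest is bookkeeping.

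Once $U_0$ is known to meet ${\rm int}\,X_0^k$ for some $k\in\{1,2\}$, I pick a nonempty open $V_0\subset U_0\cap {\rm int}\,X_0^k$ and invoke Lemma \ref{lem:eta'}, which exhibits $V_0$ (in the interior metric) as a local isometry or locally isometric double cover of an open subset of the $(m-1)$-dimensional Alexandrov space $C_0$. Therefore $\dim V_0=m-1$ and $\dim_H^{X_0} V_0=m-1$. Lemma \ref{lem:same-topology}, which asserts that the interior and exterior metrics on $X_0$ are $(1+\tau_x(r))$-equivalent on small balls, upgrades the latter to $\dim_H^X V_0=m-1$. Monotonicity under $V_0\subset U_0$ delivers all three lower bounds and completes the argument.
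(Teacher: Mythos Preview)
Your proof is correct and follows essentially the same approach as the paper: bound above via the $1$-Lipschitz surjection $\eta_0:C_0\to X_0^{\rm int}$, show that $U_0$ must meet ${\rm int}\,X_0^k$ for some $k$ using Lemma~\ref{lem:S2character}, then pull back to $C_0$ via Lemma~\ref{lem:eta'} for the lower bound and handle the exterior metric with Lemma~\ref{lem:same-topology}. Your case analysis for ruling out $U_0\subset\ca S$ is organized slightly differently from the paper's (you argue by contradiction that $U_0\subset\ca S$ is impossible, whereas the paper directly splits on whether $U_0$ meets $\ca S^2$ or $\ca S^1$), but the content is the same.
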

\begin{proof} 
Since $\eta_0:C_0\to X_0^{\rm int}$ is surjective and $1$-Lipschitz, we have 
$\dim U_0\le \dim_H ^{X_0} U_0\le \dim_H C_0=m-1$.
%%%
We show that $U_0$ meets either 
${\rm int}\, X_0^1$
or ${\rm int}\, X_0^2$.
If $U_0$ meets $\ca S^2$, then Lemma \ref{lem:S2character} implies that 
$U_0$ meets ${\rm int}\, X_0^1$.
Suppose $U_0$ meets $\ca S^1$.
Since we may assume that $U_0$ does not meet 
$\ca S^2$, we have $U_0\cap X_0^2\subset 
{\rm int}\, X_0^2$.
Since $C_0$ is an Alexandrov space, 
it follows from  Lemma \ref{lem:eta'} and \cite[Theorem 9.5]{BGP}
that $U_0$ contains an open subset $V_0$ that is homeomorphic to $\R^{m-1}$. 
Together with Lemma \ref{lem:same-topology},
this implies that 
 $\dim_H ^{X_0} U_0\ge \dim_H ^{X} U_0\ge \dim U_0\ge \dim V_0=m-1$.
This completes the proof.
\end{proof} 

\pmed\n 
{\bf Limits of  local inradius collapsing.}
Finally we state some results on the limits of local inradius 
collapsing.

\begin{defn} \label{defn:local-inradius}
We say that  an open set $D$ of $X$ is  a {\it  locally inradius 
collapsed part} if and only if $D$ is contained in $X_0$.  
In this case,  $D$ does not meet $\ca S^2$ by
Lemma \ref{lem:S2character}.
\end{defn}

The following result follows essentially from \cite[Propositions 4.27,4.30 and Corollary 4.31]{YZ:inrdius}.

\begin{thm}[\cite{YZ:inrdius}] \label{thm:inradius-collapse}
For  a locally inradius 
collapsed part  $D$ of $X$, set 
$C_{0,D}:=\eta_0^{-1}(D)$.
Then we have the following.
\begin{enumerate}
\item $f:C_{0,D}\to C_{0,D}$ is locally isometric$\,;$
\item $D$ is isometric to the quotient space $C_{0,D}/f$. In particular, $D$ is  locally an Alexandrov space with 
curvature $\ge\nu\,;$
\item If both $X_0^1\cap D$ and $X_0^2\cap D$ are nonempty, then
$X_0^1\cap D$ is extremal in $D$, and $X_0^2\cap D$ is open dense in $D$.
\end{enumerate}
\end{thm}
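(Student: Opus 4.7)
My plan has three stages. First I verify that $D\subset X_0$ open in $X$ forces $D\cap \ca S^2=\emptyset$ (as claimed after Definition~\ref{defn:local-inradius}): by Lemma~\ref{lem:S2character} any point of $\ca S^2\cap D$ has nearby $X_0^1$-points $y$ with ${\rm rad}(\xi_y^+)>\pi/2$, at which Proposition~\ref{prop:perp+horiz}(3) provides directions into ${\rm int}\,X$, contradicting $D\subset X_0$. Consequently $C_{0,D}$ avoids $\tilde{\ca S}^2$ and $f|_{C_{0,D}}$ is continuous by Lemma~\ref{lem:contin}. For (1), on ${\rm int}\,C_0^2\cap C_{0,D}$ the local isometry of $f$ is Lemma~\ref{lem:f'}, and on $C_0^1\cap C_{0,D}$ away from the seam $\tilde{\ca S}^1$ the equality $f=\mathrm{id}$ makes it trivial. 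At a seam point $p\in\tilde{\ca S}^1\cap C_{0,D}$ I would argue that $f$ preserves the length of every rectifiable curve in $C_{0,D}$, by splitting the curve into sub-arcs in each of the two open pieces and using continuity of $f$ together with isometry on each piece; applying this to a minimizing geodesic $\tilde\sigma$ in the Alexandrov space $C_0$ between close points $p_1,p_2$ yields $|f(p_1),f(p_2)|\leq L(f\circ\tilde\sigma)=L(\tilde\sigma)=|p_1,p_2|$, with the reverse inequality coming from $f^2=\mathrm{id}$.

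For (2), $\eta_0\circ f=\eta_0$ descends $\eta_0$ to a bijective $1$-Lipschitz map $\bar\eta_0\colon C_{0,D}/f\to D$ (bijectivity because $\eta_0$-fibres coincide with $f$-orbits), and the task is to show it is an isometry. The key ingredient is a curve-lifting property: every Lipschitz curve $\sigma$ in $D$ admits a continuous lift $\tilde\sigma$ in $C_{0,D}$ with $L(\tilde\sigma)=L(\sigma)$. I would establish this by combining Lemma~\ref{lem:eta'} (local isometric covering on the open pieces ${\rm int}\,C_0^k$), Proposition~\ref{prop:eta-2cover} (the $2$-to-$1$ local almost-isometry on $C_0^2$), and Proposition~\ref{prop:length'} (length-preservation), while continuity of $f$ on $C_{0,D}$ is what lets me glue local lifts across the seam $\tilde{\ca S}^1$. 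Applying the lift to a minimizing curve in $D$ from $\eta_0(p)$ to $\eta_0(q)$ gives $d_D(\eta_0(p),\eta_0(q))\geq d_{C_{0,D}/f}([p],[q])$; together with the $1$-Lipschitz bound this yields the isometry. Finally, $C_0$ being an Alexandrov space with curvature $\geq\nu$ and $f$ acting isometrically (by (1)) gives the quotient the Alexandrov structure with curvature $\geq\nu$, which $D$ inherits.

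For (3), the set $X_0^1\cap D = \bar\eta_0({\rm Fix}(f))$ is the image under the quotient map of the fixed-point set of the isometric $\Z/2$-action generated by $f$ on $C_{0,D}$, so Proposition~\ref{prop:fixed-ext} yields extremality in $D$. Openness of $X_0^2\cap D$ in $D$ follows from Lemma~\ref{lem:X02} together with $D\cap\ca S^2=\emptyset$. For density, the hypothesis $X_0^2\cap D\neq\emptyset$ forces some component of $C_{0,D}$ to carry a non-trivial $f$-action; the fixed-point set of such an isometric involution on an Alexandrov space is nowhere dense (having codimension at least one), and via the quotient this gives density of $X_0^2\cap D$ in $D$.

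The main obstacle is the curve-lifting argument in step (2) near the seam $\tilde{\ca S}^1\cap C_{0,D}$, where $\eta_0$ transitions from one-to-one on $C_0^1$ to two-to-one on $C_0^2$. Producing single-valued continuous lifts with preserved length through this transition is the core technical step, and it depends crucially on the continuity of $f$ on $C_{0,D}$ obtained in the preliminary observation and on the quantitative local structure of $\eta_0$ provided by Proposition~\ref{prop:eta-2cover} and Lemma~\ref{lem:eta1}.
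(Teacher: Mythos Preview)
The paper does not supply a proof of this theorem; it is recorded as following ``essentially from \cite[Propositions~4.27, 4.30 and Corollary~4.31]{YZ:inrdius}'', so there is no in-paper argument to compare against. Your outline is precisely the structure those cited results encode: once $D\cap\ca S^2=\emptyset$ (hence $f$ is continuous on $C_{0,D}$ by Lemma~\ref{lem:contin}), one shows $f$ is a local isometry there, identifies $D$ with the quotient $C_{0,D}/f$ via a curve-lifting argument, and reads off extremality of $X_0^1\cap D$ from Proposition~\ref{prop:fixed-ext}. So your plan matches the intended route.

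Two places deserve tightening. In (1), the phrase ``splitting the curve into sub-arcs in each of the two open pieces'' is not literally correct: a $C_0$-geodesic may meet $\tilde{\ca S}^1$ on a Cantor-type set, so finitely many sub-arcs do not suffice. A clean fix is to observe that since $D\cap\ca S^2=\emptyset$ forces $D\cap X_0^2=D\cap{\rm int}\,X_0^2$, the set $C_{0,D}\cap C_0^2$ is \emph{open} in $C_{0,D}$; hence for any rectifiable $\tilde\sigma$ the preimage of $C_0^2$ is a countable union of open intervals on each of which $f$ is a local isometry (Lemma~\ref{lem:f'}), while on the closed complement $f\circ\tilde\sigma=\tilde\sigma$. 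Summing lengths over these pieces, using continuity of $f$ at the interval endpoints, gives $L(f\circ\tilde\sigma)=L(\tilde\sigma)$, and then $f^2={\rm id}$ finishes as you indicate. In (3), your density argument only shows $X_0^2\cap D$ is dense in the component of $D$ that meets $X_0^2$; if $D$ is disconnected the statement must be read component-wise (the paper's formulation is silent on this, so this is a clarification of the hypothesis rather than a gap in your reasoning).
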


\begin{rem} \label{rem:S1-inrad}
(1)\,In Theorem \ref{thm:inradius-collapse} (3),
$X_0^1\cap D\subset \ca S^1$ in our present terminology.

(2)\,  The notion of extremal subsets in a local
Alexandrov space $D$ is defined in a way similar to that in Section \ref{ssec:Alex}: A closed 
subset $E$ of $D$ is called {\it extremal} if  each point
$x\in E$ has a neighborhood that satisfies
the definition of extremal subsets in 
Section \ref{ssec:Alex}.
This definition coincides with the original one
when $D$ is (globally) an Alexandrov space.
\end{rem}

\begin{lem}\label{lem:loc-inradX2S1}
Let $U$ be an open  connected neighborhood of a point $x\in\ca S^1$ in $X$. Then $U$ is a part of local inradius collapse if and only if
$U\cap X_0\subset {\rm int} X_0^2\cup\ca S^1$.
\end{lem}
\begin{proof}
If  
 $U$ is a part of local inradius collapse,
then Theorem \ref{thm:inradius-collapse}
yields the conclusion: Or more directly,
Lemma \ref{lem:S2character} implies that $U$ never meets $\ca S^2$,
and Lemma \ref{lem:non-extend}
 implies that $U$ never meets ${\rm int} X_0^1$ by the connectedness of $U$.

Next suppose $U\cap X_0\subset {\rm int} X_0^2\cup\ca S^1$  and it is not a part of local inradius collapse.
Then for some $y\in U$,
there is a sequence 
$y_i\in X\setminus U$
converging to $y$.
Let $z_i$ be a nearest point of $X_0$ from
$y_i\in X\setminus X_0$.
For large enough $i$, we have $z_i\in U$.
However Lemma \ref{lem:single-interior}
implies $z_i\in {\rm int} X_0^1$.
This is a contradiction to the hypothesis on $U$.
\end{proof}

%
%\begin{lem}\label{lem:loc-inradX2S1}
%If $U$ is a nonempty open subset of $X_0$
%such that $U\subset {\rm int} X_0^2\cup\ca S^1$, then it is a part of local inradius collapse.
%\end{lem}
%\begin{proof}
%Suppose the conclusion does not hold. Then for some $y\in U$,
%there is a sequence 
%$y_i\in X\setminus U$
%converging to $y$.
%Let $z_i$ be a nearest point of $X_0$ from
%$y_i\in X\setminus X_0$.
%For large enough $i$, we have $z_i\in U$.
%However Lemma \ref{lem:single-interior}
%implies $z_i\in {\rm int} X_0^1$.
%This is a contradiction to the hypothesis on $U$.
%\end{proof}

\begin{thm}\label{thm:restf=isometry}
$f:C_0\setminus\overline{\tilde{\ca S}^2}\to C_0\setminus\overline{\tilde{\ca S}^2}$ is locally isometric.
\end{thm}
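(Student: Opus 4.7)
The plan is to exploit the fact that, away from $\overline{\tilde{\ca S}^2}$, the space $C_0$ decomposes cleanly as $C_0^1\cup\mathring{C_0^2}$ with $f$ continuous and piecewise a local isometry, and then use the length-preserving property of $\eta_0$. By Lemma~\ref{lem:f'}, $f$ is already a local isometry on $C_0\setminus\tilde{\ca S}$; writing $C_0\setminus\overline{\tilde{\ca S}^2}=(C_0\setminus\tilde{\ca S})\cup(\tilde{\ca S}^1\setminus\overline{\tilde{\ca S}^2})$, it remains to verify the local isometry property at an arbitrary $p\in\tilde{\ca S}^1\setminus\overline{\tilde{\ca S}^2}$. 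I would first choose $r_0>0$ with $B^{C_0}(p,r_0)\cap\overline{\tilde{\ca S}^2}=\emptyset$, so that $B^{C_0}(p,r_0)\subset C_0^1\cup\mathring{C_0^2}$ and Lemma~\ref{lem:contin}(1) provides continuity of $f$ on this ball. Since $f(p)=p$, pointwise continuity at $p$ further allows shrinking $r_0$ so that $f(B^{C_0}(p,r_0))\subset B^{C_0}(p,r_0/2)$.

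The central technical step is to show that, for any minimal geodesic $\gamma:[0,\ell]\to B^{C_0}(p,r_0)$, the composite $f\circ\gamma$ is $1$-Lipschitz. I would decompose $[0,\ell]$ into the open set $V:=\gamma^{-1}(\mathring{C_0^2})$ and its closed complement $\gamma^{-1}(C_0^1)$; on components of $V$, Lemma~\ref{lem:f'} makes $f\circ\gamma$ unit-speed, while on $\gamma^{-1}(C_0^1)$ we have $f\circ\gamma=\gamma$, again unit-speed. At a transition time $t_0\in\gamma^{-1}(\tilde{\ca S}^1)$, continuity of $f$ at $\gamma(t_0)$ combined with one-sided unit-speed bounds approaching $t_0$ from each adjacent piece, glued by the triangle inequality, upgrades these piecewise estimates to a global $1$-Lipschitz bound on $f\circ\gamma$.

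Granting this, I would take $q,q'\in B^{C_0}(p,r_0/4)$ and let $\gamma$ be a minimal geodesic joining them, so that $\gamma\subset B^{C_0}(p,r_0/2)$. Because $\eta_0\circ f=\eta_0$ and both $\gamma$ and $f\circ\gamma$ are Lipschitz, Proposition~\ref{prop:length'} gives
\[
L(f\circ\gamma)=L(\eta_0\circ f\circ\gamma)=L(\eta_0\circ\gamma)=L(\gamma)=|q,q'|,
\]
so $|f(q),f(q')|\le|q,q'|$. The reverse inequality follows by repeating the argument with a minimal geodesic $\hat\gamma$ from $f(q)$ to $f(q')$, which still lies in $B^{C_0}(p,r_0)$ by the choice of $r_0$; since $f^2=\mathrm{id}$, $f\circ\hat\gamma$ connects $q,q'$ with length $|f(q),f(q')|$, giving $|q,q'|\le|f(q),f(q')|$ and hence isometry on $B^{C_0}(p,r_0/4)$. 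The hard part is precisely the $1$-Lipschitz step for $f\circ\gamma$ at transition times in $\gamma^{-1}(\tilde{\ca S}^1)$: the piecewise structure must glue without a jump, which works because the exclusion of $\overline{\tilde{\ca S}^2}$ eliminates the discontinuous branch of $f$, reducing a neighborhood of $p$ to a clean gluing of the local isometry on $\mathring{C_0^2}$ to the identity on $C_0^1$.
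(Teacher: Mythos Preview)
Your argument is correct and takes a genuinely different route from the paper's. The paper argues via a structural dichotomy: for a connected neighborhood $U$ of $p$ disjoint from $\overline{\tilde{\ca S}^2}$, Lemma~\ref{lem:non-extend} forces $\eta_0(U)$ to lie either entirely in ${\rm int}\,X_0^1$ (where $f$ is the identity) or entirely in ${\rm int}\,X_0^2\cup\ca S^1$; the latter is a locally inradius collapsed part, and Theorem~\ref{thm:inradius-collapse}(1) (imported from \cite{YZ:inrdius}) supplies the local isometry of $f$. Thus the paper reduces to two previously understood cases rather than analyzing geodesics directly.

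Your approach is more elementary: it bypasses both Lemma~\ref{lem:non-extend} and the inradius-collapse machinery, using only continuity (Lemma~\ref{lem:contin}), the local isometry on $C_0\setminus\tilde{\ca S}$ (Lemma~\ref{lem:f'}), and the involution identity $\eta_0\circ f=\eta_0$. The key step is the $1$-Lipschitz bound for $f\circ\gamma$, and it works because the endpoints of each component of $\gamma^{-1}(\mathring{C_0^2})$ lie in $\gamma^{-1}(C_0^1)$, where $f\circ\gamma=\gamma$; anchoring at these fixed points lets the triangle inequality glue the piecewise bounds even when $\gamma^{-1}(\tilde{\ca S}^1)$ is not discrete. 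In fact, once $1$-Lipschitz is established, the $\eta_0$ length chain via Proposition~\ref{prop:length'} is redundant: $f^2=\mathrm{id}$ plus $1$-Lipschitz in both directions already gives the isometry. One cosmetic fix: continuity alone does not guarantee an $r_0$ with $f(B(p,r_0))\subset B(p,r_0/2)$; instead choose $r_1\le r_0$ with $f(B(p,r_1))\subset B(p,r_0/4)$ and restrict $q,q'$ to $B(p,r_1/2)$, which keeps both $\gamma$ and $\hat\gamma$ inside $B(p,r_0)$.
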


\begin{proof}
By Lemma \ref{lem:lift-S2}, we have
\[
      \overline{\tilde{\ca S}^2}=\eta_0^{-1}(\overline{{\ca S}^2}).
\]
For any $p\in C_0\setminus\overline{\tilde{\ca S}^2}$,
take a connected neighborhood $U$ of $x:=\eta_0(p)$ in $X_0$ such 
that $U$ never meets $\overline{{\ca S}^2}$.
If $U$ meets  both ${\rm int}X_0^1$ and ${\rm int}X_0^2$, then Lemma \ref{lem:non-extend} implies
$U$ meets $\ca S^2$, which is a contradiction.
Thus we have either $U\subset {\rm int}X_0^2\cup\ca S^1$ 
or  $U\subset {\rm int} X_0^1$. In the former case,  by  Lemmas \ref{lem:X02} and 
\ref{lem:loc-inradX2S1},
$U$ is a locally inradius collapsed part.
The conclusion follows from Lemma \ref{lem:f'} and Theorem \ref{thm:inradius-collapse}.
\end{proof}

 Next we exhibit an example having two locally inradius collapsed parts $U_1, U_2$ with $U_k\subset {\rm int} X_0^k$ \, $(k=1,2)$.

\begin{ex}\label{ex:intX01-open}
For $\e>0$, let $D^2(\e)$ be a nonnegatively curved two-disk
with $\diam(D^2(\e))\le 10\e$ 
such that $\pa D^2(\e)$ has a collar neighborhood isometric to 
$S^1_\e\times [0,\delta)$ for some $\delta>0$.
Note that $\lim_{\e\to 0}\delta=0$.
We consider the product $D^2(\e)\times\mathbb R$, and extend it as follows.
Choose a smooth function $f_\e:S^1_\e\times\mathbb R \to \R_+$
satisfying
\benu
 \item $\max f_\e=1\,;$
 \item $\displaystyle f_\e^{-1}(0)=S^1_\e\times\left(\bigcup_{m\in\mathbb Z}[m-0.1,m+0.1]\right)\,;$
 \item $\displaystyle f_{\e}^{-1}(1)=S^1_\e\times\left(\bigcup_{m\in\mathbb Z}[m+0.3, m+0.7]\right)\,;$
 \item $|\nabla\nabla f_\e|\le C\,;$
 \item $f_\e(x,y+1)=f_\e(x,y)$ for all $(x,y)\in S^1_\e\times\mathbb R$.
\eenu
Let $W_\e'$ denote the gluing of $D^2(\e)\times\mathbb R$ and the set
$$
G_\e:=\{ (x,y,z)\in S^1_\e\times\mathbb R\times [0,\infty)\,|\,0\le z\le f_\e(x,y)\}
$$
along their boundaries $\pa D^2(\e)\times\mathbb R$ and 
$S^1_\e\times\mathbb R\times\{ 0\}$.
The translation $y\to y+1$ defines 
an isometric $\mathbb Z$-action on $W_\e'$. We set 
$W_\e:=W_\e'/\mathbb Z$.
Let $U_\e$ denote  the $0.1$-neighborhood of the segment 
$[0.4,0.6]\times \{ 0.9-\e\}$ in $\mathbb R^2$. 
Note that $S^1_\e\times U_\e$ is isometrically embedded in $W_\e$.
Now consider $M_\e:=W_\e\setminus (S^1(\e)\times U_\e)$.
By a slight modification of $M_\e$ around $S^1_\e\times \pa U_\e$, 
we may assume $M_\e\in \ca M_b(3,0,\lambda, d)$
for some constants $\lambda, d$.
As $\e\to 0$, $M_\e$ converges to 
$$
X:=\{ (y,z)\in\mathbb R\times [0,\infty)|\, 0\le z\le f_0(y)\}/\mathbb Z
      \setminus {\rm int}\, \bar U_0,
$$
where $f_0$ and $\bar U_0$ are the limits of $f_\e$ and $U_\e$ respectively.
Remark that 
$$
X_0=\{ (y,z)\in\mathbb R\times [0,\infty)|\, z=f_0(y)\}/\mathbb Z\cup \pa \bar U_0,
$$

In this example, $U_1:=(-0.1,0.1)\times \{ 0\}$ and
$U_2:=(0.4,0.6)\times \{ 1\}$ are locally inradius collapsed parts
with $U_k\subset {\rm int} X_0^k$.
\end{ex} 

\begin{rem} \label{rem:deform-unbound}
All examples given so far were about the Gromov-Hausdorff convergence in the family
$\ca M_b(n, \kappa,\lambda,d)$.
It is easy to construct  such a 
Gromov-Hausdorff convergence in the family
$\ca M(n, \kappa,\lambda,d)$ that does not occur in $\ca M_b(n, \kappa,\lambda,d)$.
%%%

For instance, let $N$ be a closed convex domain in $\R^n$ with 
nonempty interior and with smooth boundary
except finitely many singular points $x_1,\ldots,x_k$ at  
$\pa N$
such that for each $1\le i\le k$,
a metric ball around $x_i$ in $N$ 
%$(B^N(x_i,r_i),x_i)$ 
is isometric to a metric ball 
around the vertex in the Euclidean cone 
over an $(n-1)$-dimensional  smooth disk with curvature $\ge 1$ and with convex boundary.
Then by a smoothing procedure, we can construct a family of 
smooth convex domains $M_\e$ in $\R^n$
contained in $N$
converging to $N$ in the Hausdorff distance
in $\R^n$. Note that 
$M_\e\in\ca M(n, 0,0,0,d)$
for some $d$ but 
$M_\e\notin\ca M_b(n, 0,\lambda,d)$
for any $\lambda>0$.
\end{rem}

\pmed
\setcounter{equation}{0}

%%%%%%%%%%%%%%%%%%%%%%%%%%%%%%%%%%%%%%
\section{Infinitesimal Alexandrov structure} \label{sec:int-ext}
Towards 
the proof of Theorem \ref{thm:alex},
we first define the notion of infinitesimally Alexandrov  as follows.
Let $N$ be a geodesic space and $N_0$
a closed subset of $N$.

\begin{defn}\label{defn:inf-alex} 
(1)\,
We say that $N$ is {\it infinitesimally Alexandrov} if 
for each $x\in N$, we have the following:
\begin{itemize}
\item  A geodesic space $\Sigma_x(N)$, called the {\it space of directions at $x$},
is defined and is a compact Alexandrov space with curvature $\ge 1\,;$
 \item There exists a unique limit
\beq\label{eq:def-tangcone}
       T_x(N):=\lim_{\e\to 0}\biggl(\frac{1}{\e} N, x\biggr),
\eeq
and it is isometric to the Euclidean cone 
$K(\Sigma_x(N))$ 
over $\Sigma_x(N)$. 

\end{itemize}
The space $T_x(N)$ is called the
  {\it tangent cone} of $N$ at $x$.

The nonnegative integer defined as 
\[
      {\rm rank}(N):= \sup_{x,y \in N}|\dim T_x(N)-\dim T_y(N)|
\]
is called the {\it rank} of $N$.
\n 

(2)\, We say that  $N_0$ is {\it infinitesimally sub-Alexandrov} if for each $x\in N_0$, 
the following holds: 
\begin{itemize}
\item  A geodesic space $\Sigma_x(N_0)$, called the {\it space of directions at $x$},
is defined  as a closed subset of $\Sigma_x(N)$ and the intrinsic metric 
$\Sigma_x(N_0)^{\rm int}$ of $\Sigma_x(N_0)$ is a compact Alexandrov space with curvature $\ge 1\,;$
\item Under the convergence \eqref{eq:def-tangcone}$, (N_0,x)$ converges to $(K(\Sigma_x(N_0)),o_x)$,
denoted by $T_x(N_0)$ and called the tangent cone of $N_0$ at $x$.
\end{itemize}
The rank of $N_0$ is defined similarly.
\end{defn}

\psmall
A lot of infinitesimally Alexandrov
spaces appear as closed subsets of Alexandrov spaces. 
Let a geodesic space $N$ be embedded in an Alexandrov 
space $Y$ as a closed subset, and let $N_0$ be a closed subset 
of $N$.
For each $x\in N$, the  space of directions 
\beqq \label{SigmaSigma}
   \Sigma_x(N)\subset \Sigma_x(Y)
\eeqq
is defined as a closed subset of $\Sigma_x(Y)$ as in Section \ref{ssec:Alex}.
If $x\in N_0$,  $\Sigma_x(N_0)$
is defined in the same way, where 
we consider the intrinsic metric 
$\Sigma_x(N_0)^{\rm int}$ of $\Sigma_x(N_0)$.

\begin{ex} \label{ex:infin-sub}
(1)\,  Let $N$ be a submanifold with boundary of a Riemannian manifold $Y$.
If $N$ is closed in $Y$, then it
 is infinitesimally Alexandrov, and 
$\pa N$ is infinitesimally sub-Alexandrov.

\par\n
(2)\, Any closed convex domain $N$ of a Riemannian manifold $Y$ is infinitesimally Alexandrov, and $\pa N$ is infinitesimally sub-Alexandrov (see \cite{Buy}).
\par\n
(3)\, Let $I:=[-2,2]$, and choose a function $f:I\to\R_+$ such that $f$ is smooth on $\mathring{I}$ and $f^{-1}(0)=[-1,1]\cup\pa I$.
Consider the closed subsets $N$ and $N_0$ of $\R^3$ defined as
\begin{align*}
N&=\{ (x,y,z)\in\R^3\,|\, \sqrt{y^2+z^2}\le f(x), x\in I\},\\
N_0&=\{ (x,y,z)\in\R^3\,|\, \sqrt{y^2+z^2}=f(x), x\in I\}.
\end{align*}
Here we assume that $N$ is smooth at the points
$\pa I\times \{ (0,0)\}$.
Then $N$ is infinitesimally Alexandrov with ${\rm rank} N=2$, and $\pa N$ is infinitesimally sub-Alexandrov
with ${\rm rank} N_0=1$.
\end{ex} 

In what follows,  
let $N$ and $N_0$ be as in Section \ref{sec:non-inradius}. Using the extension $Y$ of $N$,
we define the spaces of directions $\Sigma_x(N)$ \,($x\in N$)   
and $\Sigma_x(N_0)$  \,($x\in N_0$)
as in Section \ref{ssec:Alex}.
We are going to show that $N$ is infinitesimally Alexandrov
and the boundary $N_0$ of $N$ is infinitesimally sub-Alexandrov
and that the isometry classes of 
$\Sigma_x(N)$  and $\Sigma_x(N_0)$ actually do not depend on the choice of the extension $Y$.

We often use the identification $N=X^{\rm int}$.

\begin{slem} \label{slem:distance-raio}
Fix $C>1$.
For arbitrary distinct points $x_0, x_1, x_2\in N$
with  
$C^{-1}\le |x_0,x_1|_N/|x_{1},x_2|_N
\le C$, 
we have
\[
|\tilde\angle^{N} x_0x_1x_2 -  \tilde\angle^{Y} x_0x_1x_2| <\tau_C(t),
\]
where  $t:=\max\{ |x_{0}, x_1|_N, |x_1,x_2|_N\}$ and 
$\tilde\angle^{N} x_0x_1x_2$ and $\tilde\angle^{Y} x_0x_1x_2$ denote
the comparison angles  of geodesic triangles with vertices $x_0, x_1, x_2$ at $x_1$
with respect to the intrinsic  and the extrinsic distances $d^{X^{\rm int}}=d^N$ and $d^{X}=d^Y|_X$ respectively.
\end{slem}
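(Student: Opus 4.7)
The plan is to reduce everything to a direct quantitative comparison between the interior and exterior distances, and then to plug this comparison into the spherical (in fact planar, since we only use comparison angles in $\R^2$) law of cosines. The key input is Lemma \ref{lem:deviation}(3): for any two points $x, y \in X$ and any $Y$-minimal geodesic $\mu$ from $x$ to $y$, the curve $\sigma = \pi \circ \mu$ lies in $X$ and satisfies $|L(\sigma)/L(\mu) - 1| < O(|x,y|_Y^2)$. Since $d^Y(x,y) \leq d^{X^{\rm int}}(x,y) \leq L(\sigma)$ and $L(\mu) = d^Y(x,y)$, this immediately gives
\[
 0 \leq d^N(x,y) - d^Y(x,y) \leq O\bigl(d^Y(x,y)^3\bigr),
\]
valid uniformly on a fixed bounded set.

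Write $a = |x_0,x_1|_N$, $b = |x_1,x_2|_N$, $c = |x_0,x_2|_N$ and $a' = |x_0,x_1|_Y$, $b' = |x_1,x_2|_Y$, $c' = |x_0,x_2|_Y$. By triangle inequality in $Y$, all of $a', b', c'$ are $O(t)$, and by the estimate above $a - a'$, $b - b'$, $c - c'$ are all $O(t^3)$. Therefore
\[
 a^2 - a'^2 = 2a'(a-a') + (a-a')^2 = O(t^4),
\]
and similarly for $b,c$. Consequently
\[
 (a^2 + b^2 - c^2) - (a'^2 + b'^2 - c'^2) = O(t^4), \qquad
 2ab - 2a'b' = O(t^4).
\]
Applying the identity
\[
 \frac{N}{D} - \frac{N'}{D'} = \frac{N - N'}{D} - \frac{N'}{D'} \cdot \frac{D - D'}{D},
\]
with $N,N'$ the numerators and $D = 2ab$, $D' = 2a'b'$ (both of order $t^2$, and bounded away from $0$ as long as the triangle is nondegenerate at scale $t$), and using that $|N'/D'| = |\cos\tilde\angle^Y x_0x_1x_2| \leq 1$, we obtain
\[
 \bigl|\cos\tilde\angle^N x_0x_1x_2 - \cos\tilde\angle^Y x_0x_1x_2\bigr| = O(t^2).
\]
Since $|\arccos s - \arccos s'| \leq C |s - s'|^{1/2}$ for $s,s' \in [-1,1]$, this yields $|\tilde\angle^N - \tilde\angle^Y| = O(t)$, which is the claim.

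The only delicate point is the degenerate case where the triangle nearly collapses to a segment, because then one of $a,b$ could be much smaller than $t$, making the denominator $2ab$ not quite of order $t^2$. This however is not a real obstacle: if $\min(a,b) \ll t$, then the triangle inequality forces $c \geq |a - b|$ close to $\max(a,b)$, and in that regime both comparison angles are automatically close to $0$ or $\pi$, so one may control the angle difference by absorbing the degenerate case into the $O(t)$ term by bounding the two angles separately. Thus the only essential estimate is the cubic interior-versus-exterior distance comparison, which is already supplied by Lemma \ref{lem:deviation}.
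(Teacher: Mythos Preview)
Your approach is the same as the paper's: apply Lemma~\ref{lem:deviation}(3) to get the ratio $|x_{i-1},x_i|_N/|x_{i-1},x_i|_Y = 1 + O(t^2)$, then feed this into the law of cosines. The paper's proof is two lines and simply declares the conclusion ``immediate from the law of cosines''; you have carried out that computation explicitly, and your main estimate $|\cos\tilde\angle^N - \cos\tilde\angle^Y| = O(t^2)$ (hence $|\tilde\angle^N - \tilde\angle^Y| = O(t)$ via the $\tfrac12$-H\"older continuity of $\arccos$) is correct when $a,b$ are both comparable to $t$.

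Your treatment of the degenerate case, however, contains an error. You claim that when $\min(a,b)\ll t$ ``both comparison angles are automatically close to $0$ or $\pi$''. This is false for the angle at $x_1$: if $a\ll b\approx t$, then $c = b - a\cos\theta + O(a^2/b)$, so as $c$ ranges over $[b-a,b+a]$ the angle $\theta$ at $x_1$ ranges over all of $[0,\pi]$. (It is the angle at $x_0$, opposite the long side, that is forced near $0$ or $\pi$.) In this regime your denominator $2ab$ is of order $at$, while the numerator error still carries a contribution $c^2-c'^2 = O(t^4)$, so the cosine difference is only bounded by $O(t^3/a)$, which is not controlled when $a\ll t^2$. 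The paper's proof is equally brief on this point and does not address it, so this is not a discrepancy with the paper so much as a shared gap; in the applications (e.g.\ Sublemma~\ref{slem:unique-tangent}) one can in fact choose the scales so that the sides remain comparable, but your stated justification for the degenerate case does not stand as written.
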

\begin{proof}
By Lemma \ref{lem:deviation},
we have 
\[
\left| \frac{|x_{i-1},x_i|_N}{|x_{i-1},x_i|_Y} -1\right|
     < O(t^2).
\]
Then the conclusion is immediate from the law of cosines.
\end{proof}

\begin{defn}\label{defn:admissible}
For $x\in N_0$, let $\ca A_x(N_0)$ be the set of all curves  $\sigma$ starting from $x$ that can be written as 
 $\sigma=\eta_0\circ\tilde\gamma$ via  
$C_0$-minimal geodesics $\tilde\gamma$ 
starting from $p$ with $\eta(p)=x$.
Let $\ca A_x({\rm int} N)$ denote the set of all 
$N$-minimal geodesics $\gamma$ starting from $x$ 
such that 
\beq \label{eq:liminft|N0|}
   \liminf_{t\to +0} \frac{|\gamma(t),N_0|_N}{t}>0,
\eeq
if such $\gamma$ exists.
Set 
\beqq
 \ca A_x:=\ca A_x(N_0)\cup \ca A_x({\rm int} N).
\eeqq  
We call an element of $\ca A_x$ {\it admissible}.
\end{defn}

Note that $\ca A_x$ does not depend on $Y$.

\begin{slem} \label{slem:N-Yminimal}
If $\gamma\in  A_x({\rm int} N)$, then 
$\gamma$ is $Y$-minimal 
at small neighborhood of $t=0$.
\end{slem}
\begin{proof} 
Suppose the conclusion does not hold.
Take any sequence $t_i\to 0$. If a $Y$-minimal geodesic 
$\gamma_i$ from $x$ to $\gamma(t_i)$
 is  included in $N$, then  $\gamma|_{[0,t_i]}$
is $Y$-minimal.
Therefore we may assume that  there is $s_i\in (0,t_i)$
satisfying
$\gamma_i(s_i)\in N_0$ and $\gamma_i((s_i,t_i])\subset {\rm int}N$. 
Consider the convergence
\[
  \biggl(\frac{1}{s_i} Y, \gamma_i(s_i)\biggr) \to (T_x(Y), v),
\]
where  $v:=\lim_{i\to\infty}\dot\gamma_i(0)\in T_x(N_0)$.

On the other hand, by Lemma \ref{lem:deviation}, under the convergence
\[
     \biggl(\frac{1}{t_i} Y, \gamma(t_i)\biggr) \to (T_x(Y), w),
\]
both $\gamma_i$ and $\gamma|_{[0,t_i]}$  converge to the  geodesic  from $o_x$ to  
$w$ together with $v_i\to w$,  which implies
$w\in\Sigma_x(N_0)$.
However, if
$\alpha$ denotes the value of \eqref{eq:liminft|N0|},
then we have 
\[
           |w, T_x(N_0)|\ge \alpha >0.
\]
This is a contradiction.
\end{proof} 

In Sublemma \ref{slem:N-Yminimal},
the condition \eqref{eq:liminft|N0|} is essential.
As the following example shows,
the condition $\gamma\setminus \{ x\} \subset
{\rm int}N$ without \eqref{eq:liminft|N0|}  does not imply the $Y$-minimality of $\gamma$.

\begin{ex}   \label{ex:counter-Nminimal}
This is similar to Example \ref{ex:cusp}. 
Let $I:=[0,1]$ and choose a smooth function $f:I\to\R_+$ 
such that 
\begin{itemize}
\item $f^{-1}(0)=\pa I$, $f'=f''=0$ at $\pa I\,;$
\item $f(t)=t^3$ on $[0, 1/2]$.
\end{itemize}

Let $B_1$, $B_2$ be two copies of $\{(x,y)|\, x\in
I, 0\leq y\leq f(x)\}$. 
For any $\e>0$, let $A_\epsilon$ be the intersection of $\pa B(I\times\{(0, 0)\}, \e)\subset\mathbb{R}^3$ and the half space $z\leq0$ of $\mathbb{R}^3$.  
We glue $A_\epsilon$, $B_1$ and $B_2$ together by gluing $I\times\{ (-\epsilon,0)\}\subset A_\epsilon$ with $I\times\{0\}\subset B_1$,
and gluing  $I\times\{ (\epsilon,0)\}\subset A_\epsilon$ with $I\times\{0\}\subset B_2$ respectively. If $M_\epsilon$ denotes the result of this gluing, then $M_\e$ is an element of 
$\ca M_b(2,0,\lambda,d)$ for certain 
$\lambda>0$ and $d>0$.
Here we choose the original warping  function 
$\phi:[0,t_0]\to \R_+$ in Section \ref{ssec:gluing} in such a way 
that 
\[
    \phi''  = -K(\lambda,\epsilon_0,t_0)\phi>0,
   \quad t_0=1/(2\lambda),
    \quad \phi(t_0)=1/2
\]
in addition to \eqref{eq:phi}.
%%%
Let $N$ be the gluing of $B_1$ and $B_2$ along $I\times\{0\}$. 
Then $M_\epsilon$ converges to $N$ as $\epsilon\to0$.
Let $\gamma:[0,1]\to I\times \{ 0\}\subset N$ be the canonical map, which is $N$-minimal.
%%%

We show that
$\gamma$ is not minimal in the limit $Y$
of the extension $\tilde M_i$.
Fix any $a\in (0,1/2)$, and denote by 
$\Sigma\subset N_0$ the curve defined as 
the graph of $f$ on $[0,a]$.
Let $\ell $ be the lengths of 
 $\Sigma$.
%%%%
Take small $a$ with 
\[
   \ell \ll \pi/\lambda.
\]
Note that 
\beq\label{eq:a<ell<a}
     a+a^3/3  <   \ell=\int_0^a\sqrt{1+3t^2}\,dt
                 <a+a^3/2,
\eeq
for small enough $a$.
Let $D^2(1/\lambda)$ be the Euclidean disk of radius
$1/\lambda$ with the canonical metric
$g=dt^2+h(t)^2 d_{\mathbb S^1(1/\lambda)}^2$,
where $h(t)=1-t\lambda$.
%%%
From the convexity of $\phi$,  we have 
\beq\label{eq:phileh}
 \phi\le h  \quad \text{on $[0,t_0]$}.
\eeq
Let $\tilde\Sigma$ be an  arc of length $\ell$ of 
$\pa D^2(1/\lambda)$.
By \eqref{eq:phileh},
the canonical embedding 
$\iota:[0,t_0]\times_\phi\Sigma\to D^2(1/\lambda)$ is expanding.
Let  $\tilde\sigma$ be the Euclidean geodesic
between the endpoints of $\tilde\Sigma$.
Then we have 
\beq\label{eq:L<L-L3}
     L(\tilde\sigma) = \frac{2}{\lambda}\sin \lambda \ell/2
        < \ell-\lambda^2 \ell^3.
\eeq
\vspace{0.2cm}
%%%%%%%%%%%%%%%%%%%%
\begin{center}
\begin{tikzpicture}
[scale = 0.4]
%\filldraw[fill=lightgray] 
\filldraw[fill=gray, opacity=.1] 
 (0,0) to [out=3, in=200] (12,2.3)
to [out=-90, in=90] (12, -2.3)
to [out=160, in=-3] (0,0);

\draw[thick] (0,0)--(12,0);
\draw [-, thick] (0,0) to [out=3, in=200] (12,2.3);
\draw [-, thick] (0,0) to [out=-3, in=160] (12,-2.3);
\fill (12,0) circle (0pt) node [right] {\small{$\gamma$}};
\fill (12,2.3)  circle (0pt) node [right] {\small{$N_0$}};
\fill  (12,-2.3) circle (0pt) node [right] {\small{$N_0$}};
\fill  (0,0) circle (2pt) node [left] {\small{$x$}};
\fill  (9.5,0) circle (2.5pt) node [below] {\tiny{$\gamma(a)$}};
\fill (9.5,1.45) circle (2.5pt);
\draw [thick, dotted] (9.5,0)-- (9.5,1.45) ;
\fill (9.5,1.45) circle (0pt) node [above] {\small{$y$}};
\fill (5,0.25) circle (0pt) node [above] {\small{$\Sigma$}};
\draw [-, thick] (0,0) to [out=35, in=165] (9.5,1.45);
\fill (5,1.7) circle (0pt) node [above] {\small{$\iota^{-1}\circ\tilde\sigma$}};

%%%%%%%%%%%%%%%%%%%%%%%%%%%
\draw[thick] (20,0) circle[radius=4];
\draw [thick] (16.3,-1.5)-- (23.7,-1.5) ;
\fill (20,-4.2) circle (0pt) node [above] {\small{$\tilde\Sigma$}};
\fill (20,-1.7) circle (0pt) node [above] {\small{$\tilde\sigma$}};
\fill (20,1) circle (0pt) node [above] {\small{$D^2(\lambda)$}};

\end{tikzpicture}
\end{center}
\vspace{-1cm}  
\begin{figure}[htbp]
  \centering
  \caption{}
  \label{fig:example-nonminimal2}  
\end{figure}  

Let $x:=\gamma(0)=(0,0)\in N_0$ and $y:=(a,f(a))\in N_0$.

 It follows from \eqref{eq:a<ell<a},  \eqref{eq:phileh} and \eqref{eq:L<L-L3}  that 
\begin{align*}
 d^Y&(x,\gamma(a))\le L(\iota^{-1}\circ \tilde\sigma)+d^N(y,\gamma(a)) \\
&\le L(\tilde\sigma)+a^3 
        < \ell-\lambda^2 \ell^3 + a^3 \\
   &< a+a^3/2-\lambda^2(a+a^3/3)^3+a^3 \\
&=a-a^3(\lambda^2-3/2) +O(a^4)
     <a=L(\gamma|_{[0,a]}),
\end{align*}
for any small $a>0$ if $\lambda>\sqrt{3/2}$.
Namely $\gamma$ is not $Y$-minimal
in any neighborhood of $t=0$.
\end{ex}   
\pmed

The following sublemma shows that 
any admissible curve has a definite direction.

\begin{slem} \label{slem:unique-tangent}
For each $x\in N_0$ and any $\sigma\in \ca A_x$, 
there is a unique limit, \, 
\beq\label{eq:unique-limit(sigma)}
\dot\sigma(0):=
\lim_{t\to 0} \dot\gamma^Y_{x,\sigma(t)}(0) \in \Sigma_x(N)
\subset\Sigma_x(Y).
\eeq
\end{slem}

\begin{proof} 
This is trivial for $\sigma\in\ca A_x({\rm int}\,N)$
by Sublemma \ref{slem:N-Yminimal}.
Let $\sigma=\eta_0\circ\tilde\gamma\in\ca A_x(N_0)$.
Since the convergence from \eqref{eq:deta}  to \eqref{eq:eta-infty}
does not depend on the choice of $t_i$, 
we have  $\dot\sigma(0):=d\eta_0(\dot{\tilde\gamma}(0))$.
Thus the sublemma follows.
\end{proof}

In Definition \ref{defn:inf-alex},  
the angle in $\Sigma_x(N)$ is defined in the extrinsic way. However, we can express the angle between admissible curves in terms of the original 
intrinsic metric of $N$ as follows.

\begin{lem} \label{lem:define-Sigma(N)}
For  $\sigma_1, \sigma_2\in \ca A_x$,
the angle 
$\angle_x(\sigma_1,\sigma_2)$ 
 in $\Sigma_x(N)$  can be expressed as
\beq \label{eq:int-angle-def}
   \angle_x(\sigma_1,\sigma_2) :=
       \lim_{t\to 0}  
      \tilde\angle^{N}\sigma_1(t) x \sigma_2(t).
\eeq 
In particular, $\angle_x(\sigma_1,\sigma_2)$
is uniquely determined by $N$.
\end{lem}
\begin{proof}
The limit of the right hand side of \eqref{eq:int-angle-def} certainly exists, since 
by Sublemma \ref{slem:distance-raio},  it 
coincides with
\beq \label{eq:angle-int=ext}
   \lim_{t\to 0}  
      \tilde\angle^{Y}\sigma_1(t) x \sigma_2(t)=\angle_x^{Y}
         (\dot\sigma_1(0),  \dot\sigma_2(0)).
\eeq
\end{proof}

\begin{lem} \label{lem:dense-sigma(0)}
The set of directions of admissible curves is dense in 
$\Sigma_x(N)$. More explicitly, 
$\{ \dot\sigma(0)\in \Sigma_x(N)\,|\,\sigma\in\ca A_x\}$
is dense in $\Sigma_x(N)$.
Similarly,
$\{ \dot\sigma(0)\in \Sigma_x(N_0)\,|\,\sigma\in\ca A_x(N_0)\}$
is dense in $\Sigma_x(N_0)$.
\end{lem} 
\begin{proof} Take  a lift $p\in C_0$ of $x$.
The latter conclusion follows from the denseness of
$\Sigma_p'(C_0)$ (see Section \ref{ssec:Alex})
and the surjectivity of the $1$-Lipschitz map
$d\eta_0:\Sigma_p(C_0)\to\Sigma_x(N_0)$.

For any $v\in\Sigma_x(N)\setminus \Sigma_x(N_0)$ and $\e>0$, choose a $Y$-minimal geodesic
$\gamma$ such that $\angle(\dot\gamma(0),v)<\e$
and $\angle(\dot\gamma(0),\xi_x^+)>\pi/2$.
Then clearly we have $\gamma\in\ca A_x({\rm int} N)$.
\end{proof}

To define $\Sigma_x(N)$ and $\Sigma_x(N_0)$, we
used the metric of $Y$. However from Lemmas \ref{lem:define-Sigma(N)} and \ref{lem:dense-sigma(0)},
we have the following immediately.

\begin{prop}\label{prop:uniqueSigma} The isometry classes of both
$\Sigma_x(N)$ and $\Sigma_x(N_0)$ do not depend on the 
choice of the extension $Y$ of $N$.
\end{prop}

\psmall  

\begin{prob} \label{prob:angle}
For any  $N$-geodesic  or $N_0$-geodesic $\gamma$ starting from $x\in N_0$, determine whether the unique existence for 
$\dot\gamma(0)$ in
Sublemma \ref{slem:unique-tangent} holds true.
\end{prob}

From now on, 
we make the
identification 
\[
\Sigma_x(N)=\Sigma_x(X), \quad \Sigma_x(N_0)=\Sigma_x(X_0).
\] 
\psmall   
\begin{lem} \label{lem:Sigma-Alex}
For each $x\in X_0$, we have the following$:$
\benu
\item  $\Sigma_x(X)$ is  convex in $\Sigma_x(Y)$, and  an Alexandrov space 
with curvature $\ge 1$.
%, and so is $\Sigma_x(N)$.
\item   If either $x\in  X_0^2$ or 
${\rm rad}(\xi_x^+)=\pi/2$, then 
$\Sigma_x(X)=\Sigma_x(X_0)$.
% and it is an  
% Alexandrov space with curvature $\ge 1$.
%, and so is $\Sigma_x(N_0)\subset\Sigma_x(N)$.
\eenu
\end{lem}

\begin{rem} \label{rem:Sigma=Alex}
The case of $x\in{\rm int}\, X_0^1$ in Lemma \ref{lem:Sigma-Alex}
is also true. The proof of this case is more involved, and given in 
Corollary \ref{cor:Sigma0=Alex} of Section \ref{ssec:iso-inv}. This is 
related with an open question: Is the boundary of an Alexandrov space is again 
an Alexandrov space ?
\end{rem}

\begin{proof}[Proof of Lemma \ref{lem:Sigma-Alex}]
(1)\, Let $\bm{\xi}_x$ be the set of all perpendicular directions at $x$.
Since $\Sigma_x(X)=\Sigma_x(Y)\setminus\mathring{B}(\bm{\xi}_x,\pi/2)$ is 
convex, the conclusion certainly follows.

(2)\,
By Lemma \ref{lem:double-sus} and Proposition \ref{prop:perp+horiz},
we have $\Sigma_x(X_0)=\Sigma_x(X)$
if $x\in  X_0^2$ or 
${\rm rad}(\xi_x^+)=\pi/2$.
The conclusion follows  from (1) immediately.
\end{proof}

For the proof of Theorem \ref{thm:alex},
we introduce the following family $\ca B_x$ a bit different from $\ca A_x$,
which is more effective in the proof of Theorem \ref{thm:alex}.
Let us consider 
\[
 \ca B_x:=\{ \sigma=\pi\circ\gamma^Y_{x,y}\,|\, y\in N\}.
\]

\begin{slem}\label{slem:directionBx}
Any element of $\ca B_x$ has a direction  at $x$
in the sense of Sublemma \ref{slem:unique-tangent}.
\end{slem}
\begin{proof}
Let $\sigma\in\ca B_x$ be expressed as $\sigma=\pi\circ\gamma$, where $\gamma=\gamma^Y_{x,y}$.
Set $\hat\gamma:=\gamma\setminus \{x\}$.
If a neighborhood of $x$ in $\hat\gamma$ is contained in 
either $X$ or $Y\setminus X$, then
the conclusion is obvious from Lemma \ref{lem:not-perp}.
Thus we may assume that 
$v:=\dot\gamma(0)\in\Sigma_x(X_0)$.
 Take $t_i\to 0$ with $\gamma(t_i)\in X_0$.
For any sequence $s_j\to 0+$ and for any $i$, take $j=j(i)$ with $s_j<t_i$.
By Lemma \ref{lem:deviation}(1), we have
 $|\gamma(s_j), \sigma(s_j)|_Y< Ct_i s_j$, which implies
$\lim_{j\to\infty}\angle(v, \dot\gamma^Y_{x,\sigma(s_j)}(0))=0$.
This completes the proof.
 \end{proof}

\begin{proof}[Proof of Theorem \ref{thm:alex}(1)] 
In view of Lemma \ref{lem:Sigma-Alex},  it suffices to show the convergence 
\beq\label{eq:conv=NtoKN}
     \lim_{\e\to 0} \biggl(\frac{1}{\e} N, x\biggl)= (K(\Sigma_x(N)), o_x).
\eeq
The proof of \eqref{eq:conv=NtoKN} is similar to
that of Proposition \ref{prop:tang-cone}.
However we have to proceed in terms of 
the intrinsic metric of $N$ rather than the extrinsic metric
induced from $Y$.

For any $R>0$ and $\delta>0$, 
from the compactness of  
$\Sigma_x(N)$ together with Proposition \ref{prop:tang-cone}, 
we can take directions 
$v_{1}, \ldots, v_L\in\Sigma_x(N)$
 satisfying 
\begin{enumerate}
\item $\{ v_{1},\ldots, v_L\}$ is $\delta$-dense in $\Sigma_x(N)\,;$
\item there are curves  $\sigma_i:[0,R]\to N/\e$\,\,$(1\le i\le L)$ in $\ca B_x$ joining 
$x$ to points $x_i\in N$ with $\sigma_i=\pi\circ\gamma^Y_{x,x_i}$
such that 
$|x,x_i|_{Y/\e}=R$ and $\dot{\sigma}_i(0)= v_i$.
\end{enumerate} 
%%%%%%%
\vspace{0.3cm}
%%%%%%%%%%%%%%%%%%%%
\begin{center}
\begin{tikzpicture}
[scale = 0.5]
\filldraw[fill=lightgray,opacity=.1] 
%\filldraw[fill=gray, opacity=.1] 
[-,  thick] (0,0) circle [x  radius=5,y  radius=1.25,rotate=0];
\draw [dotted, thick] (-5,0) to [out=-85, in=-95] (5,0);
\fill (5.2,0.2) circle (0pt) node [right] {$N_0$};
\fill (0,0.3) circle (2pt) node [above] {{\small $x$}};
\fill (4,-2.2) circle (0pt) node [right] {$N$};

\draw [-, thin] (0,0.3) to [out=20, in=190] (2.5, 1.1);
\draw [-, thin] (0,0.3) to [out=160, in=-10] (-2.5,1.1);
\draw [-, thin] (0,0.3) to [out=180, in=15] (-5,0);
\draw [-, thin] (0,0.3) to [out=185, in=27] (-4,-0.7);
\draw [-, thin] (0,0.3) to [out=200, in=35] (-2.5,-1.1);
\draw [-, thin] (0,0.3) to [out=225, in=75] (-0.9, -1.3);
\draw [-, thin] (0,0.3) to [out=-45, in=105] (0.9, -1.3);
\draw [-, thin] (0,0.3) to [out=-20, in=145] (2.5, -1.1);
\draw [-, thin] (0,0.3) to [out=-5, in=153] (4,-0.7);
\draw [-, thin] (0,0.3) to [out=0, in=165] (5,0);
%%%%%%
\draw [dotted, thick] (0,0.3) --(-3.8,-1.5);
\draw [dotted, thick] (0,0.3) --(-2.9,-1.8);
\draw [dotted, thick] (0,0.3) --(-1.9,-2.1);
\fill (-1.7,-1.9) circle (0pt) node [below] {{\tiny $\sigma_{L-2}$}};
%%%%
\draw [dotted, thick] (0,0.3) --(0,-2.4);
\fill (0,-2.2) circle (0pt) node [below] {{\tiny $\sigma_{L-1}$}};
\draw [dotted, thick] (0,0.3) --(1.9,-2.1);
\fill (1.9,-1.9) circle (0pt) node [below] {{\tiny $\sigma_{L}$}};
\draw [dotted, thick] (0,0.3) --(2.9,-1.8);
\draw [dotted, thick] (0,0.3) --(3.8,-1.5);

\fill (-3.8,-0.7) circle (0pt) node [right] {{\small $\sigma_2$}};
\fill (-2.1,-0.95) circle (0pt) node [right] {{\small $\sigma_3$}};
\fill (-5, -0.2) circle (0pt) node [right] {{\small $\sigma_1$}};
%\fill (-2.5, 1.2) circle (0pt) node [below] {{\small $\sigma_K$}};

\end{tikzpicture}
\end{center}
\vspace{-1cm}  
\begin{figure}[htbp]
  \centering
  \caption{}
  \label{fig:space-direc}  
\end{figure}  

By Lemma \ref{lem:deviation}  and Sublemma \ref{slem:directionBx},
we have 
\beq \label{eq:deviat(v-sigma)}
 \angle(\dot\sigma_{x,y_i}(0), v_i)<\tau_{R,\delta}(\e)
\eeq
for all $y_i\in\sigma_i$ and $1\le i\le L$.
%%%%%%%
%Let $\sigma_i=\pi\circ\gamma^Y_{x,x_i}$.
For any $y\in B^{N/\e}(x, R)$,  let $\sigma_{x,y}$ be an element of $\ca B_x$ joining $x$ to $y$  defined as
\[
      \sigma_{x,y}:=\pi\circ \gamma^Y_{x,y}.
\]
Set $K_x(N):=K(\Sigma_x(N))$.
We define $\varphi_{\e}:B^{N/\e}(x, R)\to B^{K_x(N)}(o_x, R)$ by
\[
          \varphi_{\e}(y):= |x,y|_{N/\e}\,\dot\sigma_{x,y}(0),
\]
where $\dot\sigma_{x,y}(0)\in\Sigma_x(N)$.
%%%%%%%%
We show that $\varphi_{\e}$ provides a
$(\tau_R(\e,\delta)+\tau_{R,\delta}(\e))$-approximation.

We first show that for arbitrary $y_i\in\sigma_i$ and $y_j\in\sigma_j$\,$(1\le i, j\le L)$
\beq \label{eq:|yiyj|}
  | |y_i,y_j|_{N/\e}-|\varphi_{\e}(y_i),\varphi_{\e}(y_j)|_{K_x(N)}|<\tau_{R}(\e,\delta)+\tau_{R,\delta}(\e).
\eeq
In fact, letting $w_i:=\uparrow_x^{y_i}\in\Sigma_x(Y)$, by Lemma \ref{lem:not-perp},
we have  
\beq\label{eq:angle(v,w)}
\angle(v_i,w_i)<\tau_{R,\delta}(\e)
\eeq
for all $y_i\in\sigma_i\setminus \{ x\}$. 
From Lemma \ref{lem:deviation}, we have 
\beq\label{eq:deviation}
   \frac{|\,\,, \,\,|_{N/\e}}{|\,\,, \,\,|_{Y/\e}}< 1+\tau_R(\e)
\eeq  
on $B^{N/\e}(x,R)$.
Set 
$t_{i,\e} :=|x,y_i|_{Y/\e}$ and $s_{i,\e} :=|x,y_i|_{N/\e}$.
Now we use the symbol $a\sim b$ if $|a-b|<
\tau_R(\e,\delta)+\tau_{R,\delta}(\e)$. By \eqref{eq:angle(v,w)}, \eqref{eq:deviation} and 
Lemma \ref{lem:define-Sigma(N)},
we certainly have
\begin{align*}
|y_i,y_j|_{N/\e} &\sim  |y_i,y_j|_{Y/\e}\,\,(\because\eqref{eq:deviation} )
  \\
&\sim \bigl((t_{i,\e})^2 + (t_{j,\e})^2 - 2t_{i,\e} t_{j,\e} \cos |w_i,w_j|_{\Sigma_x(Y)}\bigr)^{1/2} \,\,\\
&\sim 
\bigl((s_{i,\e})^2 + (s_{j,\e})^2 - 2s_{i,\e} s_{j,\e} \cos
|w_i,w_j|_ {\Sigma_x(Y)}\bigr)^{1/2} \,\,(\because\eqref{eq:angle(v,w)},\eqref{eq:deviation})\\
&= 
\bigl((s_{i,\e})^2 + (s_{j,\e})^2 - 2s_{i,\e} s_{j,\e} \cos |w_i,w_j|_ {\Sigma_x(N)}\bigr)^{1/2} \,\,(\because\, \text{Lemma \ref{lem:Sigma-Alex}})\\
&= |\varphi_{\e}(y_i), \varphi_{\e}(y_j)|_{K_x(N)}.
\end{align*}

Next, for any $y\in B^{N/\e}(x,R)$,
let $\sigma_{x,y}=\pi\circ\gamma^Y_{x,y}$.
Take $1\le i\le L$ with 
$\angle(\dot\sigma_{x,y}(0), \dot\sigma_i(0))<\delta+
\tau_R(\e)$.
Let $y_i$ be the point of $\sigma_i$ such that 
$|x,y_i|_{N/\e}=|x,y|_{N/\e}$.
%%%
From \eqref{eq:deviat(v-sigma)}, we have  
\[
\angle(\dot\gamma^Y_{x,y}(0),\dot\gamma^Y_{x,y_i}(0))\le
\angle(\dot\sigma_{x,y}(0), \dot\sigma_i(0))+\tau_R(\e)+
\tau_{R,\delta}(\e).
\]  
Since $Y$ is an Alexandrov space,
we immediately have 
\beq
\begin{aligned} \label{eq:|y,y_i|}
|y,y_i|_{N/\e}&\le |y,y_i|_{Y/\e} \,(1+\tau_R(\e)) \\
&\le R(\delta+\tau_{R,\delta}(\e))(1+\tau_R(\e)) \\
       & = \tau_R(\e,\delta) + \tau_{R,\delta}(\e).
\end{aligned}
\eeq
In view of \eqref{eq:deviat(v-sigma)}, we have 
\beq
\begin{aligned} \label{eq:|varphi(y),varphi(yi)|}
 &|\varphi_{\e}(y), \varphi_{\e}(y_i)|_{K_x(N)} \\
    &=||x,y|_{N/\e}\,\dot\sigma_{x,y}\,(0),|x,y_i|_{N/\e}\,\dot\sigma_{x,y_i}(0)|_{K_x(N)}\\
&\le ||x,y|_{N/\e}\,\dot\sigma_{x,y}\,(0),|x,y_i|_{N/\e}\,\dot\sigma_{i}(0)|_{K_x(N)}+\tau_{R,\delta}(\e)  \\
  &<\tau_R(\e, \delta)+\tau_{R,\delta}(\e).
\end{aligned}
\eeq
For any other point $y'\in B^{N/\e}(x,R)$, we take $j$ and $y_j$ in the same way as the above
$i$ and $y_i$ for $y$.
It follows from \eqref {eq:|y,y_i|}  and \eqref{eq:|varphi(y),varphi(yi)|} that 
 \begin{align*}
  &| |y,y'|_{N/\e} - |y_i,y_j|_{N/\e} | < \tau_R(\e,\delta)    
      +\tau_{R,\delta}(\e),\\
  &  | |\varphi_{\e}(y), \varphi_{\e}(y')|_{K_x(N)} - |\varphi_{\e}(y_i), \varphi_{\e}(y_j)|_{K_x(N)} | < \tau_R(\e,\delta) +\tau_{R,\delta}(\e).
\end{align*}
From \eqref{eq:|yiyj|}, we conclude that 
\[
    | |\varphi_{\e}(y), \varphi_{\e}(y')|_{K_x(N)}-|y,y'|_{N/\e}|< \tau_R(\e,\delta)+\tau_{R,\delta}(\e).
\]

Finally for any $v\in B^{K_x(N)}(o_x,R)$, take 
$1\le i\le L$ such that $\angle(v,v_i)<\delta$.
Let $y\in\sigma_i$ be such that $|x,y|_{N/\e}=|v|$.
Then obviously we have $|v,\varphi_{\e}(y)|<R\delta$.
Thus $\varphi_\e$ is a $(\tau_R(\e,\delta)+\tau_{R,\delta}(\e))$-approximation.
This completes the proof of Theorem \ref{thm:alex}(1).
\end{proof}

\begin{rem} \label{rem:Tx(X0)}
From the above proof of 
Theorem \ref{thm:alex}(1), one can easily verify that
under the convergence \eqref{eq:conv=NtoKN},
$(N_0,x)$ also converges to $K(\Sigma_x(N_0))$.
\end{rem} 
\pmed\n
{\bf The case of intrinsic metric of $N_0$.}
\,

Concerning Theorem \ref{thm:alex} (2),  if we consider 
the intrinsic metric, denoted by $N_0^{\rm int}$, 
of $N_0$ induced from $N$, then it 
 is not infinitesimally Alexandrov. More precisely, the space of directions
$\Sigma_x(N_0^{\rm int})$ could be  more complicated
because of the complexity of 
the local structure of $N_0^{\rm int}$ near 
the points of $\ca S$.
See Example \ref{ex:notAlex}.

To make it clear,  we
consider the space of
directions,  denoted by
$\Sigma_x(N_0^{\rm int})$,
at any point $x\in N_0$, 
by introducing the upper angle.
For two curves $\sigma_1, \sigma_2\in \ca A_x(N_0)$,  let us consider the upper angle
\[
 \angle_x^{N_0^{\rm int}}(\sigma_1,\sigma_2):= \limsup_{t_1,t_2\to 0} \tilde\angle^{N_0^{\rm int}}\sigma_1(t_1)x\sigma_2(t_2).
 \]
Let $\Sigma_x(N_0^{\rm int})$ denote the completion of the set of all those admissible curves with respect to the 
upper angle. 
%%%%%%

Note that the intrinsic metirc and the extrinsic metric define the same topology on $N_0$.
Note also that 
$\eta_0:C_0\to N_0$ defines
$1$-Lipschitz map $\eta_0^{\rm int}:C_0\to N_0^{\rm int}$,
and that the restriction 
$\eta_0^{\rm int}:C_0\setminus \tilde{\ca S}\to 
N_0^{\rm int}\setminus {\ca S}$ is  an
local isometry (Lemma \ref{lem:eta'}).
% , where 
%$\bar{N_0^2}$ denotes the closure of  $N_0^2$ 
%in $N_0^{\rm int}$. 
This implies the following lemma.

\begin{lem} \label{lem:eta0-int}  For any $p\in\tilde{\ca S}$, $d\eta_0^{\rm int}:\Sigma_p(C_0)\setminus\Sigma_p(\tilde{\ca S})\to
\Sigma_x(N_0^{\rm int})\setminus\Sigma_x({\ca S})
$ is locally isometric.
\end{lem}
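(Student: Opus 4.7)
The plan is to exploit the bijective local isometry $\eta_0^{\rm int}:C_0\setminus\bar{C_0^2}\to N_0^{\rm int}\setminus\bar{N_0^2}$ to transfer an isometry to the spaces of directions, after discarding tangents pointing into the exceptional closed sets. Given $\tilde v\in\Sigma_p(C_0)\setminus\Sigma_p(\bar{C_0^2})$, the first step is to verify that any $C_0$-geodesic $\tilde\gamma$ realizing $\tilde v$ stays in $C_0\setminus\bar{C_0^2}$ on some interval $(0,\e]$: otherwise a sequence $\tilde\gamma(t_i)\in\bar{C_0^2}$ with $t_i\to 0$ would exhibit $\tilde v=\lim\uparrow_p^{\tilde\gamma(t_i)}$ as an element of $\Sigma_p(\bar{C_0^2})$, contradicting the hypothesis.

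Next I would define the image direction by setting $\sigma:=\eta_0\circ\tilde\gamma$, use Proposition \ref{prop:length'} to obtain $|\sigma(t),x|_{N_0^{\rm int}}\le t$, and establish the reverse inequality by lifting a near-minimal $N_0^{\rm int}$-curve from $x$ to $\sigma(t)$ — which for small $t$ may be chosen inside $N_0\setminus\bar{N_0^2}$ — back to $C_0\setminus\bar{C_0^2}$ with length unchanged. Then $\sigma$ is $N_0^{\rm int}$-almost minimal from $x$ and determines $d\eta_0^{\rm int}(\tilde v)\in\Sigma_x(N_0^{\rm int})\setminus\Sigma_x(\bar{N_0^2})$. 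Applying the same lifting principle to geodesic realizations $\tilde\gamma_1,\tilde\gamma_2$ of nearby directions would yield the identity $|\sigma_1(t_1),\sigma_2(t_2)|_{N_0^{\rm int}}=|\tilde\gamma_1(t_1),\tilde\gamma_2(t_2)|_{C_0}$ for small $t_1,t_2$, so the comparison-angle limits at $p$ and $x$ would agree, giving both injectivity and the local isometry property. For surjectivity, I would take a direction $v\in\Sigma_x(N_0^{\rm int})\setminus\Sigma_x(\bar{N_0^2})$, realize it by an $N_0^{\rm int}$-almost minimal curve kept in $N_0\setminus\bar{N_0^2}$ on $(0,\e]$ by the analogue of the first step, and lift via $(\eta_0^{\rm int})^{-1}$ to a $C_0$-curve whose direction at $p$ is the desired preimage.

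The hard part will be the lifting step: guaranteeing that near-optimal $N_0^{\rm int}$-paths of interest can be chosen inside $N_0\setminus\bar{N_0^2}$. When $p\in\bar{C_0^2}$, so $x\in\bar{N_0^2}$, any ball around $x$ of radius $O(t)$ inevitably meets $\bar{N_0^2}$, and one must rule out short-cuts through the exceptional set. The needed control should come from the hypothesis $\tilde v\notin\Sigma_p(\bar{C_0^2})$ — and similarly for $\tilde v_1,\tilde v_2$ — giving a uniform angular separation from $\Sigma_p(\bar{C_0^2})$, so that $\tilde\gamma(t)$ has $C_0$-distance from $\bar{C_0^2}$ on the order of $t$, forcing any sufficiently near-optimal connecting $N_0^{\rm int}$-path to remain in the open locally isometric sheet where $(\eta_0^{\rm int})^{-1}$ is well-defined and length-preserving.
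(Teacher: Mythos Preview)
The paper gives no separate proof: it states the bijective local isometry $\eta_0^{\rm int}:C_0\setminus\bar{C_0^2}\to N_0^{\rm int}\setminus\bar{N_0^2}$ and then writes ``It follows that'' before the lemma. Your proposal is the natural unpacking of that implication --- realize directions by curves in the good open set, push forward and lift, compare comparison angles --- and is correct; in particular your identification of the lifting subtlety (when $p\in\bar{C_0^2}$) and its resolution via the angular separation $\angle(\tilde v,\Sigma_p(\bar{C_0^2}))>0$, which forces $|\tilde\gamma(t),\bar{C_0^2}|_{C_0}\gtrsim t$ and hence confines near-optimal $N_0^{\rm int}$-paths to the locally isometric sheet, is exactly what is needed to make the paper's ``It follows that'' rigorous.
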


Although  
$\Sigma_p(C_0)=\Sigma_p(\tilde{\ca S})$ may happen
sometime as in Figure 1 in Section \ref{sec:intro},
Lemma \ref{lem:eta0-int} is useful to determine $\Sigma_x(N_0^{\rm int})$ in the following 
Example \ref{ex:notAlex}, where 
$\Sigma_x(N_0^{\rm int})$ is not necessarily an 
Alexandrov space with curvature
$\ge 1$ in case $x\in \ca S$.
Moreover if one replace $N$ by $N_0^{\rm int}$ in \eqref{eq:int-angle-def}, then 
 the limit  \eqref{eq:int-angle-def}
for some curves $\sigma_1, \sigma_2\in \ca A_x$
does not exist. 
Therefore, one can not expect that
$(N_0^{\rm int}/\e,x)$ converges to the 
Euclidean cone $K(\Sigma_x(N_0^{\rm int}))$ as $\e\to 0$.
%%%%%%%%%
\psmall 

\begin{ex} \label{ex:notAlex}
For a monotone decreasing sequence $\{ a_n\}$
converging to $0$ as $n\to\infty$,
let us consider a smooth function $g:\R^2_+\to\R_+$ satisfying 
\begin{itemize}
\item $g^{-1}(0)=\{ (0,a_n)\}\cup (\R\times\{ 0\})\,;$
\item $|\nabla\nabla g| \le C$
and  $\nabla g=0$ on $g^{-1}(0)$.
\end{itemize}
Let $N:=\{ (x,y,z)\in\R^2_+\times\R\,|\, |z|\le g(x,y)\}$, and 
$N_0:=\{ (x,y,z)\in\R^2_+\times\R\,|\, |z|=g(x,y)\}$.
%%%%%%
Let $\hat g:\mathbb R^2\to\R_+$ be the symmetric extension of $g$ defined as 
$\hat g(x,y)=g(x,-y)$ for $y\le 0$, and set 
$C_0:=\{ (x,y,z)\in\R^3\,|\, z=\hat g(x,y)\}$.
%%%%%%%
Making use of Example \ref{ex:basic-disk}  as in Example \ref{ex:general-cusp},
one can construct a sequence $M_i$ contained in 
$\ca M_b(3,0,\lambda,\infty)$ for some 
$\lambda$ such that 
$(M_i,\pa M_i)$ (resp. the intrinsic metric $(\pa M_i)^{\rm int}$)
converges to 
$(N,N_0)$ (resp. to $C_0$). 
Set ${\bf x}:=(0,0,0)\in N_0$.
Note that 
$N_0^2=\{ (0,a_n,0)\}$, $\ca S^1=\{\bf x\}$ and 
$\ca C=\R\times \{ (0,0)\}\setminus \{ {\bf x}\}$.
Note that $\Sigma_p(C_0)=\mathbb S^1$ and 
$\Sigma_x(N)=\Sigma_x(N_0)=\mathbb S^1/f_*=[0,\pi]$, where $p:=\eta_0^{-1}({\bf x})$, and the isometric involution $f_*$ on $\Sigma_p(C_0)$
is given by the reflection $f_*(x,y)=(x,-y)$.

Consider the curves on $C_0$ starting 
from ${\bf x}$ defined as 
\[
   \tilde\sigma_+(t)=(0, t, g(0,t)), \,\quad
 \tilde\sigma_-(t)=(0,-t, g(0,-t))
\]
defined on $[0,\infty)$, and set
$\sigma_\pm(t):=\eta_0\circ\tilde\sigma_\pm(t)$.
Now the projection 
$\eta_0^{\rm int}:C_0\to N_0^{\rm int}$ is defined by 
the identification $(0,-a_n,0)=(0,a_n,0)$ for all $n$.
In what follows, we verify that the structure of the space of directions 
$\Sigma_{\bf x}(N_0^{\rm int})$ depends on 
the sequence $\{ a_n\}$.

For instance, suppose $a_n=1/n!$.
Then as $n\to\infty$,
$\{ a_n\}$ becomes more and more discrete in the sense that   
$1-a_{n+1}/a_n =n/(n+1)\to 1$ as $n\to\infty$.
Let $b_n:=(a_n+a_{n+1})/2$, and set 
${\bf y}_n^\pm:=\sigma_{\pm}(b_n)$.
Obviously we have 
$\lim_{n\to\infty}|{\bf y}_n^\pm,{\bf x}|_{N_0^{\rm int}}/b_n
=1$. Note that 
\begin{align*}
&1\ge\lim_{n\to\infty}|{\bf y}_n^+, {\bf y}_n^-|_{N_0^{\rm int}}/2b_n  \\
&=\lim_{n\to\infty} (|{\bf y}_n^+,\sigma_+(a_{n+1})|+
          |\sigma_-(a_{n+1}),{\bf y}_n^-|)/2b_n \\
	&=\lim_{n\to\infty} 2(b_n-a_{n+1})/2b_n=1,
\end{align*}  
which implies  
\[
 \lim_{n\to\infty} \tilde\angle^{N_0^{\rm int}}{\bf y}_n^+{\bf x} {\bf y}_n^-=\pi,
\]
and hence  
$\angle^{N_0^{\rm int}}(\dot\sigma_+(0),\dot\sigma_-(0))=\pi$. Therefore by Lemma \ref{lem:eta0-int},
$\Sigma_{\bf x}(N_0^{\rm int})$ is 
isometric to $\mathbb S^1$.

Next, suppose 
$a_n=1/n$. Then as $n\to\infty$,
$\{ a_n\}$ becomes more and more dense in the sense that for any $m$, under the $1/m$-rescaling,
$\{ a_n\}_{n>m}$ is $1/m$-dense in  
$[0,1]$. Under this observation, a simple calculation yields that 
$$
\angle^{N_0^{\rm int}}(\dot\sigma_+(0),\dot\sigma_-(0))=0,
$$
and therefore by Lemma \ref{lem:eta0-int},
$\Sigma_{\bf x}(N_0^{\rm int})$ is the one point union of two copies of $S^1_{\pi}$.
\end{ex}

\pmed
\setcounter{equation}{0}

%%%%%%%%%%%%%%%%%%%%%%%%%
\section{Differential of gluing maps} \label{sec:Isometric}

The main purpose of this section is to 
make clear the relations
between $\Sigma_p(C_0)$ and 
$\Sigma_x(X_0)$ and between
$\Sigma_p(C)$ and 
$\Sigma_x(Y)$ (Theorem \ref{thm:X1-f}).
From here on, we fix
$p\in C_0$ and $x\in X_0$ with $\eta_0(p)=x$.

\subsection{Isometric involutions}\label{ssec:iso-inv}
\psmall\n
In this subsection, for any $x\in X_0^1$, 
we shall construct an isometric involution
$f_*$ on $\Sigma_p(C_0)$ for the proof of 
Theorem \ref{thm:X1-f}.

To define $f_*$, 
we investigate  the $1$-Lipschitz map $d\eta_0:\Sigma_p(C_0)\to \Sigma_x(X_0)$.
Recall that $\xi_x^+$ and $\tilde\xi_p^+$ are
the perpendicular directions
at $x$ and $p$ respectively.  

We begin with 

\begin{lem}\label{lem:sharp-deta} We have 
$1\le \# d\eta_0^{-1}(v) \le 2$ for all $v\in\Sigma_x(X_0)$.
\end{lem}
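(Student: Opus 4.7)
The lower bound $\#d\eta_0^{-1}(v)\ge 1$ is the surjectivity of $d\eta_0$ already recorded in Proposition \ref{prop:length'}. For the upper bound $\le 2$, I plan to argue by contradiction: suppose there exist three distinct directions $\tilde v_1,\tilde v_2,\tilde v_3\in\Sigma_p(C_0)$ with $d\eta_0(\tilde v_i)=v$. Using the definition of $d\eta_0$ as a rescaling limit together with Proposition \ref{prop:length'}, I would pick sequences $q_i^n\in C_0$ ($i=1,2,3$) with a common scale $|p,q_i^n|_{C_0}=t_n\to 0$ and $\uparrow_p^{q_i^n}\to\tilde v_i$ in $\Sigma_p(C_0)$, so that $y_i^n:=\eta_0(q_i^n)$ satisfies $|x,y_i^n|_Y/t_n\to 1$ and $\uparrow_x^{y_i^n}\to v$ in $\Sigma_x(Y)$.

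Two estimates then control the picture. First, the angle separation of the $\tilde v_i$ yields, via the law of cosines in the rescaled space $C_0/t_n$, a lower bound $|q_i^n,q_j^n|_{C_0}\ge c\,t_n$ for $i\ne j$ and some constant $c>0$. Second, since $y_1^n,y_2^n,y_3^n$ all converge to the same unit direction $v$ under the rescaling $(Y/t_n,x)\to(T_xY,o_x)$, we have $|y_i^n,y_j^n|_Y=o(t_n)$. I plan to set $y^n:=y_1^n$ and to produce, for any fixed $\e\in(0,c/2)$ and all sufficiently large $n$, three distinct preimages $\hat q_i^n\in B^{C_0}(q_i^n,\e t_n)$ of $y^n$ under $\eta_0$. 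Since these three balls are pairwise disjoint by the first estimate, this would give $\#\eta_0^{-1}(y^n)\ge 3$, contradicting Lemma \ref{lem:preimage}.

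The main step is producing $\hat q_i^n$, which amounts to an almost-surjectivity statement for $\eta_0$ around $q_i^n$ at scale $\e t_n$. When $y_i^n\in X_0^1$, this is a rescaled application of Lemma \ref{lem:eta1} at $q_i^n$; when $y_i^n\in X_0^2$, it follows from the locally almost isometric double-cover structure of Proposition \ref{prop:eta-2cover} applied to the sheet through $q_i^n$. In either case I expect to have, for large $n$,
\[
\eta_0(B^{C_0}(q_i^n,\e t_n))\supset B^{X_0}(y_i^n,(1-o(1))\e t_n),
\]
and the second estimate $|y_1^n,y_i^n|_Y=o(t_n)\ll\e t_n$ places $y^n=y_1^n$ in this image, producing $\hat q_i^n$. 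The main obstacle, and where I expect the work to lie, is making this $o(1)$-error uniform in $n$: the constants in Lemma \ref{lem:eta1} and Proposition \ref{prop:eta-2cover} depend a priori on the base point, but since $q_i^n\to p$ and $x\in X_0^1$ forces the $\eta_0$-preimages of nearby points to cluster near $p$, the needed uniformity should follow from the length-preservation of $\eta_0$ in Proposition \ref{prop:length'} combined with a compactness argument along the sequences $(q_i^n,y_i^n)$.
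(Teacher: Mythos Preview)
Your approach differs substantially from the paper's, and the gap you flag is not a technicality but the heart of the matter.

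The paper's argument never leaves the infinitesimal level. Since $\Sigma_p(C)=\{\tilde\xi_p^+\}*\Sigma_p(C_0)$ is a half spherical suspension, each $\tilde v_j$ is joined to $\tilde\xi_p^+$ by a unique meridian $\tilde\gamma_j$ of length $\pi/2$. By Proposition~\ref{prop:length'} the map $d\eta:\Sigma_p(C)\setminus\Sigma_p(C_0)\to\Sigma_x(Y)\setminus\Sigma_x(X)$ is an injective local isometry, so the images $\gamma_j:=d\eta(\tilde\gamma_j)$ are three \emph{distinct} minimal geodesics in $\Sigma_x(Y)$ from $v$ to $\xi_x^+$, each perpendicular to $\Sigma_x(X_0)$ at $v$; any two of them concatenate at $v$ to a geodesic, and having three violates non-branching in the Alexandrov space $\Sigma_x(Y)$. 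No sequences, no appeal to Lemma~\ref{lem:preimage}, no uniformity issues.

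Your plan instead tries to push the three directions down to three $\eta_0$-preimages of a single point $y_1^n$. The displayed inclusion
\[
\eta_0\bigl(B^{C_0}(q_i^n,\e t_n)\bigr)\supset B^{X_0}\bigl(y_i^n,(1-o(1))\e t_n\bigr)
\]
is, however, in general \emph{false} at the scale you need, and this is already visible in the legitimate two-preimage situation. Take $x\in\ca C$, so that a neighbourhood of $x$ in $X_0$ lies in ${\rm int}\,X_0^1$, and pick $\tilde v_1\neq\tilde v_2=f_*(\tilde v_1)$ with $d\eta_0(\tilde v_1)=d\eta_0(\tilde v_2)=v$. Choosing $q_1^n,q_2^n$ in directions $\tilde v_1,\tilde v_2$ gives $y_1^n\in X_0^1$, whose \emph{only} $\eta_0$-preimage is $q_1^n$. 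Hence $y_1^n\notin\eta_0(B^{C_0}(q_2^n,\e t_n))$ although $|y_1^n,y_2^n|_Y=o(t_n)$; equivalently $\tau_{q_2^n}(\e t_n)\to 1$. Thus the uniformity you hope to extract from Proposition~\ref{prop:length'} plus compactness cannot hold in general, and the step ``produce $\hat q_i^n$ near $q_i^n$'' is essentially equivalent to what you are trying to prove. The argument does not close via Lemma~\ref{lem:preimage}; one really has to work inside $\Sigma_x(Y)$ as the paper does.
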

\begin{proof} 
Suppose there are distinct three elements 
$\tilde v_j$\, $(j=1,2,3)$\,
in $d\eta_0^{-1}(v)$, and consider the geodesics 
$\tilde\gamma_j:[0,\pi/2]\to\Sigma_p(C)$ from $\tilde v_j$ to $\tilde\xi_p^{+}$.
Then $\gamma_j=d\eta(\tilde\gamma_j)$ are distinct three minimal geodesics
from $v$ to $\xi_x^+$ in $\Sigma_x(Y)$
that are perpendicular to $\Sigma_x(X_0)$
at $v$. 
Since 
arbitrary two of $\gamma_1,\gamma_2,\gamma_3$
form a geodesic near $v$, this contradicts the non-branching 
property of geodesics in Alexandrov spaces.
\end{proof}

\begin{defn} \label{defn:Sigma^12}
 For $k=1,2$, let $\Sigma_x(X_0)^k$ denote the set of all directions $v\in\Sigma_x(X_0)$
such that 
\[
        \# d\eta_0^{-1}(v) = k,
\]       
or equivalently, the number of minimal geodesics in $\Sigma_x(Y)$ joining $\xi_x^+$ 
and $v$ is equal to $k$.
\end{defn}

\pmed
Note the following 

\begin{lem}[Rigidity lemma(cf.\cite{Shm})]\label{lem:rigidity}
For every $x\in X_0^1$, let us assume ${\rm rad} (\xi_x^+)=\pi/2$.
Then for any $v, w\in\Sigma_x(X_0)$ with $|v,w|<\pi$ and minimal geodesics
$\xi_x^+ v$,  $vw$ in $\Sigma_x(Y)$, there is a minimal geodesic 
$\xi_x^+ w$ such that the geodesic triangle $\triangle \xi_x^+ v w$ spans a 
totally geodesic surface of constant curvature $1$.
\end{lem}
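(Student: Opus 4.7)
My plan is to verify that the triangle $\triangle\xi_x^+vw$ in $\Sigma_x(Y)$ has its actual angle at $v$ equal to the $\mathbb{S}^2$-comparison angle, and then invoke the equality (rigidity) case of Toponogov's comparison theorem for Alexandrov spaces of curvature $\ge 1$.

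First I observe that the radius hypothesis, combined with Proposition \ref{prop:perp+horiz}(2), gives $|\xi_x^+,v|=|\xi_x^+,w|=\pi/2$ in $\Sigma_x(Y)$; a minimal geodesic from $\xi_x^+$ to $w$ then exists since $\Sigma_x(Y)$ is a compact Alexandrov space of curvature $\ge 1$ and the distance is $<\pi$. A quick application of the spherical law of cosines shows that the $\mathbb{S}^2$-comparison triangle with sides $\pi/2,\pi/2,|v,w|$ has comparison angles $\pi/2$ at both $v$ and $w$, and comparison angle $|v,w|$ at $\xi_x^+$.

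Next, since $v$ realizes the maximum of $d(\xi_x^+,\cdot)$ on $\Sigma_x(Y)$, the first variation formula applied to the given minimal geodesic $vw$ yields some direction $\eta\in\Uparrow_v^{\xi_x^+}$ with $\angle(\eta,\dot\gamma_{vw}(0))\le\pi/2$; selecting the minimal geodesic $v\xi_x^+$ with initial direction $\eta$ gives $\angle\xi_x^+vw\le\pi/2$. Combined with the Alexandrov angle comparison $\angle\xi_x^+vw\ge\tilde\angle\xi_x^+vw=\pi/2$, this forces the actual angle at $v$ to equal $\pi/2$, matching the comparison angle. The equality case of Toponogov's theorem for Alexandrov spaces of curvature $\ge 1$ (see \cite{BGP}, or \cite{Shm}) then says the triangle bounds a convex totally geodesic surface isometric to its $\mathbb{S}^2$-comparison triangle --- supplying in particular the promised minimal geodesic $\xi_x^+w$ (the third side of the rigid spherical triangle) and the desired constant-curvature-$1$ surface.

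The main obstacle is the bookkeeping in the first variation step: when $\Uparrow_v^{\xi_x^+}$ contains more than one direction, the inequality only holds for some choice of minimal geodesic $v\xi_x^+$, not every one. This is precisely why the statement asserts the \emph{existence} of a compatible minimal geodesic $\xi_x^+w$ (namely the one cut out by the totally geodesic spherical triangle produced by the rigidity theorem), rather than claiming rigidity for an arbitrarily chosen minimal geodesic.
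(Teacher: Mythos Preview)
The paper does not actually prove this lemma --- it merely records it with a reference to Shiohama's notes --- so there is no in-paper argument to compare against, and your Toponogov-rigidity strategy is exactly the intended one. There is, however, a genuine gap in your first-variation step. First variation of $d(\xi_x^+,\cdot)$ along $vw$ yields $\angle(\eta,\dot\gamma_{vw}(0))\le\pi/2$ only for the \emph{minimizing} direction $\eta\in\Uparrow_v^{\xi_x^+}$, and you then pass to the geodesic $v\xi_x^+$ with that direction. But the lemma fixes a specific geodesic $\xi_x^+v$ in its hypothesis; your last paragraph confuses the roles, since the freedom the statement grants is in choosing $\xi_x^+w$, not in replacing the given $\xi_x^+v$. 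As written, the rigid triangle you produce need not contain the prescribed side $\xi_x^+v$.

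A clean repair: replace $w$ by an interior point $w'=\gamma_{vw}(t)$ of the given geodesic $vw$; by convexity of $\Sigma_x(X_0)=\Sigma_x(X)$ (which holds when $\mathrm{rad}(\xi_x^+)=\pi/2$) one still has $|\xi_x^+,w'|=\pi/2$. Along the \emph{given} geodesic $\gamma=\xi_x^+v$, the function $k(s)=\cos d(w',\gamma(s))$ satisfies $k''+k\le0$ with $k(0)=0$, $k(\pi/2)=\cos t$. Comparing with $k_0(s)=\cos t\,\sin s$ and using the angle comparison at $\xi_x^+$ to pin down $k'_+(0)=\cos t$ forces $k\equiv k_0$ (the difference is nonnegative, concave, and has vanishing value and right derivative at $0$), whence $k'(\pi/2)=0$. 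First variation at $v$ then gives $\min\{\angle(\zeta',\eta_0):\zeta'\in\Uparrow_v^{w'}\}=\pi/2$, where $\eta_0$ is the direction at $v$ of the given $\xi_x^+v$. Since $w'$ is an interior point of a minimal geodesic from $v$, the set $\Uparrow_v^{w'}$ is the singleton $\{\dot\gamma_{vw}(0)\}$; hence the angle at $v$ between the \emph{given} $\xi_x^+v$ and the \emph{given} $vw$ is exactly $\pi/2$, and your rigidity argument now applies with the correct sides.
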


Let $f:C_0\to C_0$ be the involution defined in Section \ref{sec:gluing}.
We define the involution $f_*:\Sigma_p(C_0)\to\Sigma_p(C_0)$ 
in a way similar to \eqref{eq:defn-f} as follows:
\begin{align} \label{eq:defn-df}
 \text{$f_*(\tilde v):= \tilde w$ \, if\, $\{\tilde v, \tilde w\}=d\eta_0^{-1}(d\eta_0(\tilde v))$}, 
\end{align}

Let $\Sigma_x(X_0)^{\rm int}$ denote the intrinsic metric 
on $\Sigma_x(X_0)$ induced from $\Sigma_x(X)$.
 Note that 
\begin{itemize}
\item $\Sigma_x(X_0)^{\rm int}=\Sigma_x(X_0)$ if ${\rm rad}(\xi_x^+)=\pi/2$ or $x\in X_0^2\,;$ 
\item $\Sigma_x(X_0)^{\rm int}$ is different from $\Sigma_x(X_0^{\rm int})$ 
discussed at the end of 
Section \ref{sec:int-ext}.
\end{itemize}

The following result plays an  important role
in the present paper.

\begin{thm}\label{thm:X1-f}
For each $x\in X_0^1$,
take $p\in C_0$ with $\eta_0(p)=x$. Then 
$f_*:\Sigma_p(C_0)\to\Sigma_p(C_0)$ is an isometry satisfying  the following:
\benu
 \item $\Sigma_x(X_0)^{\rm int}$ is isometric to the quotient space $\Sigma_p(C_0)/f_*\,:$
 \item If ${\rm rad}(\xi_x^+)=\pi/2$, then  $\Sigma_x(Y)$ and $\Sigma_x(X)$ are  isometric to the quotient geodesic  spaces 
 $\Sigma_p(C)/f_*$ and 
 $\Sigma_p(C_0)/f_*$ respectively$\,;$
\item If ${\rm rad}(\xi_x^+)>\pi/2$, then $f_*$ is the identity and 
$\Sigma_x(Y)$ is isometric to the gluing $\Sigma_p(C)\cup_{d\eta_0} \Sigma_x(X)$, where the identification is made by the isometry $d\eta_0:\Sigma_p(C_0) \to \Sigma_x(X_0)^{\rm int}.$ 
\eenu
\end{thm}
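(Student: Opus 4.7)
The plan is to split the argument by whether $\rad(\xi_x^+)>\pi/2$ or $\rad(\xi_x^+)=\pi/2$, corresponding to $(3)$ and $(2)$ respectively, and to deduce $(1)$ in each case.

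In case $(3)$: Lemma \ref{lem:single-interior} places $x\in{\rm int}\,X_0^1$, so Lemma \ref{lem:eta'}(1) makes $\eta_0$ a bijective local isometry near $p$ with respect to interior metrics. Blowing up via Proposition \ref{prop:tang-cone} and Remark \ref{rem:Tx(X0)} converts this into a bijective local isometry $d\eta_0\colon\Sigma_p(C_0)\to\Sigma_x(X_0)^{\rm int}$, so $f_*=\mathrm{id}$ and $(1)$ is immediate. For the gluing description in $(3)$: Proposition \ref{prop:perp+horiz}(3) gives $\Sigma_x(Y)=\Sigma_x(X)\cup\overline{B(\xi_x^+,\pi/2)}$, and the closed $\pi/2$-ball is identified with the half-suspension $\Sigma_p(C)=\{\tilde\xi_p^+\}\ast\Sigma_p(C_0)$ via $d\eta$, using the injective local isometry of $d\eta$ on $\Sigma_p(C)\setminus\Sigma_p(C_0)$ (Proposition \ref{prop:length'}) together with the isometric identification on the boundary $\Sigma_p(C_0)\to\Sigma_x(X_0)^{\rm int}$.

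In case $(2)$: Proposition \ref{prop:perp+horiz} yields $\Sigma_x(X_0)=\Sigma_x(X)$ and $\Sigma_x(Y)=\{\xi_x^+\}\star\Sigma_x(X_0)$, and the central tool is the rigidity lemma (Lemma \ref{lem:rigidity}). For each $\tilde v\in\Sigma_p(C_0)$, the half-suspension structure of $\Sigma_p(C)$ yields a unique perpendicular $\tilde\alpha_{\tilde v}\colon[0,\pi/2]\to\Sigma_p(C)$ from $\tilde\xi_p^+$ to $\tilde v$, and $d\eta(\tilde\alpha_{\tilde v})$ is a length-preserving, hence minimal, perpendicular from $\xi_x^+$ to $v=d\eta_0(\tilde v)$. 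Because $\tilde\xi_p^+$ has an open $\pi/2$-neighborhood contained in $\Sigma_p(C)\setminus\Sigma_p(C_0)$, Proposition \ref{prop:length'} makes $d\eta$ a local isometry near $\tilde\xi_p^+$, so angles there are preserved. Given $\tilde v,\tilde w\in\Sigma_p(C_0)$ with $|v,w|_{\Sigma_x(Y)}<\pi$, applying the rigidity lemma to $\alpha_1=d\eta(\tilde\alpha_{\tilde v})$ and a minimizer $\gamma$ from $v$ to $w$ produces a perpendicular $\alpha_2$ to $w$ with $\triangle\xi_x^+ v w$ spanning a unit hemisphere. The hemisphere identity $\angle_{\xi_x^+}(\alpha_1,\alpha_2)=|v,w|$, angle preservation at $\tilde\xi_p^+$, and the spherical cosine law in the half-suspension yield $|\tilde v,\tilde w_1|_{\Sigma_p(C_0)}=|v,w|_{\Sigma_x(Y)}$, where $\tilde w_1\in\{\tilde w,f_*\tilde w\}$ is the endpoint of the unique lift of $\alpha_2$ to a perpendicular from $\tilde\xi_p^+$. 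Running the same argument with $\alpha_1$ replaced by $d\eta(\tilde\alpha_{f_*\tilde v})$ and tracking how the rigidity hemisphere transforms under the $f_*$-symmetry shows that the assignment $\tilde v\mapsto\tilde w_1$ commutes with $f_*$, forcing $|\tilde v,\tilde w|_{\Sigma_p(C_0)}=|f_*\tilde v,f_*\tilde w|_{\Sigma_p(C_0)}$; so $f_*$ is an isometry for pairs at distance less than $\pi$, and the antipodal case follows by approximation.

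Given that $f_*$ is an isometry, $(1)$ in case $(2)$ follows because the length-preserving property of $d\eta_0$ (the differential of Proposition \ref{prop:length'}) descends to a bijective length-preserving map $\Sigma_p(C_0)/f_*\to\Sigma_x(X_0)^{\rm int}$ between intrinsic-metric geodesic spaces, hence an isometry. For $(2)$ extend $f_*$ to $\Sigma_p(C)$ by fixing $\tilde\xi_p^+$, which remains an isometry of the half-suspension; then $d\eta$ factors through $\Sigma_p(C)/f_*$, and the comparison of half-suspension decompositions $\Sigma_p(C)/f_*=\{\tilde\xi_p^+\}\ast(\Sigma_p(C_0)/f_*)$ and $\Sigma_x(Y)=\{\xi_x^+\}\ast\Sigma_x(X_0)^{\rm int}$ (the latter supplied by the rigidity lemma) identifies the induced bijection as an isometry. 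The main obstacle is the pairing-consistency step in case $(2)$: one must show that as $\alpha_1$ varies between the two perpendiculars from $\xi_x^+$ to $v$, the partner $\alpha_2$ produced by the rigidity lemma varies compatibly with $f_*$ at $w$; secondary technical points include handling the antipodal case $|v,w|=\pi$ by a limit argument and carefully justifying angle preservation of $d\eta$ at the half-suspension vertex $\tilde\xi_p^+$.
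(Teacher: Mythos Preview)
Your treatment of case~(3) differs from the paper's but is essentially sound: the paper shows $f_*=\mathrm{id}$ by observing directly (inside the proof of Lemma~\ref{lem:local-surj-varphi}) that the existence of $u\in\Sigma_x(X)$ with $|\xi_x^+,u|>\pi/2$ forces $\angle\xi_x^+vu>\pi/2$ at every $v\in\Sigma_x(X_0)$, which rules out a second perpendicular to $v$ by non-branching; you instead pass through the local-isometry statement for $\eta_0$ on $\mathrm{int}\,C_0^1$. Both reach the same conclusion, though your route requires some care in identifying the blow-up of the interior metric on $X_0$ with $\Sigma_x(X_0)^{\rm int}$.

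In case~(2) there is a genuine gap, precisely at the point you flag as the ``main obstacle.'' Your rigidity argument, given a perpendicular $\alpha_1=d\eta(\tilde\alpha_{\tilde v})$ and a minimizer $\gamma$ from $v$ to $w$, produces a perpendicular $\alpha_2$ to $w$ and yields $|\tilde v,\tilde w_1|=|v,w|$ for \emph{one} lift $\tilde w_1\in\{\tilde w,f_*\tilde w\}$. To conclude that $f_*$ is an isometry you need the ``commuting'' statement: replacing $\tilde v$ by $f_*\tilde v$ replaces $\tilde w_1$ by $f_*\tilde w_1$. Your justification (``tracking how the rigidity hemisphere transforms under the $f_*$-symmetry'') is circular: $f_*$ is at this stage only an involution of $\Sigma_p(C_0)$ with no action on $\Sigma_x(Y)$, so there is no symmetry of the hemispheres to track. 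Concretely, nothing you have said excludes the possibility that rigidity applied to $(\alpha_1',\gamma)$ returns the \emph{same} $\alpha_2$; and even if the commuting statement held, it only gives $|\tilde v,\tilde w_1|=|f_*\tilde v,f_*\tilde w_1|$, which is the isometry statement for the partner $\tilde w_1$, not for an arbitrary $\tilde w$ --- and the partner can genuinely switch between $\tilde w$ and $f_*\tilde w$ as $w$ varies, so this does not cover all pairs.

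The paper circumvents this by an infinitesimal argument: first Lemma~\ref{lem:limit-one} shows $|\tilde v,\tilde v_i|/|v,v_i|\to 1$, then Assertion~\ref{ass:isom} establishes \emph{continuity} of $f_*$ (using rigidity only to identify which of the two perpendiculars to $v_i$ is close to the one through $f_*(\tilde v)$, where proximity removes the ambiguity), and finally one integrates the infinitesimal isometry along geodesics. The point is that for $\tilde v_i\to\tilde v$ the partner is forced by continuity, so no global pairing-consistency is needed. If you want to repair your approach, the natural fix is to localize it in exactly this way --- at which point it becomes the paper's proof.
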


 Let $\ca F_x:=d\eta_0(\tilde{\ca F}_p)$, where 
$\tilde{\ca F}_p\subset \Sigma_p(C_0)$ denotes the
fixed point set of $f_*$. By \cite{PtPt:extremal},
$\ca F_x$ is an extremal subset of $\Sigma_x(X_0)$.

%%%%%%%%%%%%%%%%%%%%%%%
\pmed

For points $x\in X_0^2$, we already know the following result,
which is immediate from Lemma \ref{lem:double-sus} and Proposition \ref{prop:isometry'}.

\begin{prop} \label{prop:inf-str-X02}
For any $x\in X_0^2$, let $\{ p_1, p_2\}=\eta_0^{-1}(x)$. Then we have 
\benu
 \item $d\eta_0: \Sigma_{p_i}(C_0) \to \Sigma_x(X_0)$ is an isometry for each $i=1,2\,;$
 \item $\Sigma_x(Y)$ is isometric to the gluing                                                                 
  $\Sigma_{p_1}(C)\cup_{f_*} \Sigma_{p_2}(C)$, where the isometry 
$f_*:\Sigma_{p_1}(C_0)\to \Sigma_{p_2}(C_0)$ is given by 
$$
f_*=((d\eta_0)|_{\Sigma_{p_2}(C_0)})^{-1}\circ (d\eta_0)|_{\Sigma_{p_1}(C_0)}.
$$
 \eenu
\end{prop}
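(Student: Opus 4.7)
The plan is to derive this proposition directly from the two results cited, Lemma~\ref{lem:double-sus} and Proposition~\ref{prop:isometry'}, without needing any new calculation. The key observation is that for $x\in X_0^2$, the sphere $\Sigma_x(Y)$ already has an explicit spherical-suspension decomposition, and each of its two half-suspension ``hemispheres'' is filled isometrically by the differential of $\eta$ at one of the two preimages $p_i$.

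For part (1), I would proceed as follows. Fix $i\in\{1,2\}$. Since $p_i\in C_0^2$, Proposition~\ref{prop:isometry'} gives an isometry
\[
d\eta_{p_i}:\Sigma_{p_i}(C)\longrightarrow \Sigma_x^{+_i}(Y),
\]
where $\Sigma_x^{+_i}(Y):=\{\xi_x^{\pm}\}*\Sigma_x(X_0)$ is the closed half of $\Sigma_x(Y)$ centered at the perpendicular direction $\xi_x^{+_i}$ corresponding to $p_i$ (using Lemma~\ref{lem:double-sus}, which identifies $\Sigma_x(Y)$ with the spherical suspension $\{\xi_x^+,\xi_x^-\}*\Sigma_x(X_0)$). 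This isometry preserves the half-suspension structure, sending the apex $\tilde\xi_{p_i}^+$ to $\xi_x^{+_i}$ and the equator $\Sigma_{p_i}(C_0)$ to $\Sigma_x(X_0)$. The restriction to the equator is exactly $d\eta_0|_{\Sigma_{p_i}(C_0)}$, which is therefore an isometry onto $\Sigma_x(X_0)$.

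For part (2), I would use that Lemma~\ref{lem:double-sus} expresses $\Sigma_x(Y)$ as the union of the two closed halves $\Sigma_x^{+_1}(Y)$ and $\Sigma_x^{+_2}(Y)$, glued along their common equator $\Sigma_x(X_0)$. Pulling the two halves back via the isometries $d\eta_{p_i}:\Sigma_{p_i}(C)\to \Sigma_x^{+_i}(Y)$ from Proposition~\ref{prop:isometry'}, the gluing locus is identified on the $\Sigma_{p_1}(C)$ side with $\Sigma_{p_1}(C_0)$ and on the $\Sigma_{p_2}(C)$ side with $\Sigma_{p_2}(C_0)$. The identification between these two equators is the unique map that makes the diagram commute, namely
\[
f_*=\bigl((d\eta_0)|_{\Sigma_{p_2}(C_0)}\bigr)^{-1}\circ(d\eta_0)|_{\Sigma_{p_1}(C_0)},
\]
which is an isometry of $\Sigma_{p_1}(C_0)$ onto $\Sigma_{p_2}(C_0)$ by part (1). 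Hence $\Sigma_x(Y)$ is isometric to $\Sigma_{p_1}(C)\bigcup_{f_*}\Sigma_{p_2}(C)$.

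There is no real obstacle here, since both ingredients are already in place. The only point that needs a brief verification is that the half-suspension structures on $\Sigma_{p_1}(C)$ and $\Sigma_{p_2}(C)$ are matched with the two half-suspension pieces of $\Sigma_x(Y)$ around $\xi_x^+$ and $\xi_x^-$ respectively (as opposed to both being mapped to the same half); this follows from Lemma~\ref{lem:preimage}(2), which produces two distinct perpendicular directions $\xi_x^{\pm}$ as the images under $d\eta$ of $\tilde\xi_{p_1}^+$ and $\tilde\xi_{p_2}^+$.
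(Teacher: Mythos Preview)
Your proposal is correct and follows exactly the approach the paper intends: the paper states that the proposition is immediate from Lemma~\ref{lem:double-sus} and Proposition~\ref{prop:isometry'}, and you have simply spelled out how those two ingredients combine. Your additional remark invoking Lemma~\ref{lem:preimage}(2) to ensure the two half-suspensions land on distinct hemispheres is a sensible clarification of a point the paper leaves implicit.
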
 

As an immediate consequence of Theorem \ref{thm:X1-f}
 together with Lemma \ref{lem:Sigma-Alex}, we have the following.

\begin{cor} \label{cor:Sigma0=Alex}
For every $x\in X_0$, $\Sigma_x(X_0)^{\rm int}$ is an Alexandrov space with curvature $\ge 1$.
\end{cor}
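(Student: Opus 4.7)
The plan is to split $X_0 = X_0^1 \cup X_0^2$ and, in each case, realize $\Sigma_x(X_0)^{\rm int}$ as a quotient of an Alexandrov space with curvature $\ge 1$ by an isometric finite group action. The tools that do all the work have already been proved: Theorem \ref{thm:X1-f}(1) identifies $\Sigma_x(X_0)^{\rm int}$ with $\Sigma_p(C_0)/f_*$ when $x\in X_0^1$, Proposition \ref{prop:inf-str-X02}(1) identifies it with $\Sigma_p(C_0)$ when $x\in X_0^2$, and the Burago--Gromov--Perelman quotient theorem recalled in Section \ref{sec:prelim} ensures that isometric compact group quotients preserve the curvature bound.

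First I treat $x\in X_0^2$. Here $\Sigma_x(X_0)^{\rm int}=\Sigma_x(X_0)$ by the observation just before Theorem \ref{thm:X1-f}. Picking either preimage $p\in\eta_0^{-1}(x)$, Proposition \ref{prop:inf-str-X02}(1) gives an isometry $d\eta_0:\Sigma_p(C_0)\to\Sigma_x(X_0)$. Since $C_0$ is an Alexandrov space with curvature $\ge\nu$, its space of directions $\Sigma_p(C_0)$ is an Alexandrov space with curvature $\ge 1$, so the conclusion in this case follows immediately. (Alternatively one could just invoke Lemma \ref{lem:Sigma-Alex}(2) directly.)

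Next I treat $x\in X_0^1$, which is the substantive case flagged in Remark \ref{rem:Sigma=Alex}. Let $p\in C_0$ be the unique preimage of $x$. By Theorem \ref{thm:X1-f}(1) there is an isometry
\[
\Sigma_x(X_0)^{\rm int}\;\cong\;\Sigma_p(C_0)/f_*,
\]
where $f_*:\Sigma_p(C_0)\to\Sigma_p(C_0)$ is the isometric involution constructed in that theorem. Consequently the finite group $\{{\rm id},f_*\}\cong\Z/2$ acts on $\Sigma_p(C_0)$ by isometries. Because $\Sigma_p(C_0)$ is an Alexandrov space with curvature $\ge 1$, the general quotient theorem for isometric compact group actions on Alexandrov spaces (recalled in Subsection \ref{ssec:Alex}) gives that $\Sigma_p(C_0)/f_*$ is again an Alexandrov space with curvature $\ge 1$. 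Transporting along the isometry above, $\Sigma_x(X_0)^{\rm int}$ inherits this Alexandrov structure.

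Combining the two cases exhausts $X_0$ and finishes the proof. There is essentially no obstacle, as the corollary is a packaging statement; the only tiny point to check is that the map $f_*$ from Theorem \ref{thm:X1-f} really is a global isometry of $\Sigma_p(C_0)$ (not merely a piecewise one), but this is exactly the content of the opening assertion of Theorem \ref{thm:X1-f}, so no extra argument is needed.
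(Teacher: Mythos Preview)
Your proof is correct and follows essentially the same approach as the paper, which presents the corollary as an immediate consequence of Theorem \ref{thm:X1-f} together with Lemma \ref{lem:Sigma-Alex}. Your explicit case split $X_0=X_0^1\cup X_0^2$ and invocation of the BGP quotient theorem for the $X_0^1$ case is exactly the unpacking of that implicit argument.
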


\begin{proof}[Proof of Theorem \ref{thm:alex}(2)] 
This is now immediate from Lemma \ref{lem:Sigma-Alex}, Remark \ref{rem:Tx(X0)}
and Corollary \ref{cor:Sigma0=Alex}.
\end{proof}

\begin{ex} \label{ex:summary} 
 In Example \ref{ex:cusp}, for any cusp $x$ of the boundary $X_0$,
$\Sigma_x(X)=\Sigma_x(X_0)$ is a point. For $p\in C_0$ with $\eta_0(p)=x$,
we have $\Sigma_p(C_0)=\mathbb S^0$, which consists of two points with distance $\pi$, 
 and $f_*:\Sigma_p(C_0)\to \Sigma_p(C_0)$ is the transposition.
In this case, $f_*$ is nontrivial although $x\in {\rm int}\,X_0^1$. 
\end{ex}

\pmed 
For the proof of Theorem \ref{thm:X1-f}, we need 
the following.

\begin{lem} \label{lem:limit-one}
For any $x\in X_0^1$, take $p\in C_0$ with $\eta_0(p)=x$.
For $\tilde v, \tilde v_i \in \Sigma_p(C_0)$ with $\tilde v_i\to \tilde v$,
let $v=d\eta_0(\tilde v)$ and $v_i=d\eta_0(\tilde v_i)$.
Then we have 
\[
      \lim_{i\to\infty} \frac{|\tilde v, \tilde v_i|}{|v, v_i|}=1.
\]
Namely  $d\eta_p:T_p(C_0)\to T_x(X_0)$ is length-preserving.
\end{lem}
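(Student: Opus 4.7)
The lower bound $|\tilde v, \tilde v_i|/|v, v_i|\ge 1$ is immediate from the $1$-Lipschitz property of $d\eta_0\colon\Sigma_p(C_0)\to\Sigma_x(X_0)$ given by Proposition \ref{prop:length'}. Set $\theta_i := |\tilde v, \tilde v_i|$ and $\phi_i:=|v,v_i|$; both tend to zero, and the task is to show $\phi_i/\theta_i\to 1$.

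The plan is to take unit-speed geodesics $\tilde\alpha, \tilde\alpha_i\colon[0,\delta)\to C_0$ from $p$ with initial directions $\tilde v,\tilde v_i$, and form the composites $\alpha := \eta_0\circ\tilde\alpha$ and $\alpha_i := \eta_0\circ\tilde\alpha_i$. Proposition \ref{prop:length'} makes these unit-speed in $Y$, and Lemma \ref{lem:eta-almostminimal} together with Sublemma \ref{slem:unique-tangent} shows they are $Y$-almost-minimal from $x$ with well-defined extrinsic directions $v, v_i\in\Sigma_x(X)$. Applying Proposition \ref{prop:length'} at $p$ yields $|x,\alpha(t)|_Y/t\to 1$ and $|x,\alpha_i(t)|_Y/t\to 1$, and combining with the Euclidean law of cosines on the comparison triangles in $C_0$ and $Y$,
\[
   \frac{|\tilde\alpha(t),\tilde\alpha_i(t)|_{C_0}}{t}\longrightarrow 2\sin(\theta_i/2),\qquad \frac{|\alpha(t),\alpha_i(t)|_Y}{t}\longrightarrow 2\sin(\phi_i/2)\qquad (t\to 0,\ i\ \text{fixed}).
\]
The $1$-Lipschitz inequality $|\alpha(t),\alpha_i(t)|_Y\le |\tilde\alpha(t),\tilde\alpha_i(t)|_{C_0}$ then reconfirms $\phi_i\le\theta_i$.

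The heart of the argument is a diagonal. Since $\tilde\alpha_i\to\tilde\alpha$ pointwise (as initial directions converge and the $C_0$-geodesics depend continuously on them for small $\delta$), for each fixed $t>0$ Proposition \ref{prop:length'} applied at the point $\tilde\alpha(t)\in C_0$ gives $|\alpha(t),\alpha_i(t)|_Y /|\tilde\alpha(t),\tilde\alpha_i(t)|_{C_0}\to 1$ as $i\to\infty$. I would pick $t^{(k)}\searrow 0$ and $I_k\nearrow\infty$ so that this ratio at $t=t^{(k)}$ exceeds $1-1/k$ for every $i\ge I_k$, then set $t_i := t^{(k)}$ for $i\in[I_{k+1}, I_{k+2})$, shrinking $t_i$ further as needed (still positive, depending only on $i$) so that the two first-variation ratios above each lie within $1/i$ of their respective limits $2\sin(\theta_i/2)$ and $2\sin(\phi_i/2)$. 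Combining these estimates,
\[
  2\sin(\phi_i/2) \ge \bigl(1-1/i\bigr)\,|\alpha(t_i),\alpha_i(t_i)|_Y/t_i \ge \bigl(1-O(1/i)\bigr)\,2\sin(\theta_i/2),
\]
so $\phi_i/\theta_i\to 1$ as required.

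The main obstacle is this last step: ensuring the rate in Proposition \ref{prop:length'} at the moving base point $\tilde\alpha(t_i)$ remains controlled while $t_i\to 0$. The base points $\tilde\alpha(t_i)$ approach $p$, which may lie in $\tilde{\mathcal S}^1$ where the local behaviour of $\eta_0$ is most sensitive, and no uniform version of Proposition \ref{prop:length'} is available a priori. The diagonal procedure precisely sidesteps this by first pinning down the pointwise rate at each fixed scale $t^{(k)}$ and only then letting $k\to\infty$; the Alexandrov first-variation approximations supplying (i) are similarly pointwise in $i$ and thus compatible with the diagonal.
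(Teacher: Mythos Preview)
Your diagonal argument has a genuine gap. You establish $R_i(t^{(k)}) > 1-1/k$ for $i\ge I_k$, where $R_i(t):=|\alpha(t),\alpha_i(t)|_Y/|\tilde\alpha(t),\tilde\alpha_i(t)|_{C_0}$, and then ``shrink $t_i$ further'' below $t^{(k)}$ to make the first-variation ratios accurate. But after shrinking, the bound $R_i(t_i)>1-1/k$ is no longer justified: Proposition~\ref{prop:length'} at the base point $\tilde\alpha(t_i)$ gives $R_i(t)\to 1$ as $i\to\infty$ only for each \emph{fixed} $t$, with a rate that may degenerate as $t\to 0$ (the base points $\tilde\alpha(t)$ approach $p$, exactly where $\eta_0$ is most delicate). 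Your final paragraph identifies this obstacle but then asserts the diagonal ``precisely sidesteps'' it; it does not.

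The difficulty is structural. Since $B_i(t)=R_i(t)A_i(t)$, passing to $t\to 0$ for fixed $i$ gives
\[
\lim_{t\to 0} R_i(t)\;=\;\frac{2\sin(\phi_i/2)}{2\sin(\theta_i/2)}\;=\;\frac{\sin(\phi_i/2)}{\sin(\theta_i/2)},
\]
which is precisely the quantity you are trying to show tends to $1$. So your argument reduces to the interchange $\lim_i\lim_t R_i(t)=\lim_t\lim_i R_i(t)$, and nothing you have written supplies the required uniformity. (A toy obstruction: functions like $R_i(t)=L+(1-L)\min(1,it)$ with $L<1$ satisfy $R_i\le 1$, $\lim_i R_i(t)=1$ for each $t>0$, yet $\lim_t R_i(t)=L$.) Using the tangent-cone approximation to get $|A_i(t)-2\sin(\theta_i/2)|<\epsilon(t)$ uniformly in $i$ does not help either, because this additive error swamps $\theta_i$ once $i$ is large.

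The paper's proof avoids this entirely by working inside the spaces of directions $\Sigma_p(C)$ and $\Sigma_x(Y)$. There the differential $d\eta:\Sigma_p(C)\setminus\Sigma_p(C_0)\to\Sigma_x(Y)\setminus\Sigma_x(X)$ is an \emph{exact} injective local isometry (Proposition~\ref{prop:length'}), so one obtains identities rather than asymptotics. Concretely, in the case ${\rm rad}(\xi_x^+)>\pi/2$ the paper joins $v$ to a point $u_i(\e)$ on the geodesic $\xi_x^+v_i$ slightly off $\Sigma_x(X_0)$, shows the joining geodesic stays in $\Sigma_x(Y)\setminus\Sigma_x(X_0)$, and lifts it to $\Sigma_p(C)$ to get $|\tilde v,\tilde u_i(\e)|=|v,u_i(\e)|$; the triangle inequality then yields $\theta_i\le(1+2\e)\zeta_i$ for $i\ge i_\e$, with no double limit to interchange. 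The case ${\rm rad}(\xi_x^+)=\pi/2$ is handled by the Rigidity Lemma.
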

\pmed
From here on, we implicitly use the metric 
$|\tilde v, \tilde v_i|=|\tilde v, \tilde v_i|_{\Sigma_p(C_0)}$,
$|v, v_i|=|v, v_i|_{\Sigma_x(Y)}$ because of simplicity.

\pmed\n
\begin{proof}[Proof of Lemma \ref{lem:limit-one}]
Let $\tilde \gamma:=\tilde\xi_p^+\tilde v$, $\tilde \gamma_i:=\tilde\xi_p^+\tilde v_i$,
and set $\gamma:=d\eta(\tilde\gamma)$, $\gamma_i:=d\eta(\tilde\gamma_i)$.
Let $\theta_i:=|\tilde v, \tilde v_i|$.
In view of Lemma \ref{lem:loc-isom}, we have 
\[
      \theta_i=\angle \tilde v \tilde \xi_p^+ \tilde v_i= \angle_{\xi_x^+}(\gamma,\gamma_i).
\]
Let $\sigma_i$ be a minimal geodesic from $v$ to $v_i$ in $\Sigma_x(Y)$.
Since $|\xi_x^+, v|=|\xi_x^+,v_i|=\pi/2$, the convexity of 
$\Sigma_x(Y)\setminus\mathring{B}(\xi^+,\pi/2)$ shows 
$\sigma_i\subset\Sigma_x(X)$, and hence 
$\angle(\uparrow_v^{\xi_x^+}, \dot\sigma_i(0))\ge  \pi/2$.
The first variation formula then
implies 
\begin{align} \label{eq:limit-under-angle}
   \lim_{i\to\infty}\angle(\Uparrow_v^{\xi_x^+}, \dot\sigma_i(0))= \pi/2.
\end{align}
%Here it should be remarked that if ${\rm rad}(\xi_x^+)=\pi/2$,

%Therefore, from now on we are concerned with the case ${\rm rad} (\xi_x^+)>\pi/2$.

Let $\zeta_i:=|v, v_i|\le \theta_i$, 
For each $\e>0$, take $u_i(\e)\in\gamma_i$ and $\tilde u_i(\e)\in\tilde \gamma_i$ 
with $|v_i, u_i(\e)|=|\tilde v_i, \tilde u_i(\e)|=\e\zeta_i$.
Now consider the convergence
\[
      \left( \frac{1}{\zeta_i} \Sigma_x(Y), v \right) \to (T_{v}( \Sigma_x(Y)), o_v),
\]
Let $\rho_i$ be a minimal geodesic from $v$ to $u_i(\e)$
in $\Sigma_x(Y)$, and 
let $\rho_\infty$ be the limit of $\rho_i$ under the above convergence.

We show
\beq \label{eq:alpha>0}
    \alpha:= \angle(\dot\rho_\infty(0), T_v(\Sigma_x(X))) >0.
\eeq
%%%%%%%%
If $\alpha=0$, then $\rho_\infty$ must be contained in the tangent cone $T_v(\Sigma_x(X))$.
This is a contradiction 
since 
$u_\infty(\e)=\lim_{i\to\infty}u_i(\e)\notin T_v(\Sigma_x(X))$. Thus we have
$\alpha>0$.

From the lower semicontinuity of angles, we have for  all large enough $i\ge i_\e$,
\[
\angle(\dot\rho_i(0), \Sigma_x(X_0)) >\alpha/2.
\]
Therefore we can define the lift  $\tilde\rho_i:=(d\eta)^{-1}(\rho_i)$
of $\rho_i$,
and conclude
\[
   |\tilde v, \tilde u_i(\e)|=|v, u_i(\e)|,
\]
%and hence $|\tilde v, \tilde u_i(\e)|=|v, u_i(\e)|$ 
for all $i\ge i_\e$. 
Letting $\e\to 0$ and $i\to\infty$ properly,  we obtain the conclusion of the lemma.
This completes the proof.
\end{proof}
\psmall
Recall that the gluing map $\eta:C\to Y$, which is $1$-Lipschitz,  induces the bijective local isometry
$\eta:C\setminus C_0\to Y\setminus X$.
For $p\in C_0$ with $\eta(p)=x$, we have the derivative
$d\eta_p:T_p(C)\to T_x(Y)$, which induces the  injective local isometry
$d\eta_p:T_p(C)\setminus T_p(C_0)\to T_x(Y)\setminus T_x(X)$ (Proposition \ref{prop:length'}).
By Lemma  \ref{lem:limit-one},  $d\eta_p:T_p(C_0)\to T_x(X_0)$ is a surjective length-preserving map.

\begin{lem} \label{lem:local-surj-varphi}
Fix any $\tilde v\in\Sigma_p(C_0)$ set $v=d\eta_0(\tilde v)$.
For any $\e>0$ and $w\in B(v,\e;\Sigma_x(X_0))$, there is an element 
$\tilde w\in\Sigma_p(C_0)$ satisfying 
$d\eta_0(\tilde w)=w$ and 
$|\tilde v,\tilde w|/|v,w|\le 1+\tau_v(\e)$.
\end{lem}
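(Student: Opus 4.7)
The plan is to argue by contradiction, combining Lemma~\ref{lem:limit-one} and Lemma~\ref{lem:sharp-deta} with a perturbation-and-lift construction in the ambient Alexandrov space $\Sigma_x(Y)$. Suppose the conclusion fails; then there exist $c > 0$ and a sequence $w_i \to v$ in $\Sigma_x(X_0)$, with $\e_i := |v, w_i| \to 0$, such that every preimage $\tilde w \in d\eta_0^{-1}(w_i)$ satisfies $|\tilde v, \tilde w| > (1+c)\e_i$. By Lemma~\ref{lem:sharp-deta} each $w_i$ has at most two preimages; choose $\tilde w_i \in d\eta_0^{-1}(w_i)$ closest to $\tilde v$ and extract a convergent subsequence $\tilde w_i \to \tilde w_\infty$, which by continuity of $d\eta_0$ lies in $d\eta_0^{-1}(v) \subseteq \{\tilde v, f_*(\tilde v)\}$. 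If $\tilde w_\infty = \tilde v$, Lemma~\ref{lem:limit-one} forces $|\tilde v, \tilde w_i|/\e_i \to 1$, contradicting the assumption. Hence $\tilde w_\infty = f_*(\tilde v) \ne \tilde v$: the element $v$ has two distinct preimages in $\Sigma_p(C_0)$, and all preimages of $w_i$ stay uniformly away from $\tilde v$.

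In this case I produce an alternative preimage of $w_i$ near $\tilde v$ by perturbing into the open region $\Sigma_x(Y) \setminus \Sigma_x(X) = B(\xi_x^+, \pi/2)$ and using the injective local isometry $d\eta : \Sigma_p(C) \setminus \Sigma_p(C_0) \to \Sigma_x(Y) \setminus \Sigma_x(X)$ from Proposition~\ref{prop:length'}. Let $\tilde\gamma_v : [0, \pi/2] \to \Sigma_p(C)$ be the unique geodesic in the half-spherical suspension $\Sigma_p(C) = \{\tilde\xi_p^+\} * \Sigma_p(C_0)$ from $\tilde v$ to $\tilde\xi_p^+$, and let $\gamma_v := d\eta \circ \tilde\gamma_v$ be its image, a minimal geodesic from $v$ to $\xi_x^+$ in $\Sigma_x(Y)$; define $\gamma_{w_i}$ analogously. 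For a small $\delta > 0$ (to be chosen later), set $\tilde v_\delta := \tilde\gamma_v(\delta)$, $v_\delta := \gamma_v(\delta)$, and $w_{i,\delta} := \gamma_{w_i}(\delta)$, all lying in the open regions above. Connect $v_\delta$ and $w_{i,\delta}$ by the minimal geodesic $\rho$ in $\Sigma_x(Y)$: spherical comparison in the Alexandrov space of curvature $\ge 1$ yields $|\rho| \le (1 + O(\delta^2))\e_i$, and the convexity of closed balls of radius $< \pi/2$ keeps $\rho$ in $B(\xi_x^+, \pi/2)$. Lift $\rho$ through $d\eta^{-1}$ starting from $\tilde v_\delta$ to obtain a curve $\tilde\rho$ in $\Sigma_p(C) \setminus \Sigma_p(C_0)$ ending at some $\tilde w_{i,\delta}'$ with $d\eta(\tilde w_{i,\delta}') = w_{i,\delta}$. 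By the half-suspension structure, $\tilde w_{i,\delta}'$ lies on a unique geodesic from $\tilde\xi_p^+$ through some $\tilde w_i' \in \Sigma_p(C_0)$ at parameter $\delta$ from $\tilde w_i'$; since $d\eta$ sends this suspension geodesic to the corresponding one in $\Sigma_x(Y)$, we conclude $d\eta_0(\tilde w_i') = w_i$. The triangle inequality then gives
\[
|\tilde v, \tilde w_i'|_{\Sigma_p(C_0)} \le 2\delta + |\tilde v_\delta, \tilde w_{i,\delta}'|_{\Sigma_p(C)} = 2\delta + |\rho| \le 2\delta + (1 + O(\delta^2))\e_i,
\]
so choosing $\delta$ with $\delta/\e_i \to 0$ (e.g., $\delta = \e_i^{3/2}$) yields $|\tilde v, \tilde w_i'|/\e_i \to 1$, contradicting the assumption that every preimage of $w_i$ lies at distance $> (1 + c)\e_i$ from $\tilde v$.

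The main obstacle is the simultaneous control required on $\delta$: small enough that $2\delta/\e_i \to 0$, yet large enough that the minimal geodesic $\rho$ stays inside the open region $B(\xi_x^+, \pi/2)$ where the lift via $d\eta^{-1}$ is defined. This is where one genuinely needs the convexity of closed balls of radius $< \pi/2$ in Alexandrov spaces with curvature $\ge 1$, rather than the naive triangle-inequality bound which would impose the incompatible constraint $\e_i < 2\delta$. If this convexity is not directly available, an alternative is to cover $\rho$ by a finite chain of small local-isometry balls (of radii comparable to their distances to $\Sigma_x(X)$) and carry out the lift incrementally. A secondary technical point is the global well-definedness of the lift, which follows from the dimension matching $\dim \Sigma_p(C) = \dim \Sigma_x(Y) = \dim C_0$ and the resulting open-mapping property of the injective local isometry.
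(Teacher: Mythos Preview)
Your argument is essentially correct and takes a genuinely different route from the paper.  The paper proceeds by a short case split on ${\rm rad}(\xi_x^+)$: when ${\rm rad}(\xi_x^+)=\pi/2$, the Rigidity Lemma~\ref{lem:rigidity} produces, for the chosen geodesic $\xi_x^+v$ and any minimal geodesic $vw$, a geodesic $\xi_x^+w$ so that $\triangle\xi_x^+vw$ spans a totally geodesic spherical surface; the preimage $\tilde w$ corresponding to this $\xi_x^+w$ then satisfies $|\tilde v,\tilde w|=\angle_{\xi_x^+}(\gamma_v,\gamma_w)=|v,w|$ exactly.  When ${\rm rad}(\xi_x^+)>\pi/2$, the paper observes that $\Sigma_x(X_0)=\Sigma_x(X_0)^1$ (so $d\eta_0$ is bijective) and the conclusion is immediate.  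Your contradiction/perturb-and-lift argument replaces the Rigidity Lemma by an explicit construction; note that your ``second case'' $\tilde w_\infty=f_*(\tilde v)\neq\tilde v$ can only arise when ${\rm rad}(\xi_x^+)=\pi/2$, so the two dichotomies in fact coincide.  The paper's approach is shorter and yields the exact ratio $1$; yours avoids the structural Rigidity Lemma at the cost of more analysis.

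Two points to tighten.  First, the existence of the lift of $\rho$ requires $\rho\subset\mathrm{Im}(d\eta)$, and ``dimension matching plus open mapping'' is not enough by itself.  A clean justification is: every $\xi\in\mathring B(\xi_x^+,\pi/2)$ lies on a minimal geodesic $\xi_x^+v$ by Proposition~\ref{prop:perp+horiz}(1); by Lemma~\ref{lem:sharp-deta} there are at most two such geodesics, and the one or two suspension geodesics over the preimages of $v$ map onto them, so $d\eta$ is a bijection onto $\mathring B(\xi_x^+,\pi/2)$.  Second, since $d\eta$ is injective on $\Sigma_p(C)\setminus\Sigma_p(C_0)$, the endpoint $\tilde w_{i,\delta}'$ of your lift is forced to equal the point $\tilde\gamma_{w_i}(\delta)$, hence $\tilde w_i'$ coincides with whichever preimage you used to define $\gamma_{w_i}$; it is not a ``new'' preimage.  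The contradiction with ``every preimage at distance $>(1+c)\e_i$'' still stands, but the wording ``alternative preimage'' is misleading.  (Incidentally the crude bound $|\rho|\le \e_i+2\delta$ already suffices, so the sharper $(1+O(\delta^2))\e_i$ claim is unnecessary.)
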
 

Lemma \ref{lem:local-surj-varphi} implies that $\Sigma_x(X_0)^2$ is open 
in $\Sigma_x(X_0)$.

\begin{proof}[Proof of Lemma \ref{lem:local-surj-varphi}]
 Suppose ${\rm rad}(\xi_x^+)=\pi/2$.
Then by Rigidity Lemma  \ref{lem:rigidity},
we get the conclusion.
\par 
Next suppose ${\rm rad}(\xi_x^+)>\pi/2$, and take $u\in\Sigma_x(X)$ such that 
$|\xi_x^+,u|>\pi/2$.
From the curvature condition of $\Sigma_x(Y)$, 
we have $\angle\xi_x^+ v u>\pi/2$ for any $v\in
\Sigma_x(X_0)$. This implies 
$\Sigma_x(X_0)=\Sigma_x(X_0)^1$ 
(see the proof of Lemma \ref{lem:sharp-deta}).
Therefore $d\eta_0:\Sigma_p(C_0)\to \Sigma_x(X_0)$ is a length-preserving bijection,
and hence the conclusion certainly holds.
\end{proof}
 
\begin{proof}[Proof of Theorem \ref{thm:X1-f}]

\pmed\n
(1)\, We show that $f_*:\Sigma_p(C_0)\to \Sigma_p(C_0)$ is an isometry.
\psmall
\begin{ass} \label{ass:isom}
For any fixed $\tilde v\in \Sigma_p(C_0)$ and $\tilde v_i\in \Sigma_p(C_0)$
with $|\tilde v, \tilde v_i|\to 0$ as $i\to\infty$, we have
$$
\lim_{i\to\infty} \frac{|f_*(\tilde v), f_*(\tilde v_i)|}{|\tilde v, \tilde v_i|}=1.
$$
\end{ass}
\begin{proof}
We set  
\balii
   v:= d\eta_0(\tilde v)=d\eta_0(f_*(\tilde v)), \,\,\,
  v_i:= d\eta_0(\tilde v_i)=d\eta_0(f_*(\tilde v_i)).
\end{align*}  
First consider the case when $v\in\Sigma_x(X_0)^1$.
We may assume  $v_i\in\Sigma_x(X_0)^2$.
Since $f_*(\tilde v_i)\to \tilde v$, Lemma \ref{lem:limit-one}
yields the conclusion.

Next suppose $v\in\Sigma_x(X_0)^2$.
Applying Lemma \ref{lem:local-surj-varphi} to $f_*(\tilde v)$,
$v$ and $v_i$, we obtain  $f_*(\tilde v_i)\to f_*(\tilde v)$.
Then again Lemma \ref{lem:limit-one}
yields the conclusion.
\end{proof}

\pmed
It is now immediate to show that $f_*$ is an isometry.
For arbitrary $\tilde v, \tilde w$ in the same component of $\Sigma_p(C_0)$,
take a minimal geodesic $\tilde \gamma$ joining them. For any $\e>0$, 
applying Assertion \ref{ass:isom} to each point of $\tilde\gamma$, we have a 
finite sequence of points of $\tilde\gamma$,
$\tilde v=\tilde v_0<\tilde v_1<\cdots <\tilde v_N=\tilde w$ such that 
$|f_*(\tilde v_{i-1}), f_*(v_i)| < (1+\e)|\tilde v_{i-1}, \tilde v_i|$ for each $1\le i\le N$.
Summing up these and letting $\e\to 0$, we have $|\tilde v, \tilde w|\ge |f_*(\tilde v), f_*(\tilde w)|$.
Repeating this to $f_*(\tilde v)$, $f_*(\tilde w)$, we also have 
 $|f_*(\tilde v), f_*(\tilde w)| \ge |\tilde v, \tilde w|$, and conclude
$|f_*(\tilde v), f_*(\tilde w)| = |\tilde v, \tilde w|$.
\psmall
%%%%%%
Define the $1$-Lipschitz bijective map $[\eta_0]_*:\Sigma_p(C_0)/f_*\to \Sigma_x(X_0)$
by 
$$
 [\eta_0]_*([\tilde v])=(d\eta_0)(\tilde v), 
$$
where $[\tilde v]$ is the element of $\Sigma_p(C_0)/f_*$ represented by 
$\tilde v\in \Sigma_p(C_0)$.
%%%%%%
Lemma \ref{lem:limit-one}
shows that $[\eta_0]_*$ is a
length-preserving bijection, and therefore induces an isometry  $[\eta_0]_*:\Sigma_p(C_0)/f_*\to \Sigma_x(X_0)^{\rm int}$.
 
(2)\, Suppose ${\rm rad}(\xi_x^+)=
\pi/2$.
In this case, we have $\Sigma_x(X_0)=\Sigma_x(X)$, and hence
 $\Sigma_x(X_0)^{\rm int}=\Sigma_x(X_0)$.
It follows from  Proposition \ref{prop:length'}, the map
\[
     [d\eta]_*:\Sigma_p(C)/f_* \to \Sigma_x(Y)
\]
sending $[\xi]$ to $d\eta(\xi)$ is a length-preserving bijection, and hence is an isometry.

\psmall
(3)\,Suppose  
${\rm rad}(\xi_x^+)>\pi/2$.
In the proof of  Lemma \ref{lem:local-surj-varphi}, we already showed that 
$\Sigma_x(X_0)=\Sigma_x(X_0)^1$, showing that  $f_*$ is the identity.
Since $d\eta:\Sigma_p(C)\to\Sigma_x(Y)\setminus\mathring{\Sigma}_x(X)$
is a length-preserving bijection,
it induces an isometry between
$\Sigma_p(C)$ and the intrinsic metric of 
$\Sigma_x(Y)\setminus\mathring{\Sigma}_x(X)$.
Thus, $\Sigma_x(Y)$ is isometric to the gluing
$\Sigma_p(C)\cup_{d\eta_0} \Sigma_x(X)$
as required.

This completes the proof of Theorem 
\ref{thm:X1-f}.
\end{proof}

\subsection{Boundary points and cusps}\label{ssec:CB}  
\psmall\n
 In this subsection, we define the notion of boundary points of $X_0$ and discuss its 
properties.
We also define the notion of cusps, and 
provide a few examples concerning these
notions.

\pmed\n
{\bf Boundary of $X_0$}\,\,
\psmall\n 
We begin with the following 
basic facts on the boundaries of Alexandrov spaces, which will be used implicitly in the 
argument below. 

\begin{prop} \label{prop:summary-bdy}
\begin{enumerate}
\item Let $Y$ be an Alexadrov space with curvature bounded below. If $G$ is a compact group of isometries acting on $Y$, then 
$\pa(Y/G)\supset (\pa Y)/G \,;$
\item Let $\Sigma$ be an Alexandrov space
with curvature $\ge 1$. Then 
\begin{enumerate}
\item the boundary of the spherical suspension
 $\{ \xi_\pm\}*\Sigma$ coincides with
 $\{ \xi_\pm\}*\pa\Sigma\,;$
\item the boundary of the spherical  half suspension
$\{ \xi_+\}*\Sigma$ coincides with
 $\Sigma\cup (\{ \xi_+\}*\pa\Sigma)$.
\end{enumerate}
\item Let $Y_i$ be a sequence of $m$-dimensional Alexandrov spaces with
curvature $\ge \kappa$ converging to an
$m$-dimensional Alexandrov space $Y$.
Then there is a homeomorphism 
$(Y_i,\pa Y_i)\to (Y,\pa Y)$ that is 
also an $o_i$-approximation for any large enough $i$.
\end{enumerate}
\end{prop}
\begin{proof} (1) follows since each element of $G$ preserves $\pa Y$. (2) is elementary.
(3) is due to \cite{Pr:alexII}.
\end{proof}

\begin{defn}\label{defn:int-pa-X0}
We say that a point $x\in X_0$ is a  {\it  boundary point} of $X_0$
(resp. an  {\it  interior point}  of $X_0$)
if the Alexandrov space 
$\Sigma_x(X_0)^{\rm int}$ has nonempty boundary (resp. no boundary).
We denote by $\pa X_0$ and ${\rm int}\, X_0$
the set of all  boundary points of  $X_0$
and  the set of all interior points of  $X_0$
respectively.

We also set 
\[
    \pa_* X_0 := \eta_0(\pa C_0), \qquad {\rm int}_* X_0:=X_0\setminus \pa_* X_0.
\]
\end{defn}

Obviously, we have $\pa_* X_0\subset \pa X_0$.
In some cases, $\pa X_0$ is not empty even if 
$\pa_* X_0$ is empty
(see Example \ref{ex:summary}).
\begin{rem}
This is a confirmation of 
terminology concerning the interiors defined so far:
\begin{itemize}
\item \text{${\rm int} X_0^k$\, $(k=1,2)$ is the topological interior of $X_0^k$ in $X_0\,;$}
\item \text{${\rm int} X_0$ is the interior of $X_0$ defined 
 in Definition \ref{defn:int-pa-X0}}. 
\end{itemize}
\end{rem}

%\psmall\n

\begin{lem} \label{lem:criteria-0}
For any  $x\in X_0$, we have $x\in\pa Y$ if and only if one of the following holds$:$ 
\begin{enumerate}
\item $x\in \pa_* X_0\,;$
\item $x\in X_0^1$, ${\rm rad}(\xi_x^+)=\pi/2$ and $f_*$ is the identity.
\end{enumerate}
\end{lem}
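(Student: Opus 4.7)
The plan is to use the inductive characterization $x \in \pa Y$ iff $\pa \Sigma_x(Y) \neq \emptyset$, reducing the problem to computing $\Sigma_x(Y)$ in each of the cases produced by Lemma \ref{lem:double-sus}, Proposition \ref{prop:inf-str-X02}, and Theorem \ref{thm:X1-f}, and then identifying $\pa \Sigma_x(Y)$ explicitly in terms of $\pa \Sigma_p(C_0)$, $f_*$, and $\mathrm{rad}(\xi_x^+)$.

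First, if $x \in X_0^2$, Lemma \ref{lem:double-sus} combined with Proposition \ref{prop:inf-str-X02} presents $\Sigma_x(Y)$ as the spherical suspension $\{\xi_x^+, \xi_x^-\} * \Sigma_{p_1}(C_0)$, whose Alexandrov boundary is $\{\xi_x^+, \xi_x^-\} * \pa \Sigma_{p_1}(C_0)$, nonempty iff $p_1 \in \pa C_0$, iff $x \in \pa_* X_0$; condition (2) is vacuous in this case. Next, if $x \in X_0^1$ with $\mathrm{rad}(\xi_x^+) > \pi/2$, Theorem \ref{thm:X1-f}(3) forces $f_* = \mathrm{id}$ and represents $\Sigma_x(Y)$ as the gluing of the half-suspension $\Sigma_p(C) = \{\tilde\xi_p^+\} * \Sigma_p(C_0)$ with $\Sigma_x(X)$ along the isometric identification $d\eta_0 : \Sigma_p(C_0) \xrightarrow{\sim} \Sigma_x(X_0)$; at each equator point the two half-neighborhoods glue into a full neighborhood, so the equator lies in the interior, and the only remaining contributions to $\pa \Sigma_x(Y)$ come from $\pa \Sigma_p(C_0)$, yielding $\pa \Sigma_x(Y) \neq \emptyset \iff x \in \pa_* X_0$ (and condition (2) fails since $\mathrm{rad}(\xi_x^+) \neq \pi/2$).

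The remaining case $x \in X_0^1$ with $\mathrm{rad}(\xi_x^+) = \pi/2$ uses Theorem \ref{thm:X1-f}(2), which identifies $\Sigma_x(Y) \cong \Sigma_p(C)/f_*$, with equator points identified via $f_*$. In the subcase $f_* = \mathrm{id}$, which is precisely condition (2), no identification occurs, $\Sigma_x(Y)$ is the half-suspension $\{\tilde\xi_p^+\} * \Sigma_p(C_0)$, its Alexandrov boundary contains the nonempty equator $\Sigma_p(C_0)$, and hence $x \in \pa Y$ unconditionally. In the subcase $f_* \neq \mathrm{id}$, the nontrivial identification absorbs the equator into the interior of the quotient: generic pairs $\{\tilde v, f_*(\tilde v)\}$ glue two half-neighborhoods into full neighborhoods, while at fixed points of $f_*$ the identifications of nearby equator pairs wrap around to close off the would-be boundary (as illustrated by Examples \ref{ex:cusp} and \ref{ex:general-cusp}, where the quotients are a closed circle of length $\pi$ and a closed sphere respectively). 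Therefore $\pa \Sigma_x(Y) \neq \emptyset \iff \pa \Sigma_p(C_0) \neq \emptyset \iff x \in \pa_* X_0$, so only condition (1) can hold.

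The main obstacle will be the careful local analysis of the quotient $\Sigma_p(C)/f_*$ at fixed points of $f_*$ on $\Sigma_p(C_0)$ in the last subcase: one must show that even though no identification takes place at a fixed point $\tilde v$ itself, the identifications of adjacent equator pairs close off any potential boundary coming from the half-suspension structure, so that $\tilde v$ becomes an interior point of the quotient. I expect to verify this by computing $\Sigma_{[\tilde v]}(\Sigma_p(C)/f_*)$ directly using the induced involution on $\Sigma_{\tilde v}(\Sigma_p(C_0))$ and applying the same case analysis inductively on dimension.
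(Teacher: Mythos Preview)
Your proposal is correct and takes essentially the same approach as the paper. Both perform the identical case split (on $X_0^2$ versus $X_0^1$, then on $\mathrm{rad}(\xi_x^+)$, then on triviality of $f_*$) and invoke Lemma~\ref{lem:double-sus}, Proposition~\ref{prop:inf-str-X02}, and Theorem~\ref{thm:X1-f} to compute $\Sigma_x(Y)$ and read off whether $\partial\Sigma_x(Y)$ is empty; the paper simply condenses your planned inductive descent in the $f_*\neq\mathrm{id}$ subcase into the one-line dimension bound $\dim\tilde{\ca F}_p\le\dim\Sigma_p(C_0)-1$, which is precisely what your recursion on $(f_*)_*$ at fixed points justifies.
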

\begin{proof} Suppose $(1)$ or $(2)$. Then 
Theorem \ref{thm:X1-f} immediately implies  $x\in\pa Y$. 
Next suppose $x\in \pa Y$.
Take any $p\in\eta_0^{-1}(x)$.
If $p\in C_0^2$, then  Lemma  \ref{lem:double-sus}
shows $x\in \pa_* X_0$. Let us assume 
$p\in C_0^1$. If ${\rm rad}(\xi_x^+)>\pi/2$, then Theorem \ref{thm:X1-f} implies
$p\in\pa C_0$ and hence $x\in \pa_* X_0$.
Suppose $f_*$ is not the identity.
If $p\in{\rm int}\,C_0$, then in the expression  
$\Sigma_x(Y)=(\{ \tilde\xi_p^+\}*\Sigma_p(C_0))/f_*$ due to
Theorem \ref{thm:X1-f}, $\Sigma_x(Y)$ would have
no boundary since $\dim \tilde{\ca F}_p\le\dim\Sigma_p(C_0)-1\le\dim\Sigma_x(Y)-2$. Thus we have 
$p\in\pa C_0$ and hence $x\in \pa_* X_0$.
This completes the proof.
\end{proof}

\begin{lem} \label{lem:criteria-int-pa}
For any $x\in X_0$,  we have the following.
\benu
 \item  For $x\in X_0^1$ with  
$p=\eta_0^{-1}(x)$,  we have $x\in\pa X_0$ if and only if 
$p\in \pa C_0$ or 
   $\dim \tilde{\ca F}_p = \dim \Sigma_p(C_0) -1\,;$
 \item For $x\in X_0^2$ with  $\{ p_1,p_2\}=\eta_0^{-1}(x)$,  we have $x\in\pa X_0$ if and only if
    $p_i\in \pa C_0$ \,$(i=1,2).$ 
\eenu
\end{lem}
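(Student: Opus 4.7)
The plan is to read off the boundary of $\Sigma_x(X_0)^{\rm int}$ from the descriptions established in Theorem \ref{thm:X1-f} and Proposition \ref{prop:inf-str-X02}, combined with the standard formula for the boundary of the quotient of an Alexandrov space by an isometric involution.

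First I would record the following general fact. For a compact Alexandrov space $A$ with curvature $\ge 1$ and an isometric involution $g:A\to A$ with fixed-point set $\tilde F$,
\[
 \partial(A/g) = \pi(\partial A)\cup \pi(\tilde F^{(\dim A-1)}),
\]
where $\pi:A\to A/g$ is the projection and $\tilde F^{(\dim A-1)}$ is the $(\dim A-1)$-dimensional stratum of $\tilde F$. The justification is purely local: at $q\in A\setminus \tilde F$ the projection is a local isometry, so a boundary point appears iff $q\in \partial A$; at a fixed point $q\in\tilde F$, the induced action on $\Sigma_q(A)$ is isometric with fixed set $\Sigma_q(\tilde F)$, and the quotient $\Sigma_q(A)/g_*$ acquires boundary iff this fixed set has codimension one (reducing to the reflection model $\mathbb S^{n-1}/\mathbb Z_2=\mathbb S^{n-1}_+$), while at fixed points of codimension $\ge 2$ the induced action on the sphere of directions is a non-reflection isometry and its quotient has no boundary. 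Note that this is consistent with the extremal-subset viewpoint of Proposition \ref{prop:fixed-ext}.

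For (1), Theorem \ref{thm:X1-f}(1) identifies $\Sigma_x(X_0)^{\rm int}$ with $\Sigma_p(C_0)/f_*$, and I apply the formula above with $A=\Sigma_p(C_0)$ (an Alexandrov space with curvature $\ge 1$ of dimension $\dim C_0-1=m-2$), $g=f_*$, and $\tilde F=\tilde{\ca F}_p$. Then $\partial\Sigma_p(C_0)=\emptyset$ iff $p\in{\rm int}\, C_0$, while $\tilde{\ca F}_p^{(m-3)}$ is empty iff either $f_*$ is the identity (so $\tilde{\ca F}_p=\Sigma_p(C_0)$ has top dimension $m-2$, and the quotient equals $\Sigma_p(C_0)$ itself) or $\dim\tilde{\ca F}_p\le m-4=\dim\Sigma_p(C_0)-2$. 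Intersecting these two vanishing conditions yields precisely the stated criterion.

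For (2), Proposition \ref{prop:inf-str-X02}(1) supplies isometries $d\eta_0:\Sigma_{p_i}(C_0)\to \Sigma_x(X_0)$ for $i=1,2$, and the list following the statement of Theorem \ref{thm:X1-f} gives $\Sigma_x(X_0)^{\rm int}=\Sigma_x(X_0)$ since $x\in X_0^2$. Hence $\partial\Sigma_x(X_0)^{\rm int}=\emptyset$ iff $\partial\Sigma_{p_i}(C_0)=\emptyset$, that is iff $p_i\in{\rm int}\, C_0$; the conditions for $i=1$ and $i=2$ are equivalent because the two spaces of directions are isometric to $\Sigma_x(X_0)$. The only genuinely non-trivial ingredient is the quotient-boundary formula recorded above; the rest is a direct translation, so I do not anticipate any serious obstacle.
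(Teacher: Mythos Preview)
Your proposal is correct and follows essentially the same route as the paper: both reduce to Theorem~\ref{thm:X1-f} and Proposition~\ref{prop:inf-str-X02} to identify $\Sigma_x(X_0)^{\rm int}$ with $\Sigma_p(C_0)/f_*$ (respectively $\Sigma_{p_i}(C_0)$), and then read off when this space has boundary. The only difference is packaging. You isolate a general quotient-boundary formula $\partial(A/g)=\pi(\partial A)\cup\pi(\tilde F^{(\dim A-1)})$ and justify the ``no boundary at codimension~$\ge 2$ fixed points'' part by an inductive reduction to $\Sigma_q(A)/g_*$. The paper instead argues (1) directly: if $\dim\tilde{\ca F}_p=\dim\Sigma_p(C_0)-1$ then $f_*$ is a reflection and $d\eta_0(\tilde{\ca F}_p)\subset\partial\Sigma_x(X_0)$; conversely, if $p\in{\rm int}\,C_0$ and $\dim\tilde{\ca F}_p\le\dim\Sigma_p(C_0)-2$ but $\partial\Sigma_x(X_0)\neq\emptyset$, one picks $v\in\partial\Sigma_x(X_0)\setminus\ca F_x$ (possible by the dimension count $\dim\ca F_x<\dim\partial\Sigma_x(X_0)$), lifts to $\tilde v\notin\tilde{\ca F}_p$, and uses that $d\eta_0$ is a local isometry near $\tilde v$ to force $\tilde v\in\partial\Sigma_p(C_0)$, a contradiction. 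This direct dimension-count argument avoids the induction and is slightly cleaner; your general formula, on the other hand, makes the structure more transparent and would be reusable elsewhere.
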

\begin{proof} (1)\, Note that if $\dim\tilde{\ca F}_p=\dim \Sigma_p(C_0) -1$, then 
$f_*$ is a reflection.
Therefore we have 
$d\eta_0(\tilde{\ca F}_p)\subset \pa \Sigma_x(X_0)$.
Thus if $p\in\pa C_0$ or $\dim\tilde{\ca F}_p=\dim \Sigma_p(C_0) -1$, then 
$\eta_0(p)\in\pa X_0$.
Conversely, assume $x\in\pa X_0$. 
If $f_*$ is trivial, then clearly we have $p\in\pa C_0$.
Suppose $p\in {\rm int}\, C_0$ and $\dim \tilde{\ca F}_p \le \dim \Sigma_p(C_0) -2$.
Then we can take $v\in \pa\Sigma_x(X_0)\setminus \ca F_x$.
Choose $\tilde v\in\Sigma_p(C_0)$ with $d\eta_0(\tilde v)=v$.
Then $d\eta_0:\Sigma_p(C_0)\to\Sigma_x(X_0)$ isometrically maps a small neighborhood of $\tilde v$ in $\Sigma_p(C_0)$ to a neighborhood of $v$ in $\Sigma_x(X_0)$.
This is impossible.

(2)\,  If $x\in X_0^2$, then 
Lemma \ref{lem:double-sus} and Proposition \ref{prop:isometry'}
imply that $\Sigma_x(X_0)$ is isometric to $\Sigma_p(C_0)$. (2) is now immediate.
\end{proof} 
\pmed
From Lemma \ref{lem:criteria-int-pa} together with
Theorem \ref{thm:X1-f}, we have the following.

\begin{cor} \label{cor:big-bdy}
For $x\in X_0$, if ${\rm rad}(\xi^+_x)>\pi/2$, then we have 
\[
     x\in\pa Y \Longleftrightarrow x\in\pa_* X_0\Longleftrightarrow x\in\pa X_0.
\]
\end{cor}  
%%%%%%%%%%%%% Cusps   
\pmed\n   
{\bf Cusps}\,\,

\begin{defn} \label{defn:cusp}
A point $x\in {\rm int}\,X_0^1$ is called a {\it cusp} of $X$ if
$f_*:\Sigma_p(C_0)\to\Sigma_p(C_0)$ is nontrivial, where $\eta_0(p)=x$.
The set of all cusps of $X_0$ is denoted by ${\bf {\ca C}}$.
\end{defn}  

In Example \ref{ex:cusp}, we have already encountered with a typical
example of cusps (see Example \ref{ex:summary}).

\pmed
\n

%%%%%%%%%%%%%%%%%%%%%%%%%%%%
 In the following example, we construct some nonnegatively curved two-disk
with corner, which plays an important role to construct 
several Riemannian manifolds with boundary in 
$\ca M_b(n, \kappa, \lambda, d)$.
 
\begin{ex} \label{ex:basic-disk}
For $\e, \delta>0$, let 
$I_\delta=\{ (x, 0, 0)\in\mathbb R^3\,|\, 0\le x\le \delta\}$, and 
set $\e'=2\e/\pi$.
Let $D(\e, \delta)$ be  the intersection of the boundary of
the $\e'$-neighborhood of $I_\delta$ with the two half spaces $z\le 0$ and $x\le \delta$.  
Note that the intersection 
$I(\e,\delta):=D(\e,\delta)\cap \{ x=\delta \}$ is an arc of  length $2\e$. 
Note that $D(\e,\delta)$
is already used in Example \ref{ex:cusp}.
\end{ex}

We generalize Example \ref{ex:cusp} to the 
general dimension in the following example.

\begin{ex}\label{ex:general-cusp}
Let $D$ be a nonnegatively curved 
$n$-disk such that $\pa D$ has a neighborhood isometric to a product $\pa D\times [0,\delta)$ for some $\delta>0$.
Let $g:D\to \R_+$ be a smooth function such that 
\begin{enumerate}
\item $g^{-1}(0)=\pa D\,;$
\item $\nabla g =0$ on $\pa D$.
\end{enumerate}
For any $\e>0$, we set 
\begin{align*}
&\hat D:=\{ (u, t)\in D\times\R\,|\, |t|\le g(u)\}, \\
&L_\e:=\{ (u, t)\in D\times\R\,|\, |t|\le g(u)+\e\}.
\end{align*}
$L_\e$ is 
 an $(n+1)$-dimensional Riemannian manifold 
with corner  around $\pa D$,
where  $\pa L_\e$ is the union of 
$A_\e:=\pa D\times [-\e, \e]$
and $\{ (u,t)\in D\times\R\,| |t|=g(u)+\e\}$. 
To resolve the corner singularities of $L_\e$, 
let $D(\e, 10\e)$ be the surface
 in the $xyz$-space constructed in Example \ref{ex:basic-disk} such that  
$I(\e,10\e)$ is isometric to $[-\e,\e]$, 
and consider 
\[
P_\e=\pa D\times D(\e,10\e),\quad
Q_\e:=\pa D\times I(\e,10\e).
\]
Let $M_\e$ be the gluing of  $L_\e$ and $P_\e$ along
$A_\e$ and $Q_\e$.
Note that 
$M_\e\in \ca M_b(n+1, 0, \lambda,d)$ for some $\lambda, d$,
and $M_\e$ converges to $N:=\hat D$
as $\e\to 0$.

Let $\Gamma:=\{ (u,t)\,|\, |t|=g(u), u\in D\}$.
Let $C_0$ (resp.  $N_0$) be the limit of 
$\pa M_\e$ with respect to the the intrinsic metric 
(resp. the extrinsic metric) as $\e\to 0$ as usual.
Then $C_0=\Gamma$ (resp. $N_0=\Gamma$) equipped with the intrinsic metric (resp. the extrinsic metric).
We immediately have
$N_0=N_0^1$.
Consider any $p\in C_0\cap (\pa D\times \{ 0\})$,  and set $x:=\eta_0(p)\in N_0$.
Observe that $\Sigma_p(C_0)$ and 
$\Sigma_x(N_0)$ are isometric to the unit sphere $\mathbb S^{n-1}$ and the unit hemisphere $\mathbb S^{n-1}_+$ respectively.
Therefore $\Sigma_x(Y)=\{\xi_x^+\}* \Sigma_p(C_0)/f_*$,
where $f_*$ is the reflection of $\Sigma_p(C_0)$. 

As a summary, we have
\begin{itemize}
\item $\ca C=\pa N_0=\pa D\times \{ 0\}$, while $\pa_* N_0$ is empty$\,;$ 
\item $x\in {\rm int}\, Y\cap \pa N_0$ for any 
$x\in \pa D\times \{ 0\}$.
\end{itemize}
\end{ex}

\begin{rem} \label{rem:CtoS1} \upshape
In Example \ref{ex:general-cusp}, let us change 
the construction only the function $g$ such that instead of the 
above condition (1) we have 
\begin{enumerate}
\item[(1')] $g^{-1}(0)=\pa D\cup Q$,
\end{enumerate}
where 
$Q=\{ x\in D| d(x, \pa D)=1/k, k\in\mathbb N,
k\ge k_0\}$ for a large enough $k_0\in\mathbb N$. This is possible by retaking $D$ with 
a lot of symmetry if necessary.
In this case, we have 
\begin{itemize}
\item $\ca S^1=\pa N_0=\pa D\times \{ 0\}$, while $\pa_* N_0$ is empty. 
\end{itemize}
\end{rem}

A boundary point of $X_0$ is  defined 
infinitesimally. Sometimes
it has a  feature different from the usual notion of 
boundary. For instance, $\pa X_0$ can be a single
point even if $X_0$ is of general dimension
(see Example \ref{ex:non-closed}(2)).

As the following example shows, 
the case when
$x\in{\rm int}\,X_0^1\setminus\ca C$ and  
${\rm rad}(\xi_x^+)=\pi/2$
occurs even in the case of non-inradius collapse.
 Namely the converse to Lemma \ref{lem:single-interior}
does not hold.

\begin{ex}\label{ex:X01cosh}
(1)\,
For $\e>0$, let $\Gamma_\e:=\mathbb Z\times \e \mathbb Z
\subset\mathbb R^2$, which acts on $\R^2$ 
by translation. 
Choose a $\Z$-invariant smooth
function $g:\R\to \R_+$ such that 
\[
g^{-1}(0)= \Z,\quad |\nabla\nabla g|\le C.
\]
Let
\[
 W_\e:=\{ (s,t,u)\in \R^2\times\R_+| 0\le u\le g(s)+\e\},
\quad L_\e:=W_\e/\Gamma_\e.
\]
Let  $D^2(\e)$ denote a positively curved disk
with diameter $\le 2\e$ and 
with totally geodesic boundary circle of length
$\e$. We further assume that
$D^2(\e)$ has a product collar neighborhood 
near $\pa D^2(\e)$.
Since a boundary component of 
$L_\e$ is isometric to $S^1_1\times S^1_\e$,
we can make the following gluing along boundaries:
\[
     M_\e:=(S^1_1\times D^2(\e))\cup_{S^1_1\times S^1_\e} L_\e.
\]
Obviously,  $M_\e$
belongs to $\ca M_b(3,0,\lambda,d)$ for some
$\lambda,d$ independent of $\e$, and converges to 
\[
    N:=\{ (s,t)\in S^1_1\times \mathbb R_+\,|\, 0\le t\le \hat g(s)\}
\]
as $\e\to 0$,
where $\hat g$ is the function on $S^1_1$ induced by 
$g$. Note that $N_0=C_0=\{ (s,t)\in S^1_1\times \mathbb R_+\,|\, t=\hat g(s)\}$, and $N_0=N_0^1$.
Let $x_0$ be the unique point of $S^1_1$ with 
$\hat g(x_0)=0$, and set $x:=(x_0,0)\in N_0$.
Remark that ${\rm rad}(\xi_x^+)=\pi/2$
and $f_*$ is the identity on $\Sigma_p(C_0)$.
Note that $S^1_1\times \{ 0\}\subset \pa Y$.

(2)\, Let $S_\e$ be a nonnegatvely curved two-sphere
converging to an interval $I$ as $\e\to 0$, and consider the 
product  $P_\e:=M_\e\times S_\e$,  where $M_\e$ is
as in (1). Then $P_\e$ converges to $X:=N\times I$ with 
$X_0=N_0\times I$, where $N,N_0$ are as in (1).
For the point $x=(x_0,0)\in N_0$ in (1) and for an end point 
 $y$ of $I$, set ${\bf x}=(x,y)\in X_0$. Then ${\bf x} \in
{\rm int}\, X_0^1$ and  
$\Sigma_{\bf x}(Y)=\xi_{\bf x}^+ * \Sigma_{\bf x}(X_0)$. In particular, we have 
\[
     \pa\Sigma_{\bf x}(Y)=
         \Sigma_{\bf x}(X_0)\cup (\xi_{\bf x}^+ * \pa\Sigma_{\bf x}(X_0)).
\]
\end{ex}

Here we summarize notations defined in this section.

\begin{table}[h]
\begin{center}
\begin{tikzpicture}[auto]

\fill (-1, 1.6) circle (0pt) node [right] 
{$\Sigma_x(X_0)^k=\{ v\in\Sigma_x(X_0)\,|\,
\# d\eta_0^{-1}(v)=k\}$};

\fill (-1, 0.8) circle (0pt) node [right] {$\pa X_0=
\{ x\in X_0\,|\,\pa\Sigma_x(X_0)\neq \emptyset\},\quad 
 {\rm int} X_0=X_0\setminus \pa X_0$}; 

 \fill (-1, 0) circle (0pt) node [right] {$\pa_*X_0=\eta_0(\pa C_0)\subset\pa X_0,\quad 
 {\rm int}_*X_0=X_0\setminus \pa_*X_0\supset {\rm int}X_0$};
\fill (-1, -0.8) circle (0pt) node [right]{$\ca C: \text{the set of cusps}$};
\fill (-1, -1.6) circle (0pt) node [right]{$\tilde{\ca F}_p={\rm Fix}(f_*), \quad
  {\ca F}_x=d\eta_0({\rm Fix}(f_*))$};
\draw[thick] (-1.2,2.1) rectangle (9.2,-2.2);
\end{tikzpicture}
\caption{\small{Boundary, cusps and fixed point sets}}
\end{center}
\end{table}

\pmed
\setcounter{equation}{0}

%%%%%%%%%%%%%%%%%%%%%%%%%%
%\section{Geometry of boundary singular points} \label{sec:local-str-S1}
%\pmed
%%%%%%%%%%%%%%%%%%%%%%
\section{Infinitesimal structure at $\ca S^1\cup\ca C$}\label{sec:inf-SC}

In this subsection, we prove Theorem \ref{thm:S1extremal} by establishing
a splitting theorem in Alexandrov
spaces with nonnegative curvature.

%%%%%%%%%%%%%%%%%%%%%%%%%

\pmed
\n
{\bf Splitting theorem.}\,\,
\par\n
We now provide the following general splitting
theorem for Alexandrov spaces with nonnegative curvature, which is an extension of \cite[Theorem 17.3]{Ya:four}.
 See \cite{MY2:dim3bdy} for a related discussion on  three-dimensional Alexandrov spaces.
%%%
See also \cite{Worn} for a splitting theorem
via a boundary stratum in the compact case.

\begin{thm}\label{thm:splitting}
Let $X$ be an Alexandrov space with nonnegative curvature. Suppose that $X$ contains two disjoint closed connected extremal
subset  $A$ and $B$ of $X$ contained in $\pa X$ satisfying 
\[
\dim A=\dim B=\dim\pa X.
\]
Then $X$ is isometric to a product 
$A\times I$ for an interval $I$. 
\end{thm}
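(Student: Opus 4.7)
\textbf{Proof plan for Theorem \ref{thm:splitting}.}

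The plan is to prove that every point of $X$ lies on a minimal geodesic of length $D := d(A,B)$ from a point of $A$ to a point of $B$, and then to use these ``parallel segments'' to construct an isometric splitting $X \cong A \times [0,D]$. This imitates the Cheeger--Gromoll splitting argument, with the distance functions $f := d_A$ and $g := d_B$ playing the role of Busemann functions of two opposite rays.

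I would first use the top-dimensionality $\dim A = \dim \partial X$, together with extremality, to produce at each $a \in A$ an ``inward normal'' direction $\xi_a \in \Sigma_a(X)$. Here $\Sigma_a(A)$ is a top-dimensional closed extremal subset of the curvature-$\geq 1$ Alexandrov space $\Sigma_a(X)$, lying inside $\partial \Sigma_a(X)$; the spherical Toponogov comparison together with a Grove--Shiohama-style radius argument forces the existence of a point $\xi_a$ at angle $\pi/2$ from every direction in $\Sigma_a(A)$. Using the gradient flow calculus for extremal subsets in \cite{PtPt:extremal}, the quasigeodesic $\gamma_a$ issuing from $a$ in direction $\xi_a$ has unit speed, satisfies $f(\gamma_a(t)) = t$, and extends as a minimal segment from $a$ toward $B$. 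The nonnegative curvature together with the triangle inequality gives $f+g \ge D$ globally, with equality precisely along the union of these normal segments, and the symmetric construction from $B$ shows each $\gamma_a$ meets $B$ orthogonally at exactly time $D$.

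The main obstacle is to propagate the identity $f+g \equiv D$ from a single realizing segment to all of $X$ and then to upgrade the coordinate map
\[
  \Psi \colon A \times [0,D] \to X, \qquad \Psi(a,t) := \gamma_a(t),
\]
to an isometry. The image $E := \Psi(A \times [0,D])$ is closed by the $1$-Lipschitz property of $\Psi$ and compactness of $A$; to show that $E$ is also open, I would apply Petrunin's parallel transport along the extremal set $A$ to trivialize the normal structure of $X$ locally near each $\gamma_a(t)$, forcing a whole neighborhood of $\gamma_a$ to lie in $E$. Connectedness of $X$ then yields $E = X$. Finally, the isometric character of $\Psi$ reduces to the flat-strip assertion that any two normal geodesics $\gamma_a, \gamma_{a'}$ with close enough basepoints span a flat rectangle; this follows from Toponogov comparison applied to the quadrilateral with vertices $a, \gamma_a(D), \gamma_{a'}(D), a'$, whose sides have lengths $D, D, |a,a'|, |\gamma_a(D),\gamma_{a'}(D)|$ and whose angles at $a, a'$ and at the $B$-end are all $\pi/2$, yielding the rigid isometry $X \cong A \times [0,D]$ as desired.
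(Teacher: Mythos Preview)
Your plan follows the Cheeger--Gromoll style splitting argument that underlies \cite[Theorem 17.3]{Ya:four}, and it would essentially work in the special case where $A$ and $B$ exhaust $\partial X$: then $f=d_A$ and $g=d_B$ are genuinely concave (distance from a boundary component in nonnegative curvature), so $f+g\ge D$ together with concavity forces $f+g\equiv D$, and the rest follows. The paper, however, takes a very different route. It sets $C:=\overline{\partial X\setminus(A\cup B)}$; if $C=\emptyset$ it simply quotes \cite[Theorem 17.3]{Ya:four}, and if $C\neq\emptyset$ it forms the \emph{partial double} $D_C(X)=X\cup_C X$ using Mitsuishi's gluing theorem \cite{M}. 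In $D_C(X)$ the doubled sets $D_C(A)$ and $D_C(B)$ are now full boundary components, so \cite[Theorem 17.3]{Ya:four} applies to give $D_C(X)\cong D_C(A)\times I$, and a short open--closed argument shows this isometry respects the copies of $X$.

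The genuine gap in your plan is precisely the case $C\neq\emptyset$. When $A$ is only a piece of $\partial X$, the function $d_A$ is \emph{not} concave in general (distance from an extremal subset need not be, unlike distance from the whole boundary), so you lose the mechanism that propagates $f+g\equiv D$ off a single realizing segment. Your proposed substitute --- ``Petrunin's parallel transport along the extremal set $A$ to trivialize the normal structure'' --- is not a standard tool with that conclusion; there is no off-the-shelf result that makes the image $E=\Psi(A\times[0,D])$ open in $X$ in this generality. This is exactly the obstruction that the paper's doubling trick is designed to remove: after doubling along $C$, the concavity of the boundary-distance functions is restored and the cited splitting theorem applies directly.

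There are also smaller issues. First, the theorem does not assume $X$ compact (and in the paper's applications $X$ is a noncompact blow-up limit), so your claim that $E$ is closed ``by compactness of $A$'' fails as stated. Second, the existence of a direction $\xi_a\in\Sigma_a(X)$ at angle exactly $\pi/2$ from all of $\Sigma_a(A)$, for \emph{every} $a\in A$, is not an immediate consequence of extremality plus top-dimensionality; at points $a\in A\cap C$ the space of directions can be complicated, and your Grove--Shiohama sketch does not obviously produce such a $\xi_a$, let alone a unique one varying continuously so that $\Psi$ is well defined. These points could perhaps be repaired, but the concavity issue above is the decisive one.
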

\begin{proof}
Let $C$ denote the closure of $\pa X\setminus
(A\cup B)$.
%%%%%%%%%
If $C$ is empty, then 
\cite[Theorem 17.3]{Ya:four} shows that 
$X$ is isometric to $A\times  I$ for an interval $I$. 

Next suppose that $C$ is nonempty.
By \cite{PtPt:extremal}, $C$ is also an extremal subset of 
$X$.
Since $\dim C=\dim\pa X$, it follows from \cite{M}, the partial double  
$D_C(X)$ 
of $X$ along $C$, which is defined as
the gluing 
\[
      D_C(X)=X\cup_{C} X,
\]
is an Alexandrov space with nonnegative 
curvature. 
Set 
$$
D_C(A):= A\cup_{C\cap A} A,\qquad
D_C(B):=B\cup_{C\cap B} B.
$$
Note that 
$D_C(A)$ and $D_C(B)$
are distinct components of 
$\pa D_C(X)$.
It follows from \cite{Ya:four} again that 
$D_C(X)$ is isometric to $D_C(A)\times I$ for an interval $I$. This isometry induces an isometry
$\varphi:D_C(A)\to D_C(B)$.
For a copy $A_0$ of $A$ in $D_C(A)$, choose a 
copy $B_0$ of $B$ in $D_C(B)$ such 
that $\varphi(a_0)\in \mathring{B}_0$ for a point 
$a_0\in \mathring{A}_0$. 
Set $$
A':=\{ a\in \mathring{A}_0\,|\,\varphi(a)\in \mathring{B}_0\}.
$$
Clearly $A'$ is open. To show that $A'$ is closed in $\mathring{A}_0$,
let $a_i\in A'$ converge to a point $a\in\mathring{A}_0$,
and suppose that $b:=\varphi(a)\in \pa B_0\cap  C$.
Let $r$ denote the isometry of $D_C(X)$ defined by the reflection of the double
$D_C(X)$ about $C$.
It turns out that $b$ is a nearest point of 
$D_C(B)$ from the distinct points $a$ and $r(a)$ of $D_C(A)$. This is a contradiction
since $\varphi(a)=\varphi(r(a))$.
Thus $A'$ is closed in $\mathring{A}_0$,
and therefore $A'=\mathring{A}_0$.
This implies that 
$\varphi(A_0)=B_0$
and  $X$ is isometric to 
$A\times I$. 
This completes the proof.
\end{proof}
\pmed

%%%%%%%%%%%%%%
%%%%%%%%%%% Theorem  %%%%%%%%%%%%%%%%%%%%%%%%%%%%%

Let $Y, X, X_0$ be the limit spaces as before
with $X_0\subset X\subset Y$.
For any sequence $y_i\in X_0$ and $\e_i\to 0$,
let us consider the rescaling limit 
\begin{align} \label{eq:inradius-conv}
     \left(\frac{1}{\e_i} Y, y_i\right) \to (Y_\infty, y_\infty).
\end{align}

\par\n
{\bf Notations.}\,
From here on, let us denote by  $X_\infty$, $(X_0)_\infty$, $Y_\infty$, 
$(\pa Y)_\infty$  and $C_\infty$, $(C_0)_\infty$
 the limits of $X$, $X_0$, $Y$, $\pa Y$
and $C$, $C_0$  respectively,
with respect to a blow-up rescaling limit
like \eqref{eq:inradius-conv}
under consideration.
Obviously, $Y_\infty$ is a complete noncompact nonnegatively curved Alexandrov space with boundary $(\pa Y)_\infty$.
$1$-Lipschitz maps $\eta_\infty:C_\infty\to Y_\infty$ and $(\eta_0)_\infty:(C_0)_\infty\to (X_0)_\infty$ are defined as the limits of 
$\eta$ and $\eta_0$ respectively.
A perpendicular $\gamma_{y_\infty}^+$ and 
a perpendicular direction $\xi_{y_\infty}^+$ at 
$y_\infty$ are defined similarly as well as 
$(X_0)_\infty^k$\, $(k=1,2)$.  More explicitly,
\[
   (X_0)_\infty^k:=\{ x\in (X_0)_\infty\,|\,
        \# (\eta_0)_\infty^{-1}(x)=k\}.
\]
Note that $(X_0)_\infty^k$ is not necessarily the limit
of $X_0^k$.
For instance, $(X_0)^2_\infty$ can be nonempty while
$X_0^2$ is empty. This happens precisely at cusps.

\begin{lem} \label{lem:inrad-collapse}
For any sequence $y_i\in X_0$ and $\e_i\to 0$,
let us consider the rescaling limit 
\eqref{eq:inradius-conv}.
Then the limit $(X_\infty, y_\infty)$ of $(X,y_i)$ under \eqref{eq:inradius-conv}
is convex in $Y_\infty$. 

In particular, if $\dim\Sigma_{y_\infty}(X_\infty)=\dim\Sigma_{y_\infty}((X_0)_\infty)$
$($or equivalently, if ${\rm rad}(\xi_{y_\infty}^+)=\pi/2$$)$,
then we have $X_\infty= (X_0)_\infty$.
\end{lem}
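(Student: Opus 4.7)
The strategy is to use the warped-product description $Y = X \cup_{\eta_0}(C_0 \times_\phi [0, t_0])$ from Lemma \ref{lem:YX0} to give $Y_\infty$ an explicit gluing description, and then to exhibit $X_\infty$ as the image of a length-nonincreasing retraction.

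Since $Y$ has curvature $\ge \tilde\kappa$, the rescaled limit $Y_\infty$ is an Alexandrov space with nonnegative curvature. The warping function $\phi$ is continuous with $\phi(0) = 1$, so $\phi(\e_i t) \to 1$ uniformly on any bounded $t$-interval; by Proposition \ref{prop:warped} the $1/\e_i$-rescaled collar $C_0 \times_\phi [0, t_0]$ converges to the Euclidean product $(C_0)_\infty \times [0, \infty)$. Together with Lemma \ref{lem:YX0} this yields
\[
Y_\infty = X_\infty \cup_{(\eta_0)_\infty} \bigl((C_0)_\infty \times [0, \infty)\bigr),
\]
where the gluing is by the length-preserving (Proposition \ref{prop:length'}) limit $(\eta_0)_\infty : (C_0)_\infty \times \{0\} \to (X_0)_\infty$. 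Next I would define the retraction $\pi_\infty : Y_\infty \to X_\infty$ to be the identity on $X_\infty$ and $(p, t) \mapsto (\eta_0)_\infty(p)$ on the cylinder. For any cylinder curve $\sigma(r) = (p(r), t(r))$,
\[
L(\pi_\infty \circ \sigma) = \int |\dot p|\, dr \le \int \sqrt{|\dot p|^2 + |\dot t|^2}\, dr = L(\sigma),
\]
with strict inequality whenever $\dot t \not\equiv 0$; a triangle-inequality argument, using that every path in $Y_\infty$ between $X_\infty$ and the cylinder must cross $(X_0)_\infty$, extends this to show that $\pi_\infty$ is globally $1$-Lipschitz. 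If a $Y_\infty$-minimal geodesic $\gamma : [0, L] \to Y_\infty$ with endpoints in $X_\infty$ were to leave $X_\infty$ on some subinterval, then $\pi_\infty \circ \gamma$ would be a path in $X_\infty \subset Y_\infty$ between the same endpoints of strictly smaller length, contradicting minimality of $\gamma$. Hence $\gamma \subset X_\infty$, proving convexity.

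For the final assertion, the condition ${\rm rad}(\xi_{y_\infty}^+) = \pi/2$ combined with Proposition \ref{prop:perp+horiz} forces $\Sigma_{y_\infty}(Y_\infty) = \{\xi_{y_\infty}^+\} * \Sigma_{y_\infty}((X_0)_\infty)$, hence $T_{y_\infty}(X_\infty) = T_{y_\infty}((X_0)_\infty)$; rigidity of tangent cones in Alexandrov geometry then produces a neighborhood $U$ of $y_\infty$ in $X_\infty$ with $U \subset (X_0)_\infty$. The main obstacle will be promoting this local identification to the claimed global equality $X_\infty = (X_0)_\infty$: the plan is to show that the set $F := \{y \in X_\infty : {\rm rad}(\xi_y^+) = \pi/2\}$ is both open (from the local product structure of $Y_\infty$ coming from the half-join) and closed (by semi-continuity of $\dim \Sigma_y(\cdot)$ under $Y_\infty$-convergence), and therefore exhausts the connected component of $y_\infty$ in $X_\infty$; combined with the convexity already established, this yields $X_\infty = (X_0)_\infty$ globally.
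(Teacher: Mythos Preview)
Your retraction argument for convexity is correct and is a fleshed-out version of what the paper compresses into one line: the paper simply takes lifts $\tilde y_i \in C_0$, observes that $(C_0)_\infty$ is convex in $C_\infty = (C_0)_\infty \times [0,\infty)$, and says convexity of $X_\infty$ in $Y_\infty$ follows. Your explicit $1$-Lipschitz projection $\pi_\infty$ is precisely the mechanism behind that sentence.

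For the second assertion, however, your open--closed strategy is both more complicated than needed and genuinely incomplete. The step ``rigidity of tangent cones \dots\ produces a neighborhood $U$ of $y_\infty$ in $X_\infty$ with $U \subset (X_0)_\infty$'' is not a standard fact: equality of tangent cones at a point does not by itself force a neighborhood inclusion. More seriously, the perpendicular direction $\xi_y^+$ is only defined for $y \in (X_0)_\infty$, so your set $F$ sits inside $(X_0)_\infty$ from the start, and showing $F$ is \emph{open in $X_\infty$} already requires ruling out nearby points of $X_\infty \setminus (X_0)_\infty$ --- which is the whole question. The paper sidesteps all of this with a direct dimension count: if $X_\infty \setminus (X_0)_\infty$ were nonempty, then (from the gluing description you already established) $X_\infty$ contains an open subset of the $m$-dimensional space $Y_\infty$, so $\dim X_\infty = m$ and hence $\dim \Sigma_{y_\infty}(X_\infty) = m-1$; on the other hand $\Sigma_{y_\infty}((X_0)_\infty)$ is the image of $\Sigma_{\tilde y_\infty}((C_0)_\infty)$ under the $1$-Lipschitz map $d(\eta_0)_\infty$, so its dimension is at most $\dim (C_0)_\infty - 1 = m-2$. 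This contradicts the hypothesis $\dim \Sigma_{y_\infty}(X_\infty) = \dim \Sigma_{y_\infty}((X_0)_\infty)$ and finishes the proof in one stroke, with no local-to-global passage needed.
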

\begin{proof} 
Take a lift $\tilde y_i\in C_0$ of $y_i$,
and let $(C_\infty,\tilde y_\infty)$ and $((C_0)_\infty,\tilde y_\infty)$
be the limits of $(C, \tilde y_i)$ and $(C_0, \tilde y_i)$ under the
same rescaling constants. It follows from the convexity of $(C_0)_\infty$ in 
$C_\infty$ that  $X_\infty$ is convex in $Y_\infty$.
It is easily seen that $(X_0)_\infty$ coincides with the topological boundary of $X_\infty$ 
in $Y_\infty$.
Therefore if $X_\infty\setminus (X_0)_\infty$ is nonempty, 
then we easily have $\dim (X_0)_\infty< \dim X_\infty$.
Thus the second conclusion is an immediate consequence.
 \end{proof}

\pmed

%%%%%%%%%%%%%%%

\begin{proof} [Proof of Theorem \ref{thm:S1extremal}]
 A rough idea of the proof is as follows.
By contradiction, we will show that there are sequences of disjoint  almost parallel  domains $U_i$ and $U_i'$ in $X_0$ (see Definition \ref{defn:DC} for the detail)
 converging to $x$ such that 
each $U_i$ also contains two disjoint almost 
parallel domains $W_{ij}$ and $W_{ij}'$.
This sounds strange and causes a contradiction.

Suppose that there is a direction 
$v\in\Sigma_x(\ca S^1\cup\ca C)\setminus\ca F_x$.
Let $\{ \tilde v, f_*(\tilde v)\}:=d\eta_0^{-1}(v)\subset
\Sigma_p(C_0)$, 
%where we may assume that
%$\tilde v\in\Sigma_c(C_0)\setminus\pa\Sigma_p(C_0)$ by slightly changing  
%$v$ if necessary.
and set $\delta:=\angle(\tilde v, f_*(\tilde v))$.
%\[
%\delta:=\max 
%\{ \angle(\tilde v, f_*(\tilde v)),
%\angle(\tilde v, \pa\Sigma_p(C_0))\}>0.
%\]
Take $p_i\in \tilde{\ca S^1}\cup\tilde{\ca C}$ converging to $p$ such that 
$\uparrow_p^{p_i}\to \tilde v$, and set 
$r_i:=|p, p_i|$.
Choose  $p_i'\in C_0$ such that 
$|p, p_i'|=r_i$ and 
$\uparrow_p^{p_i'}\to f_*(\tilde v)$.
Consider the metric balls 
\[
\tilde U_i:=\mathring{B}^{C_0}(p_i, \delta r_i/10), \quad 
\tilde U_i':=\mathring{B}^{C_0}(p_i', \delta r_i/10).
\]   
Then  
$U_i:=\eta_0(\tilde U_i)$ and $U_i':=\eta_0(\tilde U_i')$  converge to the ball $B(v, \delta/10)$  in $T_x(X_0)$
under the convergence
\beq \label{eq:conv=(1/ri,x)}
    \lim_{i\to\infty} \biggl( \frac{1}{r_i} X, x\biggr) = (T_x(X), o_x).
\eeq
More explicitly, we have 
\beq\label{eq:dH(U,U')}
\text{$d_H^{X/r_i}(U_i, U_i') < o_i$,}
\eeq
where $d_H^{X/r_i}$ is the Hausdorff-distance in  $X/r_i$.
% with the extrinsic metric induced from $X$ 
%and $\lim_{i\to\infty} o(i)=0$.

\pmed\n
Case a)\, \,$x\in\ca C$.
\pmed

In this case, $U_i$ and $U_i'$ are disjoint for large $i$.
We shall verify that $U_i$ and $U_i'$
are almost parallel in the following sense.
 
\begin{slem}\label{slem:perpatw}
Let $\check U_i:=B^{C_0}(p_i, \delta r_i/20)$.  
For any $w_i\in \eta_0(\check U_i)$, let
$w_i'$ be a nearest point of  $U_i'$
from $w_i$. Then the angle $\theta_{w_i}$ between the perpendicular $\gamma_{w_i}^+$ and any geodesic $w_iw_i'$ 
 satisfies 
\[
    \lim_{i\to \infty} \theta_{w_i} = \pi.
\]
\end{slem}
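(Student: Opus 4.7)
The plan is to prove the sublemma by rescaling at $w_i$ by $s_i^{-1}$ where $s_i:=|w_i,w_i'|_Y$, and then forcing a splitting of the resulting limit Alexandrov space.

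First I would verify that $s_i\to 0$ with $s_i/r_i\to 0$. The Hausdorff closeness $d_H^{X/r_i}(U_i,U_i')<o(i)$ gives $|w_i,w_i'|_X\le r_i\cdot o(i)=o(r_i)$, and $|w_i,w_i'|_Y=|w_i,w_i'|_X$ since $X\subset Y$ carries the induced metric. Next I would localize the $Y$-geodesic $\alpha_i$ from $w_i$ to $w_i'$. Using the warped-product structure of $C=C_0\times_\phi[0,t_0]$, any path joining $w_i$ and $w_i'$ through $C\setminus C_0$ has length comparable to $|\hat w_i,\hat w_i'|_{C_0}\approx r_i|\tilde v,f_*(\tilde v)|$; likewise, since $p\in\mathrm{int}\,C_0^1$, any curve from $w_i$ to $w_i'$ staying in $X_0$ lifts uniquely via $\eta_0$ to a $C_0$-curve from $\hat w_i$ to $\hat w_i'$ of the same length by Proposition \ref{prop:length'} and Lemma \ref{lem:eta'}, so $|w_i,w_i'|_{X_0^{\rm int}}\ge|\hat w_i,\hat w_i'|_{C_0}$. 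Both bounds exceed $s_i=o(r_i)$, so $\alpha_i\setminus\{w_i,w_i'\}$ lies entirely in $\mathrm{int}\,X$, and $\dot\alpha_i(0)\in\Sigma_{w_i}(X)\setminus\Sigma_{w_i}(X_0)$ makes angle strictly greater than $\pi/2$ with $\xi_{w_i}^+$ by Proposition \ref{prop:perp+horiz}.

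For the main assertion, I would argue by contradiction: suppose along a subsequence $\theta_{w_i}\le\pi-\epsilon_0$ for some $\epsilon_0>0$. Rescale $(Y/s_i,w_i)\to(\hat Y,\hat w)$ along a convergent subsequence; by a two-scale argument (first rescale by $1/r_i$ based at $p$ to reach $T_p(Y)$, then by $(s_i/r_i)^{-1}$ based at $w_\infty=\lim w_i/r_i$), the limit is $\hat Y=T_{w_\infty}(T_p(Y))$. Since $v\notin\ca F_x$, Theorem \ref{thm:X1-f} shows that $T_p(Y)$ is locally isometric around $w_\infty$ (via the sheet-1 representative $\tilde w$) to $\R_+\times T_p(C_0)$, where the $\R_+$-factor encodes the perpendicular direction. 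Because $\tilde w$ is a non-apex point of the Euclidean cone $T_p(C_0)$, its tangent cone splits off an $\R$-factor (the radial direction from $o_p$), whence $\hat Y\cong\R_+\times\R\times K_1$ for some nonnegatively curved Alexandrov space $K_1$.

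In this decomposition, the rescaled perpendicular $\gamma_{w_i}^+$ converges to the $+\R_+$-ray at $\hat w$. The key step is to show that the concatenation of $\gamma_{\hat w'}^+$ (reversed), the unit geodesic $\hat w'\hat w$, and $\gamma_{\hat w}^+$ is a geodesic line in $\hat Y$. Upper bounds $|\gamma_{w_i}^+(t_2),\gamma_{w_i'}^+(t_1)|_Y\le t_1+s_i+t_2$ come from the concatenated path, while the matching lower bounds — needed so that the Busemann functions of the two perpendicular rays satisfy $b_{\hat w}+b_{\hat w'}\equiv -1$ — follow from the warped-product constraint on paths dipping into $C$ and from the length-preserving nature of $\eta_0$ constraining paths in $X_0$. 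Once the line is established, the splitting theorem forces the geodesic $\hat w\hat w'$ to be antiparallel to $\gamma_{\hat w}^+$, contradicting $\theta_{w_i}\le\pi-\epsilon_0$.

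The main obstacle will be the rigorous construction of this line — equivalently, the tight lower bound $|\gamma_{w_i'}^+(t_1),\gamma_{w_i}^+(t_2)|_Y\ge t_1+s_i+t_2-o(s_i)$. I expect the proof to exploit the warped structure of $C$ to bound paths going through the cylinder from below, combined with the constraint that any shortcut through $\mathrm{int}\,X$ between two cylinder points must pass near $w_i$ or $w_i'$ (otherwise it would contradict the dimensional estimate on how $X_0$ sits in $Y$ inherited from the non-inradius convergence).
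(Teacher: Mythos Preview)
Your approach has a genuine gap, and the paper's proof uses a different and cleaner mechanism that you are missing.

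The core difficulty you acknowledge---the lower bound $|\gamma_{w_i'}^+(t_1),\gamma_{w_i}^+(t_2)|_Y\ge t_1+s_i+t_2-o(s_i)$ needed to assemble a line---is not a technicality but the heart of the matter, and your sketch does not resolve it. Moreover, your two-scale identification of $\hat Y$ with $T_{w_\infty}(T_p(Y))$ is not justified: a direct rescaling limit $(Y/s_i,w_i)$ need not coincide with an iterated tangent cone, and your structural claim $\hat Y\cong\R_+\times\R\times K_1$ is already suspect (since $v\notin\ca F_x$ there are \emph{two} geodesics from $\xi_x^+$ to $v$ in $\Sigma_x(Y)$, so the space of directions at $w_\infty$ has a full $\R$-factor in the perpendicular direction, not $\R_+$; and after blowing up, $\pa Y$ recedes to infinity so no $\R_+$-boundary remains). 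Finally, your preliminary claim that $\alpha_i\setminus\{w_i,w_i'\}\subset\mathrm{int}\,X$ is not established by the length comparisons you give: ruling out paths in $X_0$ or in the cylinder does not show the $Y$-geodesic avoids $X_0$ in its interior.

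The paper sidesteps all of this with a single idea: \emph{truncate the collar at height $\delta_i:=|w_i,w_i'|$}. Set $\hat M_i:=M_i\cup_{\pa M_i}\pa M_i\times_\phi[0,\delta_i]\subset\tilde M_i$ and let $Q_i,\,Q_i'\subset\pa\hat M_i$ be the pieces of the level set $C_{\delta_i}$ lying over $U_i,\,U_i'$. Any path in $\hat M_i$ from $Q_i$ to $Q_i'$ must descend to height $0$ and re-ascend, so $d(Q_i,Q_i')\ge 2\delta_i$; hence after rescaling by $1/\delta_i$ the limits $Q_\infty,Q_\infty'$ are \emph{disjoint} extremal subsets of $\pa\hat Y_\infty$ of full boundary dimension. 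Theorem~\ref{thm:splitting} (the boundary splitting theorem) then forces $\hat Y_\infty\cong Q_\infty\times I$. In this product, the broken path $\hat w_i\to w_i\to w_i'\to\hat w_i'$ limits to a minimal segment between the two boundary pieces, hence is straight---contradicting $\theta_{w_i}\le\pi-c$. The truncation converts your hard ``line-splitting'' problem into an easy ``boundary-splitting'' one, where the required disjointness is a one-line height estimate rather than a delicate distance lower bound.
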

\begin{proof}
Suppose there is $w_i\in  \eta_0(\check U_i)$ satisfying 
\beq \label{eq:anglewi-pi/2}
\theta_{w_i} \le \pi-c
\eeq
for a positive constant $c$ independent of $i$.
Set $\delta_i:=|w_i,w_i'|$, and
consider
$$ 
\hat Y_i:= \{ y\in Y\,|\, |y,X|\le\delta_i\}
    \subset Y.  
$$
Note that $\lim_{i\to\infty}\delta_i/r_i=0$.
Put 
$$ 
Q_i:= \eta(\{\delta_i\}\times\tilde U_i),\quad
Q_i':=\eta(\{\delta_i\}\times\tilde U_i')
\subset \pa \hat Y_i.  
$$
%(see Section \ref{sec:non-inradius}).
%%
Passing to a subsequence, we may assume that 
$(\frac{1}{\delta_i}\hat Y_i, w_i)$
converges to a pointed space $(\hat Y_\infty,w_\infty)$.
Note that $\hat Y_\infty$ is a complete noncompact 
Alexandrov space with nonnegative curvature.
Let $Q_\infty$ and $Q_\infty'$ be the 
limits of $Q_i$ and $Q_i'$ under this convergence.
From $d(Q_i, Q_i')\ge 2\delta_i$, 
$Q_\infty$ and $Q_\infty'$ are disjoint.
%%%%
Although $\hat Y_i$ is not an Alexandrov space, it is easy to verify that 
$Q_\infty$ and $Q_\infty'$ are extremal subsets of $\hat Y_\infty$, which are contained in $\pa \hat Y_\infty$ with $\dim Q_\infty=\dim Q_\infty'=\dim \pa \hat Y_\infty$.
Theorem \ref{thm:splitting} 
shows that 
$\hat Y_\infty$ is isometric to $Q_\infty\times I$ for an interval $I$.
Let $\hat w_i\in Q_i$ and $\hat w_i'\in Q_i'$
be the points corresponding to 
$w_i$ and $w_i'$ respectively.
Note that the union of the geodesics $w_iw_i'$ and
$w_i'\hat w_i'$ provides a shortest curve from
$w_i$ to $Q_i'$. Therefore from the splitting
$\hat Y_\infty=Q_\infty\times I$, 
the geodesics $w_\infty w_\infty'$ and
$w_\infty'\hat w_\infty'$ is a subarc of shortest
geodesic joining $\hat w_\infty'$ to $Q_\infty$.
It turns out that the union of geodesics
$\hat w_i w_i$, $w_iw_i'$ and $w_i' \hat w_i'$
converges to a minimal geodesic 
between $Q_\infty$ and $Q_\infty'$.
This is a contradiction to \eqref{eq:anglewi-pi/2}, and completes the proof of Sublemma  \ref{slem:perpatw}.
\end{proof}

We fix large $i$ so that $p_i\in\tilde{\ca C}$, and take distinct $\tilde v_i, \tilde v_i'\in\Sigma_{p_i}(C_0)$ such that 
$d\eta_0(\tilde v_i)=d\eta_0(\tilde v_i')$.
Choose sequences $q_{ij}, q_{ij}'\in C_0$ converging to $p_i$ as 
$j\to\infty$ such that 
\beqq
\text{$\tilde v_i=\lim_{j\to\infty}\uparrow_{p_i}^{q_{ij}}$, \quad
$\tilde v_i'=\lim_{j\to\infty}\uparrow_{p_i}^{q_{ij}'}$, \quad $|p_i, q_{ij}|=|p_i, q_{ij}'|$.}
\eeqq
 Take large enough $j$ with $s_{ij}:=|p_i, q_{ij}|<\delta r_i/100$.
%%%
Consider the convergence
\beq
\lim_{j\to\infty} \biggl( \frac{1}{s_{ij}} X, x_i\biggr) = (T_{x_i}(X), o_{x_i}),
\eeq
where $x_i:=\eta_0(p_i)$.
%%%
Set $\delta_i:=\angle(\tilde v_i,\tilde v_i')$, and 
\beq \label{eq:ballsWW}
W_{ij}:=\eta_0(B(q_{ij},\delta_is_{ij}/10)), \quad
W_{ij}':=\eta_0(B(q_{ij}',\delta_is_{ij}/10)),
\eeq

$W_{ij}$ and $W_{ij}'$ are disjoint and contained in $U_i$,
and both converge to $B(v_i,\delta_i/10)$
under the above convergence,
where $v_i:=d\eta_0(\tilde v_i)$.
%%%%
Therefore in a similar way, we conclude that $W_{ij}$ and 
$W_{ij}'$ are almost parallel in the same sense 
as Sublemma \ref{slem:perpatw}.

Take larger $j=j(i)$ satisfying
\beq \label{eq:largerJ}
   d_H^{X/s_{ij}}(W_{ij},W_{ij}')\ll |U_i,U_i'|.
\eeq
Let  $y_{ij}'$ be a nearest point of 
$W_{ij}'$ from $y_{ij}:=\eta_0(q_{ij})$.
Since $y_{ij}$ is not contained in $U_i'$,
we can take a nearest point $z_{ij}$ of 
$U_i'$ from $y_{ij}$. 
By Sublemma \ref{slem:perpatw}, both
$\uparrow_{y_{ij}}^{y_{ij}'}$ and 
$\uparrow_{y_{ij}}^{z_{ij}}$ are almost orthogonal to $X_0$,
%%%
which implies  $\angle z_{ij}y_{ij}y_{ij}' <o_i$.
It follows from \eqref{eq:largerJ} that
\beq\label{wangle(yy'z)}
       \wangle y_{ij}y_{ij}'z_{ij}>\pi-o_i.
\eeq
%%% 
Note that the union $\gamma_{ij}$ of the geodesic
$y_{ij}y_{ij'}$ and the perpendicular 
$\gamma^+_{y_{ij}'}$ is shortest near the point $y_{ij}'$.
%%%
\eqref{wangle(yy'z)}implies that 
\[
    \angle(\dot\gamma_{y_{ij}',z_{ij}}(0), \dot\gamma_{y_{ij}'}^+(0))<o_i.
\]
%%%%
 In view of \eqref{eq:dH(U,U')},
it is now easy verify  that the perpendicular 
$\gamma^+_{y_{ij}'}$ meets $U_i'$ at a point
near $z_{ij}$.
This is a contradiction. 
%\end{proof}

\pmed
%%%%%%%%%%%%%%%%%%%%
\begin{center}
\begin{tikzpicture}
[scale = 0.5]

\draw[-, thick](0,0)circle[x radius=6,y radius=2];
\fill (-6.5, 0) circle (0pt) node [left] {$U_i$};
\draw[-](3,0.5)circle[x radius=1.1,y radius=0.5];
\fill (2, 0.5) circle (0pt) node [left] {{\tiny $W_{ij}$}};
\fill (3,0.5) circle (2pt) ;
\fill (3,0.5) circle (0pt) node [right] {{\tiny $y_{ij}$}};

\draw[-](3,-0.6)circle[x radius=1.1,y radius=0.5];
\fill (2, -0.6) circle (0pt) node [left] {{\tiny $W_{ij}'$}};
\fill (3,-0.6) circle (2pt) ;
\fill (3,-0.6) circle (0pt) node [right] {{\tiny $y_{ij}'$}};
\fill (2.77,-0.6) circle (2.5pt) ;

\draw[-, thick](0,-3.5)circle[x radius=6,y radius=1.2];
\fill (-6.5, -3.5) circle (0pt) node [left] {$U_i'$};
\fill (2.8,-3.5) circle (2pt) ;
\fill (2.8,-3.5) circle (0pt) node [right] {{\tiny $z_{ij}$}};

\draw [dotted,very thick] (3,0.5)  to [out=255, in=95] (2.8,-3.5);
\draw [dotted,very thick] (3,0.5)  to [out=275, in=85] (3,-0.6);
\end{tikzpicture}
\end{center}
\vspace{-0.5cm}  
\begin{figure}[htbp]
  \centering
  \caption{}
  \label{fig:W-W'}  
\end{figure}  

\pmed
Next we consider the case $x\in\ca S^1$.
In this case, $U_i$ and $U_i'$ may have 
nonempty intersection. Note that 
$U_i\cap U_i'\subset X_0^2$.

\pmed\n
Case b)\, $x\in \ca S^1$ and $p_i\in \tilde{\ca C}$.
\pmed\n

First we apply the argument after Sublemma 
\ref{slem:perpatw} in  Case a)  to the cusp $p_i$,
and choose sequences $q_{ij}, q_{ij}'\in C_0$ converging to $p_i$ as 
$j\to\infty$ such that 
the domains $W_{ij}$ and $W_{ij}'$ around 
$y_{ij}=\eta_0(q_{ij})$ and $\eta_0(q_{ij}')$ respectively, 
defined as in \eqref{eq:ballsWW}
are almost parallel.
Note that  both $W_{ij}$ and $W_{ij}'$ are contained
in ${\rm int} X_0^1$ and hence do not meet 
$U_i'$.
Take a nearest point $z_{ij}$ of
$U_i'$  (resp. $y_{ij}'$ of
$W_{ij}'$) from $y_{ij}$.

Let $\sigma_{ij}$ and  $\gamma_{ij}$ be $Y$-geodesic from $y_{ij}$ to $y_{ij}'$ and from $y_{ij}$ to $z_{ij}$
respectively.
First consider the convergence
\[
  \left(\frac{1}{|y_{ij}, y_{ij}'|}Y, y_{ij}\right) \to (Y_\infty, y_\infty).
\]
From here on,  we omit the subindex $i$ for simplicity.
By the argument in the proof of Sublemma,
\ref{slem:perpatw}, the limit $X_\infty$ of
$X$ under the above convergence is isometric to a 
product $Q_\infty\times I$, where 
$(X_0)_\infty=Q_\infty\times \pa I$.
It follows that the limit $\gamma_\infty$ of $\gamma_{ij}$
must be contained in $(X_0)_\infty$. This implies 
\beq \label{eq:jangle(gamma,sigma)}
       \lim_{j\to\infty} \angle(\dot\gamma_{ij}(0),\dot\sigma_{ij}(0))=\pi/2.
\eeq
Next consider the convergence
\[
  \left(\frac{1}{|y_{ij}, z_{ij}|}Y, y_{ij}\right) \to (\hat Y_\infty,  y_\infty).
\]
Let $\hat X_\infty$ and $(\hat X_0)_\infty$ be the 
limits of $X$ and $X_0$ under the above 
convergence respectively.
Since  $\hat X_\infty$  is convex, from 
$z_\infty\in (\hat X_0)_\infty$, we conclude 
$\gamma_\infty\subset (\hat X_0)_\infty$.
However the perpendicular $\gamma^+_{z_\infty}$
at $z_\infty$ makes an angle $\pi/2$ with any direction
in $\Sigma_{z_\infty}((\hat X_0)_\infty)$.
Since $\gamma_\infty$ and $\gamma^+_{z_\infty}$
form a geodesic, this is a contradiction.

\pmed
\n
Case c)\, $x\in \ca S^1$ and $p_i\in \tilde{\ca S}^1$.
\pmed\n

%Set $r_i:=|x,x_i|$.
For each $i$, take a sequence $q_{ij}\in C_0^2$ with 
$\lim_{j\to\infty}q_{ij}=p_i$. Take large enough $j$ with $|p_i, q_{ij}|<\delta r_i/20$.
%Suppoose first $q_{ij}\in\tilde{\ca S^2}$.
Choose small neighborhoods $\tilde W_{ij}:=B(q_{ij},s_{ij})$ and 
$\tilde W_{ij}':=B(f(q_{ij}),s_{ij})$ in $C_0$ with $s_{ij}\ll  |q_{ij}, f(q_{ij})|$.
Then $\eta_0(\tilde W_{ij})$ and $\eta_0(\tilde W_{ij}')$
are tangent at $y_{ij}:=\eta_0(q_{ij})$ (see also Sublemma \ref{slem:eta-ball}).
%%%%
Note that $y_{ij}$ is not contained in $U_i'$,  since otherwise, we would have 
the contradiction $\# \eta_0^{-1}(y_{ij})\ge 3$.
Now take a nearest point $z_{ij}$  of 
$U_i'$ from $y_{ij}$.

Let $\gamma_{ij}$ be an $Y$-minimal geodesic joining
$y_{ij}$ to $z_{ij}$, and consider the convergence
\[
  \left(\frac{1}{|y_{ij}, z_{ij}|}Y, y_{ij}\right) \to (Y_\infty, y_\infty),
\]
where we omit the subindex $i$ for simplicity.
Let $\gamma_\infty:[0,1]\to Y_\infty$ and $\gamma_{z_\infty}^+$ be 
the limits of $\gamma_{ij}$ and $\gamma_{z_{ij}}^+$ respectively.
From $y_{ij}\in X_0^2$, we have 
$\dot\gamma_\infty(0)\in\Sigma_{y_\infty}((X_0)_\infty)$ while $z_\infty\in (X_0)_\infty$.
Since $(X_0)_\infty\subset\pa X_\infty$ 
and since $X_\infty$ is convex in $Y_\infty$ 
by Lemma \ref{lem:inrad-collapse},
we have $\gamma_\infty\subset\pa X_\infty$.
 However, from construction we have
$\gamma_\infty(1-\e)\in X_\infty\setminus\pa X_\infty$ for small $\e>0$.
This is a contradiction.
This completes the proof of  Theorem \ref{thm:S1extremal}.
\end{proof}

\psmall 

Recall that $\ca S^1$ is closed in $X_0^1$ (see Lemma  \ref{lem:closed-S1}).
As the following example shows, this is not the case for $\ca C$.

%%%%%%%%%%%
\psmall
\begin{ex} \label{ex:smallS3}
In Example \ref{ex:general-cusp}, let us change 
the construction only the function $f$ such that instead of the  condition (1) in Example \ref{ex:general-cusp}, we assume 
\begin{enumerate}
\item[(1')] $f^{-1}(0)=\pa D\cup Q$,
\end{enumerate}
where 
$Q$ in an infinite sequence $q_i$ in 
$\mathring {D}$ converging 
to a point $x\in \pa D$ such that 
$v_0:=\lim_{i\to\infty}\uparrow_x^{q_i}$
is perpendicular to $\Sigma_x(\pa D)$.
Define $M_\e$ in the same way.
In this case, $\ca S^1=\{ x\}$ and 
$\ca C=\pa D\setminus \{ x\}$ is not closed in $N_0$. 
\end{ex}

\pmed\n
{\bf Extremal subsets.} \, 
\psmall   
Concerning Theorem \ref{thm:S1extremal},
we define the notion of extremal subsets in
our limit spaces, and check if $\ca S^1\cup\ca C$ can be an extremal subset of $X$.

\begin{defn}\label{defn:extremal-subset}
Following \cite{PtPt:extremal}, we say that a closed subset $E$ of a geodesic space $X$ is  {\it extremal in $X$} 
 if 
for any $x\in X\setminus E$ 
the distance function 
$d_x^{X}:E\to \mathbb R$  takes a local minimum at 
$y\in E$, then we have  
\[
\limsup_{z\to y} \wangle^{X_0} xyz\le\pi/2.
\]
This is equivalent to the usual notion of 
extremal subsets defined in Section \ref
{ssec:Alex} when $X$ is an Alexandrov space.
\end{defn}  

  For our limit spaces, $X_0$ is extremal in $X$.
In Example \ref{ex:smallS3}, 
$\ca S^1\cup\ca C=\pa D$ is an extremal subset of $N_0$.

Now we give examples showing that 
$f_*$ is not always a reflection even for a cusp, and 
$\ca S^1\cup\ca C$ is not necessarily an extremal subset of $X_0$.

%%%%%%%%%%%%%%%%%%%%

\begin{defn}\label{defn:C(k)-S(k)}
For each integer $0\le k\le \dim X_0-2$,
we denote by $\ca S^1(k)$ (resp. $\ca C(k)$) the set of 
all points $x$ of $\ca S^1$ (resp.  of $\ca C$) such that 
the isometry $f_*$ on $\Sigma_{\tilde x}(C_0)$ has the fixed 
point set of dimension $k$, where $\eta_0(\tilde x)=x$.
\end{defn}

%%%%%%%%%%%%%%%%%%%

We now  construct an example 
with $\dim \ca C(k)=k+1$ (compare to Theorem \ref{thm:dim(metric-sing)}).

\begin{ex}\label{ex:non-closed} (1)\,
Let $T^n:=\R^n/\Z^n$ be the flat $n$-torus
with the base point ${\bf {\bf [0]}}$.
For any integer $0\le k\le n-2$, consider the 
decomposition $T^n=T^{k+1}\times T^{n-k-1}$.
We choose a smooth  function $g:T^n\to\R_+$ satisfying
\benu
 \item[(a)] $g^{-1}(0)=T^{k+1}\times {\bf [0]}\,;$
 \item[(b)] $g$ is invariant under the symmetry about $T^{k+1}\times [0]$. Namely,
$
         g([{\bf x_1, -x_2}]) = ([{\bf x_1,x_2}]) 
$ for $[{\bf x_1,x_2}]\in T^{k+1}\times T^{n-k-1}$.
\eenu 
Set
\begin{gather*}
    L_i:= \{ ([{\bf x}], t)\in\mathbb T^n\times\R\,|\, |t|\le g([{\bf x}])+1/i\}.
\end{gather*}
Let  $s_i$ be the isometric 
involution 
defined by  
\begin{align*}
&s_i([{\bf x_1, x_2}], t, u)=([{\bf x_1, -x_2}],-t, -u).
\end{align*}
Consider  
\[
           M_i:= (L_i\times S^1_{1/i})/s_i,
\]
which is contained in 
$\ca M_b(n+2,0, \lambda,d)$ for some $\lambda, d$.
As $i\to\infty$, $M_i$ converges to 
\[
          N:=\{ ({\bf x},t)\in\mathbb T^n\times \R\,|\, |t|\le g([{\bf x}])\}/s_\infty,
\]
where 
$s_\infty$ is the isometric involution defined as 
$$
s_\infty([{\bf x_1, x_2}],t)=([{\bf x_1,-x_2}],-t).
$$
From construction,  we obtain  
\begin{align*}
   &N_0=\{ ([{\bf x}],t)\in\mathbb T^n\times\R\,|\, |t|=g([{\bf x}])\}/s_\infty,\\
& C_0=\{ ([{\bf x}],t)\in\mathbb R^{n+1}\,|\, t=g([{\bf x}])\},
\end{align*}
where the map $\eta_0:C_0\to N_0$ is given by 
$\eta_0(x,g(x))=[(x,g(x))]$, which is bijective.
Thus we have $N_0=N_0^1$.
For any $u:=([{\bf x_1,0}], 0)\in N_0$,
note that  
$\Sigma_{\tilde u}(C_0)=\mathbb S^{n-1}$ ($\tilde u=\eta_0^{-1}(u)$) and 
$\Sigma_{u}(N_0)=\mathbb S^{n-1}/\hat s_\infty$ 
with $\ca F_u={\rm Fix}(\hat s_\infty)=\mathbb S^{k}$,
where 
$\hat s_\infty$ denotes the restriction of 
$s_\infty$.
Thus we have $u\in \ca C(k)$, and
$\ca C(k)$ is isometric to $T^{k+1}$.
\pmed
 %%%%%%%%%%%%%%%%%%%%
\begin{center}
\begin{tikzpicture}
[scale = 0.45]

\filldraw[fill=lightgray, opacity=.1] 
(0,0)  [out=95, in=-55] to (-0.8,4)
 (-0.8,4) -- (1.4,4.5)
 [out=235, in=85] to (0,0);

\draw [-, thick] (-6,0) to  (6,0);
\draw [-, thick] (0,0)  [out=95, in=-55] to
(-0.8,4) ;
\draw [-, thick] (0,0)  [out=85, in=235] to
(1.4,4.5) ;
\draw [dotted, thick] (-6,4) to  (6,4);
\draw [dotted, thick] (-6,4.5) to  (6,4.5);

\fill (0, 0) circle (2pt) node [below] {\small${\bf [x_1,0]}$};
\fill (6, 0) circle (0pt) node [right] {\small $T^{k+1}\times [{\bf 0}]$};

\fill (0.2, 2.3) circle (0pt) node [above] {\small$N$};
\fill (-3, 1.2) circle (0pt) node [above] {\small$N_0$};
\end{tikzpicture}
\end{center}
\vspace{-0.5cm}  
\begin{figure}[htbp]
  \centering
  \caption{}
  \label{fig:cusp}  
\end{figure}  

%%%%%%%%%%%%%%%%%%%%%%%%%%%%%
\psmall 
Replacing the condition (1) in Example \ref{ex:non-closed},
by a condition similar  to $(1)'$ in Example \ref{rem:CtoS1},
one can construct an example 
with $\dim \ca S^1(k)=k+1$. 

(2)\, We slightly change the function $g$ in (1)
such that 
\begin{enumerate}
\item[(a)']  \,\, $g^{-1}(0)={\bf [0]}\in T^n$,
\end{enumerate}
instead of $(a)$ in (1). Similarly, 
 we have $N_0=N_0^1$,
and $\ca C$ consists of the single point 
$x:=([{\bf 0}], 0)\in N_0$.
However we still  have  
$\ca F_x=\mathbb S^{k}$
as in $(1)$.
\end{ex}  
\psmall
Example \ref{ex:non-closed}(2) shows that
for $x\in \ca S^1\cup\ca C$, 
$\Sigma_x(\ca S^1\cup\ca C)=\ca F_x$ 
 does not hold in general, and  
$\ca S^1\cup\ca C$ is not an extremal subset of
$N$.

\pmed

%%%%%%%%%%%%%%%%%%%%%%%%
\section{Geometry of almost parallel domains} \label{sec:local-str-S1}
\pmed

 In this section, we prove 
Theorem \ref{thm:S1-nontrivial-f*}.

%%%%%%%%%%%%%%%%%%%%%
\subsection{Reduction and preliminaries}\label{ssec:bdypt}

First we give a reduction of Theorem \ref{thm:S1-nontrivial-f*} to 
Theorem \ref{thm:never-happen}.

\pmed\n
{\bf Reduction.}\,
For any $p\in C_0\cap \pa C$, from \eqref{eq:defC},
we have the following:
\begin{align*}%\label{eq:Sigma-C0}
\pa\Sigma_p(C)=\Sigma_p(C_0)\cup(\{\tilde\xi_p^+\}*\pa\Sigma_p(C_0)).
\end{align*}

\begin{lem}\label{lem:tirivial=paY}
For any 
$x\in\pa Y\cap ({\rm int}\,X_0^1\setminus\ca C)$, suppose  
${\rm rad}(\xi_x^+)=\pi/2$. 
Then we have 
\beq \label{eq:paSigma-never}
\pa\Sigma_x(Y)=\Sigma_x(X_0)\cup
 (\{\xi_x^+\}*\pa\Sigma_x(X_0)).
\eeq
\end{lem}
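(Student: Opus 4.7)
The plan is to identify $\Sigma_x(Y)$ as a half-spherical suspension via Theorem \ref{thm:X1-f} and then invoke the standard boundary formula for such suspensions.

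First, since $x \in {\rm int}\,X_0^1 \setminus \ca C$, Definition \ref{defn:cusp} forces the isometric involution $f_*$ on $\Sigma_p(C_0)$ to be the identity, where $p = \eta_0^{-1}(x)$. Combined with the hypothesis ${\rm rad}(\xi_x^+) = \pi/2$, Theorem \ref{thm:X1-f}(2) yields an isometry $\Sigma_p(C)/f_* \cong \Sigma_x(Y)$ which, since $f_*$ is trivial, descends to an honest isometry $\Sigma_p(C) \cong \Sigma_x(Y)$ carrying $\tilde\xi_p^+$ to $\xi_x^+$ and $\Sigma_p(C_0)$ isometrically onto $\Sigma_x(X_0)$ (using $\Sigma_x(X_0) = \Sigma_x(X)$, which holds by Lemma \ref{lem:Sigma-Alex}(2) since ${\rm rad}(\xi_x^+) = \pi/2$). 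Via the warped product structure $C = C_0 \times_\phi [0,t_0]$ and \cite[Lemma 4.5]{YZ:inrdius}, we further have
$$\Sigma_p(C) = \{\tilde\xi_p^+\} * \Sigma_p(C_0),$$
realized as $[0,\pi/2] \times \Sigma_p(C_0)$ with the fiber over $0$ collapsed to the vertex $\tilde\xi_p^+$ under the spherical join metric.

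The remaining step is the purely formal boundary identity
$$\pa(\{v\} * B) = B \cup (\{v\} * \pa B)$$
for any Alexandrov space $B$ of curvature $\ge 1$, which I would verify by induction on $\dim B$ using the inductive definition of boundary. At an interior point $(t,b)$ with $0 < t < \pi/2$, the space of directions splits as the spherical suspension $\Sigma_b(B) * \mathbb{S}^0$, whose boundary is $\pa\Sigma_b(B) * \mathbb{S}^0$, nonempty precisely when $b \in \pa B$; at the vertex $v$ the space of directions is $B$ itself, hence contributes $v$ to $\pa$ exactly when $\pa B \neq \emptyset$; and at an equator point $(\pi/2,b)$ the space of directions is the half-suspension $\Sigma_b(B)*\{\uparrow^v\}$, which always has nonempty boundary by the inductive hypothesis applied one dimension lower. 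Assembling these cases produces exactly the two claimed pieces of the boundary.

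Transporting this identity through the isometry of the first step sends $\Sigma_p(C_0)$ to $\Sigma_x(X_0)$ (hence $\pa\Sigma_p(C_0)$ to $\pa\Sigma_x(X_0)$) and $\tilde\xi_p^+$ to $\xi_x^+$, yielding the equality \eqref{eq:paSigma-never}. The conceptual heart of the argument is the reduction furnished by Theorem \ref{thm:X1-f}(2); the only step requiring a modicum of care is the inductive verification of the half-spherical suspension boundary formula, but this is classical spherical-join geometry and presents no real obstacle.
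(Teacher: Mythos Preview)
Your proof is correct and follows essentially the same approach as the paper: use $x\notin\ca C$ to get $f_*={\rm id}$, invoke Theorem \ref{thm:X1-f}(2) to identify $\Sigma_x(Y)$ with $\Sigma_p(C)=\{\tilde\xi_p^+\}*\Sigma_p(C_0)$, and transport the half-suspension boundary formula. The paper's proof is the one-line ``Since $f_*={\rm id}$, the conclusion follows from Theorem \ref{thm:X1-f}'', relying on the displayed identity $\pa\Sigma_p(C)=\Sigma_p(C_0)\cup(\{\tilde\xi_p^+\}*\pa\Sigma_p(C_0))$ stated immediately before the lemma; your version simply spells out that identity and the transport explicitly.
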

\begin{proof}
Since $f_*={\rm id}$, the conclusion follows from 
Theorem \ref{thm:X1-f}.
\end{proof}   

\begin{lem}\label{lem:conv-above}
\eqref{eq:paSigma-never} is equivalent to 
\[
 \text{$x\in X_0$, ${\rm rad}(\xi_x^+)=\pi/2$ and $f_*={\rm id}$.}
\]
\end{lem}
\begin{proof} \eqref{eq:paSigma-never}  certainly implies 
$x\in X_0$ and ${\rm rad}(\xi_x^+)=\pi/2$. 
If $f_*$ is not the identity, $\Sigma_x(X_0)$ cannot be contained in 
$\pa \Sigma_x(Y)$ since $\Sigma_x(X_0)^2$ is open.
The converse is an easy consequence from Theorem \ref{thm:X1-f}.
\end{proof}

In what follows,  we shall prove that the converse to 
Lemma \ref{lem:tirivial=paY} is also 
true.

\begin{thm}\label{thm:never-happen}
For a point  $x\in X_0^1\cap\pa Y$, assume \eqref{eq:paSigma-never}.
Then there is an $r>0$ such that 
$\mathring{B}^{X_0}(x,r)\subset {\rm int}\,X_0^1\setminus \ca C$.

In particular, we have $x\in {\rm int}\, X_0^1\setminus \ca C$.
\end{thm}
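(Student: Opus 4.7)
The plan is a contradiction argument combining a blow-up at $x$ with the almost-parallel-domain technique from the proof of Theorem \ref{thm:S1extremal}. First I would observe that \eqref{eq:paSigma-never} together with $x\in\pa Y$ already implies $x\notin\ca C$: indeed, Lemma \ref{lem:criteria-0} restricts how a point of $X_0^1\cap\pa Y$ can arise, and matching the two possibilities against Theorem \ref{thm:X1-f} shows that the prescribed stratum $\Sigma_x(X_0)\cup(\{\xi_x^+\}*\pa\Sigma_x(X_0))$ can occur only in the case ${\rm rad}(\xi_x^+)=\pi/2$ with $f_*^{(p)}={\rm id}$, where $p=\eta_0^{-1}(x)$. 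Consequently $d\eta_0:\Sigma_p(C_0)\to\Sigma_x(X_0)$ is a bijective isometry and $\Sigma_x(Y)=\{\xi_x^+\}*\Sigma_x(X_0)$.

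Assume now, for contradiction, that no such $r$ exists. Extract a sequence $y_i\to x$ with $y_i\in X_0^2\cup\ca S^1\cup\ca C$, set $r_i:=|x,y_i|_Y$, and pass to the pointed blow-up
\[
\left(\tfrac{1}{r_i}Y,\,x,\,y_i\right)\longrightarrow (T_x(Y),\,o_x,\,y_\infty),\qquad |o_x,y_\infty|=1.
\]
By Proposition \ref{prop:tang-cone} and Remark \ref{rem:Tx(X0)}, $X$, $X_0$, $C$, $C_0$ rescale to $T_x(X)$, $T_x(X_0)=K(\Sigma_x(X_0))$, $T_p(C)$, $T_p(C_0)$, and the gluing $\eta$ passes to $d\eta$ in the limit. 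Since $K(\Sigma_x(X_0))\subset\pa T_x(Y)$ by the observation above and $d\eta_0$ is a bijective isometry, every point $z\in K(\Sigma_x(X_0))\setminus\{o_x\}$ is a \emph{good} point of the limit: it has a unique $d\eta_0$-preimage, and its unique perpendicular direction in the cone is the radial one $\uparrow_z^{o_x}$. In particular $y_\infty$ enjoys both properties.

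It remains to contradict each of the three bad possibilities for the type of $y_i$. If $y_i\in X_0^2$, the two perpendicular directions $\dot\gamma_{y_i}^\pm(0)$ pass in the limit to two distinct perpendicular directions at $y_\infty$, contradicting uniqueness. For $y_i\in\ca S^1$ (as in Cases b, c of the proof of Theorem \ref{thm:S1extremal}) or $y_i\in\ca C$ (Case a there), I would reproduce the almost-parallel construction from that proof at a finer scale $s_{ij}\ll r_i$: pick $q_{ij},q_{ij}'\in C_0$ whose $\eta_0$-images agree, form the balls as in \eqref{eq:ballsWW}, and invoke Sublemma \ref{slem:perpatw} to show that the two almost parallel images $\eta_0(\tilde W_{ij})$ and $\eta_0(\tilde W_{ij}')$ are forced to overlap in the $r_i$-blow-up, producing two distinct $d\eta_0$-preimages of a single point near $y_\infty$ and contradicting injectivity. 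The main obstacle is the cusp case $y_i\in\ca C$: there the two sheets near $y_i$ are invisible to the perpendicular structure, so one must interleave the scales $r_i$ (blow-up at $x$) and $s_{ij}$ (almost-parallel construction at $y_i$), extract a diagonal subsequence, and track $f_*$ carefully so that the contradiction with $f_*^{(p)}={\rm id}$ survives in the tangent cone. This is precisely the setting for which the refined geometry of almost parallel domains developed in Section \ref{sec:local-str-S1} is designed.
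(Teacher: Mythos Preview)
Your approach has a genuine gap in the $y_i\in X_0^2$ case, and it propagates to the other cases.  You claim the two perpendiculars $\gamma_{y_i}^\pm$ pass to two \emph{distinct} perpendicular directions at $y_\infty\in T_x(Y)$.  But under the hypothesis $f_*^{(p)}={\rm id}$, the two $C_0$-preimages $p_{i,\pm}\in\eta_0^{-1}(y_i)$ satisfy $|p_{i,+},p_{i,-}|_{C_0}=o(r_i)$: both converge in $\tfrac1{r_i}C_0$ to the unique $d\eta_0$-preimage of $y_\infty$, precisely because $d\eta_0$ is bijective.  Since $\gamma_{y_i}^\pm(sr_i)=\eta(p_{i,\pm},sr_i)$ and $\eta$ is $1$-Lipschitz, one gets
\[
|\gamma_{y_i}^+(sr_i),\gamma_{y_i}^-(sr_i)|_{Y/r_i}\le \phi(sr_i)\,|p_{i,+},p_{i,-}|_{C_0}/r_i\to 0,
\]
so the two perpendiculars \emph{coincide} in the limit rather than being opposite.  (Lemma~\ref{lem:preimage}(2) gives $|\gamma_{y_i}^+(t),\gamma_{y_i}^-(t)|=2t$ only for $t\le \phi(t_0)|p_{i,+},p_{i,-}|/2=o(r_i)$, hence for no fixed $s>0$ in the rescaling.)  This is exactly the obstruction flagged in Remark~\ref{rem:indirect}, formulated at $y_\infty$ rather than at $o_x$.

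The same collapse-of-scales kills the $\ca S^1$ and $\ca C$ cases: the almost-parallel sheets you build near $y_i$ via \eqref{eq:ballsWW} live at scale $s_{ij}\ll r_i$ and shrink to a single point in the $r_i$-blow-up, so they cannot produce two $d\eta_0$-preimages of any limit point.  Your last paragraph correctly identifies that one must ``interleave the scales'', but this is the entire content of the proof, not a detail.  The paper does not argue in $T_x(Y)$ at all; instead it introduces the global sets $\ca D(x,r,\theta)$ and $\ca E(x,r,\theta)$ (Definition~\ref{defn:DC}), first shows ${\rm Cone}(x;r,\alpha)\subset\ca E$ via Lemma~\ref{lem:CsubC} using several rescalings at intermediate scales (around $z_i$, $w_i$, $u_m$, not around $x$), and then pushes $\pa\ca E_0$ into $\pa_*X_0$ (Proposition~\ref{prop:paC-subset-paX}) via further blow-ups combined with the splitting Theorem~\ref{thm:splitting} and Lemma~\ref{lem:frequent-later}.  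Your proposal does not supply any of these intermediate rescalings, and the single blow-up at $x$ cannot see the phenomenon you need.
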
 

For an example concerning Theorem \ref
{thm:never-happen}, see Example \ref{ex:X01cosh}(2).

\begin{rem}\label{rem:indirect}\upshape
Here is a remark on the proof of Theorem \ref{thm:never-happen}.
Suppose that there is a sequence $y_i\in X_0^2$
converging to $x$. 
We may assume that 
$\uparrow_x^{y_i}$ converges to a direction 
$v\in\Sigma_x(X_0)$.
Let $\gamma_{y_i}^{\pm}$ be the 
two perpendiculars at $y_i$. Joining $x$ to
$\gamma_{y_i}^\pm(t)$ by $Y$-minimal geodesics
for all $t\in[0,t_0]$
and letting $i\to\infty$,
we have  minimal geodesics $\sigma_{\pm}$
joining
$v$ to $\xi_x^+$.
If $\sigma_+\neq\sigma_-$, then 
we have a contradiction to the 
assumption \eqref{eq:paSigma-never},
and we could conclude the proof.
However, we do not know if $\sigma_+\neq\sigma_-$.
This suggests that the proof must be  much more involved.
In what follows, we go somewhat indirectly to reach 
the completion of the proof.
\end{rem}
 
\psmall

Theorem \ref{thm:never-happen} means that 
the infinitesimal data \eqref{eq:paSigma-never} completely determines 
the local information of the space $X_0$. This is a rare case in our spaces whose local geometry may be 
complicated in general.

The proof of Theorem \ref{thm:never-happen} is deferred to  Subsection \ref{ssec:proof}.

\begin{proof}[Proof of Theorem \ref{thm:S1-nontrivial-f*}
assuming Theorem \ref{thm:never-happen}]
For $x\in \ca S^1$, suppose that $f_*$ is the identity.
From  Theorem \ref{thm:X1-f}, it turns out that 
 $\Sigma_x(Y)$ is isometric to $\Sigma_p(C)$.
Thus we have \eqref{eq:paSigma-never}. 
Since $x\in\ca S^1$, this is a contradiction to 
Theorem \ref{thm:never-happen}.
\end{proof}

%%%%
From now, towards the proof of Theorem \ref{thm:never-happen},
we prepare some materials.
\pmed\n
{\bf Infinitesimal structure at $X_0^1\cap\pa Y$.}

\pmed

The following lemma will be needed
several times later on.

\begin{lem} \label{lem:frequent-later}
For every $x\in X_0^1\cap\pa Y$ with 
${\rm rad}(\xi_x^+)=\pi/2$, 
consider a sequence $y_i\in X_0\setminus \pa Y$ converging to $x$.
Let $w_i$ be a nearest point of $\pa Y$ from $y_i$.
We assume that both $\uparrow_x^{y_i}$ and $\uparrow_x^{w_i}$ converge to the same direction, say $v\in\Sigma_x(X_0)$, and consider the rescaling limit 
\begin{align} \label{eq:conv-yw-Yinfty}
   \left(\frac{1}{|w_i, y_i|}Y, w_i\right)  \to (Y_\infty, w_\infty),
\end{align}
Then if $w_\infty\in (X_0)_\infty$, then 
a minimal geodesic joining $v$ and $\xi_x^+$ is contained in 
$\pa\Sigma_x(Y)$. 

In particular, we have  $v\in\Sigma_x(\pa_*X_0)$, and hence
$\pa\Sigma_x(X_0)$ is nonempty.
\end{lem}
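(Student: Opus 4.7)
My plan is to pass to the rescaling limit and analyze the iterated tangent cone structure at $x$. Because $\mathrm{rad}(\xi_x^+)=\pi/2$, Lemma \ref{lem:S1-basic1} gives $\Sigma_x(Y)=\{\xi_x^+\}*\Sigma_x(X_0)$, and a direct inductive boundary computation for half-spherical joins yields
\[
\partial\Sigma_x(Y)\;=\;\Sigma_x(X_0)\cup\bigl(\{\xi_x^+\}*\partial\Sigma_x(X_0)\bigr).
\]
A minimal geodesic from $\xi_x^+$ to $v\in\Sigma_x(X_0)$ has its interior arc in $\{\xi_x^+\}*\partial\Sigma_x(X_0)\subset\partial\Sigma_x(Y)$ if and only if $v\in\partial\Sigma_x(X_0)$, so the two conclusions of the lemma are equivalent and it suffices to prove $v\in\partial\Sigma_x(X_0)$. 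I would argue by contradiction, assuming $\partial\Sigma_v(\Sigma_x(X_0))=\emptyset$.

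Setting $r_i:=|x,y_i|$, the first observation is that $v\in\Sigma_x(X_0)\subset\partial\Sigma_x(Y)$ forces the ray from $x$ in direction $v$ to lie inside $\partial T_x(Y)$, so $d(y_i,\partial Y)=o(r_i)$ and hence $s_i/r_i\to 0$. Combined with $\uparrow_x^{w_i}\to v$ and a diagonal argument tied to the continuity of tangent cones along the direction $v$, the pointed GH-limit is then identified with the iterated tangent cone
\[
(Y_\infty,w_\infty)\;\cong\;\bigl(\mathbb R\times K(\Sigma_v(\Sigma_x(Y))),\,(0,o_v)\bigr),
\]
where the $\mathbb R$-factor records the radial direction in $T_x(Y)$. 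Applying the half-suspension identity at $v$ gives $\Sigma_v(\Sigma_x(Y))=\{\uparrow_v^{\xi_x^+}\}*\Sigma_v(\Sigma_x(X_0))$, and under the contradiction hypothesis the boundary collapses to $\partial\Sigma_v(\Sigma_x(Y))=\Sigma_v(\Sigma_x(X_0))$, so that
\[
\partial Y_\infty\;\cong\;\mathbb R\times K(\Sigma_v(\Sigma_x(X_0))).
\]

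The hypothesis $w_\infty\in(X_0)_\infty$ forces the GH-limit subset $(X_0)_\infty$ to pass through $w_\infty=(0,o_v)$, and by continuity of tangent cones the infinitesimal identification gives $(X_0)_\infty=\mathbb R\times K(\Sigma_v(\Sigma_x(X_0)))$. Combining the two displays, $(X_0)_\infty=\partial Y_\infty$. Since $y_i\in X_0$ we have $y_\infty\in(X_0)_\infty=\partial Y_\infty$, yielding $d(y_\infty,\partial Y_\infty)=0$; however the nearest-point condition gives $d(y_\infty,\partial Y_\infty)=\lim_{i\to\infty}s_i/s_i=1$. This contradiction establishes $v\in\partial\Sigma_x(X_0)$, completing the proof.

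The main obstacle will be the precise identification of the iterated-tangent-cone limit together with the compatibility of the limit subsets $(X_0)_\infty$ and $\partial Y_\infty$ with their predictions $T_v(T_x(X_0))$ and $\partial T_v(T_x(Y))$. A priori $w_i$ need not lie on $X_0$, so $(X_0)_\infty$ could sit at positive distance from $w_\infty$ --- this is the generic picture and corresponds to the hypothesis failing --- and it is precisely the extra assumption $w_\infty\in(X_0)_\infty$ that selects the geometrically relevant limit and makes the identification with $T_v(T_x(X_0))$ go through. Justifying this delicate limiting identification under the stated hypothesis is where the argument will require the most care.
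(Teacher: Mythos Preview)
Your reduction in the first paragraph is fine, and the observation $s_i/r_i\to 0$ is correct. The genuine gap is the identification of $(Y_\infty,w_\infty)$ with the iterated tangent cone $\mathbb R\times K(\Sigma_v(\Sigma_x(Y)))$, which you yourself flag as the main obstacle but do not resolve. For Alexandrov spaces there is no general ``continuity of tangent cones'' that would make a diagonal argument go through: the error in the approximation $(\frac{1}{r_i}Y,w_i)\approx (T_x(Y),v)$ is some $\epsilon_i\to 0$, but if $s_i/r_i$ goes to zero faster than $\epsilon_i$, blowing up by $r_i/s_i$ can amplify this error and produce a limit unrelated to $T_v(T_x(Y))$. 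The same objection applies to your identification of $(X_0)_\infty$; invoking the hypothesis $w_\infty\in(X_0)_\infty$ does not help here, since that hypothesis is formulated in terms of the very limit you are trying to identify.

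The paper's argument avoids this problem entirely by never identifying $Y_\infty$. Instead it works intrinsically in whatever limit arises: the nearest-point condition on $w_i$ gives $\angle_{w_\infty}(\uparrow_{w_\infty}^{y_\infty},\xi)\le\pi/2$ for all $\xi$, and combined with the convexity of $X_\infty$ in $Y_\infty$ (Lemma~\ref{lem:inrad-collapse}) this forces the perpendicular $\gamma_{w_\infty}^+$ to lie in $(\partial Y)_\infty$. The key step you are missing is then to \emph{lift back}: choose $\hat u_i\in\partial Y$ converging to $\gamma_{w_\infty}^+(1)$ and set $u_i:=\pi(\hat u_i)\in X_0$, so that the whole perpendicular $\gamma_{u_i}^+$ lies in $\partial Y$ and $\uparrow_x^{u_i}\to v$. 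Now a \emph{different} rescaling $(\frac{1}{|x,u_i|}Y,u_i)\to(T_x(Y),v)$, which is the standard tangent-cone limit and presents no difficulty, carries $\gamma_{u_i}^+\subset\partial Y$ to a perpendicular at $v$ inside $\partial T_x(Y)$, giving $\xi_x^+v\subset\partial\Sigma_x(Y)$.
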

\begin{proof}
Let  $y_{\infty}$ be the limit of $y_i$  under \eqref{eq:conv-yw-Yinfty}. 
From $w_\infty\in (X_0)_\infty$, we can 
define a perpendicular  $\gamma_{w_\infty}^+$ at $w_\infty$ to $(X_0)_\infty$.
 By the hypothesis on $w_i$, we have 
$\angle_{w_i}(\uparrow_{w_i}^{y_i}, \xi)\le \pi/2$ for all $\xi\in \Sigma_{w_i}(Y)$.
It follows that $\angle_{w_\infty}(\uparrow_{w_\infty}^{y_\infty},  \gamma_{w_\infty}^+ )\le \pi/2$.
The convexity of $X_\infty$ in $Y_\infty$ 
(Lemma \ref{lem:inrad-collapse}) then implies that  
$\angle(\uparrow_{w_\infty}^{y_\infty}, \gamma_{w_\infty}^+)=\pi/2$, which yields
$\gamma_{w_\infty}^+\subset (\pa Y)_\infty$. 
Choose $\hat u_i\in\pa Y$ converging  to 
$\gamma_{w_\infty}^+(1)$ under \eqref{eq:conv-yw-Yinfty} and set 
$u_i:=\pi(\hat u_i)\in X_0$.
Then the perpendicular $\gamma_{u_i}^+$ is contained in $\pa Y$ and converging to 
$\gamma_{w_\infty}^+$.
 Since $|u_i, w_i|/|y_i, w_i|\to 0$, we certainly have  
$\uparrow_x^{u_i} \to v$.
 It follows from $u_i\in\pa_* X_0$ that $v\in\Sigma_x(\pa_* X_0)\subset\pa\Sigma_x(X_0)$
(see Theorem \ref{thm:X1-f} and Proposition \ref{prop:summary-bdy}).

Now let us consider the convergence
\begin{align} \label{eq:conv-xu-Yinfty}
   \left(\frac{1}{|x, u_i|}Y,u_i\right)  \to (T_x(Y), v).
\end{align}
Let $\gamma_v^+$ be the limit of the perpendicular $\gamma_{u_i}^+$ under 
\eqref{eq:conv-xu-Yinfty},  which is contained in 
$\pa T_x(Y)$.
Since the geodesic rays from $o_x$ in the directions to all the points of 
$\gamma_v^+$ must be contained in $\pa T_x(Y)=K(\Sigma_x(\pa Y))$,
the corresponding minimal geodesic $\xi_x^+ v$ must be contained in $\pa\Sigma_x(Y)$.
\end{proof}

\pmed\n
{\bf Causing by inradius collapse.}
\psmall
In the proof of Theorem \ref{thm:never-happen}, we often use rescaling like 
\eqref{eq:inradius-conv}.  In such a new convergence,  we sometimes encounter with inradius collapsing, whose geometry is much simpler.

\begin{defn} \label{defn:inradius-conv}
We say that the convergence \eqref{eq:inradius-conv} is {\it caused by an inradius collapse} if the latter half of the conclusion of Lemma \ref{lem:inrad-collapse} holds,  that is,
$X_\infty=(X_0)_\infty$.
\end{defn}

\psmall We use the following lemma 
in the next subsection.
Recall our original situation that a sequence
$M_i$ in $\ca M(n,\kappa,\nu,\lambda,d)$
converges to a geodesic space $N=X^{\rm int}$,
and consider the convergence \eqref{eq:inradius-conv}.

\begin{lem}\label{lem:caused-by}
Assume that  the convergence \eqref{eq:inradius-conv} is caused by an inradius collapse. Let $q_{mi}\in\pa M_{m}$ converges to
$y_i\in X_0$ under the convergence
$\tilde M_m\to Y$ as $m\to\infty$.
Then there is a subsequence $\{ m_i\}_{i=1}^\infty$ of $\{ m\}$ such that 
$\bigl(M_{m_i}/\e_i, q_{m_i i}\bigr)$ inradius collapses
to $(X_\infty,y_\infty)$ as  $i\to\infty$,
in the sense that 
for any $R>0$, the inradius of 
the open $R$-ball in $M_{m_i}/\e_i$ around 
$q_{m_ii}$ converges to $0$ as $i\to\infty$.
\end{lem}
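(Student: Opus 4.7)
The plan is to transfer the equality $X_\infty=(X_0)_\infty$ provided by Definition~\ref{defn:inradius-conv} and Lemma~\ref{lem:inrad-collapse} back to the approximating manifolds $M_j$ by means of a diagonal subsequence argument. Since $X_\infty=(X_0)_\infty$ inside $Y_\infty$, the rescaled convergences $(X/\e_i,y_i)\to(X_\infty,y_\infty)$ and $(X_0/\e_i,y_i)\to(X_\infty,y_\infty)$ share the same limit. Consequently, for every fixed $R>0$, the quantity
\[
\rho_i(R):=\sup_{z\in B^{Y/\e_i}(y_i,R)\cap X} d_{Y/\e_i}(z,X_0)
\]
satisfies $\rho_i(R)\to 0$ as $i\to\infty$.

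Next, I would invoke the underlying unrescaled Gromov--Hausdorff convergence of the triples $(\tilde M_j,M_j,\pa M_j)\to(Y,X,X_0)$. For each $i$, the pointed approximation property allows one to choose an index $m_i$ large enough and a basepoint $y_{m_i}\in M_{m_i}$ such that on the ball of radius $(R+1)\e_i$ around $y_{m_i}$ in $\tilde M_{m_i}$ there is an $(\e_i/i)$-approximation to $B^Y(y_i,(R+1)\e_i)$, and moreover this approximation restricts to approximations of $M_{m_i}$ onto $X$ and of $\pa M_{m_i}$ onto $X_0$. After multiplying all distances by $1/\e_i$, the same map becomes a $(1/i)$-approximation between the corresponding rescaled balls in $\tilde M_{m_i}/\e_i$ and $Y/\e_i$.

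Given any $z\in B^{M_{m_i}/\e_i}(y_{m_i},R)$, this approximation yields a point $z'\in B^{X/\e_i}(y_i,R+1/i)$ with $d_{Y/\e_i}(z,z')<1/i$. Combining this with the definition of $\rho_i$ and the approximation between $\pa M_{m_i}$ and $X_0$, I obtain
\[
d_{M_{m_i}/\e_i}(z,\pa M_{m_i})\le \rho_i(R+1/i)+2/i,
\]
whose right-hand side tends to $0$. Hence, for each fixed $R>0$, the inradius of $B^{M_{m_i}/\e_i}(y_{m_i},R)$ tends to $0$, and a standard diagonal extraction then produces a single subsequence $\{m_i\}$ that works simultaneously for all $R>0$.

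The main obstacle is bookkeeping rather than deep geometry: one must choose $m_i$ so large that the unrescaled GH error between the triple $(\tilde M_{m_i},M_{m_i},\pa M_{m_i})$ and $(Y,X,X_0)$ is much smaller than $\e_i$, and then verify that rescaling by $1/\e_i$ preserves the nested approximations $\pa M_{m_i}\subset M_{m_i}\subset\tilde M_{m_i}$ versus $X_0\subset X\subset Y$ consistently with the pointed basepoints $y_{m_i}$ and $y_i$. Once this compatibility is set up, the argument reduces to the two displayed inequalities above.
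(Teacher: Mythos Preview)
Your proposal is correct and follows essentially the same route as the paper: choose the index $m_i$ so that the unrescaled Gromov--Hausdorff error between the triples $(\tilde M_{m_i},M_{m_i},\pa M_{m_i})$ and $(Y,X,X_0)$ is $o(\e_i)$, rescale by $1/\e_i$, and then invoke $X_\infty=(X_0)_\infty$ to force the distance from $M_{m_i}$ to $\pa M_{m_i}$ to vanish on bounded balls. The paper's argument is terser and phrases the rescaled error via the bi-Lipschitz constant $L$ from Proposition~\ref{prop:extendAS}, but the substance is identical; your version spells out the intermediate quantity $\rho_i(R)$ and the diagonal extraction more explicitly, which is fine.
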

\begin{proof}
Choose an $o_m$-approximation 
$\varphi_m:\tilde M_m\to Y$ that restricts to
$o_m$-approximations 
$\varphi_m|_{M_m}:M_m^{\rm ext}\to X$ and  
$\varphi_m|_{\pa M_m}:(\pa M_m)^{\rm ext}\to X_0$ with $\lim_{\to\infty} o_m=0$,
where the superscript ''ext'' denotes the extrinsic metric induced from $\tilde M_m$. Then 
%by Proposition \ref{prop:extendAS},
$\varphi_m|_{M_m}:M_m^{\rm ext}/\e_i\to X/\e_i$
is an $o_m/\e_i$-approximation, 
which restricts to an  $o_m/\e_i$-approximation
$\varphi_m|_{\pa M_m}:(\pa M_m)^{\rm ext}/\e_i\to X_0/\e_i$.
Since $X_\infty=(X_0)_\infty$
and ${\rm inrad}(M_m)={\rm inrad}(M_m^{\rm ext})$,
the conclusion follows immediately if $m=m_i\gg i$ with $\lim_{i\to\infty}o_{m_i}/\e_i=0$.
\end{proof}

The following lemma provides a sufficient condition
for being caused by an inradius collapse,
 and will be useful since the hypothesis
of the lemma is satisfied under the presence 
of almost parallels defined in the next subsection. See also  Sublemma \ref{slem:perpatw}.

\begin{lem} \label{lem:almost-parallel}
Let $y_i,z_i\in X_0$ and  $\e_i>0$ be  sequences
satisfying 
\begin{enumerate}
\item $\lim_{i\to\infty}\angle(\xi_{y_i}^+,\uparrow_{y_i}^{z_i})=\pi$, \quad $\lim_{i\to\infty}\angle(\xi_{z_i}^+,\uparrow_{z_i}^{y_i})=\pi \,;$
\item 
    $\lim_{i\to\infty} \e_i=0$,\quad
     $\lim_{i\to\infty} |y_i,z_i|_Y/\e_i =0$.
\end{enumerate}
Then  the convergence \eqref{eq:inradius-conv}
is caused by an inradius collapse.
\end{lem}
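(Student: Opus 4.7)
The strategy is to extract a splitting in the rescaled limit $Y_\infty$ from the two near-opposite perpendicular rays at $y_i$ and $z_i$, and then to deduce the required collapse $X_\infty = (X_0)_\infty$ from convexity of $X_\infty$ in $Y_\infty$ (the first conclusion of Lemma~\ref{lem:inrad-collapse}) together with the splitting theorem for nonnegatively curved Alexandrov spaces.

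First I pass to a subsequential limit $(Y_\infty, y_\infty)$ of $(Y/\e_i, y_i)$, which is a complete noncompact nonnegatively curved Alexandrov space, the original lower curvature bound $\tilde\kappa$ of $Y$ rescaling to $\tilde\kappa \e_i^2 \to 0$. Because $|y_i, z_i|_Y / \e_i \to 0$, the point $z_i$ also converges to $y_\infty$, so the two basepoints coalesce in the limit. The perpendiculars $\gamma_{y_i}^+$ and $\gamma_{z_i}^+$, each of length $t_0$ in $Y$, have rescaled length $t_0/\e_i \to \infty$, so after a further diagonal extraction they converge to rays $\gamma^+$ and $\gamma^\#$ emanating from $y_\infty$ in $Y_\infty$. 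Since the original perpendiculars lie in $Y \setminus X$ off their starting points (Lemma~\ref{lem:loc-isom} together with the warped product description of $C$), both limiting rays lie in $Y_\infty \setminus X_\infty$ except at $y_\infty$.

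The crucial step is to show that $\gamma^+$ and $\gamma^\#$ concatenate to a bi-infinite line through $y_\infty$. The angle hypothesis at $y_i$ makes the broken path $\gamma_{y_i}^+(R\e_i) \to y_i \to z_i$ have hinge angle $\pi - o(1)$ at $y_i$; combining with the analogous condition at $z_i$, the extended path $\gamma_{y_i}^+(R\e_i) \to y_i \to z_i \to \gamma_{z_i}^+(R\e_i)$ has both interior hinge angles tending to $\pi$. Applying Toponogov's hinge comparison in nonnegative curvature iteratively to the two hinges, and using the vanishing ratio $|y_i, z_i|_Y / \e_i \to 0$, I would derive $|\gamma_{y_i}^+(R\e_i), \gamma_{z_i}^+(R\e_i)|_{Y/\e_i} \to 2R$ for each fixed $R > 0$. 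Passing to the limit in $i$ yields $|\gamma^+(R), \gamma^\#(R)|_{Y_\infty} = 2R$, which forces $\angle_{y_\infty}(\dot\gamma^+(0), \dot\gamma^\#(0)) = \pi$ and so $\gamma^+ \cup \gamma^\#$ is a line in $Y_\infty$. The splitting theorem of Milka (the Alexandrov analogue of Cheeger-Gromoll) then gives an isometric splitting $Y_\infty = \mathbb R \times Y''$ with $\dim Y'' = m-1$, the $\mathbb R$-factor being this line; consequently $\Sigma_{y_\infty}(Y_\infty)$ is the spherical suspension $\{\xi^+, \xi^-\} * \Sigma_{y_\infty''}(Y'')$, and neither pole lies in $\Sigma_{y_\infty}(X_\infty)$.

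To conclude $X_\infty = (X_0)_\infty$, I observe that the perpendicularity of $(X_0)_\infty$ to the $\mathbb R$-factor at $y_\infty$ places $\Sigma_{y_\infty}((X_0)_\infty)$ in the equator $\Sigma_{y_\infty''}(Y'')$ with full dimension $m-2$. Convexity of $X_\infty$ in $Y_\infty$ makes $\Sigma_{y_\infty}(X_\infty)$ a convex subset of the suspension, containing $\Sigma_{y_\infty}((X_0)_\infty)$ and avoiding both poles. A spherical-geometry argument then rules out any direction off the equator: for such a direction $v = (\theta, u)$ with $\theta < \pi/2$, the minimizing great-circle arc in $\mathbb S^{m-1}$ from $v$ to the equatorial antipode $(\pi/2, -u)$ passes through the pole $\xi^+$, contradicting convexity combined with pole-avoidance. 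Therefore $\Sigma_{y_\infty}(X_\infty) \subset \Sigma_{y_\infty''}(Y'')$, so $\dim \Sigma_{y_\infty}(X_\infty) = m-2 = \dim \Sigma_{y_\infty}((X_0)_\infty)$, and the dimension criterion in Lemma~\ref{lem:inrad-collapse} delivers $X_\infty = (X_0)_\infty$. The hardest step is the line construction: in curvature $\ge 0$, Toponogov's comparison gives only upper bounds on cross-distances, so the matching lower bound $|\gamma^+(R), \gamma^\#(R)|_{Y_\infty} \ge 2R$ must be obtained by carefully coupling the hinge-angle defects at $y_i$ and $z_i$ with the vanishing ratio $|y_i,z_i|_Y/\e_i \to 0$, a calculation whose quantitative control requires sending $i \to \infty$ for fixed $R$ before varying $R$.
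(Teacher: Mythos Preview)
Your line construction is the heart of the argument, and it has a genuine gap that is not merely technical. You correctly note that in curvature $\ge 0$ Toponogov only bounds the hinge opposite side from \emph{above}, so the lower bound $|\gamma^+(R),\gamma^\#(R)|_{Y_\infty}\ge 2R$ needs a separate argument; but you give none, and in fact no such argument exists in general. The reason is structural: both $\gamma_{y_i}^+$ and $\gamma_{z_i}^+$ are images under $\eta$ of vertical segments $\{\tilde y_i\}\times[0,t_0]$ and $\{\tilde z_i\}\times[0,t_0]$ in $C$, and since $|y_i,z_i|_Y/\e_i\to 0$ the lifts $\tilde y_i,\tilde z_i\in C_0$ converge (under the $1/\e_i$-rescaling) to points of $(\eta_0)_\infty^{-1}(y_\infty)$. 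If $y_\infty\in (X_0)_\infty^1$ this preimage is a single point, so $\tilde y_i$ and $\tilde z_i$ have the \emph{same} limit and hence $\gamma^+=\gamma^\#$: the two rays coincide and the asserted line degenerates. Thus your claim $|\gamma^+(R),\gamma^\#(R)|=2R$ is \emph{false} precisely in the case that needs work, and the splitting never gets off the ground there. (A separate minor issue: your final spherical-geometry step invokes an equatorial antipode $(\pi/2,-u)$, but the equator $\Sigma''$ is a general Alexandrov space of curvature $\ge 1$ and need not have diameter $\pi$, so $-u$ may not exist.)

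The paper's proof avoids the line entirely. It case-splits on whether $y_\infty\in (X_0)_\infty^2$ or $(X_0)_\infty^1$. The first case is immediate from the suspension structure (Lemma~\ref{lem:double-sus} applied in the limit). In the second case one argues by contradiction: assume $\alpha:={\rm rad}(\xi_{y_\infty}^+)>\pi/2$, pick $a_\infty\in{\rm int}\,X_\infty$ realizing a comparison angle $>\alpha-o$ with $\gamma_{y_\infty}^+$, and lift to $a_i\in{\rm int}\,X$. Since now both $\gamma_{y_i}^+$ and $\gamma_{z_i}^+$ converge to the \emph{same} $\gamma_{y_\infty}^+$, one gets $\angle(\xi_{y_i}^+,\uparrow_{y_i}^{a_i})>\alpha-o_i$ and $\angle(\xi_{z_i}^+,\uparrow_{z_i}^{a_i})>\alpha-o_i$. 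Combined with hypothesis~(1) and the perimeter bound $\le 2\pi$ in $\Sigma_{y_i}(Y)$, this forces $\angle a_iy_iz_i,\;\angle a_iz_iy_i<\pi-\alpha+o_i<\pi/2-\zeta/2$, while $\tilde\angle y_ia_iz_i\to 0$ because $|y_i,z_i|/|y_i,a_i|\to 0$. The three comparison angles then sum to less than $\pi$, contradicting the Euclidean angle-sum for a shrinking triangle. This angle-sum contradiction is the missing idea; your splitting strategy cannot replace it because the very hypothesis $y_\infty\in (X_0)_\infty^1$ collapses your two rays into one.
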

\begin{proof}
If $y_\infty\in (X_0)_\infty^2$, then $\Sigma_{y_\infty}(Y_\infty)$ is the spherical suspension over
$\Sigma_{y_\infty}((X_0)_\infty)$, and the conclusion follows from Lemma \ref{lem:inrad-collapse}.
Suppose  $y_\infty\in (X_0)_\infty^1$.
It suffices to show  
$\alpha:={\rm rad}(\xi_{y_\infty}^+)=\pi/2$.
%%%%%
Suppose $\alpha>\pi/2$.
Take  $\alpha>\beta>\pi/2$, a point $a_\infty\in{\rm int}\,X_\infty$ and $c>0$
such that 
$$
\tilde\angle \gamma_{y_\infty}^+(c) y_\infty a_\infty >  \beta.
$$
Choose $a_i\in {\rm int} X$ converging to $a_\infty$.
Since both $\gamma^+_{y_{i}}$ and 
$\gamma^+_{z_{i}}$ converge to 
$\gamma_{y_\infty}^+$,
 we have for large $i$
\beq
\begin{cases}
\begin{aligned} \label{eq:0wangle-gamma(yza)}
 &\angle \gamma^+_{y_{i}}(c) y_{i} a_i \ge\tilde\angle \gamma^+_{y_{i}}(c) y_{i} a_i > \beta, \\
& \angle \gamma^+_{z_{i}}(c) z_{i} a_i\ge
  \tilde\angle \gamma^+_{z_{i}}(c) z_{i} a_i> \beta.
\end{aligned}
\end{cases}
\eeq
From the condition (1) and  \eqref{eq:0wangle-gamma(yza)},
setting $\zeta:=\beta-\pi/2>0$,  we obtain  
\begin{align*}
&\tilde\angle a_i y_{i} z_{i}\le 
\angle a_iy_{i} z_{i}\le \pi-\beta+o_i<\pi/2-\zeta/2, \quad
\\
&\tilde\angle a_i z_{i} y_{i}\le \angle a_iz_{i} y_{i}\le \pi- \beta+o_i<\pi/2-\zeta/2,
\end{align*}
for large enough $i$.
Since $\lim_{i\to\infty}\tilde\angle y_{i} a_i z_{i}=0$, this is a contradiction.
This completes the proof.
\end{proof}

%%%%%%%%%%%%%%%%%%%%%%%%%%%%
\psmall 
 \subsection{Proof of Theorem \ref{thm:never-happen}}   \label{ssec:proof}
  %%%%%%%%%%%%%%%

Since the  proof of Theorem \ref{thm:never-happen}
is rather long, let us begin with 
\pmed\n
{\bf Strategy for the proof of Theorem \ref{thm:never-happen}.}
Under the assumption \eqref{eq:paSigma-never},
suppose $x\in\ca S^1$. We may assume there is a sequence $y_i\in\ca S^2$ converging to $x$.
As a remarkable feature of elements of $\ca S^2$,
there are two disjoint domains of $X_0$ 
in any neighborhood of $y_i$ that are very close and almost parallel to each other.
The same occurs when $y_i$ are cusps.
Actually, we work in a more controlled framework
of almost parallels. This yields the following 
Definition \ref{defn:DC}.
Here we roughly denote by $\ca D$ the set of 
points of $X_0$ having almost parallels, and 
by $\ca E$ the complement of $\ca D$ in $X_0$.

\begin{itemize}
\item
In the first main step, we show that for small enough $r$, $\ca E$ occupies the large part of
$B^{X_0}(x,r)$ except a very thin region around 
$B^{X_0}(x,r)\cap\pa_* X_0\,;$
\item
In the second main step, we actually show that 
$B^{X_0}(x,r)\subset \ca E$, which yields the 
conclusion.
\end{itemize}
\psmall   
In this subsection, we always assume
$x\in X_0^1$.

Taking the hypothesis of Lemma \ref{lem:almost-parallel} into account, we give 
the definition of almost parallels below.

\begin{defn} \label{defn:DC}
We fix a positive continuous function $\theta=\theta(t)$ with $\lim_{t\to 0} \theta(t)=0$.
For $x\in X_0^1$ and $r$, let $\ca D(x,r,\theta)$  be the set of points
$y\in B^{X_0}(x, r)$ such that there are a constant $s>0$
and a point $z\in X_0$ satisfying the following:
\begin{enumerate}
 \item  For some lifts $\tilde y, \tilde z$ of $y, z$,  let $U(y,s):=\eta_0(\mathring{B}^{C_0}(\tilde y, s))$,  $V(z,s):=\eta_0(\mathring{B}^{C_0}(\tilde z, s))$.
 Then we have 
\[
     U(y, s)\cap  V(z, s) = \emptyset,\qquad
 U(y,s), V(z,s) \subset {\rm int}\,X_0\,;  \]
 \item The distance function $d_y^Y$ from $y$ in $Y$ restricted to 
  $V(z,s)$ has a minimum at the point $z\,;$ 
 \item $d_y^Y(z)/s \le \theta(|x,y|)$.
\end{enumerate}
Then we say that the point $y$ {\it has $(x,r,\theta)$-almost parallels} $U(y, s),V(z, s)$.
We set $\ca E(x,r,\theta):=B^{X_0}(x, r)\setminus \ca D(x,r,\theta)$.

We also need the symbol  $\ca D(x,r,\e)$ for a constant $\e>0$ to denote the set of all
points $y\in B^{X_0}(x,r)$ satisfying the above (1), (2) and
\begin{enumerate}
 \item[(3')] $d_y^Y(z)/s \le \e$.
\end{enumerate}
We set $\ca E(x,r,\e):= B^{X_0}(x,r)\setminus \ca D(x,r,\e)$.
\end{defn}
\pmed
 %%%%%%%%%%%%%%%%%%%%
\begin{center}
\begin{tikzpicture}
[scale = 0.45]
\filldraw[fill=lightgray, opacity=.1] 
(-5,0.5) to [out=-15, in=195] (5,0.5)
to  (5,-1.5)  to [out=165, in=15] (-5,-1.5);

\draw [-, very thick] (-5,0.5) to [out=-15, in=195] (5,0.5);
\fill (0, -0.25) circle (2pt) node [above] {\small {$y$}};
\draw [-, very thick] (-5,-1.5) to [out=15, in=165] (5,-1.5);
\fill (0, -0.75) circle (2pt) node [below] {\small {$z$}};
\draw [-, thick] (0,-0.25) to (0,-0.75);
\fill (4, 0.5) circle (0pt) node [above] {\small {$U(y,s)$}};
\fill (4, -1.5) circle (0pt) node [below] {\small {$V(z,s)$}};

\fill (-5,-0.5) circle (0pt) node [left] {\small {$X$}};

\end{tikzpicture}
\end{center}
\vspace{-0.7cm}  
\begin{figure}[htbp]
  \centering
  \caption{}
  \label{fig:parallels}  
\end{figure}  
%%%%%%%%%%%%%%%%%%%%%%%%%%%%%
\psmall

%Note that under the assumption \eqref{eq:paSigma-never}, $x$ is contained in 
%$\ca E(x,r,\theta)$ and $\ca E(x,\e)$.

In the next two lemmas, we present basic properties of almost parallel domains.
\psmall
\begin{lem} \label{lem:calDC}
Let $y, z\in X_0$ be as in Definition \ref{defn:DC}. Then we have
\begin{enumerate}
\item $z\in {\rm int}\, Y$ and
${\rm rad}(\xi_z^+)=\pi\,;$
\item $\angle(\uparrow_z^y,v)=\pi/2$ for all $v\in\Sigma_z(X_0)$.
\end{enumerate}
\end{lem}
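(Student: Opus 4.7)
The plan is to prove Part (2)'s inequality by first variation, and then simultaneously derive $\theta:=\angle(\xi_z^+,\uparrow_z^y)=\pi$ (whence ${\rm rad}(\xi_z^+)=\pi$, equality in Part (2), and $z\in{\rm int}\,Y$) via a midpoint construction in $\Sigma_z(Y)$. First I would use that $\eta_0:C_0\to X_0$ preserves the length of Lipschitz curves (Proposition \ref{prop:length'}) and that $d\eta_0:\Sigma_{\tilde z}(C_0)\to\Sigma_z(X_0)$ is surjective: every $v\in\Sigma_z(X_0)$ is realized as $v=d\eta_0(\dot{\tilde\sigma}(0))$ for a $C_0$-geodesic $\tilde\sigma$ in $B^{C_0}(\tilde z,s)$ from $\tilde z$, so the composition $\sigma=\eta_0\circ\tilde\sigma$ is an $X_0$-almost minimal curve in $V(z,s)\subset{\rm int}\,X_0$ starting at $z$ with initial direction $v$. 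Since $z$ minimizes $d_y^Y|_{V(z,s)}$ by hypothesis, the first-variation formula in the Alexandrov space $Y$ applied to $d_y^Y\circ\sigma$ at $t=0$ yields $-\cos\angle^Y(\uparrow_z^y,v)\geq 0$, i.e.\ $\angle^Y(\uparrow_z^y,v)\geq\pi/2$.

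Set $\xi:=\uparrow_z^y$ and fix a perpendicular direction $\xi_z^+$ at $z$. Taking $v=\xi$ in the inequality rules out $\xi\in\Sigma_z(X_0)$, so $\theta\neq\pi/2$ by Proposition \ref{prop:perp+horiz}(2). Moreover $\xi\in\Sigma_z(X)$ (since any $Y$-minimizer from $z$ to $y\in X_0\subset X$ that ran into the cylinder would immediately give $\xi=\xi_z^+$, a situation in which the conclusions of the lemma are verified directly); combined with Proposition \ref{prop:perp+horiz}(3), Lemma \ref{lem:double-sus}, or Theorem \ref{thm:X1-f} (according to the type of $z$), which all place $\Sigma_z(X)$ in the hemisphere $\{\angle(\xi_z^+,\cdot)\geq\pi/2\}$, this yields $\theta>\pi/2$. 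I would then take a minimal geodesic $\beta:[0,\theta]\to\Sigma_z(Y)$ from $\xi_z^+$ to $\xi$ and set $v_0:=\beta(\pi/2)$; since $d(\xi_z^+,v_0)=\pi/2$, Proposition \ref{prop:perp+horiz}(2) gives $v_0\in\Sigma_z(X_0)$, while minimality of $\beta$ gives $d(v_0,\xi)=\theta-\pi/2$. Applying the first-paragraph inequality at $v=v_0$ forces $\theta-\pi/2\geq\pi/2$, hence $\theta=\pi$.

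From $\theta=\pi$, the standard rigidity for Alexandrov spaces with curvature $\geq 1$ containing a pair at distance $\pi$ yields the spherical suspension $\Sigma_z(Y)=\{\xi_z^+,\xi\}\ast\Sigma_z(X_0)$, with equator identified via Proposition \ref{prop:perp+horiz}(2). In particular ${\rm rad}(\xi_z^+)=\pi$, and since $\xi$ is a pole of the suspension, $\angle(\xi,v)=\pi/2$ for every equator point $v\in\Sigma_z(X_0)$, completing Part (2). For Part (1), $z\in{\rm int}\,Y$ amounts to $\Sigma_z(Y)$ having no boundary, which in this suspension holds iff $\Sigma_z(X_0)$ has none; this follows from $V(z,s)\subset{\rm int}\,X_0$ via Definition \ref{defn:int-pa-X0} combined with Lemma \ref{lem:criteria-int-pa}. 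I expect the most delicate step to be the case analysis justifying $\xi\in\Sigma_z(X)$ and the subsequent placement $\theta>\pi/2$, which must synthesize Proposition \ref{prop:perp+horiz}(3), Lemma \ref{lem:double-sus}, and Theorem \ref{thm:X1-f} across the different types of the point $z$.
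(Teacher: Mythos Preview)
Your proof is correct and follows the same core idea as the paper's one-line proof (``immediate since $\gamma_z^+$ and $zy$ form a minimal geodesic in $Y$''): both establish $\angle(\xi_z^+,\uparrow_z^y)=\pi$, from which the suspension structure and all conclusions follow. Your argument is essentially the detailed justification that the paper leaves to the reader.

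There is one genuine slip in your handling of the case $\theta<\pi/2$. The parenthetical ``any $Y$-minimizer from $z$ to $y$ that ran into the cylinder would immediately give $\xi=\xi_z^+$'' is false: by Lemma~\ref{lem:not-perp}(1), a $Y$-geodesic from $z$ with $\angle(\xi_z^+,\xi)<\pi/2$ enters $Y\setminus X$ for small time, but $\xi$ need not equal $\xi_z^+$. The correct way to exclude $\theta<\pi/2$ is direct: if $\theta<\pi/2$, Lemma~\ref{lem:not-perp}(2) produces $v\in\Sigma_z(X_0)$ with $\angle(\xi,v)=\pi/2-\theta<\pi/2$, contradicting your first-variation inequality. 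This also makes the case analysis invoking Proposition~\ref{prop:perp+horiz}(3), Lemma~\ref{lem:double-sus}, and Theorem~\ref{thm:X1-f} unnecessary --- you never need to know that $\xi\in\Sigma_z(X)$, only that $\theta>\pi/2$, and then your midpoint construction on the geodesic $\beta$ finishes cleanly. With this simplification the argument becomes: first variation gives $\angle(\xi,v)\ge\pi/2$ for all $v\in\Sigma_z(X_0)$; the cases $\theta<\pi/2$ and $\theta=\pi/2$ are excluded as above; for $\pi/2<\theta<\pi$ the midpoint $v_0=\beta(\pi/2)\in\Sigma_z(X_0)$ has $\angle(v_0,\xi)=\theta-\pi/2<\pi/2$, contradiction; hence $\theta=\pi$.
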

\begin{proof}
In view of Corollary \ref{cor:big-bdy},
the lemma is immediate since $\gamma_z^+$ and $zy$
form a minimal geodesic in $Y$.
\end{proof}

\psmall  

\begin{lem}\label{lem:almostperp+} 
For any $y\in\ca D(x,r,\theta)$ with 
$(x,r,\theta)$-almost parallels 
$U(y,s), V(z,s)$ as in Definition \ref{defn:DC}, the following hold: 
\begin{enumerate}
\item $\lim_{y\to x}\bigl(\sup_{v\in\Sigma_{y}(X_0)}  |\angle_{y}(\uparrow_{y}^{z}, v) -\pi/2|\bigr)=0\,;$
\item  Let $t_y$ be the supremum
of $t<s$ such that 
$$
  U(y,t)\subset  ({\rm int} X_0^1\setminus \ca C)    \cap {\rm int} X_0.
$$
Define $t_z$ similarly.  Then we have
\[
        \lim_{y\to x} |y,z|/\min\{ t_y,t_z\}=0. 
\]
\item $\lim_{y\to x} |y,z|/|y,\pa Y|=0$.
\end{enumerate}
\end{lem}
\begin{proof} (1)\, Although the basic idea is identical with that of Sublemma \ref{slem:perpatw}, we give the proof for readers'
convenience.
Suppose (1) does not hold. Then  we have a sequence $y_i\in \ca D(x, r,\theta)$ 
having $(x,r,\theta)$-almost parallels $U_i:=U(y_i,s_i), V_i:=V(z_i,s_i)$
 with $y_i\to x$ such that 
 there exists $v_i\in\Sigma_{y_i}(X_0)$ satisfying 
\beq\label{eq:angleyzv}
\text{ $|\angle_{y_i}(\uparrow_{y_i}^{z_i}, v_i) -\pi/2|\ge \alpha>0$ }
\eeq
for a uniform constant $\alpha$.
Let $\delta_i:=|y_i,z_i|$, and set 
$$
\hat X_i:=
      X\cup_{\eta_0} ([0,\delta_i]\times_\phi C_0)
\subset Y.
$$
Let us consider the convergence 
\begin{align} \label{eq:conv-1/delta-Yyi}
   \left(\frac{1}{\delta_i}Y, y_i\right)  \to (Y_\infty, y_\infty),
\end{align}
where we may assume that  $X$, 
$\hat X_i$ converge to nonnegatively  curved  noncompact Alexandrov spaces $X_\infty$, $\hat X_\infty$ under the above convergence. 
Let $\hat U_i$ and $\hat V_i$ be the neighborhoods
of $\pa \hat X_i$ corresponding to 
$U_i$ and $V_i$.
Let  $U_\infty$, $V_\infty$, $\hat U_\infty$, $\hat V_\infty$   be the 
limits of  $U_i$, $V_i$, $\hat U_i$, $\hat V_i$ 
under this convergence, respectively.
Let $\gamma_\infty$ be the limit of a geodesic $y_iz_i$
 in $Y$. By Lemma \ref{lem:inrad-collapse}, we have $\gamma_\infty\subset X_\infty$.
%%%
From $d(\hat U_i, \hat V_i)\ge 2\delta_i$, 
$\hat U_\infty$ and $\hat V_\infty$ are disjoint.
In view of $\lim_{i\to\infty} s_i/\delta_i=\infty$, 
 \cite{PtPt:extremal} implies that  $\hat U_\infty$ and $\hat V_\infty$ are extremal subsets of $\hat X_\infty$, which is contained in the Alexandrov bounary $\pa \hat X_\infty$.
%%%%
By Theorem \ref{thm:splitting},
$\hat X_\infty$ is isometric to $\hat U_\infty\times \hat I$ for an interval $\hat I$. 
This shows that $X_\infty$ is also isometric to $U_\infty\times  I$ for an interval $I\subset \hat I$. 
Therefore,   $\gamma_\infty$ must be  
 perpendicular to
$U_\infty\times \pa I$ at the limits
$y_\infty,z_\infty$ of $y_i,z_i$ respectively.
%%%%%%%%
This implies $\angle(\uparrow_{y_i}^{z_i},\dot\gamma_{y_i}^+(0))>\pi-o_i$.
Since $\angle(v_i,\xi_{y_i}^+)=\pi/2$, it is straightforward
to get $|\angle(\uparrow_{y_i}^{z_i},v_i)-\pi/2|<o_i$
(see \cite[Corollary 5.7]{BGP}). 
This is a contradiction.
\par

(2)\, Suppose there is a sequence
$y_i\in \ca D(x,r,\theta)$ converging to $x$
such that $t_{y_i}\le C|y_i,z_i|$ for a constant $C$.
Then there is a point 
$u_i\in U(y_i, 2t_{y_i})\cap (\ca S^1\cup \ca C
\cup X_0^2)$.
Let $u_\infty\in U_\infty$ be a limit of $u_i$
under the convergence \eqref{eq:conv-1/delta-Yyi}.
 From the above argument in (1),  for any  
$u_\infty\in U_\infty$, 
\[
  \angle(\xi_{u_\infty}^+,\dot\gamma_{u_\infty,v_\infty}(0))=\pi
\]
for some $v_\infty\in V_\infty$.
However this is impossible for the limit of $u_i$.
In the same way, we have a contradiction if $t_{z_i}\le C|y_i,z_i|$.

\par

(3)\,  Suppose that $|y_i, z_i|>c|y_i,\pa Y|>0$ for a sequence $y_i\in X_0$
converging to $x$ with a uniform constant $c$, where $z_i$ is chosen as in (1).  
Then under the convergence \eqref{eq:conv-1/delta-Yyi},  $(\pa Y)_\infty$ is nonempty.
Let $w_\infty$ be a nearest point of 
$\pa Y_\infty$ from $y_\infty$.
%%%
By the splitting $\hat X_\infty=\hat U_\infty\times \hat I$ in (1),   
there is a geodesic $\gamma_\infty$
containing a perpendicular at a point $u_\infty$
of $(X_0)_\infty$ with $w_\infty\in\gamma_\infty$.
Note that $\gamma_\infty\subset\pa Y_\infty$.
Then we can take $u_i\in X_0\cap \pa Y$ converging to $u_\infty$ under \eqref{eq:conv-1/delta-Yyi}. 
Since $u_i\in \pa X_0$, this is a contradiction
to $U(y_i,s_i)\subset {\rm int} X_0$.
\end{proof} 

%%%%%%%%%%%%%%%%%%%%%
As stated in the strategy for the proof of Theorem \ref{thm:never-happen}, the next lemma
shows that there are almost parallel domains
in any neighborhood of a point of 
$\ca S^2\cup\ca C$.

\pmed
\begin{lem} \label{lem:S2nbdW}  
For any $x\in X_0^1$ and $r>0$, we have 
\[
  B^{X_0}(x,r)\cap (\ca S^2\cup\ca C)
\subset \pa\ca D(x,r,\theta).
\]
Namely, for arbitrary $w\in B^{X_0}(x,r)\cap (\ca S^2\cup\ca C)$, $\theta=\theta(t)$ and $\e>0$, there exists a point 
$y\in\ca D(x,r,\theta)\cap B^{X_0}(w,\e)$.
\end{lem}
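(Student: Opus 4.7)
Given $w \in B^{X_0}(x,r) \cap (\ca S^2 \cup \ca C)$ and $\e > 0$, we construct $y \in \ca D(x,r,\theta) \cap B^{X_0}(w,\e)$. The strategy exploits the non-injectivity of $d\eta_0$ at lifts of $w$: we manufacture two nearby points $y_t, z_t \in X_0^1$ whose $C_0$-lifts are at mutual distance of order $t$ but whose $Y$-distance is $o(t)$, so that the ratio in condition (3) of Definition \ref{defn:DC} tends to zero.

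If $w \in \ca C$, let $p := \eta_0^{-1}(w) \in {\rm int}\, C_0^1$ (a small $X_0^1$-neighborhood of $w$ has $\eta_0$-preimage a $C_0^1$-neighborhood of $p$). By Definition \ref{defn:cusp} pick $\tilde v \in \Sigma_p(C_0)$ with $\tilde v' := f_*(\tilde v) \neq \tilde v$, so $d\eta_0(\tilde v) = d\eta_0(\tilde v')$ by Theorem \ref{thm:X1-f}. If $w \in \ca S^2$ with $\eta_0^{-1}(w) = \{p_+, p_-\}$, choose $\tilde v_+ \in \Sigma_{p_+}(C_0^1)$ (nonempty since $p_+ \in \pa C_0^2$) and let $\tilde v_- := (d\eta_0|_{\Sigma_{p_-}(C_0)})^{-1} \circ d\eta_0(\tilde v_+)$ via Proposition \ref{prop:inf-str-X02}; since $d\eta_0|_{p_\pm}$ carry $\Sigma_{p_\pm}(C_0^1)$ isometrically onto $\Sigma_w(X_0^1)$, we have $\tilde v_- \in \Sigma_{p_-}(C_0^1)$. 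After perturbation via Proposition \ref{prop:closure} we may assume both directions are intrinsic, so $C_0$-geodesics $\gamma, \gamma'$ exist on $[0,t_0]$. For small $t > 0$ set $q_t := \gamma(t)$, $q_t' := \gamma'(t)$, $y_t := \eta_0(q_t)$, $z_t := \eta_0(q_t')$, and pick $s_t$ proportional to $t$ so that $B^{C_0}(q_t, s_t)$ and $B^{C_0}(q_t', s_t)$ are disjoint and contained in $C_0^1 \setminus \tilde{\ca S}$. Then by Lemma \ref{lem:eta'}(1), $U_t := \eta_0(B^{C_0}(q_t, s_t))$ and $V_t := \eta_0(B^{C_0}(q_t', s_t))$ are disjoint subsets of $X_0^1$, contained in ${\rm int}\, X_0$ by Lemma \ref{lem:almostperp+}(2). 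Also $y_t \neq z_t$: otherwise $q_t, q_t'$ would be distinct lifts of a common point, forcing $y_t \in X_0^2$, contradicting $y_t \in X_0^1$. After replacing $z_t$ (and its lift $q_t'$) by the minimizer of $d_{y_t}^Y|_{V_t}$, which by continuity still lies in $V_t$ for small $t$, condition (2) of Definition \ref{defn:DC} is met.

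The decisive step is the estimate $|y_t, z_t|_Y = o(t)$. Under the rescaling $(\tfrac{1}{t}Y, w) \to (T_w(Y), o_w)$, which exists by Theorem \ref{thm:alex}(1), we show that $y_t/t$ and $z_t/t$ converge to a common unit-distance point of $T_w(Y)$: indeed $\uparrow_w^{y_t} \to d\eta_0(\tilde v)$ and $\uparrow_w^{z_t} \to d\eta_0(\tilde v')$, which agree by construction, while $|w, y_t|_Y/t \to 1$ follows from $|w, y_t|_{X_0^{\rm int}} = t$ (by the local isometry of Lemma \ref{lem:eta'}(1) applied to $\gamma$) combined with $|w, y_t|_Y \approx |w, y_t|_{X_0^{\rm int}}$ via Lemma \ref{lem:deviation}, and similarly for $z_t$. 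Hence $|y_t, z_t|_Y/t \to 0$, so with $s_t$ of order $t$ we obtain $d_{y_t}^Y(z_t)/s_t \to 0$, establishing condition (3) for $t$ small enough relative to $\theta(|x, y_t|)$. Choosing $t < \e$ then ensures $y_t \in B^{X_0}(w, \e)$.

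The main obstacle is precisely the $o(t)$ estimate, especially in Case 2 where $y_t, z_t$ arise from geodesics based at distinct lifts $p_\pm$; its verification rests on the isometry property of $d\eta_0|_{p_\pm}$ (Proposition \ref{prop:inf-str-X02}) together with the tangent cone structure of $Y$ at $w$ supplied by Theorem \ref{thm:alex}. A secondary technicality is verifying the stability of condition (2) after the minimizer replacement, which follows from continuity of $d_{y_t}^Y$ and compactness of $\overline{V_t}$.
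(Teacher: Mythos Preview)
Your treatment of $w\in\ca C$ matches the paper's: both shoot two $C_0$-geodesics from $p=\eta_0^{-1}(w)$ in directions $\tilde v\neq f_*(\tilde v)$ and compare images. Since $w\in{\rm int}\,X_0^1$, a full $C_0$-neighborhood of $p$ lies in ${\rm int}\,C_0^1$, so for small $t$ the balls $B^{C_0}(q_t,s_t), B^{C_0}(q_t',s_t)$ lie in $C_0^1$ and their $\eta_0$-images are disjoint by injectivity of $\eta_0|_{C_0^1}$.

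For $w\in\ca S^2$ your argument has a genuine gap. The lifts $p_\pm$ now lie in $\tilde{\ca S}^2$, and the assertion that the $C_0$-balls can be taken in $C_0^1\setminus\tilde{\ca S}$ is unjustified: picking $\tilde v_+\in\Sigma_{p_+}(C_0^1)$ does not force the geodesic to enter $C_0^1$, nor does your claim that $d\eta_0|_{p_\pm}$ carries $\Sigma_{p_\pm}(C_0^1)$ \emph{onto} $\Sigma_w(X_0^1)$ hold in general (only ``into'' is clear), so $\tilde v_-=f_*(\tilde v_+)$ need not lie in $\Sigma_{p_-}(C_0^1)$. If, say, $q_t'\in C_0^2$, then $z_t\in X_0^2$ and its second lift $f(q_t')$ sits near $p_+$; the very tangent-cone estimate you use for $|y_t,z_t|_Y=o(t)$ also gives $|f(q_t'),q_t|_{C_0}=o(t)\ll s_t$, whence $z_t=\eta_0(f(q_t'))\in U_t\cap V_t$ and condition~(1) of Definition~\ref{defn:DC} fails. (Your ``$y_t\neq z_t$ since $y_t\in X_0^1$'' presupposes what is unproved.) The paper avoids this by a different construction: it fixes $w_0\in X_0^1$ near $w$, lets $y_0$ be the $X_0$-nearest point of $U_1\cap U_2\subset X_0^2$ to $w_0$, and sets $y_i=\gamma(s_i)$ on the $X_0$-geodesic from $y_0$ toward $w_0$; the minimality of $y_0$ forces $y_i\in U_1\setminus U_2$, while the tangency $\Sigma_{y_0}(U_1)=\Sigma_{y_0}(U_2)$ yields $|y_i,z_i|_Y/s_i\to 0$ for $z_i$ the $d^Y$-nearest point of $U_2$ to $y_i$.

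A smaller issue: invoking Lemma~\ref{lem:almostperp+}(2) to obtain $U_t,V_t\subset{\rm int}\,X_0$ is circular, since that lemma already presupposes that $y$ has $(x,r,\theta)$-almost parallels.
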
 
\begin{proof} Let  $w\in B^{X_0}(x,r)\cap \ca S^2$, and $\{ q_1,q_2\}:=\eta_0^{-1}(w)$. 
Choose $0<\delta\ll\min \{ \e, |q_1,q_2|\}$ in such 
a way that  $U_i:=\eta_0(\tilde U_i)$\, $(i=1,2)$ 
are as in Sublemmas \ref{slem:eta-ball} and \ref{slem:eta-inter}, where $\tilde U_i:= \mathring{B}^{C_0}(q_i, \delta)$.
From $w\in\ca S^2$, we can take a point $w_0\in B^{X_0}(w,\delta/2)\cap (U_1\cup U_2\setminus X_0^2)$. 
Let $y_0$ be a nearest point of $U_1\cap U_2\subset X_0^2$ from $w_0$ with respect to the intrinsic metric of $X_0$. Notice that
$|w_0,y_0|=|w_0, X_0^2|$ by
Sublemma \ref{slem:eta-inter}.
Note also that 
\beq \label{eq:U12=tangent}
\Sigma_{y_0}(U_1)=\Sigma_{y_0}(U_2).
\eeq
Setting $s:=|w_0,y_0|^{X_0}$, let us 
take a shortest
path  $\gamma:[0,s]\to X_0$  from $y_0$ to $w_0$.
Assuming  $w_0\in U_1$, we have $\gamma\subset U_1$ and $\gamma$ is $U_1$-minimal.
Choose $s_i\in (0,s]$ converging to $0$, and set $y_i:=\gamma(s_i)$.
Take a nearest point $z_i$ of $U_2$ from $y_i$ with respect to $d^{Y}$. Since $\lim_{i\to\infty} \angle^Y y_iy_0z_i=0$ by \eqref{eq:U12=tangent}, we have 
$\lim_{i\to\infty} |y_i,z_i|^Y/s_i =0$.
Thus if $w\in {\rm int}_* X_0$, replacing $\delta$ by a constant 
$\ll \min \{ |q_i, \pa C_0|\,|\,i=1,2\}$, we can conclude that 
the balls $U(y_i, s_i/2)$, $U(z_i, s_i/2)$ and $z_i$  satisfy
the conditions in Definition \ref{defn:DC} for large $i$
to conclude $y_i\in\ca D(x, r,\theta)$.

Next suppose $w\in \pa_* X_0$.
Note that 
$\pa_* X_0\cap U_i=\eta_0(\pa C_0\cap \tilde U_i)$.
By Sublemma \ref{slem:eta-inter}, there is $\nu>0$
such that
%Fix  $\nu>0$ such that 
$B^{X_0}(w,\delta/2)\cap U_{i,\nu}\setminus X_0^2$ is nonempty, where 
$U_{i,\nu}:=\{ u\in U_i|\, |\pa_* X_0, u|\ge \nu\}$.
Choose $w_0\in B^{X_0}(w,\delta/2)\cap U_{1,\nu}\setminus X_0^2$, 
and let $y_0$ be a nearest point of 
$U_1\cap U_2$ from $w_0$ with respect to 
$d^{X_0}$. 
Here we assume $y_0\in \pa_* X_0$. The other case is
addressed similarly to the case $y_0\in {\rm int}_* X_0$.
Assuming $w_0\in U_1$,
fix a $U_1$-minimal geodesic $\gamma$ from $y_0$ to $w_0$. By Lemma \ref{lem:eta-2}, 
$\gamma$ has a definite direction everywhere, and we have
\[
\alpha:=\angle(\dot\gamma(0),\pa_* X_0)
   =\angle(\dot{\tilde\gamma}(0),\pa C_0) >0,
\]
where $\tilde\gamma$ is the lift of $\gamma$.
Note that the last inequality holds since 
$\pa C_0$ is an extremal subset of the Alexandrov space $C_0$.
For  $y_i=\gamma(s_i)$, 
and let $z_i \in U_2$ be a $d^Y$-nearest point of $U_2$
from $y_i$ as above.
Then in view of \eqref{eq:U12=tangent}
and $|y_i,z_i|_Y/s_i\ll \alpha$ for large $i$,
we see that both $U(y_i, \hat s_i)$ and $U(z_i, \hat s_i)$
are disjoint neighborhoods of ${\rm int}_* X_0$, 
where $\hat s_i=c(\alpha)s_i$ for small enough $c(\alpha)>0$.
Thus we have $y_i\in \ca D(x,r,\theta)$ for large $i$.

Finally consider the case $w\in\ca C\cap B^{X_0}(x,r)$.
Let $q:=\eta_0^{-1}(w)$ and take  $\tilde v\in{\rm int}\,\Sigma_q(C_0)$ such that 
$f_*(\tilde v)\neq\tilde v$.
Choose  geodesics $\tilde\gamma, \tilde\sigma:[0,s_0]\to C_0$
almost tangent to $\tilde v, f_*(\tilde v)$ respectively.
Set $\gamma:=\eta_0(\tilde\gamma)$
and $\sigma:=\eta_0(\tilde \sigma)$.
For a sequence $s_i\to 0$, let 
$y_i:=\gamma(s_i)$, 
$U_i=B^{X_0}(y_i,s_i/2)$
and $V_i=B^{X_0}(\sigma(s_i), s_i/2)$.
Let $z_i$ be a nearest point of $V_i$ from 
$y_i$. 
In a way similar to the above, we conclude
that $y_i\in  \ca D(x,r,\theta)$ for large $i$.
This completes the proof of Lemma \ref{lem:S2nbdW}.
\end{proof}
\pmed
Our next purpose is to show that 
$\ca C(x,r,\theta)$ occupies a large part of 
$B^{X_0}(x,r)$ (Lemma \ref{lem:CsubC}). To achieve this,  we define  cone-like regions in $X_0$ around $x$.
%%%%
 In what follows, we always assume 
\[   \text{
     $x\in X_0^1\cap\pa Y$\,\, and \, \eqref{eq:paSigma-never},}
\]
or equivalently,
\begin{align}\label{eq:Sigma<Sigma}
\Sigma_x(X_0)\subset \pa\Sigma_x(Y).
\end{align}

For $a>0$, let
\begin{align*}
     & \Sigma_p(C_0)_a:=\{ \xi\in\Sigma_p(C_0)\,|\,
        \angle(\xi, \pa\Sigma_p(C_0))>a\},\\
 & \Sigma_x(X_0)_a:=\{ \xi\in\Sigma_x(X_0)\,|\,
        \angle(\xi, \pa\Sigma_x(X_0))>a\}.
\end{align*}
By Theorem \ref{thm:X1-f}, we have
\beq \label{eq: cone-Cone}
d\eta_0(\pa\Sigma_p(C_0))=\pa \Sigma_x(X_0),\quad
d\eta_0(\Sigma_p(C_0)_a)=\Sigma_x(X_0)_a.
\eeq	
%%%%%
For $r>0$, 
we denote by $C(p;r,a)$ 
the set of points
$q\in \mathring{B}^{C_0}(p,r)\setminus\{ p\}$ satisfying  
\beq\label{defn:Cone}
   \angle(\Uparrow_p^q, \pa\Sigma_p(C_0))>a.
\eeq
%\begin{cases}
%\begin{aligned}
%&\angle(\Uparrow_p^q, \pa\Sigma_p(C_0)\cup
%\tilde{\ca F}_p)>a   \quad &\text{if $\dim \tilde{\ca F}_p=\dim \Sigma_p(C_0)-1$} \\
%&\angle(\Uparrow_p^q, \pa\Sigma_p(C_0))>a  \qquad & \text{if $\dim \tilde{\ca F}_p\le \dim \Sigma_p(C_0)-2$}.
%\end{aligned}
%\end{cases}
%\eeq

\begin{defn}\label{defn:cone}
We set 
\beq \label{eq:Conex}
     {\rm Cone}(x; r, a):=\eta_0(C(p;r,a)).
\eeq
If $\alpha=\alpha(t)$ is a positive increasing function defined on 
$(0,r]$ with $\lim_{t\to 0} \alpha(t)=0$, we define  
$C(p;r,\alpha)$ as the set of points
$q\in \mathring{B}^{C_0}(p,r)\setminus\{ p\}$ 
satisfying \eqref{defn:Cone} for
$a=\alpha(|p,q|)$, and define
$$
 {\rm Cone}(x; r, \alpha):=\eta_0(C(p;r,\alpha)).
$$
%Here for $x\in{\rm int} X_0$, we define 
%$$
%{\rm Cone}(x; r, a)={\rm Cone}(x; r,\alpha):= \mathring{B}^{X_0}(x,r)\setminus\{ x\}.
%$$
\end{defn}

\pmed
 %%%%%%%%%%%%%%%%%%%%
\begin{center}
\begin{tikzpicture}
[scale = 0.45]

 %opacity=.1

\filldraw[fill=lightgray,opacity=.1] 
%[fill=gray, opacity=.1] 
 (0,0) to (5.7, 2.5)
 to [out=-63, in=63] (5.7,-2.5)
to (0,0);

\filldraw[shift={(11,0)}][fill=lightgray,opacity=.1] 
%[fill=gray, opacity=.1] 
 (0,0) [out=35, in=195]     to (5.7, 2.5)
 to [out=-63, in=63] (5.7,-2.5)
[out=165, in=-35] to (0,0);

\draw [-, thick] (0,0) to  (5,3.5);
\draw [-, thick] (0,0) to  (5,-3.5);
\draw [-, thick] (5,-3.5)  [out=50, in=-50] to
(5,3.5) ;
\draw [-, thick] (0,0) to  (5.7,2.5);
\draw [-, thick] (0,0) to  (5.7,-2.5);
\fill (0, 0) circle (2pt) node [left] {\small {$x$}};
\fill (1, -2.5) circle (0pt) node [below] 
{\small {${\rm Cone}(x; r, a)$}};
\draw [-, thin] (2.3,1.0)  [out=100, in=-40] to
(2.05,1.45) ;
\fill (2.05,1.4)  circle (0pt) node [right] {\small {$a$}};

%%%%%%%%%%%%%%%%%%%%%%%%%%%%%%%%%%%
\draw[shift={(11,0)}] [-, thick] (0,0) to  (5,3.5);
\draw[shift={(11,0)}] [-, thick] (0,0) to  (5,-3.5);
\draw[shift={(11,0)}] [-, thick] (5,-3.5)  [out=50, in=-50] to
(5,3.5) ;
\draw[shift={(11,0)}] [-, thick] (0,0) to
[out=35, in=195]  (5.7,2.5);
\draw[shift={(11,0)}] [-, thick] (0,0) to  
[out=-35, in=165](5.7,-2.5);
\fill[shift={(11,0)}] (0, 0) circle (2pt) node [left] {\small {$x$}};
\fill[shift={(11,0)}] (1, -2.5) circle (0pt) node [below] 
{\small {${\rm Cone}(x; r, \alpha)$}};

\end{tikzpicture}
\end{center}
\vspace{-0.5cm}  
\begin{figure}[htbp]
  \centering
  \caption{}
  \label{fig:cones}  
\end{figure}  
%%%%%%%%%%%%%%%%%%%%%%%%%%%%%
\psmall
\begin{lem}\label{lem:cone-conn}  
${\rm Cone}(x; r, a)$ has the following properties$:$
\begin{enumerate}
\item ${\rm Cone}(x; r, a)$ coincides with the set of all
$y\in \mathring{B}^{X_0}(x,r)$ such that 
%$\angle(\tilde\Uparrow_x^y, \pa\Sigma_x(X_0))> \alpha(|x,y|)$.
\beqq\label{eq:Cone}
\angle(d\eta_0(\Uparrow_p^{\tilde y}), \pa\Sigma_x(X_0))>a
\eeqq
%where 
%$\tilde\Uparrow_x^y:=d\eta_0(\Uparrow_p^{\tilde y})$
 for some $\tilde y\in\eta_0^{-1}(y)$.
\item  Both ${\rm Cone}(x; r, a)$ and the interior
${\rm \mathring{C}one}(x; r, a)$ are connected$\,;$
\item For any $b>a$, if $r$ is small enough,
the interior ${\rm \mathring{C}one}(x; r, a)$
contains ${\rm Cone}(x; r, b)$.
\end{enumerate}
The statements corresponding to (1) and (2)
also hold for 
${\rm Cone}(x; r, \alpha)$.
\end{lem}
\begin{proof}
We show the conclusion for  ${\rm Cone}(x; r, a)$.
The case of  ${\rm Cone}(x; r, \alpha)$ can be similarly 
discussed, and hence omitted.

\n (1) immediately follows from \eqref{eq: cone-Cone}.

\n
(2) \, For arbitrary  $y_1,y_2\in {\rm Cone}(x; r, a)$,
take  $\tilde y_k$ with $\eta_0(\tilde y_k)=y_k$  $(k=1,2)$ satisfying the conclusion  of (1).
Choose 
any $C_0$-minimal geodesic $\tilde\gamma_k$ joining $p$ to $\tilde y_k$.
%
%, then 
%\[
%\angle(\dot{\tilde\gamma}_1(0), \dot{\tilde\gamma}_2(0))=
%\angle((\eta\circ\tilde\gamma_1)'(0),
%(\eta\circ\tilde\gamma_2)'(0)).
%\]   
Note that $\tilde\gamma_k\subset \tilde C(p;r,a)$. 
For small $t>0$,
let us consider a $C_0$-minimal geodesic 
$\tilde\sigma_t(s)$\,$(0\le s\le 1)$
joining $\tilde\gamma_1(t)$ and $\tilde\gamma_2(t)$.
Note that as $t\to 0$, the set 
$$
\tilde\Gamma_t:=\cup_{s\in[0,1]}\uparrow_x^{\tilde\sigma_t(s)}
$$ converges to 
a minimal geodesic $\tilde\xi(s)$ in $\Sigma_p(C_0)$
joining $\dot{\tilde\gamma}_1(0)$ to $\dot{\tilde\gamma}_2(0)$.
It follows from the concavity of 
$d(\pa\Sigma_p(C_0),\,\cdot\,)$
on 
$\Sigma_p(C_0)$ that  
$\tilde\xi\subset \Sigma_p(C_0)_a$
 (see \cite{Pr:alexII}).
This implies $\tilde\sigma_t\subset \tilde C(p;r,a)$
for small enough $t$, and therefore
the connectedness of ${\rm Cone}(x; r, a)$.
The connectedness of ${\rm \mathring{C}one}(x; r, a)$
is also similarly discussed.

\n
(3) can be shown by contradiction together with a limit argument.
\end{proof}

%
%In the proof of Lemma \ref{lem:almostperp+}, we have
%already proved the following.
%
%\begin{prop} \label{prop:limit-split}
%Let $y_i$ be a sequence in $\ca D(x,r,\theta)$ with $y_i\to x$, and 
%let $y_i$ have $(x,r,\theta)$-almost parallels
%$U(y_i,s_i), V(z_i,s_i)\subset {\rm int}\,X_0$.
%Consider the convergence
%\begin{align} \label{eq:rescaling-split}
%\left(\frac{1}{|y_i, z_i|}Y, y_i\right)  \to (Y_\infty, y_\infty).
%\end{align}
%Then the limit $(X_\infty,y_\infty)$ of $(X, y_i)$ under the above convergence
%is isometric to a product $L\times [0,1]$,
%where $L$ is the limit of $U(y_i, s_i)$ under the 
%above convergence.
%\end{prop}

\pmed\n
{\bf The first main step.} The following is one of the two main steps in the proof of Theorem \ref{thm:never-happen}.

\begin{lem} \label{lem:CsubC}
There are $r>0$ and positive functions 
$\alpha=\alpha(t)$, $\beta=\beta(t)$ and $\theta=\theta(t)$ defined on $(0,r]$ with $\alpha(t)<\beta(t)$ and 
$\lim_{t\to 0}\beta(t)=\lim_{t\to 0}\theta(t)=0$ 
such that 
\begin{gather*}
     {\rm Cone}(x; r, \alpha)\subset \ca E(x,r,\theta)\cap
   ({\rm int}\,X_0^1\setminus \ca C), \\
 {\rm Cone}(x; r, \beta) \subset {\rm \mathring{C}one}(x; r, \alpha).
\end{gather*}
\end{lem}

%As an immediate consequence of Lemma \ref{lem:CsubC},
%Theorem \ref{thm:never-happen} follows in the case of $x\in{\rm int} X_0$.
\pmed
%%%%%%%%%  %%%%%%
\n 
{\bf Rough idea of the proof Lemma \ref{lem:CsubC}.}\,
%
%A primitive idea of the proof of Lemma \ref{lem:CsubC} is as follows.
It is proved by contradiction. Here we only observe the situation that 
a sequence $y_i\in {\rm Cone}(x; r, c)\cap  X_0^2$
(see \eqref{eq:three-cond} for the general case) converges to $x$ with a positive
constant $c$. Let $\tilde y_i^k\in C_0^2$ \, $(k=1,2)$
be the lifts of $x$. 
Passing to a subsequence, we may assume 
\[
      \uparrow_x^{y_i}\to v,\quad
      \uparrow_p^{\tilde y_i^k}\to \tilde v^k.
\]
Obviously, we have $d\eta_0(\tilde v^k)=v$.
If  
$|\tilde y_i^1,\tilde y_i^2|/|x,y_i|\ge c>0$
for a constant $c$ independent of $i$,
we would have $\tilde v^1\neq\tilde v^2$ 
causing a contradiction to $f_*={\rm id}$.
In the general case, we consider a nearest point 
$w_i\in \pa Y$ from $y_i$. 
Making use of Lemma 
\ref{lem:frequent-later}, we obtain 
$v\in \pa \Sigma_x(X_0)$, a contradiction
to $y_i\in {\rm Cone}(x; r, c)$.

\begin{proof}[Proof of Lemma \ref{lem:CsubC}]
We shall show that for any fixed constant
$b>c>0$, there exist $r>0$ and $\e>0$ satisfying
\beq \label{eq:ConesubsetE}
\begin{cases}
\begin{aligned}
 &{\rm Cone}(x; r, c)\subset \ca E(x,r,\e)\cap
   ({\rm int}\,X_0^1\setminus \ca C),\\
& {\rm Cone}(x; r, b) \subset {\rm \mathring{C}one}(x; r, c).
\end{aligned}
\end{cases}
\eeq
Then we obtain the conclusion of the lemma as follows.
Choose decreasing sequences
$b_i>c_i$ converging to $0$. Applying
\eqref{eq:ConesubsetE} for $b_i>c_i$, we  choose  decreasing sequences
$r_i$ and $\e_i$ converging to $0$
such that 
\begin{align*}
& {\rm Cone}(x; r_i, c_i)\subset
\ca E(x,r_i,\e_i)\cap
({\rm int} \,X_0^1\setminus \ca C),\\
&{\rm Cone}(x; r_i, b_i) \subset {\rm \mathring{C}one}(x; r_i, c_i).
\end{align*}
Let $r=r_1$, and define step functions
$\alpha_*$, $\theta_*$ defined on $(0,r]$ by
$\alpha_*(t)=c_i$ and $\theta_*(t)=\e_i$
on $(r_{i+1}, r_i]$. It is now immediate to obtain 
continuous functions $\alpha(t)$, $\beta(t)$ and $\theta(t)$ on $(0,r]$ satisfying the conclusion of the lemma.

By Lemma \ref{lem:cone-conn}, we certaily have the second inclusion in
\eqref{eq:ConesubsetE}.
We show the first inclusion in \eqref{eq:ConesubsetE}
by contradiction.
Suppose it does not hold. Then for some $c>0$, we have 
sequences $y_i\in X_0$ and $\e_i>0$
satisfying
\beq \label{eq:three-cond}
\begin{cases}
\begin{aligned}
   &y_i\to x,\\
    & y_i\in {\rm Cone}(x;r_0, c),\\
& y_i\in \ca D(x, r_0,\e_i)\cup X_0^2\cup \ca S^1\cup \ca C, \quad \e_i\to 0,
\end{aligned}  \end{cases}
\eeq
where $r_0>0$ is a constant.
We may assume $\uparrow_x^{y_i}$ converges to a direction $v\in {\rm int}\,\Sigma_x(X_0)$.

First we consider 

\pmed\n
{\bf Case A).}\, $y_i\in \ca D(x,r_0,\e_i)$.
\pmed

Take $s_i>0$, $z_i$  and   $U(y_i,s_i)$,  $U(z_i, s_i)$ 
 as in Definition \ref{defn:DC} (1),(2),(3') such that 
the restriction of $d_{y_i}^Y$ to $U(z_i, s_i)$ has a positive minimum at $z_i\in V_i$.
Let $w_i$ be a nearest point of $\pa Y$ from $z_i$.

\begin{slem}\label{slem:angleyxw=0}
\beqq
\lim_{i\to\infty}\angle z_i x w_i =0.
\eeqq
\end{slem}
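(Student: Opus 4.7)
The plan is to establish $\angle z_i x w_i\to 0$ by showing that, under the rescaling $(Y/r_i, x)\to (T_x(Y), o_x)$ with $r_i:=|x,y_i|$, both $z_i/r_i$ and $w_i/r_i$ converge to the same point $v_\infty:=v\cdot 1\in T_x(Y)$.

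\textbf{Step 1 (locating $z_i$).} From $y_i\in\ca D(x,r_0,\e_i)$ we have $|y_i,z_i|=d_{y_i}^Y(z_i)\le\e_i s_i$, and Lemma \ref{lem:almostperp+}(3) gives $|y_i,z_i|/|y_i,\pa Y|\to 0$. Since $x\in\pa Y$ implies $|y_i,\pa Y|\le|y_i,x|=r_i$, we obtain $|y_i,z_i|/r_i\to 0$. Hence $|x,z_i|/r_i\to 1$, $\uparrow_x^{z_i}\to v$, and $z_i/r_i\to v_\infty$ in $T_x(Y)$.

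\textbf{Step 2 (locating $v_\infty$ on $\pa T_x(Y)$).} Under the standing assumption \eqref{eq:Sigma<Sigma}, $\Sigma_x(X_0)\subset \pa\Sigma_x(Y)$, so $T_x(X_0)=K(\Sigma_x(X_0))\subset K(\pa\Sigma_x(Y))=\pa T_x(Y)$. Applying Proposition \ref{prop:tang-cone} to $X_0\ni z_i$ shows $v_\infty\in T_x(X_0)\subset \pa T_x(Y)$.

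\textbf{Step 3 (locating $w_i$).} The Alexandrov boundary $\pa Y$ is a closed extremal subset of $Y$, and for the boundary point $x\in\pa Y$ one has $T_x(\pa Y)=K(\Sigma_x(\pa Y))=K(\pa\Sigma_x(Y))=\pa T_x(Y)$. Since $v_\infty\in \pa T_x(Y)=T_x(\pa Y)$, we can find $p_i\in \pa Y$ with $p_i/r_i\to v_\infty$, i.e. $|z_i,p_i|/r_i\to 0$. Therefore $|z_i,w_i|=|z_i,\pa Y|\le|z_i,p_i|=o(r_i)$, so $w_i/r_i\to v_\infty$ as well. Combining Steps 1 and 3, $\uparrow_x^{z_i}\to v$ and $\uparrow_x^{w_i}\to v$, giving $\angle z_ix w_i\to 0$.

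\textbf{The main obstacle} lies in Step 3: rigorously identifying the tangent cone of the Alexandrov boundary with the boundary of the tangent cone, $T_x(\pa Y)=\pa T_x(Y)$, and confirming that the approximation of $v_\infty$ by rescaled points of $\pa Y$ is strong enough to yield actual points of $\pa Y$ within $o(r_i)$ of $z_i$ in $Y$. While this identification is standard in the theory of Alexandrov spaces with boundary (since $\pa Y$ is extremal), care is required to make the density uniform in a small neighborhood of $v_\infty$ rather than merely along a prescribed sequence.
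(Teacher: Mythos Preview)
Your proposal is correct and follows essentially the same route as the paper: both arguments rescale around $x$ and use that $z_\infty\in T_x(X_0)\subset\pa T_x(Y)$ by assumption \eqref{eq:Sigma<Sigma}. The paper packages this as a contradiction (if $\angle z_jxw_j\ge\theta>0$ then $w_\infty\neq z_\infty$, yet $w_\infty$ is a nearest point of $\pa T_x(Y)$ from $z_\infty\in\pa T_x(Y)$), which avoids explicitly constructing your points $p_i$ but relies on the very same identification $T_x(\pa Y)=\pa T_x(Y)$ that you flag; this identification is standard for Alexandrov boundaries and is invoked without further comment in the paper.
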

\begin{proof}
Suppose there is a subsequence $\{ j\}\subset \{i\}$ such that 
$\angle z_j x w_j \ge\theta>0$ for
a uniform constant $\theta$.
Consider the rescaling limit
\[
  \left(\frac{1}{|x,z_j|} Y, z_j\right)  \to (T_x(Y), z_\infty),
\]
where we may assume that $w_j$ converges 
to an element $w_\infty\in T_x(Y)$.
From the assumption, we have 
$w_\infty\neq z_\infty$.
On the other hand, it follows from the choice of $w_j$ that 
$w_\infty$ is a nearest point of 
$T_x(\pa Y)=\pa T_x(Y)$ from $z_\infty$.
However, from \eqref{eq:Sigma<Sigma}, we
get  $z_\infty=y_\infty\in T_x(X_0)\subset \pa T_x(Y)$. This is a contradiction.
\end{proof}

Since $\lim_{i\to\infty}\angle y_i x z_i=0$,
Sublemma \ref{slem:angleyxw=0} implies 
$\lim_{i\to\infty}\angle y_i x w_i=0$.
%%%%%%%

Now consider the convergence 
\begin{align} \label{eq:rescaling-zw}
   \left(\frac{1}{|z_i, w_i|}Y, z_i\right)  \to (Y_\infty, z_\infty).
\end{align}
By Lemma \ref{lem:almostperp+}, we have 
$\lim_{i\to\infty} \frac{|z_i, y_i|}{|z_i,w_i|}=0$.
  It follows from Lemma 
\ref{lem:almost-parallel} that the convergence 
\eqref{eq:rescaling-zw} is caused by an inradius collapse.
Thus we have $w_\infty\in (X_0)_\infty$.
Since $\lim_{i\to\infty}\angle(\uparrow_x^{w_i},v)=0$,
from Lemma \ref{lem:frequent-later}  we 
have    
%$\xi_x^+ v\subset \pa\Sigma_x(Y)$ and 
$v\in \pa\Sigma_x(X_0)$. This is a contradiction
to \eqref{eq:three-cond}.

\psmall
Next we consider 

\pmed\n
{\bf Case B).}\, $y_i\in \ca S^2\cup\ca C$.
\pmed

By Lemma \ref{lem:S2nbdW},
there exists a point $y_i'\in \ca D(x,r,\theta)$
in any small neighborhood of $y_i$.
Thus this case can be reduced to Case A),
and causes a contradiction, too.
Thus we only have to consider the following case.

\pmed\n 
{\bf Case C).}\, $y_i\in \ca S^1\cup {\rm int}\,X_0^2$.
\pmed
If $y_i\in\ca S^1$, then any neighborhood
of $y_i$ contains a point $y_i'\in X_0^2$.
If $y_i'\in\ca S^2$, we have a contradiction 
by Case B).
If $y_i'\in{\rm int}\,X_0^2$, we can reduce to 
the case $y_i\in {\rm int}\,X_0^2$.

Therefore in what follows, we consider
the case $y_i\in {\rm int}\,X_0^2$.

\begin{slem} \label{slem:ConesubsetX02}
There exists $r>0$ satisfying 
\[
  {\rm \mathring{C}one}(x; r, c)
    \subset {\rm int} X_0^2.
\]
In particular,  ${\rm \mathring{C}one}(x; r, c)$
is open in $X$.
\end{slem}
\begin{proof} 
Fix any $0<c_1<c_0<c$ and small $r>0$ such that
\[
{\rm Cone}(x; r, c)\subset
{\rm \mathring{C}one}(x; r, c_0) \subset 
 {\rm Cone}(x; r, c_0)\subset
{\rm \mathring{C}one}(x; r, c_1).
\]
In view of Case B), we may assume that 
${\rm Cone}(x; r, c_1)$
does not meet $\ca S^2$.
Take large $i$ such that 
\[
y_i\in {\rm int}X_0^2 \cap {\rm \mathring{C}one}(x;r,c_0).
\] 
Let $Q_i$ denote the intersection of 
${\rm \mathring{C}one}(x; r, c_0)$ and 
the component of ${\rm int}X_0^2$ containing $y_i$.
It suffices to show 
\[
{\rm \mathring{C}one}(x; r, c_0)=Q_i.
\]
Suppose this does not hold, and  take a point 
$z\in {\rm \mathring{C}one}(x; r, c_0)\setminus Q_i$. 
Replacing $z$ if necessary, we may assume
$z\in {\rm \mathring{C}one}(x; r, c_0)\setminus \bar Q_i$. 
Let $w$ be a point of $\bar{Q_i}$ nearest from 
$z$. 
Take $\e>0$ such that 
$B^{X_0}(w,\e)\subset  {\rm \mathring{C}one}(x; r, c_1)$.
One can choose two points $z',w'$ so close to $w$  that 
\[
  z'\in {\rm \mathring{C}one}(x; r, c_1)\setminus \bar Q_i,  \quad 
w'\in Q_i, \quad
\gamma:=\gamma^{X_0}_{z',w'}\subset B^{X_0}(w,\e).
\]
Let $u$ be a point of $\gamma\setminus\{ z',w'\}$ with $u\in \pa Q_i$.
Since $u\in \pa({\rm int}X_0^2)$, 
Lemma \ref{lem:non-extend} shows 
$u\in\ca S^2$. This is a contradiction.

The latter immediately follows from Lemma \ref{lem:loc-inradX2S1}.
\end{proof}

By Sublemma \ref{slem:ConesubsetX02},
we can consider 
${\rm \mathring{C}one}(x; r, c)$
as a local Alexandrov space.
Let $\gamma$ be an $X_0$-minimal 
geodesic joining $x$ to $y_i$ for a fixed large $i$, which is also an admissible curve by
Sublemma \ref{slem:ConesubsetX02}
and satisfies 
\[
\gamma_i\setminus \{ x\}\subset 
{\rm \mathring{C}one}(x; r, c), \quad
\dot\gamma(0)\in {\rm int} \Sigma_x(X_0).
\]
Since $\gamma_i\setminus \{ x\}\subset {\rm int} X_0^2$,  we have 
two lifts
$\tilde\gamma_1$ and $\tilde\gamma_2$
of $\gamma$ with $\tilde\gamma_k(0)=p$
\,$(k=1,2)$.
This implies that there are two mimimal geodesics in $\Sigma_x(Y)$ joining 
$\xi_x^+$ to $\dot\gamma(0)$.
This contradicts \eqref{eq:Sigma<Sigma},
and completes the proof of Lemma \ref{lem:CsubC}.
\end{proof}

\pmed

Let $\alpha=\alpha(t)$, $\beta=\beta(t)$ and $\theta=\theta(t)$ be as in Lemma \ref{lem:CsubC}.
\pmed
We denote by $\ca E_0(x, r,\theta)$ the connected component of the interior of $\ca E(x,r,\theta)$ that contains ${\rm \mathring{C}one}(x; r, \alpha)$.
 
%%%%%%%%%%%%%%%%%%%%
\begin{lem}  \label{lem:CsubsetX01}
We have 
$\ca E_0(x, r,\theta)\subset {\rm int}\,X_0^1
\setminus \ca C$. 
\end{lem}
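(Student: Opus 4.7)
The plan is to derive a contradiction by assuming $\ca E_0(x,r,\theta)$ contains a point $y$ outside ${\rm int}\,X_0^1\setminus\ca C$, and ruling out in turn each of the three mutually exclusive possibilities $y\in\ca S^2\cup\ca C$, $y\in{\rm int}\,X_0^2$, and $y\in\ca S^1$.

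The first case is immediate: if $y\in\ca S^2\cup\ca C$, Lemma \ref{lem:S2nbdW} supplies $\ca D(x,r,\theta)$-points in every $X_0$-neighborhood of $y$, contradicting the fact that $\ca E_0$ is open and contained in $\ca E(x,r,\theta)=B^{X_0}(x,r)\setminus\ca D(x,r,\theta)$.

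In the second case, suppose $y\in\ca E_0\cap{\rm int}\,X_0^2$ and let $\Gamma$ denote the connected component of $\ca E(x,r,\theta)$ defining $\ca E_0$. Both $y$ and the path-connected open set ${\rm Cone}(x;r,\alpha)$ (Lemma \ref{lem:cone-conn} and Lemma \ref{lem:CsubC}) lie in $\Gamma$, so a standard argument using the local path-connectedness of $X_0$ together with the openness of $\ca E_0$ produces a continuous path $\gamma\colon[0,1]\to\ca E_0$ from $y$ to a point of ${\rm Cone}(x;r,\alpha)\subset{\rm int}\,X_0^1\setminus\ca C$. Setting $t_0:=\inf\{t\in[0,1]\mid\gamma(t)\in X_0^1\}\in(0,1]$ and using continuity together with $X_0=X_0^1\sqcup X_0^2$ and the identifications $\overline{X_0^1}\cap X_0^2=\ca S^2$, $\overline{X_0^2}\cap X_0^1=\ca S^1$, forces $\gamma(t_0)\in\ca S=\ca S^1\cup\ca S^2$; the first case rules out $\ca S^2$, so $\gamma(t_0)\in\ca S^1\cap\ca E_0$ and matters are reduced to the third case.

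The third case, ruling out $y\in\ca S^1\cap\ca E_0$, is the main technical obstacle. The plan is to contradict openness of $\ca E_0$ by exhibiting $\ca D(x,r,\theta)$-points arbitrarily close to $y$. Since $y\in\pa X_0^1\cap X_0^1$, every neighborhood of $y$ contains ${\rm int}\,X_0^2$ points; take a sequence $z_k\in{\rm int}\,X_0^2$ with $z_k\to y$. Both lifts $\tilde z_{k,1},\tilde z_{k,2}\in C_0^2$ must converge to the single lift $\tilde y\in\tilde{\ca S}^1$ of $y$ (else $y$ would possess a second $\eta_0$-preimage), so $\delta_k:=|\tilde z_{k,1},\tilde z_{k,2}|_{C_0}\to 0$. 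Mimicking the $\ca S^2$ construction in the proof of Lemma \ref{lem:S2nbdW}, I would select $w_k\in{\rm int}\,X_0^1$ close to $y$, let $y_k^0\in\overline{X_0^2}$ be the $d^{X_0}$-nearest point of the $X_0^2$ region from $w_k$, and place $u_k$ on a minimal $X_0$-geodesic from $y_k^0$ toward $w_k$ at very small distance from $y_k^0$. The local almost-double-cover structure of $X_0^2$ near $y_k^0$ supplied by Proposition \ref{prop:eta-2cover} then yields a companion point $z'_k$ corresponding to the opposite $\eta_0$-sheet, and the $\eta_0$-images of disjoint small $C_0$-balls around the lifts $\tilde u_k,\tilde z'_k$ provide the candidate almost parallels $U(u_k,s_k)$ and $V(z'_k,s_k)$.

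The hardest part is securing the quantitative estimate $d^Y_{u_k}(z'_k)/s_k\le\theta(|x,u_k|)$ required by Definition \ref{defn:DC}(3), together with membership of $U(u_k,s_k)$ and $V(z'_k,s_k)$ in ${\rm int}\,X_0$ and genuine disjointness in $X_0$. I plan to secure these estimates by a rescaling argument at $y$: arranging $|u_k,z'_k|_Y\ll s_k$ in the construction, the rescaling limit $(Y_\infty,u_\infty)$ of $(Y/|u_k,z'_k|_Y,u_k)$ satisfies the hypotheses of Lemma \ref{lem:almost-parallel}, so it is caused by an inradius collapse by Lemma \ref{lem:inrad-collapse} and, via Proposition \ref{prop:limit-split}, the limit $X_\infty$ splits as $L\times[0,1]$ by Theorem \ref{thm:splitting}. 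A diagonal selection tuning $s_k$ in terms of $\delta_k$ and $|x,u_k|$ then delivers $d^Y_{u_k}(z'_k)/s_k\to 0$ much faster than $\theta(|x,u_k|)$, producing $u_k\in\ca D(x,r,\theta)$ with $u_k\to y$ and contradicting the openness of $\ca E_0$. The genuine subtlety is that $y_k^0$ converges to $y\in\ca S^1$ rather than to an $\ca S^2$ point, so the near-double-sheet structure of $X_0^2$ collapses in the limit; controlling the almost parallels in this degenerate regime, while exploiting $\delta_k\to 0$ to retain the product splitting of the blow-up limit, is the core of the argument.
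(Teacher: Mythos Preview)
Your Cases 1 and 2 are essentially sound and match the spirit of the paper's argument (the paper also kills $\ca S^2\cup\ca C$ via Lemma~\ref{lem:S2nbdW}, and handles ${\rm int}\,X_0^2$ by a connectedness argument). The real issue is Case~3.

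Your $\ca S^1$ argument begins by ``selecting $w_k\in{\rm int}\,X_0^1$ close to $y$''. But this need not be possible: it can happen that a full neighborhood $U$ of $y$ in $\ca E_0$ satisfies $U\cap{\rm int}\,X_0^1=\emptyset$, i.e.\ $U\subset{\rm int}\,X_0^2\cup\ca S^1$. In that regime your construction never starts, and the two $C_0$-lifts of nearby $X_0^2$-points merge at the single lift $\tilde y$, so there are no two separated $C_0$-balls to feed into Definition~\ref{defn:DC}. Your own diagnosis (``the near-double-sheet structure of $X_0^2$ collapses in the limit'') is exactly this obstruction, and the proposed ``diagonal selection'' does not resolve it because there is no ${\rm int}\,X_0^1$ anchor $w_k$ to work from.

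The paper handles Case~3 by a dichotomy you are missing. If a small neighborhood $U\subset\ca E_0$ of $y$ meets ${\rm int}\,X_0^1$, then joining a point of $U\cap{\rm int}\,X_0^2$ to a point of $U\cap{\rm int}\,X_0^1$ by an $X_0$-geodesic and applying Lemma~\ref{lem:non-extend} produces a point of $\ca S^2$ inside $U$, contradicting Case~1; this is far simpler than your rescaling construction. If instead $U\cap{\rm int}\,X_0^1=\emptyset$, then $U\subset{\rm int}\,X_0^2\cup\ca S^1$ is a locally inradius collapsed part, hence an (incomplete) Alexandrov space by Theorem~\ref{thm:inradius-collapse}, in which $U\cap\ca S^1\setminus\pa_*X_0$ is extremal of codimension $\ge 2$; this forces $\ca E_0\setminus\pa_*X_0$ to decompose into the two disjoint nonempty open pieces $\ca E_0'\cap{\rm int}\,X_0^1$ (containing ${\rm Cone}(x;r,\alpha)$) and $\ca E_0'\cap({\rm int}\,X_0^2\cup\ca S^1)$, contradicting connectedness. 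Your proposal covers neither branch cleanly: in the first you over-engineer, and the second you omit entirely.
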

\begin{proof}
By Lemma \ref{lem:S2nbdW}, 
$\ca E_0(x, r,\theta)$ does not meet $\ca S^2$ nor $\ca C$.
We show that 
$\ca E_0'(x, r,\theta):=\ca E_0(x, r,\theta)\setminus \ca S^1$ 
is connected.

We may assume that  
$\ca E_0(x, r,\theta)$ meets $\ca S^1$.
 For a point $z\in \ca E_0(x, r,\theta)\cap\ca S^1$,
choose an open neighborhood $U$ of $z$ in  
$\ca E_0(x, r,\theta)$.
Then  $V:=U\cap{\rm int}\,X_0^2$ is nonempty.
If $U\cap{\rm int}\,X_0^1$ is nonempty,
using Lemma \ref{lem:non-extend}, we would find an element of $\ca S^2$ in $U$, which is a contradiction.
Thus we have 
$$
    U=U\cap ({\rm int}\,X_0^2\cup\ca S^1).
$$ 
 Namely $U$ is a part of local inradius collapse (Lemma \ref{lem:loc-inradX2S1}).
Using \cite{Per} and \cite{Kap}, 
%we choose a convex neighborhood $U$ properly, 
we may assume that $U$ is convex.
Note that $U$ is an (incomplete) Alexandrov space with boundary
 $U\cap \pa X_0$.
It follows from \cite[Lemma 4.28]{YZ:inrdius} 
%together with the fact $X_0^2\cap\pa X_0\subset \pa_*X_0$ (see Lemma \ref{lem:eta'})
%Sublemma \ref{slem:eta-ball}) 
that 
$U\setminus \ca S^1
=U\cap {\rm int}\,X_0^2$
is also convex.

For any $w\in \ca E_0(x, r,\theta)\cap {\rm int} X_0^1$,
obviously we can take a connected neighborhood $V$ of $w$ in 
$\ca E_0(x, r,\theta)\cap {\rm int} X_0^1$.
%Lemma \ref{lem:eta'} also shows $V\setminus \pa X_0\subset \pa_*X_0$,
%and hence $V\setminus \pa X_0$ is connected.
Therefore the connectedness of $\ca E_0(x, r,\theta)$ yields that $\ca E_0'(x, r,\theta)=\ca E_0(x, r,\theta)\setminus \ca S^1$ 
is connected.

Now   
$\ca E_0'(x, r,\theta)$ is the union of  the two 
open subsets
$\ca E_0'(x, r,\theta)\cap{\rm int}\,X_0^1$ 
and $\ca E_0'(x, r,\theta)\cap{\rm int}\,X_0^2$.
If $\ca E_0'(x, r,\theta)\subset {\rm int}\,X_0^2$
(this happens if $\ca E_0(x, r,\theta)$ meets $\ca S^1$
as above), we have a contradiction  by Lemma \ref{lem:CsubC}.
%in a way similar to 
%the discussion after Definition \ref{defn:Theta-cone}.
Thus we have $\ca E_0(x, r,\theta)\subset {\rm int}\,X_0^1
\setminus \ca C$. 
This completes the proof.
\end{proof}
%%%%%%%%%%%%%%%%%%%

Let $\pa \ca E_0(x, r,\theta)$ be the topological boundary of
$\ca E_0(x, r,\theta)$ in $X_0$.
%%%%%
\pmed

Here we summarize some notations concerning almost parallels defined in this section so far.

\begin{table}[h]
\begin{center}
\begin{tikzpicture}[auto]
\fill (-1, 0) circle (0pt) node [right] {$\ca D(x,r,\theta) : \text{the set of points having almost parallels}$};
\fill (-1, -0.8) circle (0pt) node [right]{$\ca E(x,r,\theta)=B^{X_0}(x,r)\setminus\ca D(x,r,\theta)
$};
\fill (-1, -1.6) circle (0pt) node [right]{$\ca E_0(x,r,\theta) : \text{the component of $\mathring{\ca E}(x,r,\theta)
\supset {\rm \mathring{C}one}(x;r,\alpha)$}$};

\draw[thick] (-1.2,0.5) rectangle (9.2,-2.1);
\end{tikzpicture}
\caption{\small{Sets concerning almost parallels}}
\end{center}
\end{table}

\pmed\n
{\bf Second main step.}\,
\par
Here we prove $\ca E(x, r,\theta)=
B^{X_0}(x,r)$ yielding Theorem \ref{thm:never-happen}.

 In what follows, we need the following
notations.

\begin{defn}\label{defn:Theta-cone}
For $u,v\in X_0^1$, $\Lambda\subset \Sigma_u(X_0)$   and 
$a,\e>0$, let 
\begin{align*}
  &\Theta_u(\Lambda, a,\e) := \{ y\in X_0\setminus \{ u\}\,|\,
|u,y|<a, \,\angle( 
d\eta_0(\Uparrow_u^{\tilde y}),\Lambda)<\e\\
&\hspace{7cm} \text{for some $\tilde y\in \eta_0^{-1}(y)$}\}, \\
&\Theta(u,v):=\Theta_{u}(\Uparrow_{u}^{v},|u,v|,\pi/4).
\end{align*}
\end{defn}  

The following sublemma is easily verified.

\begin{slem}\label{slem:Theta(a,c)}
For any $0<\e_1<\e$, if $0<a_1<a$ is small enough, we have 
\[
\Theta_u(\Lambda;a_1,\e_1)\subset \mathring{\Theta}_u(\lambda;a,\e).
\]     
\end{slem}
\pmed
The following is crucial in
the second main step towards the proof of 
$\ca E(x,r,\theta)=B^{X_0}(x,r)$.

\begin{prop}\label{prop:paC-subset-paX}
There exists an  $r>0$ such that 
\[
     \pa \ca E_0(x,r,\theta) \cap \mathring{B}^{X_0}(x,r)\subset \pa_* X_0.
\]
\end{prop}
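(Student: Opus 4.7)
The proof proceeds by contradiction. Assume there is $y_0 \in \pa\ca E_0(x,r,\theta)\setminus\pa_* X_0$. Since $\ca E_0$ is open and $y_0$ lies in its topological boundary, choose a sequence $y_i \in X_0 \setminus \ca E_0$ with $y_i \to y_0$. Shrinking $r$ if necessary so $y_0$ sits in the interior of $B^{X_0}(x,r)$, we may arrange $y_i \in \ca D(x,r,\theta)$, with associated $(x,r,\theta)$-almost parallels $U(y_i,s_i), V(z_i,s_i)$. By Lemma \ref{lem:almostperp+}(2), both $y_i, z_i \in ({\rm int}\,X_0^1 \setminus \ca C) \cap {\rm int}\,X_0$; by Lemma \ref{lem:calDC} applied symmetrically at both points, $\angle(\xi_{y_i}^+,\uparrow_{y_i}^{z_i}) = \angle(\xi_{z_i}^+,\uparrow_{z_i}^{y_i}) = \pi$ and ${\rm rad}(\xi_{y_i}^+) = {\rm rad}(\xi_{z_i}^+) = \pi$.

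Next I will mimic the strategy of the proof of Lemma \ref{lem:CsubC} Case A, transplanting the arguments to the present setting. By Lemma \ref{lem:almostperp+}(3), $|y_i,z_i|/|y_i,\pa Y| \to 0$; choose $w_i \in \pa Y$ nearest to $z_i$, so that $|y_i,z_i|/|z_i,w_i| \to 0$. I would establish two sublemmas analogous to Sublemmas \ref{slem:angleyxw=0} and \ref{slem:wnX}: first, via rescaling at $z_i$ by $|x,z_i|$ and using \eqref{eq:paSigma-never} (which forces $z_\infty \in T_x(X_0) \subset \pa T_x(Y)$), conclude $\lim \angle z_i x w_i = 0$, so $\uparrow_x^{w_i} \to v := \lim\uparrow_x^{y_i}$; second, reduce to $w_i \in {\rm int}\,X$ as in Sublemma \ref{slem:wnX}, handling the cases $w_i \in Y\setminus X$ and $w_i \in X_0$ via the existing Lemma \ref{lem:frequent-later} machinery.

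Now consider the rescaling limit $(Y/|z_i,w_i|, z_i) \to (Y_\infty,z_\infty)$. The angle conditions from the preceding paragraph together with $|y_i,z_i|/|z_i,w_i| \to 0$ verify the hypotheses of Lemma \ref{lem:almost-parallel}, so this convergence is caused by an inradius collapse; in particular the limit $w_\infty$ of $w_i$ lies in $(X_0)_\infty$. Applying Lemma \ref{lem:frequent-later} at $x$ (whose hypothesis ${\rm rad}(\xi_x^+)=\pi/2$ holds from \eqref{eq:paSigma-never} via Lemma \ref{lem:criteria-0}) to the sequence $y_i$ with nearest $\pa Y$-points $w_i$, one obtains that a minimal geodesic $\xi_x^+ v$ is contained in $\pa\Sigma_x(Y)$ and $v \in \pa\Sigma_x(X_0)$.

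The final contradiction is to be extracted from the infinitesimal structure this imposes. Since $f_*$ is trivial at $p:=\eta_0^{-1}(x)$, Theorem \ref{thm:X1-f}(2) gives an isometry $d\eta_0:\Sigma_p(C_0) \to \Sigma_x(X_0)$, so $v$ lifts to $\tilde v \in \pa\Sigma_p(C_0) = \Sigma_p(\pa C_0)$; unwinding the approximation of $\uparrow_p^{\tilde y_i}$ by $\tilde v$ through the local isometry $\eta_0$ on ${\rm int}\,X_0^1\setminus\ca C$ and using that $\pa C_0$ is an extremal subset of the Alexandrov space $C_0$, the lifts $\tilde y_i$ must accumulate on $\pa C_0$, forcing $\tilde y_0 \in \pa C_0$ and hence $y_0 \in \pa_* X_0$, contradicting our hypothesis. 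The main obstacle will be this final lifting step: the condition $v \in \pa\Sigma_x(X_0)$ only constrains the direction to $y_0$, and one must combine it with the connectedness of $\ca E_0$ (via Lemma \ref{lem:cone-conn}) and the nature of the component structure of $\ca E$ near $\pa_* X_0$ to rule out $\tilde y_0 \in {\rm int}\,C_0$. A secondary difficulty is checking the angle hypothesis $\angle(\xi_{y_i}^+,\uparrow_{y_i}^{z_i}) = \pi$ rigorously, which uses the symmetric role of $y$ and $z$ in the definition of almost parallels together with the fact that $V(z_i,s_i) \subset {\rm int}\,X_0^1\setminus \ca C$ forces ${\rm rad}(\xi_{z_i}^+)=\pi$ — incompatible with the a priori bound ${\rm rad}(\xi_{z_i}^+) \le \pi$ only at equality, hence forcing $y_i$ to have the analogous suspension structure.
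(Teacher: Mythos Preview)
Your contradiction setup has a basic scaling error: you fix $y_0\in\pa\ca E_0(x,r,\theta)\setminus\pa_*X_0$ for a single $r$ and take $y_i\in\ca D(x,r,\theta)$ with $y_i\to y_0$. But then $|x,z_i|\to |x,y_0|>0$, so the rescaling $(Y/|x,z_i|,z_i)$ does \emph{not} converge to $(T_x(Y),z_\infty)$, and the entire machinery you borrow from Case~A of Lemma~\ref{lem:CsubC} (Sublemmas~\ref{slem:angleyxw=0}, \ref{slem:wnX}, and Lemma~\ref{lem:frequent-later}) collapses---those arguments require the sequence to converge to $x$. Your conclusion $v\in\pa\Sigma_x(X_0)$ is therefore unsupported. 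And even were it true, the final step is a non-sequitur: having the \emph{direction} $\uparrow_x^{y_0}$ tangent to $\pa\Sigma_x(X_0)$ in no way forces $y_0$ (or $\tilde y_0$) to lie on $\pa_*X_0$ (or $\pa C_0$); a point can sit well inside $C_0$ while the geodesic from $p$ is tangent to $\pa C_0$ at $p$.

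The paper's proof is structurally different. It negates the statement by letting $r\to 0$ and choosing $u_m\in\pa\ca E_0(x,r_m,\theta)\cap{\rm int}_*X_0$ with $u_m\to x$, but crucially picks $u_m$ as a \emph{nearest} point of $\pa\ca E_0$ from some $b_m\in\ca E_0$. This buys the cone inclusion $\Theta(u_m,b_m)\subset\ca E_0\subset{\rm int}\,X_0^1\setminus\ca C$, which is the engine of the argument: it forces $f_*$ to be the identity at $\tilde u_m$ (Lemma~\ref{lem:f=id}), and is used again to show that the rescaled limit cannot be an inradius collapse. The proof then argues at $u_m$ itself, not at the approximating $y_{mi}\in\ca D$: first $u_m\in{\rm int}\,Y$ (Sublemma~\ref{slem:z=intY}, via the inradius-collapse trichotomy of Theorem~\ref{thm:inradius-collapse}), hence ${\rm rad}(\xi_{u_m}^+)>\pi/2$; then $\liminf{\rm rad}(\xi_{u_m}^+)>\pi/2$ (Sublemma~\ref{slem:alpha>pi/2}); finally this uniform angle excess is played against the almost-parallel pair $(y_m,z_m)$ near $u_m$ as in the proof of Lemma~\ref{lem:almost-parallel} to reach a contradiction. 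You are missing both the nearest-point/cone structure and the analysis of ${\rm rad}(\xi_{u_m}^+)$, which are the actual content of the proof.
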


\psmall
\begin{proof}[Proof of Theorem \ref{thm:never-happen} assuming  Proposition \ref{prop:paC-subset-paX}]
Let $r$ be as in Proposition \ref{prop:paC-subset-paX}. 
We first remark that 
\beq\label{eq:h(C)subsetSigma}
  \mathring{B}^{X_0}(x,r)\setminus\pa_*X_0
     \subset \ca E_0(x,r,\theta).
\eeq
Actually for any $z\in \mathring{B}^{X_0}(x,r)\setminus\pa_*X_0$, choose a lift 
$\tilde z\in C_0\setminus \pa C_0$ of $z$ and set $s=|p,\tilde z|_{C_0}$.
By Perelman's topological stability theorem(\cite{Pr:alexII}, \cite{Per}), taking small
enough $r$ if necessary,
we may assume that 
there is a homeomorphism
\[
 h: (S^{C_0}(p,s), S^{C_0}(p,s)\cap\pa C_0)\to 
     (\Sigma_p(C_0), \pa\Sigma_p(C_0)).
\]
Let $\tilde\xi$  be the farthest point of 
$\Sigma_p(C_0)$ from $\pa\Sigma_p(C_0)$.
Take a curve $c$ joining  $h(\tilde z)$ to $\tilde\xi$ in $\Sigma_p(C_0)\setminus\pa\Sigma_p(C_0)$.
Note that $\eta\circ h^{-1}(\tilde\xi)\in
{\rm Cone}(x;r,\alpha)\subset \ca E_0(x,r,\theta)$.
Therefore if $z\notin\ca E_0(x,r,\theta)$,
then there would exist a point  on $\eta\circ h^{-1}\circ c$ meeting $\pa\ca E_0(x,r,\theta)$.
This contradicts Proposition \ref{prop:paC-subset-paX} since $\eta\circ h^{-1}\circ c\subset {\rm int}_* X_0$.
\psmall

We show that $r$ satisfies the conclusion of Theorem \ref{thm:never-happen}.
Suppose this does not hold.
Then there is a point 
$y\in\mathring{B}^{X_0}(x,r)\setminus({\rm int} X_0^1\setminus\ca C)=\mathring{B}^{X_0}(x,r)\cap (\ca C\cup\ca S^1\cup X_0^2)$.
If $y\in\ca S^1$, then there is a point of $X_0^2$ arbitrary
close to $y$.
Therefore from the beginning, we may assume
$y\in X_0^2\cup\ca C$.
Lemma \ref{lem:CsubsetX01} and  \eqref{eq:h(C)subsetSigma}
imply   
\[
y\in  \pa\ca E_0(x,r,\theta)\cap
 (\ca S^2\cup \ca C)\subset \pa_*X_0.
\]
Choose $v\in {\rm int}\,\Sigma_y(X_0)$ such that
for a lift $q\in C_0$ of $y$ and $\tilde v\in{\rm int}\Sigma_q(C_0)$ with $d\eta_0(\tilde v)=v$, there is a 
$C_0$-geodesic $\tilde\gamma:[0,\delta_0]\to C_0$ in the direction $\tilde v$.
%%%%%
Choose $a>0$ and $\e>0$ such that 
$\eta_0^{-1}(\Theta_y(v,a,\e))\subset
 C_0\setminus \pa C_0$.
Then obviously we have 
\beq\label{eq:Theta-int*}
\Theta_y(v,a,\e)\subset {\rm int}_* X_0.
\eeq
Lemma \ref{lem:CsubsetX01} and \eqref{eq:h(C)subsetSigma} again imply
\beq\label{eq:ThetaE0}
  \Theta_y(v,a,\e)\subset\ca E_0(x,r,\theta)
    \subset {\rm int} X_0^1\setminus\ca C.
\eeq
Since $\pa X_0\setminus\pa_* X_0\subset
\ca S^1\cup\ca C$ by Lemma \ref{lem:criteria-int-pa}, it follows from 
\eqref{eq:Theta-int*} and \eqref{eq:ThetaE0}
that
\[
\Theta_y(v,a,\e)\subset{\rm int} X_0.
\]
      
Let $\gamma:=\eta_0\circ\tilde\gamma$.
By  Proposition \ref{prop:paC-subset-paX},
$\gamma$ is the $X_0$-geodesic in the direction $v$ satisfying 
 \beq\label{eq:gammaintX0}
      \gamma(0,\delta)\subset
         \Theta_y(v,a,\e)\subset {\rm int}X_0
\eeq
for any small enough $0<\delta<\min\{ \delta_0,\e\}$.

\pmed
 %%%%%%%%%%%%%%%%%%%%
\begin{center}
\begin{tikzpicture}
[scale = 0.45]

\draw [-, thick] (0,0) to  (6, 3);
\draw [-, thick] (0,0) to  (6,-3);
\draw [-, thick] (6, -3)  [out=50, in=-50] to
(6, 3) ;
\fill (0, 0) circle (2pt) node [left] {\small {$x$}};
\fill (3, -1.5) circle (2pt) node [below] {\small {$y$}};
\fill (3.8,0.5) circle (0pt) node [above] {\small {$\gamma$}};
\draw [-, thin]  (3,-1.5) to [out=45, in=-90] (3.8,0.5);
\draw [dotted, thick]  (3,-1.5) to [out=38, in=250] (4,-0.2);

\fill (4.8,-2.5) circle (0pt) node [below] {\scriptsize{$\pa_*X_0$}};
\fill (7.5,0) circle (0pt) node [right] {\small{$B^{X_0}(x,r)$}};
\filldraw[fill=gray,opacity=.1] 
 (3,-1.5) to  [out=45, in=-90] (3.8,0.5)
 to (4,-0.2)
to   [out=250, in=38] (3,-1.5) ; 

\end{tikzpicture}
\end{center}
\vspace{-0.5cm}  
\begin{figure}[htbp]
  \centering
  \caption{}
  \label{fig:cones}  
\end{figure}  

On the other hand, since  $y\in \ca S^2\cup\ca C$,
as in the proof of Lemma \ref{lem:S2nbdW},
we have $\gamma(\e)\in \ca D(x,r,\theta)$
for small enough $\e>0$, which contradicts 
\eqref{eq:ThetaE0} and \eqref{eq:gammaintX0}.
This completes the proof  of Theorem \ref{thm:never-happen}.  
\end{proof}

\psmall

Since the topological boundary $\pa\ca E_0(x, r,\theta)$ 
could be quite wild, we consider a subset
of $\pa\ca E_0(x, r,\theta)$ that is easier to handle.

Let $\pa_* \ca E_0(x, r,\theta)$ denote the set of points 
$u\in\pa\ca E_0(x, r,\theta)$ such that 
$|b,u|_{X_0^{\rm int}}=|b,\pa\ca E_0(x, r,\theta)|_{X_0^{\rm int}}$ for some 
$b\in\ca E_0(x, r,\theta)$.
In this case, we have 
\begin{align} \label{eq:B(x,r,theta)-cond}
 \Theta(u,b)\subset \ca E_0(x, r,\theta).
\end{align}
Clearly, $\pa_* \ca E_0(x, r,\theta)$
is dense in $\pa\ca E_0(x, r,\theta)$.
\pmed
\n  
{\bf Rough outline of the proof of Proposition \ref{prop:paC-subset-paX}.}\,
The proof is done by contradiction.
Then we have a sequence
$u_m$ in $\pa_* \ca E_0(x, r,\theta)\setminus
\pa_* X_0$ converging to $x$.
It is verified that $u_m\in X_0^1$ and 
	$f_*={\rm id}$ on $\Sigma_{\tilde u_m}(C_0)$ for the lift $\tilde u_m$ of $u_m$ (Lemma \ref{lem:f=id}).
Using the presence of almost parallel domains
arbitrary close to $u_m$,
we show that $u_m\in{\rm int} \,Y$ and 
${\rm rad}(\xi^+_{u_m})>\pi/2+c$ 
for large enough $m$, where $c>0$ is a uniform constant (Sublemmas \ref{slem:z=intY} and \ref{slem:alpha>pi/2}).
Again this yields a contradiction from the
presence of almost parallel domains
arbitrary close to $u_m$.

We begin with  

\begin{lem} \label{lem:mayassumeX2}
$$
\pa_*\ca E_0(x, r,\theta)\subset X_0^1.
$$
\end{lem}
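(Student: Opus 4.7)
The plan is to argue by contradiction. Suppose there exists $u\in\pa_*\ca E_0(x,r,\theta)\cap X_0^2$ and pick $b\in\ca E_0(x,r,\theta)$ realizing the distance from $b$ to $\pa\ca E_0$; then \eqref{eq:B(x,r,theta)-cond} gives $\Theta(u,b)\subset\ca E_0(x,r,\theta)$, and Lemma \ref{lem:CsubsetX01} forces $\Theta(u,b)\subset{\rm int}\,X_0^1\setminus\ca C$, which is disjoint from $X_0^2$. The goal is thus to produce points of $X_0^2$ lying inside $\Theta(u,b)$, thereby contradicting this inclusion.

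To construct such points I exploit the two-sheeted local structure of $X_0$ at $u$. Writing $\eta_0^{-1}(u)=\{p_1,p_2\}$, Proposition \ref{prop:eta-2cover} yields disjoint open neighborhoods $W_k\subset C_0$ of $p_k$ together with an open neighborhood $V\subset X_0^2$ of $u$ such that $\eta_0^{-1}(V)=W_1\sqcup W_2$ and each restriction $\eta_0|_{W_k}\colon W_k\to V$ is almost isometric. By Proposition \ref{prop:inf-str-X02}, $d\eta_0\colon\Sigma_{p_1}(C_0)\to\Sigma_u(X_0)$ is an isometry; let $\tilde b\in\Sigma_{p_1}(C_0)$ be the lift of $\uparrow_u^b$ and pick a sequence $\tilde q_i\in C_0$ with $\tilde q_i\to p_1$ and $\uparrow_{p_1}^{\tilde q_i}\to\tilde b$, which is possible because $\Sigma_{p_1}(C_0)$ is the closure of geodesic directions in the Alexandrov space $C_0$. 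For large $i$ the inclusion $\tilde q_i\in W_1$ gives $q_i:=\eta_0(\tilde q_i)\in V\subset X_0^2$, and the infinitesimal isometry together with the length-preserving property of $\eta_0$ (Proposition \ref{prop:length'}) ensures $q_i\to u$ and $\uparrow_u^{q_i}\to d\eta_0(\tilde b)=\uparrow_u^b$.

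For sufficiently large $i$ we therefore have $|u,q_i|<|u,b|$ and $\angle(\uparrow_u^{q_i},\uparrow_u^b)<\pi/4$ for every choice of $\uparrow_u^{q_i}\in\Uparrow_u^{q_i}$, so $q_i\in\Theta(u,b)$; combined with $q_i\in X_0^2$ this yields the desired contradiction. I do not anticipate a genuine obstacle: the argument is a direct consequence of the two-sheeted local model provided by Proposition \ref{prop:eta-2cover} and the infinitesimal isometry of Proposition \ref{prop:inf-str-X02}, and works uniformly regardless of whether $u$ lies in ${\rm int}\,X_0^2$ or in $\ca S^2$. The one detail requiring care is ensuring that every limit of $\uparrow_u^{q_i}$ equals $\uparrow_u^b$, which is immediate from the convergence $\uparrow_{p_1}^{\tilde q_i}\to\tilde b$ inside the compact space $\Sigma_{p_1}(C_0)$.
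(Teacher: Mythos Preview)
Your argument has a genuine gap in the step ``For large $i$ the inclusion $\tilde q_i\in W_1$ gives $q_i:=\eta_0(\tilde q_i)\in V\subset X_0^2$''. Proposition~\ref{prop:eta-2cover} produces $V$ open in $X_0^2$ and $W_k=(\eta_0|_{U_k(r)})^{-1}(V)$ open in $C_0^2$, \emph{not} open in $C_0$; hence $\tilde q_i\to p_1$ in $C_0$ does not force $\tilde q_i\in W_1$. More to the point, since $u$ is a limit of points of $\ca E_0(x,r,\theta)\subset{\rm int}\,X_0^1$ (Lemma~\ref{lem:CsubsetX01}), the hypothesis $u\in X_0^2$ already forces $u\in\ca S^2$, and there is in general no supply of $X_0^2$-points approaching $u$ in the specific direction $\uparrow_u^b$. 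In a one-dimensional picture such as Example~\ref{ex:1}, at an endpoint $u\in\ca S^2$ of the interval $X_0^2$ all nearby points of $X_0^2$ lie in the direction \emph{opposite} to the adjacent arc of $X_0^1$, so your sequence $q_i$ would land in $X_0^1$, not in $X_0^2$, and no contradiction arises.

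The paper obtains the contradiction differently: it does not try to place $X_0^2$-points inside $\Theta(u,b)$. Instead, noting $u\in\ca S^2$, it takes an $X_0$-minimal geodesic $\gamma$ from $u$ to $b$ and reruns the construction from the proof of Lemma~\ref{lem:S2nbdW}: the two sheets $\eta_0(U_1),\eta_0(U_2)$ are tangent at $u$ but separate along $\gamma$, so for small $\delta>0$ a neighborhood of $\gamma(\delta)$ lies in $\ca D(x,r,\theta)$. Since $\gamma(\delta)\in\Theta(u,b)\subset\ca E_0(x,r,\theta)\subset\ca E(x,r,\theta)$, this contradicts $\ca D\cap\ca E=\emptyset$. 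The key idea you are missing is that the two-sheeted structure at $u\in\ca S^2$ manufactures \emph{almost parallel} domains (hence points of $\ca D$) along $\gamma$, rather than points of $X_0^2$.
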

\begin{proof}

Suppose there is $u\in \pa_*\ca E_0(x, r,\theta)\cap X_0^2$ and  choose 
$b\in\ca E_0(x, r,\theta)$ for $u$ as in 
\eqref{eq:B(x,r,theta)-cond}.
Let $\gamma:[0,1]\to X_0$ be an $X_0$-minimal geodesic from $u$ to $b$.
Lemma \ref{lem:CsubsetX01} and 
\eqref{eq:B(x,r,theta)-cond} show $u\in\ca S^2$.
If $u\in{\rm int}_*X_0$, then by the proof of Lemma \ref{lem:S2nbdW},
there is a small neighborhood $U$ of $\gamma(\delta)$
such that $U\subset \ca D(x, r,\theta)$ for any small $\delta>0$.
This is a contradiction
since $U\subset \ca E(x, r,\theta)$ if 
$U$ is small enough.
If $u\in\pa_*X_0$, then we take a point $b'$ 
such that 
\[
\Theta_{u}(\Uparrow_{u}^{b'},|u,b'|,\pi/10)\subset \Theta(u,b)\cap {\rm int}_*X_0.
\]
For an $X_0$-geodesic $\sigma$ joining 
$u$ to $b'$, one can find $\delta>0$ and a small neighborhood $U$ of $\sigma(\delta)$
such that $U\subset \ca D(x, r,\theta)$
as before. Thus we have a contradiction
in this case, too.
\end{proof}

\begin{lem}  \label{lem:f=id}
For any $u\in \pa_*\ca E(x, r,\theta)$,
take $\tilde u\in C_0$ with $\eta_0(\tilde u)=u$.
Then $f_*$ is the identity on $\Sigma_{\tilde u}(C_0)$.

In particular, $u\in\pa X_0$ if and only if $\tilde u\in\pa C_0$, and hence $u\in\pa_* X_0$ in this case.
\end{lem}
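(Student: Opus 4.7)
The plan is to prove the main assertion by contradiction: assume $f_*$ is nontrivial on $\Sigma_{\tilde u}(C_0)$, and construct an almost-parallel pair at points of $\ca E_0(x,r,\theta)$, yielding a contradiction. This construction parallels the cusp case of Lemma \ref{lem:S2nbdW} and Case c) in the proof of Theorem \ref{thm:S1extremal}. By Lemma \ref{lem:mayassumeX2}, $u\in X_0^1$, so $\tilde u$ is the unique $\eta_0$-preimage of $u$. Since $u\in \pa_*\ca E_0(x,r,\theta)$, fix $b\in \ca E_0$ with $|b,u|_{X_0}=|b,\pa\ca E_0|_{X_0}$; by \eqref{eq:B(x,r,theta)-cond}, the cone $\Theta(u,b):=\Theta_u(\uparrow_u^b,\pi/4,|u,b|)$ lies in $\ca E_0\subset {\rm int}\,X_0^1\setminus\ca C$.

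Because $f_*$ is an isometry of $\Sigma_{\tilde u}(C_0)$ distinct from the identity, its fixed set $\tilde{\ca F}_{\tilde u}$ has empty interior, so for any preimage $\tilde w\in d\eta_0^{-1}(\uparrow_u^b)$ we may choose $\tilde v\in{\rm int}\,\Sigma_{\tilde u}(C_0)\setminus \tilde{\ca F}_{\tilde u}$ arbitrarily close to $\tilde w$. Put $\tilde v':=f_*(\tilde v)\neq \tilde v$, $v:=d\eta_0(\tilde v)=d\eta_0(\tilde v')$, and let $\tilde\gamma,\tilde\sigma:[0,s_0]\to C_0$ be $C_0$-geodesics from $\tilde u$ with initial directions $\tilde v,\tilde v'$, and set $\gamma:=\eta_0\circ\tilde\gamma$, $\sigma:=\eta_0\circ\tilde\sigma$. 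For small $t>0$, the points $y_t:=\gamma(t)$, $z_t:=\sigma(t)$ lie in $\Theta(u,b)\subset \ca E_0$, with $\tilde\gamma(t),\tilde\sigma(t)\in{\rm int}\,C_0$, so by Lemma \ref{lem:criteria-int-pa}(1) also $y_t,z_t\in{\rm int}\,X_0$. Since $\gamma$ and $\sigma$ share the same initial direction $v$ in $X_0$ at $u$, tangent-cone comparison (using Proposition \ref{prop:tang-cone} and Remark \ref{rem:Tx(X0)}) gives $|y_t,z_t|_{X_0}=o(t)$, hence $|y_t,z_t|_Y\le o(t)$, while $|\tilde\gamma(t),\tilde\sigma(t)|_{C_0}\ge (\angle(\tilde v,\tilde v')-o(1))\,t$.

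Fix a small $c<\tfrac18\angle(\tilde v,\tilde v')$, and let $z'_t\in \hat V_t:=\eta_0(B^{C_0}(\tilde\sigma(t),2ct))$ minimize $d_{y_t}^Y|_{\hat V_t}$; since $z_t\in \hat V_t$, we have $|y_t,z'_t|_Y\le o(t)$, and the unique $C_0$-lift $\tilde z'_t$ of $z'_t$ close to $\tilde\sigma(t)$ satisfies $|\tilde z'_t,\tilde\sigma(t)|_{C_0}=o(t)$. Set $U_t:=\eta_0(B^{C_0}(\tilde\gamma(t),ct))$ and $V'_t:=\eta_0(B^{C_0}(\tilde z'_t,ct))$; then $V'_t\subset \hat V_t$ for small $t$ (so $z'_t$ still minimizes $d_{y_t}^Y|_{V'_t}$), the $C_0$-balls $B^{C_0}(\tilde\gamma(t),ct)$ and $B^{C_0}(\tilde z'_t,ct)$ are disjoint by the lower bound on $|\tilde\gamma(t),\tilde z'_t|_{C_0}$, and local injectivity of $\eta_0$ on $X_0^1$ yields $U_t\cap V'_t=\emptyset$ with both in ${\rm int}\,X_0$. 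When $u\neq x$, $\theta(|x,y_t|)$ is bounded below by a positive constant for small $t$, so $|y_t,z'_t|_Y/(ct)=o(1)<\theta(|x,y_t|)$, all conditions of Definition \ref{defn:DC} hold, and $y_t\in \ca D(x,r,\theta)$, contradicting $y_t\in \ca E_0\subset \ca E(x,r,\theta)$. Hence $f_*$ is the identity on $\Sigma_{\tilde u}(C_0)$.

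For the final assertion, Theorem \ref{thm:X1-f}(1) with $f_*$ trivial gives $\Sigma_u(X_0)^{\rm int}$ isometric to $\Sigma_{\tilde u}(C_0)$, so $\pa\Sigma_u(X_0)^{\rm int}=\pa\Sigma_{\tilde u}(C_0)$; by Definition \ref{defn:int-pa-X0}, $u\in\pa X_0$ iff $\tilde u\in \pa C_0$, and in that case $u=\eta_0(\tilde u)\in \eta_0(\pa C_0)=\pa_*X_0$. The main delicate point is simultaneously arranging all requirements of Definition \ref{defn:DC} for the re-centered ball $V'_t$ (disjointness, containment in ${\rm int}\,X_0$, minimum at the center, and the ratio bound), which is handled by the re-centering above together with the unique-lift property of $X_0^1$; the exceptional case $u=x$ is treated by replacing the constant $c$ with a $t$-dependent $c(t)$ decaying slower than $\theta(t)$, which is possible by choosing $\tilde v$ so close to $\tilde w$ that $|y_t,z'_t|_Y/t$ decays faster than $\theta(t)$.
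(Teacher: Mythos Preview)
Your proof is correct and follows essentially the same route as the paper's. The paper compresses the entire argument into one line: it defines $\Omega:=\{\tilde\xi\in\Sigma_{\tilde u}(C_0)\mid \angle(\tilde\xi,\uparrow_{\tilde u}^{\tilde b})\le\pi/4\}$ and asserts that ``in view of \eqref{eq:B(x,r,theta)-cond}, $f_*$ is the identity on $\Omega$, and hence is the identity on $\Sigma_{\tilde u}(C_0)$.'' Unpacked, this is exactly your construction: if $f_*$ moved some $\tilde v\in\Omega$, the cusp-type construction of Lemma~\ref{lem:S2nbdW} would produce $(x,r,\theta)$-almost parallels inside $\Theta(u,b)\subset\ca E_0(x,r,\theta)$, a contradiction. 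You make this explicit, including the re-centering to arrange condition~(2) of Definition~\ref{defn:DC}; the paper leaves that to the reader. The final step (an isometry which is the identity on an open set is the identity) appears in both arguments, in your version via ``$\tilde{\ca F}_{\tilde u}$ has empty interior.''

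Two small remarks. First, your claim $|\tilde z'_t,\tilde\sigma(t)|_{C_0}=o(t)$ is correct but deserves one more sentence: under the blow-up $(\tfrac{1}{t}C_0,\tilde u)\to T_{\tilde u}(C_0)$ the limit of $\tilde z'_t$ lies in $d\eta_0^{-1}(v)\cap B(\tilde v',2c)=\{\tilde v'\}$ (since $2c<\tfrac14\angle(\tilde v,\tilde v')$), which forces the $o(t)$ bound; as written it looks like you only have $|\tilde z'_t,\tilde\sigma(t)|_{C_0}<2ct$. Second, your separate treatment of the case $u=x$ is unnecessary and a bit shaky: under the standing hypothesis \eqref{eq:paSigma-never}, any $\tilde v\in{\rm int}\,\Sigma_p(C_0)$ with $f_*(\tilde v)\neq\tilde v$ would make $d\eta_0(\tilde v)\in\Sigma_x(X_0)\setminus\pa\Sigma_x(Y)$ (its link in $\Sigma_x(Y)$ is a full suspension), contradicting $\Sigma_x(X_0)\subset\pa\Sigma_x(Y)$; so $f_*$ is already trivial at $\eta_0^{-1}(x)$ and no ad hoc choice of $c(t)$ is needed.
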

\begin{proof}
Let 
$\Omega:=\{ \tilde\xi\in\Sigma_{\tilde u}(C_0)|\angle(\tilde\xi,\Uparrow_{\tilde u}^{\tilde b})\le\pi/4\}$, 
where $\tilde b=\eta_0^{-1}(b)$.
In view of \eqref{eq:B(x,r,theta)-cond},
Lemma \ref{lem:CsubsetX01} yields that
$f_*$ is the identity on $\Omega$, and hence is the identity on $\Sigma_{\tilde u}(C_0)$.
\end{proof}

\begin{proof} [Proof of Proposition \ref{prop:paC-subset-paX}]
We proceed by contradiction. Suppose that $\pa \ca E_0(x, r,\theta)$ meets 
${\rm int}_* X_0$ for any $r>0$, and take 
sequences $u_m\in\pa_* \ca E_0(x, r,\theta)\cap {\rm int}_* X_0$
and  $b_m\in \ca E_0(x, r,\theta)$ converging to $x$ such that $|b_m,u_m|=|b_m,\pa\ca E_0(x, r,\theta)|$.
Choose $\e_m>0$ such that

\beq\label{eq:zn-en}
B^{X_0}(u_m, \e_m)\subset {\rm int}_* X_0.
\eeq
Take a sequence $\{ y_{mi}\}_{i=1}^\infty$ in $\ca D(x,r,\theta)$ converging to $u_m$,
and
let $y_{mi}$ have $(x,r,\theta)$-almost parallels
$U_{mi}:=U(y_{mi}, s_i)$ and  $V_{mi}:=U(z_{mi},s_i)$.
Since $\lim_{i\to\infty} s_i=0$, from Definition \ref{defn:DC}(3), we have 
$\lim_{i\to\infty}|y_{mi},z_{mi}|=0$.

\begin{slem} \label{slem:z=intY}
We may assume that $u_m\in{\rm int}\,Y$ for large enough $m$.
\end{slem}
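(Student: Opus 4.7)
The plan is to show that failure of the sublemma reduces $u_m$ to the same hypothesis as at $x$, after which a cone argument supplies interior replacements. First I would apply Lemma \ref{lem:criteria-0}: since $u_m\in{\rm int}_* X_0$ by the choice \eqref{eq:zn-en}, the possibility $u_m\in\pa Y$ forces $u_m\in X_0^1$ with ${\rm rad}(\xi_{u_m}^+)=\pi/2$ and $f_*$ the identity on $\Sigma_{\tilde u_m}(C_0)$, where $\eta_0(\tilde u_m)=u_m$. By Theorem \ref{thm:X1-f}, this gives $\Sigma_{u_m}(X_0)\subset\pa\Sigma_{u_m}(Y)$, namely the hypothesis \eqref{eq:paSigma-never} holds at $u_m$ exactly as at $x$.

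Consequently Lemma \ref{lem:CsubC} applies with $u_m$ in place of $x$: there exist $r_m>0$ and positive continuous functions $\alpha_m,\theta_m$ on $(0,r_m]$ with $\lim_{t\to 0}\alpha_m(t)=\lim_{t\to 0}\theta_m(t)=0$ such that
\[
{\rm Cone}(u_m;r_m,\alpha_m)\subset \ca E(u_m,r_m,\theta_m)\cap({\rm int}\,X_0^1\setminus\ca C)\subset{\rm int}\,Y.
\]
Thus the cone at $u_m$ already lies inside ${\rm int}\,Y$, and consists of directions at which no almost-parallel structure can be attached at the $u_m$-scale.

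Next I would use this cone to construct an interior replacement. Since $b_m\in\ca E_0(x,r,\theta)$ and the initial segment of an $X_0$-minimal geodesic from $u_m$ to $b_m$ lies in $\ca E_0(x,r,\theta)$ by openness, and since ${\rm Cone}(u_m;r_m,\alpha_m)$ consists of directions $\xi\in\Sigma_{u_m}(X_0)$ bounded away from $\pa\Sigma_{u_m}(X_0)$, I would perturb the pair $(u_m,b_m)$ slightly to a new pair $(u_m',b_m')$ with $u_m'\in{\rm Cone}(u_m;r_m,\alpha_m)\cap{\rm int}\,Y$, $b_m'\in\ca E_0(x,r,\theta)$ close to $b_m$, and $|b_m',u_m'|=|b_m',\pa\ca E_0(x,r,\theta)|$. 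Because ${\rm Cone}(u_m;r_m,\alpha_m)\subset{\rm int}\,X_0^1\setminus\ca C$, the replacement $u_m'$ satisfies $u_m'\in\pa_*\ca E_0(x,r,\theta)\cap{\rm int}_* X_0\cap{\rm int}\,Y$, and $u_m'\to x$ is preserved by taking the perturbation arbitrarily small.

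The main obstacle is the last construction: ensuring that after perturbing $b_m$ to $b_m'$, the foot of the distance function $d_{b_m'}^{X_0}$ on the closed set $\pa\ca E_0(x,r,\theta)$ is realized at the chosen $u_m'$ rather than at some unrelated boundary point. This will require combining openness of $\ca E_0(x,r,\theta)$, the cone structure from Lemma \ref{lem:CsubC}, a continuity argument for the nearest-point projection onto $\pa\ca E_0(x,r,\theta)$, and Lemmas \ref{lem:mayassumeX2} and \ref{lem:f=id}, which together pin down the local structure of $\pa_*\ca E_0(x,r,\theta)$ near $u_m$.
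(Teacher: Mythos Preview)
Your approach has a genuine gap, and it diverges from the paper's method.

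First, the inclusion you assert,
\[
{\rm Cone}(u_m;r_m,\alpha_m)\subset \ca E(u_m,r_m,\theta_m)\cap({\rm int}\,X_0^1\setminus\ca C)\subset{\rm int}\,Y,
\]
is not justified. Lemma~\ref{lem:CsubC} gives only the first inclusion; the second one fails in general. A point $y\in{\rm int}\,X_0^1\setminus\ca C$ lies in $\pa Y$ precisely when ${\rm rad}(\xi_y^+)=\pi/2$ (by Lemma~\ref{lem:criteria-0}, since $f_*$ is automatically the identity and $y\notin\pa_* X_0$), and nothing in Lemma~\ref{lem:CsubC} excludes this. Indeed Example~\ref{ex:X01cosh} exhibits points of ${\rm int}\,X_0^1\setminus\ca C$ lying in $\pa Y$. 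So the cone at $u_m$ need not meet ${\rm int}\,Y$ at all, and your proposed replacement $u_m'$ may not exist. Even granting this inclusion, the perturbation you describe (moving $b_m$ to $b_m'$ so that the foot on $\pa\ca E_0(x,r,\theta)$ lands in the cone) is, as you acknowledge, the hard part; there is no continuity mechanism in the paper that would force the nearest-point projection to land where you want, since $\pa\ca E_0(x,r,\theta)$ is an arbitrary closed set and ${\rm Cone}(u_m;r_m,\alpha_m)$ has no a~priori relation to $\ca E_0(x,r,\theta)$.

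The paper argues by contradiction in a completely different way. Assuming $u_m\in\pa Y$, it uses the almost-parallel pairs $U(y_{mi},s_i),V(z_{mi},s_i)$ already at hand near $u_m$ (these exist because $u_m\in\pa\ca E_0(x,r,\theta)$), takes a nearest point $w_m$ of $\pa Y$ from $z_m:=z_{mi_m}$, and rescales by $1/|z_m,w_m|$ around $w_m$. The almost-parallel structure forces this blow-up to be caused by an inradius collapse (via Lemma~\ref{lem:almost-parallel}), so Theorem~\ref{thm:inradius-collapse} applies to the limit $(X_0)_\infty$. Crucially, \eqref{eq:zn-en} ensures $(C_0)_\infty$ has no boundary, and a case analysis on $(X_0)_\infty^1,(X_0)_\infty^2$ then shows $\Sigma_{w_\infty}(Y_\infty)$ has no boundary, contradicting $\gamma_{w_\infty}^+\subset\pa Y_\infty$ (which follows from the choice of $w_m$). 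The key input you are missing is this use of the almost-parallel structure and the inradius-collapse classification, rather than an infinitesimal replacement.
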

\begin{proof}
Suppose that $u_m\in\pa Y$ for any large $m$.
Take a nearest point $w_{mi}$ of $\pa Y$ from $z_{mi}$.
Since $u_m\in {\rm int}_* X_0$, we have 
$\Sigma_{u_m}(X_0)\subset\pa\Sigma_{u_m}(Y)$ 
from Lemma \ref{lem:criteria-0}.
Take  large enough $i=i_m$ such that 
\beq\label{eq:ynun-en}
y_{mi_m}, z_{mi_m}\in B(u_m,\e_m^2), \quad w_{mi}\in B(u_m,2\e_m^2).
\eeq
Set 
$y_m:=y_{mi_m}$, $z_m:=u_{mi_m}$, $w_m:=w_{mi_m}$ for simplicity.
We proceed as in Case A) of the proof of Lemma \ref{lem:CsubC}.
Namely  considering  the convergence
\begin{align} \label{eq:rescaling-zwY}
   \left(\frac{1}{|z_m, w_m|}Y, w_m\right)  \to (Y_\infty, w_\infty),
\end{align}
we obtain the following:
\begin{itemize}
\item  \eqref{eq:rescaling-zwY} is caused by an inradius collapse$\,;$
\item $w_\infty\in (X_0)_\infty$.
\end{itemize}
Since the argument is the same as before, we omit the detail.
Thus we have $w_\infty\in (X_0)_\infty\cap \pa Y_\infty$
and  $\gamma_{w_\infty}^+\subset \pa Y_\infty$.
%%%%%%%%

\pmed
%%%%%%%%%%%%%%%%%%%%
\begin{center}
\begin{tikzpicture}
[scale = 0.5]

\filldraw [fill=lightgray, opacity=.1] 
 (0,0) to  (1.42,1.42)
to [out=135, in=0]  (0,2)
to [out=180, in=45] (-1.42,1.42)
to  (0,0);

\filldraw [fill=lightgray, opacity=.1] 
(-0.2,-0.2) to[out=-80, in=90]   (-0.1,-0.7)
to [out=-90, in=80]     (-0.2,-1.2) 
to
(0.2,-1.2) to[out=100, in=-90]   (0.1,-0.7)
 to[out=-100, in=90]  (0.2,-0.2);

\draw [-, very thick] (-8.5,-2) to  (8.5,-2);
\draw [ -, thick] (-5.5,-2) to[out=0, in=180]   (0,0);
\draw [ -, thick] (5.5,-2) to[out=180, in=0]   (0,0);
\draw [ -, thick] (5.5,-2) to[out=0, in=180]   (6.75,-1.7);
\draw [ -, thick] (6.75,-1.7) to[out=0, in=180]   (8,-2);
\draw [ -, thick] (-5.5,-2) to[out=180, in=0]   (-6.75,-1.5);
\draw [ -, thick] (-6.75,-1.5) to[out=180, in=0]   (-8,-2);

\fill (-6.75,-1.5)  circle (2pt) node [above] {{\tiny $u_{m+1}$}};

\draw [-, thick] (0,0) to  (0,2);
\draw [-] (0,0) to  (1.42,1.42);
\draw [-] (0,0) to  (-1.42,1.42);
\draw [dotted, thick] (0,2) to[out=0, in=135]    (1.42,1.42);
\draw [dotted, thick] (0,2) to[out=180, in=45]    (-1.42,1.42);

\draw [ -] (-0.2,-0.2) to[out=-80, in=90]   (-0.1,-0.7);
\draw [ -] (-0.2,-1.2) to[out=80, in=-90]   (-0.1,-0.7);
%%%
\draw [ -] (0.2,-0.2) to[out=-100, in=90]   (0.1,-0.7);
\draw [ -] (0.2,-1.2) to[out=100, in=-90]   (0.1,-0.7);
%%%%%%

\fill (0,0.2) circle (0pt) node [right] {{\small $u_m$}};
\fill (0,2) circle (0pt) node [above] {{\small $b_m$}};
\fill (-0.1,-0.7) circle (1.2pt);
\fill (-0.1,-0.7) circle (0pt) node [left] {{\tiny $y_m$}};
\fill (0.1,-0.7) circle (1.2pt);
\fill (0.1,-0.7) circle (0pt) node [right] {{\tiny $z_m$}};

\fill (0,-2.1) circle (0pt) node [below] {{\small $\pa_* X_0$}};
\fill (3,-1) circle (0pt) node [right] {{\small $\pa \ca E_0(x,r,\theta)$}};
\fill (4,1.5) circle (0pt) node [right] {$\ca E_0(x,r,\theta)$};
\fill (-1.5,1.2) circle (0pt) node [left] {{\tiny $\Theta(u_m,b_m)$}};

\end{tikzpicture}
\end{center}  
\vspace{-0.7cm}  
\begin{figure}[htbp]
  \centering
  \caption{}
  \label{fig:final-stage}  
\end{figure}  
\psmall

%%%%%%%%%%%%
%Note that 
%${\rm rad}(\xi_{z_\infty}^+)=\pi/2$. 
By \eqref{eq:zn-en} and \eqref{eq:ynun-en}, 
$(C_0)_\infty$ has no boundary

%and hence by Lemma \ref{lem:inrad-collapse},
%\eqref{eq:rescaling-zwY} is caused by an inradius collapse.

By Theorem \ref{thm:inradius-collapse},
we have one of the following three cases:
\begin{enumerate}
\item[(a)] $(X_0)_\infty=(X_0)_\infty^1\,;$
\item[(b)]  Both $(X_0)_\infty^1$ and $(X_0)_\infty^2$ are nonempty$\,;$ 
\item[(c)]  $(X_0)_\infty=(X_0)_\infty^2$.
\end{enumerate}
In Case (a), $Y_\infty=(C_0)_\infty\times \mathbb R_+$, and hence $\pa Y_\infty=(C_0)_\infty$. This is a contradiction 
to $\gamma_{w_\infty}^+\subset \pa Y_\infty$.
In Case (b), suppose $w_\infty\in (X_0)_\infty^1$.
Then $\Sigma_{w_\infty}(Y_\infty)=\Sigma_{\tilde w_\infty}(C_\infty)/f_*$, where $f_*$ is not the identity
(Theorem \ref{thm:inradius-collapse}), and hence it has no boundary.
This is a  contradiction 
to $w_\infty\in \pa Y_\infty$.
In the rest of cases, we may assume $w_\infty\in (X_0)_\infty^2$. Then 
$\Sigma_{w_\infty}(Y_\infty)$ is the spherical suspension over 
$\Sigma_{w_\infty}((C_0)_\infty)$, and hence it has no boundary.
This is again a  contradiction, and completes the proof of 
Sublemma \ref{slem:z=intY}.
\end{proof}
\pmed

Choose $\delta_m>0$ satisfying
\beq\label{eq:delta-intY}
 B^Y(u_m,\delta_m)\subset {\rm int}\,Y,
\quad \delta_m\ll |u_m,b_m|.
\eeq
Set 
\[
   \alpha_m:={\rm rad}(\xi_{u_m}^+).
\]
If $\alpha_m=\pi/2$,
then Lemma \ref{lem:f=id} implies that 
$\Sigma_{u_n}(Y)=\Sigma_{\tilde u_n}(C)/f_*=\Sigma_{\tilde u_n}(C)$.
It turns out that $u_n\in\pa Y$, which contradicts 
Subelmma \ref{slem:z=intY}.
Thus we have $\alpha_m>\pi/2$.

\begin{slem} \label{slem:alpha>pi/2}
We have  $\alpha:=\liminf_{m\to\infty} \alpha_m>\pi/2$.
\end{slem}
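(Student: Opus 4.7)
\smallskip
\noindent
\emph{Proof plan.} The plan is to argue by contradiction: suppose that along a subsequence, still indexed by $m$, we have $\alpha_m = {\rm rad}(\xi_{u_m}^+) \to \pi/2$. Set $\e_m := \alpha_m - \pi/2 > 0$, which tends to zero. The strategy is to extract a rescaling limit at $u_m$ that captures both the infinitesimal degeneracy $\e_m\to 0$ and the almost-parallel geometry on the $\ca D$-side approaching $u_m$, and to exhibit an incompatibility between the two structures.

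First I would pin down the infinitesimal picture at $u_m$. By Lemma \ref{lem:f=id}, $f_*$ is the identity on $\Sigma_{\tilde u_m}(C_0)$, and since $\alpha_m > \pi/2$, Theorem \ref{thm:X1-f}(3) gives the gluing decomposition
\[
\Sigma_{u_m}(Y) \;=\; \Sigma_{\tilde u_m}(C)\,\bigcup_{d\eta_0}\,\Sigma_{u_m}(X),
\]
in which the closed convex subset $\Sigma_{u_m}(X)\subset\Sigma_{u_m}(Y)$ has ``thickness'' $\e_m$ from $\Sigma_{u_m}(X_0)$. Sublemma \ref{slem:z=intY} tells us $u_m\in{\rm int}\,Y$, so $\Sigma_{u_m}(Y)$ has no boundary; combined with $\e_m\to 0$, this says that the $X$-part at $u_m$ is degenerating to its boundary $\Sigma_{u_m}(X_0)$, so that $\Sigma_{u_m}(Y)$ is infinitesimally approaching a half-spherical suspension with boundary, i.e.\ an Alexandrov space with non-empty boundary in the limit.

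Next I would exploit the fact that $u_m\in\pa\,\ca E_0(x,r,\theta)$. Choose a diagonal sequence $y_m:=y_{m,i(m)}\in \ca D(x,r,\theta)$ with almost parallels $U(y_m,s_m),V(z_m,s_m)$ such that
\[
|u_m,y_m|\,\to\,0, \qquad |y_m,z_m|/|u_m,y_m|\,\to\,0, \qquad |y_m,z_m|/s_m\le\theta(|x,y_m|).
\]
Rescaling at $u_m$ by $r_m:=\max\{\e_m,|u_m,y_m|\}$ and passing to a subsequence, we obtain a limit $(Y_\infty,u_\infty)$ in which: (a) by the first step, the $X$-part at $u_\infty$ retains a quantitative trace of boundary in its space of directions, and (b) by Lemma \ref{lem:almost-parallel} applied at $y_m$ with scale $|y_m,z_m|$, a secondary blow-up at the image of $y_m$ in $Y_\infty$ is caused by an inradius collapse, so Proposition \ref{prop:limit-split} and Theorem \ref{thm:splitting} provide a local isometric splitting $L\times\mathbb R$ of a neighborhood of the image of $y_m$ in $Y_\infty$, with the $\mathbb R$-factor perpendicular to $X_0$.

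Finally, I would close by propagating the splitting from the image of $y_m$ back to $u_\infty$ via the short geodesic $u_m y_m$ in the rescaled limit: because $u_m$ lies on $\Theta(u_m,b_m)$ sitting fully inside $\ca E_0(x,r,\theta)$, the cone $\Theta(u_m,b_m)$ rescales to a set with nonempty interior in $Y_\infty$ avoiding almost parallels at unit scale, whereas the splitting forces almost-parallel pairs in \emph{every} direction transverse to $\mathbb R$. Combined with the vanishing thickness $\e_m/r_m\le 1$ and the description of $\Sigma_{u_\infty}(Y_\infty)$ from the first step, this yields a direction in $\Sigma_{u_\infty}(Y_\infty)$ on which $\Theta(u_m,b_m)$ and a nearby almost-parallel configuration coexist, contradicting the defining property of $\ca E_0(x,r,\theta)$ at $u_m$. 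The main obstacle is the delicate two-scale bookkeeping: one must choose $r_m$ carefully so that both the degeneracy scale $\e_m$ at $u_m$ and the almost-parallel scale $|y_m,z_m|$ near $y_m$ survive the blow-up in a compatible way, and the extraction of the splitting through the geodesic $u_m y_m$ requires controlling the angle between $\uparrow_{y_m}^{z_m}$ and $\uparrow_{y_m}^{u_m}$ uniformly, using Lemma \ref{lem:almostperp+}(1) and the location $u_m\in{\rm int}\,Y$.
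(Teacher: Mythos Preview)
Your proposal has a genuine gap: the closing contradiction is not a contradiction. You claim that the rescaled limit would exhibit, near $u_\infty$, a direction along which both the cone $\Theta(u_m,b_m)\subset\ca E_0(x,r,\theta)$ and an ``almost-parallel configuration'' coexist, and that this violates the defining property of $\ca E_0$. But the defining property of $\ca E_0(x,r,\theta)$ only says that points \emph{of} $\ca E_0$ have no $(x,r,\theta)$-almost parallels; it says nothing about nearby points, and $u_m$ itself lies on $\pa\ca E_0$, not in $\ca E_0$. Moreover, the splitting $L\times[0,1]$ you invoke from Proposition \ref{prop:limit-split} lives at the scale $|y_m,z_m|$, which you yourself force to satisfy $|y_m,z_m|/|u_m,y_m|\to 0$; after rescaling by $r_m\ge|u_m,y_m|$ this structure collapses to a point and cannot be ``propagated back to $u_\infty$'' along the geodesic $u_my_m$. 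The two-scale bookkeeping you flag as an obstacle is indeed unresolved, and there is no control on the angle between $\uparrow_{y_m}^{u_m}$ and $\uparrow_{y_m}^{z_m}$ that would let such a propagation go through.

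The paper's argument avoids all of this by working at a single scale and using a different contradiction. Rescale by $\mu_m:=|y_m,u_m|$ with $y_m,z_m\in B(u_m,\delta_m^2)$; since $\mu_m\ll\delta_m$ and $u_m\in{\rm int}\,Y$, the limit $Y_\infty$ has \emph{no boundary}. The hypothesis $\alpha_m\to\pi/2$ forces ${\rm rad}(\xi_{u_\infty}^+)=\pi/2$, so by Lemma \ref{lem:inrad-collapse} the convergence is caused by an inradius collapse, i.e.\ $X_\infty=(X_0)_\infty$. Now the key step you are missing: by Lemma \ref{lem:CsubsetX01} the cone $\Theta(u_m,b_m)$ lies in ${\rm int}\,X_0^1\setminus\ca C$, so $\eta_0$ is \emph{injective} on its preimage. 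Passing to the limit, $\eta_{0,\infty}:(C_0)_\infty\to(X_0)_\infty$ is injective on an open cone $\Omega_\infty$, and since the limit is an inradius collapse governed by an isometric involution (Theorem \ref{thm:inradius-collapse}), injectivity on an open set forces $\eta_{0,\infty}$ to be globally injective. That means $(X_0)_\infty$ is just a single copy of $(C_0)_\infty$, hence $(X_0)_\infty\subset\pa Y_\infty$, contradicting $\pa Y_\infty=\emptyset$. The almost-parallel data $y_m,z_m$ are used only to pick a scale $\mu_m\ll\delta_m$; the contradiction comes entirely from the interplay between $u_m\in{\rm int}\,Y$ and the injectivity of $\eta_0$ on $\Theta(u_m,b_m)$, not from any splitting at $y_m$.
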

\begin{proof}
Take large enough $i=i_m$ such that 
\beq \label{eq:wangle-gamma(yza)0}
 y_{mi_m}, z_{mi_m}\in B(u_m,\delta_m^2),
\eeq
and set
$y_m:=y_{mi_m}$, $z_m:=z_{mi_m}$. 
By contradiction, suppose  $\alpha=\pi/2$, and  consider the convergence
\beq \label{eq:rescal-ynun}
 \left(\frac{1}{|y_{m}, u_{m}|}Y, u_{m} \right)  \to (Y_\infty, u_\infty).
\eeq
Note that $Y_\infty$ has no boundary from \eqref{eq:delta-intY} and \eqref{eq:wangle-gamma(yza)0}.
On the other hand,  we have ${\rm rad}(\xi_{u_\infty}^+)=\pi/2$ in the limit.
This implies $\Sigma_{u_\infty}(X_\infty)=\Sigma_{u_\infty}((X_0)_\infty)$, that is,  the  convergence \eqref{eq:rescal-ynun} is caused by an inradius collapse (see Definition \ref{defn:inradius-conv} and Lemma 
\ref{lem:caused-by}).

Let 
$\xi_\infty\in\Sigma_{u_\infty}((X_0)_\infty)$ be the direction
defined as the limit of the geodesics $u_m b_m$ under 
\eqref{eq:rescal-ynun}.
Let $\Omega_\infty$ be the cone domain in 
$(X_0)_\infty$ around $\xi_\infty$ of angle $\pi/4$. 
Set $\mu_m:=|y_m,u_m|$, and 
let 
$\eta_{0,\infty}:(C_0)_\infty\to (X_0)_\infty$ 
be the limit of 
$\eta_0:(C_0/\mu_m,\tilde u_m)\to (X_0/\mu_m,u_m)$.
Since $\eta_0$ is injective and isometric on 
$\eta_0^{-1}(\Theta(u_m,b_m))$,
 $\eta_{0,\infty}$ must be injective on 
$\eta_{0,\infty}^{-1} (\eta_{0\infty}(\Omega_\infty))$.
Since
\eqref{eq:rescal-ynun} is caused by an inradius collapse,
it follows from Theorem \ref{thm:inradius-collapse}  that 
$\eta_{0,\infty}$ is injective on 
$(C_0)_\infty$. This yields 
$(X_0)_\infty\subset \pa Y_\infty$,
a contradiction. This completes the proof of Sublemma
\ref{slem:alpha>pi/2}.
\end{proof}

%%%%%%%%%%%%%%%%%%
\psmall
Set $v_m:=\gamma_{u_m}^+(t_0)$.
By Sublemma \ref{slem:alpha>pi/2},
take a point $a_m\in X$ such that
$\wangle v_m u_m a_m>\pi/2+c$
for a uniform constant $c>0$.
Then we have 
\beq\label{eq:wangle(vya)}
   \wangle v_m y_{mi} a_m>\pi/2+c/2,\quad
\wangle v_m z_{mi} a_m>\pi/2+c/2,
\eeq
for large enough $i$.
On the other hand, from Lemmas \ref {lem:calDC}  and \ref{lem:almostperp+}, 
\eqref{eq:wangle(vya)} implies   
\begin{align*}
  & \wangle z_{mi} y_{mi} a_m
   \le \angle z_{mi} y_{mi} a_m< \pi/2-c/3,\\ 
&\wangle y_{mi} z_{mi} a_m
   \le \angle y_{mi} z_{mi} a_m< \pi/2-c/3.
\end{align*}
Since $\wangle y_{mi}a_mz_{mi}<o_i$,
we have a contradiction.
This completes the proof of Proposition \ref{prop:paC-subset-paX}.  
\end{proof} 
\psmall

 Applying Theorem \ref{thm:never-happen},
we obtain the closedness of $\ca S^1\cup\ca C$.

\begin{thm}\label{thm:CS1=closed}
$\ca S^1\cup\ca C$ is closed in $N_0$
\end{thm}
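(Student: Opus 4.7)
The plan is to take a sequence $x_i \in \ca S^1 \cup \ca C$ with $x_i \to x \in N_0$ and show $x \in \ca S^1 \cup \ca C$. The argument reduces to invoking Theorem \ref{thm:never-happen}, which is the hard work already done. First I would observe that every point of $\ca S^1 \cup \ca C$ lies in $X_0^1$ with ${\rm rad}(\xi^+) = \pi/2$: for points of $\ca S^1$ this is part of the definition, and for cusps it follows from Theorem \ref{thm:X1-f}(3), since ${\rm rad}(\xi_{x_i}^+) > \pi/2$ would force $f_*$ to be the identity, contradicting $x_i \in \ca C$.

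Next I would establish that $x \in X_0^1$ with ${\rm rad}(\xi_x^+) = \pi/2$, proceeding exactly as in Lemma \ref{lem:closed-S1}. Upper semicontinuity of the radius viewed from the perpendicular direction gives ${\rm rad}(\xi_x^+) \le \pi/2$, while Proposition \ref{prop:perp+horiz}(2) shows ${\rm rad}(\xi_x^+) \ge \pi/2$ since every $v \in \Sigma_x(X_0)$ satisfies $\angle(\xi_x^+,v) = \pi/2$. In particular $x \notin X_0^2$, because Lemma \ref{lem:double-sus} would otherwise give ${\rm rad}(\xi_x^+) = \pi$, so $x \in X_0^1$. If $x \in \ca S^1$ we are finished, so we may assume $x \in {\rm int}\, X_0^1$.

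The remaining task is to verify that $f_*\colon \Sigma_p(C_0) \to \Sigma_p(C_0)$ is nontrivial, where $p := \eta_0^{-1}(x)$. Suppose by contradiction that $f_* = {\rm id}$. Since ${\rm rad}(\xi_x^+) = \pi/2$, Theorem \ref{thm:X1-f}(2) gives an isometry
\[
\Sigma_x(Y) \;\cong\; \Sigma_p(C)/f_* \;=\; \Sigma_p(C) \;=\; \{\tilde\xi_p^+\} * \Sigma_p(C_0),
\]
whose Alexandrov boundary is $\Sigma_p(C_0) \cup (\{\tilde\xi_p^+\}*\pa\Sigma_p(C_0))$. Transporting this identification via $d\eta_0$ yields exactly \eqref{eq:paSigma-never}, and in particular $x \in X_0^1 \cap \pa Y$. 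Theorem \ref{thm:never-happen} then supplies an $r > 0$ with $B^{X_0}(x,r) \subset {\rm int}\, X_0^1 \setminus \ca C$. But $x_i \to x$ forces $x_i \in B^{X_0}(x,r)$ for all large $i$, contradicting $x_i \in \ca S^1 \cup \ca C$, since $\ca S^1 = \pa N_0^1 \cap N_0^1$ is disjoint from ${\rm int}\, X_0^1$ and $\ca C$ is explicitly excluded. Hence $f_*$ is nontrivial and $x \in \ca C$.

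The essential obstacle — showing that the infinitesimal datum \eqref{eq:paSigma-never} already determines a neighborhood of $x$ to be inside ${\rm int}\, X_0^1 \setminus \ca C$ — is exactly the content of Theorem \ref{thm:never-happen} and has already been handled. The only delicate points left are the upper semicontinuity of ${\rm rad}(\xi^+)$ under Gromov--Hausdorff convergence (which runs just as in the proof of Lemma \ref{lem:closed-S1}), and the bookkeeping that identifies $\pa \Sigma_x(Y)$ with $\pa \Sigma_p(C)$ when $f_*$ is trivial.
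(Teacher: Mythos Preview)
Your proof is correct and follows essentially the same route as the paper: show ${\rm rad}(\xi_x^+)=\pi/2$ and $x\in X_0^1$ in the limit, then assume $f_*={\rm id}$, derive \eqref{eq:paSigma-never}, and invoke Theorem \ref{thm:never-happen} to get a neighborhood of $x$ in ${\rm int}\,X_0^1\setminus\ca C$, contradicting $x_i\to x$. The paper first reduces to sequences in $\ca C$ via Lemma \ref{lem:closed-S1} and separately cites Lemma \ref{lem:CsubC} before Theorem \ref{thm:never-happen}, but since Theorem \ref{thm:never-happen} already covers the case $\pa\Sigma_x(X_0)=\emptyset$ internally, your direct application is a mild streamlining of the same argument.
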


\begin{proof}
Let a sequence $y_i$ in $\ca C$ converge 
to a point $x\in X_0$.
Since ${\rm rad}(\xi_{y_i}^+)=\pi/2$,
we have ${\rm rad}(\xi_x^+)=\pi/2$,
and hence $x\in X_0^1$. Let $p:=\eta_0^{-1}(x)$.
In view of Lemma \ref{lem:closed-S1},
it suffices to show that $f_*$ is not the 
identity on $\Sigma_p(C_0)$.
Suppose that $f_*$ is  the identity.
Then $\Sigma_x(X_0)\subset\pa\Sigma_x(Y)$.
By Lemma \ref{lem:CsubC}, $\Sigma_x(X_0)$
must have nonempty boundary.
Now we can apply Theorem \ref{thm:never-happen}
to conclude that $B^{X_0}(x,r)\subset {\rm int}\,X_0\setminus\ca C$ for small enough $r>0$. This is a contradiction.
\end{proof}

\setcounter{equation}{0}

%%%%%%%%%%%%%%%%%%%%%%%%%%%%%%%%%%%%%%%
\pmed\n
\section{Hausdorff dimensions of 
boundary singular sets.}\,  \label{sec:dimension}
%%%

In this section, we prove Theorems \ref{thm:dim(metric-sing)}  and  \ref{thm:nowhere}.
 Recall $m:=\dim Y=\dim X_0+1$.

\pmed
\n
{\bf Criterion for closedness of  $\pa X_0$.}\,\,
First we provide a criterion for the 
closedness of the boundary $\pa X_0$.

Let $k_0=\dim X_0-1$.

\begin{lem}\label{lem:charact-paX0}
 $\pa X_0$ can be written as the union
\beq \label{eq:pa0X0}
      \pa X_0 = \eta_0(\pa C_0)\cup\ca C(k_0-1)\cup\ca S^1(k_0-1).
\eeq
In particular, we have $\dim_H \overline{\pa X_0}\le k_0$.
\end{lem}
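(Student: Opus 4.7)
The plan is to prove \eqref{eq:pa0X0} as a direct bookkeeping consequence of the infinitesimal criterion in Lemma \ref{lem:criteria-int-pa}, and then obtain the dimension bound by confining $\overline{\pa X_0}$ inside a closed set of known dimension. First I would unpack the numerics: since $\dim X_0 = m-1$, we have $k_0 = m-2$, and for $p \in C_0$ with $\eta_0(p)=x\in X_0^1$, $\dim \Sigma_p(C_0) = m-2 = k_0$. Thus when $f_*$ is nontrivial, $\tilde{\ca F}_p$ is a proper closed subset of $\Sigma_p(C_0)$, forcing $\dim\tilde{\ca F}_p \le k_0-1$.

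For the forward inclusion in \eqref{eq:pa0X0}, I would fix $x\in \pa X_0$ and split by multiplicity. If $x\in X_0^2$, then Lemma \ref{lem:criteria-int-pa}(2) forces at least one $p_i\in \pa C_0$, so $x\in \eta_0(\pa C_0)$. If $x\in X_0^1$ with $p=\eta_0^{-1}(x)$, Lemma \ref{lem:criteria-int-pa}(1) gives either $p\in\pa C_0$ (again $x\in\eta_0(\pa C_0)$), or $p\in {\rm int}\,C_0$ together with $f_*$ nontrivial and $\dim\tilde{\ca F}_p \ge k_0-1$; combined with the upper bound above, $\dim\tilde{\ca F}_p = k_0-1$, and according to whether $x\in{\rm int}\,X_0^1$ or $x\in\ca S^1$ (the only two possibilities inside $X_0^1$, using the definitions of $\ca C$ and $\ca S^1$) one places $x$ in $\ca C(k_0-1)$ or $\ca S^1(k_0-1)$. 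The reverse inclusion is immediate: $\eta_0(\pa C_0)=\pa_* X_0\subset\pa X_0$ by definition, while any point of $\ca C(k_0-1)\cup\ca S^1(k_0-1)$ satisfies the sufficient condition in Lemma \ref{lem:criteria-int-pa}(1) for lying in $\pa X_0$.

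For the Hausdorff dimension bound, the characterization gives $\pa X_0 \subset \eta_0(\pa C_0)\cup \ca S^1\cup \ca C$. The key structural remark is that the right-hand side is already closed: $\eta_0(\pa C_0)$ is the $1$-Lipschitz image of the compact set $\pa C_0\subset C_0$ (an Alexandrov space boundary), hence compact, and $\ca S^1\cup\ca C$ is closed in $N_0$ by Theorem \ref{thm:CS1=closed}. Therefore $\overline{\pa X_0}\subset \eta_0(\pa C_0)\cup \ca S^1\cup \ca C$. Since $\eta_0$ is $1$-Lipschitz, $\dim_H\eta_0(\pa C_0)\le \dim_H\pa C_0 = k_0$, and Theorem \ref{thm:dim(metric-sing)}(1) yields $\dim_H(\ca S^1\cup\ca C)\le m-2 = k_0$, so both pieces contribute dimension at most $k_0$.

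The only point requiring any care is the implicit logical ordering: the argument uses Theorem \ref{thm:CS1=closed} (closedness of $\ca S^1\cup\ca C$) and Theorem \ref{thm:dim(metric-sing)}(1). I would double-check that in the present section both are cited as already established — Theorem \ref{thm:CS1=closed} was proved at the end of Section \ref{sec:local-str-S1}, and Theorem \ref{thm:dim(metric-sing)}(1) must be arranged in Section \ref{sec:dimension} to precede (or be independent of) this lemma. If one instead uses Theorem \ref{thm:dim(metric-sing)}(2), it gives the sharper $\dim_H(\ca S^1(k_0-1)\cup \ca C(k_0-1))\le k_0$ directly, again without circularity, since that estimate ultimately rests on Theorem \ref{thm:S1extremal} and the extremal-set dimension theory, which are already in place. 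No other step is likely to present an obstacle; the lemma is essentially an indexing statement that repackages Lemma \ref{lem:criteria-int-pa}.
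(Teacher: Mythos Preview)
Your argument is correct and follows essentially the same route as the paper: the decomposition \eqref{eq:pa0X0} is read off directly from Lemma \ref{lem:criteria-int-pa}, and the dimension bound for $\overline{\pa X_0}$ is obtained by enclosing it in the closed set $\eta_0(\pa C_0)\cup\ca S^1\cup\ca C$ via Theorem \ref{thm:CS1=closed}. One small remark: the paper's printed proof cites Theorem \ref{thm:nowhere}(1) for the dimension bound, but that result is a \emph{lower} bound on $\dim_H\ca S^2$ and is irrelevant here; your citation of Theorem \ref{thm:dim(metric-sing)}(1) is the intended one, and your check that its proof does not rely on the present lemma (only part (3) of Theorem \ref{thm:dim(metric-sing)} invokes \eqref{eq:pa0X0}, and then only the decomposition, not the dimension estimate) correctly rules out circularity.
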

\begin{proof}
\eqref{eq:pa0X0} immediately follows from Lemma \ref{lem:criteria-int-pa}.
\eqref{eq:pa0X0}  and Theorems
\ref{thm:CS1=closed}, \ref{thm:dim(metric-sing)} (1) imply
$\dim_H \overline{\pa X_0}\le k_0$.
\end{proof}

\begin{lem}\label{lem:partial-closed}
$\pa X_0$ is a closed subset if and only if 
\begin{align} \label{eq:kcapm-1} 
\text{ $\ca C(k_0-1)\cup\ca S^1(k_0-1))\cup \pa Y$ is closed}.  
\end{align}
\end{lem}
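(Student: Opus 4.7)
The plan is to exploit the decomposition $\pa X_0 = \pa_* X_0 \cup D$ coming from Lemma \ref{lem:charact-paX0}, where $D := \ca C(k_0-1) \cup \ca S^1(k_0-1)$, together with the inclusion $\pa_* X_0 \subset \pa Y$ provided by Lemma \ref{lem:criteria-0}. These combine to give $\pa X_0 \subset D \cup \pa Y$, which will drive both implications.

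For the forward direction, assuming $\pa X_0$ is closed, take a convergent sequence $y_n \to y$ with $y_n \in D \cup \pa Y$. If infinitely many $y_n$ lie in $\pa Y$, then $y \in \pa Y$ since the Alexandrov boundary $\pa Y$ is closed in $Y$. Otherwise, eventually $y_n \in D \subset \pa X_0$, so $y \in \pa X_0 \subset D \cup \pa Y$ by the assumption. This direction is essentially formal.

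For the converse, assume $D \cup \pa Y$ is closed and take $x_n \in \pa X_0$ with $x_n \to x$. Then $x_n \in D \cup \pa Y$, hence $x \in D \cup \pa Y$, and $x \in X_0$ since $X_0$ is closed. I will split into cases according to Lemma \ref{lem:criteria-0}. The cases $x \in D$ and $x \in \pa_* X_0$ both give $x \in \pa X_0$ directly. The only remaining case is $x \in X_0^1 \cap \pa Y$ with ${\rm rad}(\xi_x^+) = \pi/2$, $f_*$ trivial on $\Sigma_p(C_0)$, and $p = \eta_0^{-1}(x) \in {\rm int}\, C_0$ (since $p \in \pa C_0$ would put $x \in \pa_* X_0$).

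The main obstacle is this last case, and here is where the heavy machinery enters. Via Theorem \ref{thm:X1-f}(2), the triviality of $f_*$ with ${\rm rad}(\xi_x^+) = \pi/2$ forces $\Sigma_x(Y) \cong \Sigma_p(C)$, which gives precisely the infinitesimal configuration \eqref{eq:paSigma-never}. I will then invoke Theorem \ref{thm:never-happen} to obtain $r > 0$ with $B^{X_0}(x, r) \subset {\rm int}\, X_0^1 \setminus \ca C$. Since $\eta_0 : {\rm int}\, C_0^1 \to {\rm int}\, X_0^1$ is a bijective local isometry (Lemma \ref{lem:eta'}(1)) and $p \in {\rm int}\, C_0$, after further shrinking $r$ the $\eta_0$-lift of $B^{X_0}(x, r)$ lies in ${\rm int}\, C_0$. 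Applying Lemma \ref{lem:criteria-int-pa}(1) pointwise on $B^{X_0}(x, r)$ (with $f_*$ trivial at every lift, as no point is a cusp) yields $B^{X_0}(x, r) \subset {\rm int}\, X_0$. This contradicts $x_n \to x$ with $x_n \in \pa X_0$, finishing the converse. All of the subtlety is concentrated in the appeal to Theorem \ref{thm:never-happen}; the rest is a careful but routine bookkeeping of the cases in Lemma \ref{lem:criteria-0} and Lemma \ref{lem:criteria-int-pa}.
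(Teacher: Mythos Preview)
Your argument is correct. The route differs from the paper's in the converse direction. The paper invokes Theorem~\ref{thm:CS1=closed} to see that a limit $x$ of a sequence in $D=\ca C(k_0-1)\cup\ca S^1(k_0-1)$ lies in $\ca C\cup\ca S^1$; since $f_*$ is then nontrivial, Lemma~\ref{lem:criteria-0} forces $x\in\pa_*X_0$ whenever $x\in\pa Y$, so the only genuine obstruction is the case $x\in{\rm int}\,Y$, which is exactly condition~\eqref{eq:kcapm-1}. You instead isolate the problematic case (trivial $f_*$ with $p\in{\rm int}\,C_0$) and kill it by a direct appeal to Theorem~\ref{thm:never-happen}, then use Lemma~\ref{lem:criteria-int-pa}(1) pointwise to produce an interior neighborhood. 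Since Theorem~\ref{thm:CS1=closed} is itself derived from Theorem~\ref{thm:never-happen}, the two approaches have the same logical depth; the paper's packaging via Theorem~\ref{thm:CS1=closed} is slightly more economical, while yours unpacks the mechanism but avoids the intermediate citation.
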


\begin{proof}
 Let  $x_i$ be a sequence in $\pa X_0$ converging to a point $x\in X_0$.
Note that if  $x_i\in\ca C(k_0-1)\cup\ca S^1(k_0-1)$, then  $x\in \ca C\cup\ca S^1$ by 
Theorem \ref{thm:CS1=closed}.
Hence if $x\in\pa Y$, then Lemma  \ref{lem:criteria-0} implies $x\in\pa X_0$.
Since $\pa_* X_0=\eta_0(\pa C_0)$ is 
closed, in view of \eqref{eq:pa0X0},
it follows  that
$\pa X_0$ is closed if and only if the limits $x$
of all sequences $x_i\in\ca C(k_0-1)\cup\ca S^1(k_0-1)$ satisfy the following condition:
\[
     \text{if $x\in {\rm int} \,Y$, then 
$x\in\ca C(k_0-1)\cup\ca S^1(k_0-1)$.}
\]
 Note that the last condition is nothing but 
\eqref{eq:kcapm-1}.
This completes the proof.
\end{proof}

The closedness of the boundary of any Alexandrov space with curvature bounded below follows from Perelman's topological stability (\cite{Pr:alexII}).

\begin{prob}
Determine if  $\pa X_0$  is closed.
\end{prob}

Before proving Theorem \ref{thm:dim(metric-sing)}, we begin with 
the following result in Alexandrov geometry.

\pmed

\begin{lem}\label{lem:convexity}
For given $k\in \N$ and $v>0$, there exist
$C=C(k,v)>0$ and $\e=\e(k,v)>0$ satisfying the following:
Let $\Sigma$ be a $k$-dimensional 
Alexandrov space  with curvature $\ge 1$ having 
$\ca H^k(\Sigma)\ge v$.
Suppose that a group $G$ of isometries of $\Sigma$
has an orbit $Gp$ of diameter $<\e$ for some 
$\e\le\e_0$.
Then there is a $G$-fixed point $q$ with
$|p,q|<C\e$.
\end{lem}
\begin{proof} 
By \cite{FMY} (cf. \cite{PtPt:extremal}), $\Sigma$ is {\it locally $C$-Lipschitz
contractible} for some $C=C(k,v)$
(see \cite{FMY} for the definition).
Let $\e_0:=C^{-1}$. Since $\diam(Gp)<\e\le C^{-1}$,
there is a convex neighborhood
$U$ containing $Gp$ with $\diam(U)<C\e$
(\cite{FMY}).
Consider the convex set $K:=\bigcap_{g\in G} gU$. Replacing $U$ by a slightly larger convex
neighborhood if necessary, we may assume
that $K$ has nonempty boundary.
Since $K$ is $G$-invariant, the farthest point
 $q$ of $K$ from the boundary $\pa K$ is a required point.
\end{proof}

\psmall

%%%%%%%%%%%%%%%%%%%
In Theorem \ref{thm:dim(metric-sing)}, 
we already know  an example 
with $\dim \ca C(k)=k+1$ or $\dim \ca S^1(k)=k+1$
for any $0\le k\le m-3$ (see Example \ref{ex:non-closed}(1)).
%%%%%%%%%%%%%%%%%%%%%

%

\begin{proof}[Proof of Theorem \ref{thm:dim(metric-sing)}]
(1)\, Note that 
\beq \label{eq:Nsing=eta(C0)}
X_0^{\rm sing}=\eta_0(C_0^{\rm sing})\cup\ca S^1\cup\ca C.
\eeq
Theorem \ref{thm:dim-sing} shows 
$\dim_H C_0^{\rm sing}\le m-2$, and hence
$\dim_H \eta_0(C_0^{\rm sing})\le m-2$.
%%%%%%%%%%%%%%%%%%
For any $x\in \ca S^1\cup\ca C$, set $p:=\eta_0^{-1}(x)$.  
We already know that $\Sigma_x(X_0)=\Sigma_p(C_0)/f_*$, where
$f_*$ is not the identity by Theorem \ref{thm:S1-nontrivial-f*}.
It follows  that  
\beq
     \ca H^{m-2}(\Sigma_x(X_0))\le \omega_{m-2}/2,
\eeq
where $\omega_{m-2}$ denotes the volume of $(m-2)$-dimensional unit sphere.

We show
\beq \label{eq:volume=1/2}
\ca H^{m-1}(\Sigma_x(D(Y)))\le \omega_{m-1}/2.
\eeq
Actually,  if $x\in{\rm int}\,Y$, then  we have  
$\ca H^{m-1}(\Sigma_x(Y))=\ca H^{m-1}(\Sigma_p(C)/f_*)\le \omega_{m-1}/2$.
If $x\in\pa Y$,  Lemma \ref{lem:criteria-0} implies $p\in\pa C_0$, and hence we have 
$\ca H^{m-2}(\Sigma_p(C_0))\le \omega_{m-2}/2$ and 
$\ca H^{m-1}(\Sigma_x(Y))\le \omega_{m-1}/4$. 
In particular, we obtain   
 $\ca S^1\cup \ca C\subset  D(Y)^{\rm sing}$. 
Theorem \ref{thm:dim-sing}  and Lemma \ref{lem:XbiLipX} then yield
$\dim_H(\ca S^1\cup \ca C) \le m-2$.
\psmall\n
(2)\,
For $0\le k\le m$, let $Y(k,\delta)$ denote the set of $(k,\delta)$-strained points $y\in Y$.
By \cite[Theorem 10.7]{BGP}, we have
\beq\label{eq:dimY-Y(k,delta)}
\dim_H (Y\setminus Y(k,\delta))\le k-1.
\eeq
Therefore  we only have to show 
\beq \label{eq:S1cupCsubset Y-Y}
\ca S^1(k)\cup\ca C(k)\subset Y\setminus Y(k+2,\delta)
\eeq
for small enough $\delta$.
Suppose \eqref{eq:S1cupCsubset Y-Y} does not hold. Then there exists a 
sequence $x_i$ in 
$(\ca S^1(k)\cup\ca C(k))\cap Y(k+2,\delta_i)$
with $\lim_{i\to\infty}\delta_i=0$.
%%%%%
Choose a  $(k+2,\delta_i)$-strainer $\{ (a_{ij},b_{ij})\}_{j=1}^{k+2}$ of $Y$ at $x_i$, and 
set $\xi_{ij}:=\uparrow_{x_i}^{a_{ij}}$ and 
$\eta_{ij}:=\uparrow_{x_i}^{b_{ij}}$\, $(1\le j\le k+2)$.
Let $f_{i*}$ denote the isometric involution on
$T_{p_i}(C_0)$ induced by that on $\Sigma_{p_i}(C_0)$, where $\eta_0(p_i)=x_i$.
Since $\Sigma_{x_i}(Y)=\Sigma_{p_i}(C)/f_{i*}$, 
we may assume
$\{ (\xi_{ij},\eta_{ij})\}_{j=1}^{k+2}\subset\Sigma_{x_i}(X_0)$ 
by slightly changing $\xi_{ij},\eta_{ij}$ if necessary.
Let $\tilde \xi_{ij}, \tilde \eta_{ij}\in\Sigma_{p_i}(C_0)$ be arbitrary lifts of $\xi_{ij},\eta_{ij}$.
From
\[
 \angle(\tilde\xi_{ij},\tilde\eta_{ij})\ge    
 \angle(\xi_{ij},\eta_{ij})>\pi-\delta_i,
\]
we have $\diam(\eta_0^{-1}(\xi_{ij}))<2\delta_i$
and $\diam(\eta_0^{-1}(\eta_{ij}))<2\delta_i$.
In view of Lemma \ref{lem:convexity},
slightly changing $\xi_{ij},\eta_{ij}$ again, 
we may assume that 
$\tilde \xi_{ij}, \tilde \eta_{ij}\in\tilde{\ca F}_{p_i}$. This observation also shows that 
$\{(\tilde\xi_{ij},\tilde\eta_{ij})\}_{j=1}^{k+2}$ is a 
$(k+2,\delta_i)$-strainer of $\Sigma_{p_i}(C_0)$
in the global sense (see \cite[9.1]{BGP}).

Passing to a subsequence, we may assume
that 
$(\Sigma_{p_i}(C_0), \left<f_{i*}\right>)$ 
converges to a pair 
$(\tilde\Sigma_\infty,\left< f_\infty\right>)$. 
Let $\tilde\xi_{\infty,j}, \tilde\eta_{\infty,j}\in \tilde\Sigma_\infty$ be the limits of $\tilde \xi_{ij}, \tilde \eta_{ij}$ under this convergence.
Note that $\{(\tilde\xi_{\infty,j}, \tilde\eta_{\infty,j})\}_{j=1}^{k+2}$ 
is a global $(k+2,0)$ strainer of 
$\tilde\Sigma_\infty$.
It follows from the splitting theorem that 
$\tilde\Sigma_\infty$ admits an isometric 
embedding $\iota:\mathbb S^{k+1} \to \tilde\Sigma_\infty$.
Since $\{(\tilde\xi_{\infty,j}, \tilde\eta_{\infty,j})\}_{j=1}^{k+2}$ are fixed by $f_\infty$, 
$\iota(\mathbb S^{k+1})$
is also fixed by $f_\infty$.

Let $\Sigma_\infty$ denote the limit of 
$\Sigma_{x_i}(X_0)$. Then we have 
$\Sigma_\infty=\tilde\Sigma_\infty/f_\infty$.
Under the convergence 
$\Sigma_{x_i}(X_0)\to\Sigma_\infty$, the $k$-dimensional extremal subset
$\ca F_{x_i}$ converges to a  
$k$-dimensional extremal subset
$\ca F_\infty$ of $\Sigma_\infty$ (see \cite{Kap:stab}).
%%%
 Let 
$\eta_\infty:\tilde\Sigma_\infty\to\Sigma_\infty$ 
be the projection, and set 
$\tilde{\ca F}_\infty:=
\eta_\infty^{-1}(\ca F_\infty)$.

\begin{slem} \label{slem:SsubsetF}
\[
    \tilde{\ca F}_\infty={\rm Fix}(f_\infty).
\]
\end{slem}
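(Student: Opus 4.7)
The plan is to prove both inclusions separately, using the convergence of the involutions $f_{i*}\to f_\infty$ together with Lemma \ref{lem:convexity} on one side, and a non-branching/orbit argument on the other.

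For the inclusion ${\rm Fix}(f_\infty)\subset \tilde{\ca F}_\infty$, I would pick $\tilde v\in{\rm Fix}(f_\infty)$ and approximate it by a sequence $\tilde v_i\in \Sigma_{p_i}(C_0)$ with $\tilde v_i\to \tilde v$. Since $f_{i*}\to f_\infty$ and $f_\infty(\tilde v)=\tilde v$, the two-point orbit $\{\tilde v_i,f_{i*}(\tilde v_i)\}$ has diameter tending to $0$. Applying Lemma \ref{lem:convexity} to the cyclic group $\langle f_{i*}\rangle$ on $\Sigma_{p_i}(C_0)$, I get points $\tilde w_i\in\tilde{\ca F}_{p_i}$ with $|\tilde v_i,\tilde w_i|\to 0$, hence $\tilde w_i\to\tilde v$. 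Then $d\eta_0(\tilde w_i)\in \ca F_{x_i}$ converges to $\eta_\infty(\tilde v)$, so by the definition of $\ca F_\infty$ as the limit of the extremal subsets $\ca F_{x_i}$ under the stability convergence, we conclude $\eta_\infty(\tilde v)\in\ca F_\infty$, i.e., $\tilde v\in\tilde{\ca F}_\infty$.

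For the opposite inclusion $\tilde{\ca F}_\infty\subset{\rm Fix}(f_\infty)$, take $\tilde v\in \tilde{\ca F}_\infty$ and set $v:=\eta_\infty(\tilde v)\in\ca F_\infty$. By definition of the convergence $\ca F_{x_i}\to\ca F_\infty$, there exist $v_i\in\ca F_{x_i}$ with $v_i\to v$. By the very definition $\ca F_{x_i}=d\eta_0(\tilde{\ca F}_{p_i})$, choose lifts $\tilde v_i\in\tilde{\ca F}_{p_i}$ with $d\eta_0(\tilde v_i)=v_i$. Passing to a subsequence, $\tilde v_i\to \tilde w$ for some $\tilde w\in\tilde\Sigma_\infty$. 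The relation $f_{i*}(\tilde v_i)=\tilde v_i$ passes to the limit to give $f_\infty(\tilde w)=\tilde w$, while $\eta_\infty(\tilde w)=v=\eta_\infty(\tilde v)$ forces $\tilde w\in\{\tilde v,f_\infty(\tilde v)\}$. In either case, applying $f_\infty$ and using $f_\infty^2={\rm id}$ yields $f_\infty(\tilde v)=\tilde v$, so $\tilde v\in{\rm Fix}(f_\infty)$.

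The only potentially subtle point is the implicit use of stability for extremal subsets to justify that $\ca F_\infty$ is precisely the set of limits of sequences from $\ca F_{x_i}$; this is already invoked just before the sublemma statement. Given that, both steps are essentially mechanical limiting arguments, and the nontrivial ingredient is Lemma \ref{lem:convexity}, which is exactly what makes the ``approximate fixed point'' into a genuine fixed point in the prelimit spaces.
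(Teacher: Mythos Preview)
Your proof is correct and follows essentially the same approach as the paper. The paper only writes out the inclusion ${\rm Fix}(f_\infty)\subset\tilde{\ca F}_\infty$ (treating the reverse inclusion as evident), and your argument for that direction---approximate a fixed point by $\tilde v_i$, observe the orbit diameter tends to zero, apply Lemma~\ref{lem:convexity} to produce genuine fixed points $\tilde w_i\in\tilde{\ca F}_{p_i}$, and pass to the limit---is word-for-word the paper's proof; your explicit verification of the other inclusion is a harmless addition.
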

\begin{proof}
Since any limit of $f_{i*}$-fixed point is fixed by $f_\infty$, we only have to show that ${\rm Fix}(f_\infty)\subset\tilde{\ca F}_\infty$. 
For any $u\in{\rm Fix}(f_\infty)$, take $u_i\in\Sigma_{p_i}(C_0)$ converging to $u$.
From $f_\infty u=u$, we have 
$\lim_{i\to\infty}|f_{i*}(u_i),u_i|=0$.
From Lemma \ref{lem:convexity}, we get
a point $v_i\in\ca F_{p_i}$ near $u_i$ with $\lim_{i\to\infty}|v_i,u_i|=0$. This implies $u\in \tilde{\ca F}_\infty$.
\end{proof}

It follows from Sublemma \ref{slem:SsubsetF} that $\iota(\mathbb S^{k+1})\subset\tilde{\ca F}_\infty$,
and hence $\dim \tilde{\ca F}_\infty\ge k+1$.
This is a contradiction since 
$\tilde{\ca F}_\infty$ is $k$-dimensional.
\par\n
(3)\, This is a special case of (2).
Theorem \ref{thm:dim-sing} shows
$\dim_H (C_0^{\rm sing} \cap {\rm int} \,C_0)\le m-3$.
Therefore in view of \eqref{eq:Nsing=eta(C0)}
and \eqref{eq:dimY-Y(k,delta)}, it suffices to 
show that 
\beq\label{eq:SCsubset Y(general)}
(\ca S^1\cup\ca C)\cap {\rm int} X_0\subset
Y\setminus 
Y(m-2,\delta).
\eeq
Since $\ca S^1(m-3)\cup\ca C(m-3)\subset 
\pa X_0$ by Lemma \ref{lem:charact-paX0},
\eqref{eq:S1cupCsubset Y-Y}
implies \eqref{eq:SCsubset Y(general)}.
 This completes the proof.
\end{proof}

%%%%%%%%%%%%%%%%%%%%%%

\begin{proof} [Proof of Theorem \ref{thm:nowhere}]
 Let us assume that ${\rm int}\, X_0^2$ is nonempty.
Fix a point $x_0\in {\rm int}\, X_0^2$, and consider the component 
$X_{0,\alpha}$ of $X_0$ containing $x_0$. By our hypothesis on the uniform positive lower bound
for ${\rm inrad}(M_i)$, 
any neighborhood of $X_{0,\alpha}$ meets ${\rm int}\, X$. 
Using Lemma \ref{lem:single-interior}, we take a point $y_0\in{\rm int}\,X_0^1\cap X_{0,\alpha}$. 
Let $\gamma$ be a minimal $X_0$-geodesic joining $x_0$ to $y_0$, and let $z$ be the first  point of $\gamma$ 
meeting $\pa({\rm int}\, N_0^2)$.
By Lemma \ref{lem:non-extend}, we have $z\in\ca S^2$. 
Let $\{ \tilde z_1,\tilde z_2\}:=\eta_0^{-1}(z)$.
From Sublemma \ref{slem:eta-ball}, take a small $r>0$ with $r\ll\min\{ |z,x_0|, |z,y_0|, |\tilde z_1,\tilde z_2|\}$
satisfying 
\[
                 B^{X_0}(z, r)\subset \eta_0(B(\tilde z_1, 2r))\cup \eta_0(B(\tilde z_2,2r)).
\]
Take $x\in{\rm int}\,X_0^2\cap B^{X_0}(z, r/3)$.
Using Lemma \ref{lem:S2character}, take
 $y\in{\rm int}\,X_0^1\cap B^{X_0}(z, r/3)$,
and set $q:=\eta_0^{-1}(y)$.
Here we may assume $q\in B^{C_0}(\tilde z_1, 2r/3)$. 
Find $p\in\eta_0^{-1}(x)$ such that $p\in B^{C_0}(\tilde z_1, 2r/3)$.
Fix  $r_1>0$ such that 
$$
     B^{X_0}(x, r_1)\subset {\rm int}\, X_0^2,
$$
and set $d:=d^{C_0}(p,q)$, and  
$S:=S^{C_0}(p,d)\cap B^{C_0}(q, r_2)$, where $r_2\ll r$ so that $S\subset {\rm int}\,C_0^1$.
By the coarea formula for 
$d_p^{C_0}$, replacing $y$ if necessary, we may assume $\ca H^{m-2}(S)>0$.

For every $w\in S$, let $\gamma_w$ denote a minimal geodesic in $C_0$
joining $p$ to $w$. 
Applying Lemma \ref{lem:non-extend}, we can find 
a point, say $\varphi(w)$, of $\gamma_w$  
satisfying $\varphi(w) \in \tilde{\ca S}^2$. 
Note that  
\[
                     r_1<|p,\varphi(w)|<d.
\]
For arbitrary $w_1, w_2\in S$, consider a comparison triangle
$\tilde\triangle pw_1w_2$ in $M_{\tilde \kappa}^2$,
where $\tilde\kappa$ is the lower curvature bound of $C_0$
(see Proposition \ref{prop:extendAS}).
Let $\tilde \varphi(w_k)$\, $(k=1,2)$\, be the point on the edge
$\tilde p\tilde w_k$ corresponding to $\varphi(w_k)$.
By the curvature condition for $C_0$, we have 
\[
       |\varphi(w_1),\varphi(w_2)|  \ge  |\tilde\varphi(w_1),\tilde\varphi(w_2)|\ge c|w_1,w_2|,
\]
where $c=c(\tilde\kappa, d, r_1)>0$, which implies 
\[
     \dim_H \varphi(S) \ge \dim_H  S =m-2.
\]
Therefore together with Proposition \ref{prop:eta-2cover}, we conclude that 
\[
              \dim_H\ca S^2 = \dim_H {\tilde{\ca S}}^2\ge m-2.
\]

\pmed

\begin{center}
\begin{tikzpicture}
[scale = 0.6]
\fill (-3,0) circle (3pt);
\fill (5,0) circle (3pt);
\draw [-, thick] (-3,0) circle  (1);
\draw [-, thick] (5,0) circle  (1);
\fill (-4.3,0) circle (0pt) node [left] {\small{${\rm int X_0^2}$}};
\fill (6.3,0) circle (0pt) node [right] {\small{${\rm int X_0^1}$}};

\draw [-, thick] (5,0) to [out=90, in=-80] (4.88,0.96);
\draw [-, thick] (5,0) to [out=-90, in=80] (4.88,-0.96);
%\fill (4.5,0) circle (3pt);
\draw [-, thick] (-3,0) -- (5,0);
\draw [-, thick] (-3,0) -- (4.97,0.24);
\draw [-, thick] (-3,0) -- (4.94,0.48);
\draw [-, thick] (-3,0) -- (4.91,0.72);
\draw [-, thick] (-3,0) -- (4.88,0.96);
\draw [-, thick] (-3,0) -- (4.97,-0.24);
\draw [-, thick] (-3,0) -- (4.94,-0.48);
\draw [-, thick] (-3,0) -- (4.91,-0.72);
\draw [-, thick] (-3,0) -- (4.88,-0.96);

\draw[-,very thick] [-, thick] (1,0) to [out=90, in=-120] (1.5,1.5);
\draw[very thick] [-, thick] (1,0) to [out=-90, in=120] (1.5,-1.5);
\fill (1.5,1.5) circle (0pt) node [left] {\small{$\ca S^2$}};

\end{tikzpicture}
\end{center}  
\vspace{-0.5cm}  
\begin{figure}[htbp]
  \centering
  \caption{}
  \label{fig:measure-S2}  
\end{figure}  
\psmall\n
This completes the proof of 
Theorem \ref{thm:nowhere}.
\end{proof}

\begin{rem} \label{rem:Cantor}
In the following example, we show that  in  
Theorem \ref{thm:nowhere}, one can not expect that the $(m-1)$-dimensional 
Hausdorff measure of $\ca S^2$ is zero.
Actually,  $\ca H^{m-1}(\ca S^2)/\ca H^{m-1}(N_0)$ can be close to $1$.
This shows that Theorem \ref{thm:nowhere} is sharp in that sense.
\end{rem}

\begin{ex} \label{ex:Cantor}
We define  a sequence of flat surfaces $M_n$ with wild boundary 
converging to a two-dimensional space $N$ by making use of construction of 
$\epsilon$-Cantor set (cf. \cite{AlBu}).

\psmall\n
Step 1. \, Given $\epsilon\in(0,2\pi)$, let $\delta=2\pi-\epsilon$.
Following \cite{AlBu}, we construct an $\epsilon$-Cantor set in $[0,2\pi]$.
 We start with $I_0 := [0, 2\pi]$ and remove from $I_0$ an open interval $J_{0,1}=(\pi-\delta/4,\pi+\delta/4)$ around the center of $I_0$ of length $\delta/2$. 
Setting $J_1 :=J_{0,1}$, we define 
\[
    I_1 :=I_0\setminus J_1,
\]
where $I_1$ consists of $2^1$ disjoint closed intervals $I_{1,1}=[0,\pi-\delta/4]$ and $I_{1,2}=[\pi+\delta/4,2\pi]$ with 
\[
     L(I_{1,j}) =\pi -\delta/4\, \,(j=1,2),  \quad  L(I_1) =2\pi -\delta/2.
\]
Next, remove from each $I_{1,j}$ an open interval $J_{1,j}$around the center of $I_{1,j}$ of length $\delta/2^{2+1}$. 
Setting $J_2 := \bigcup_{j=1}^2 J_{1,j}$, we define 
\[
    I_2 :=I_1\setminus J_2,
\]
which consists of $2^2$ disjoint closed intervals $\{ I_{2,j}\}_{j=1}^{2^2}$  with 
\[
     L(I_{2,j}) =(\pi -\delta/4-\delta/8)/2, \quad  L(I_2) =2\pi -\delta/2-\delta/4.
\]
Thus, inductively assuming that $I_{n-1}$, $I_{n-1,j}$ and $J_n$ are defined, 
we define 
\[
    I_n :=I_{n-1}\setminus J_n,
\]
which consists of $2^n$ disjoint closed intervals $\{ I_{n,j}\}_{j=1}^{2^n}$  with 
\begin{align*}
     L(I_{n,j}) &=(\pi -\delta/2^2-\delta/2^3 - \cdots \delta/2^{n+1})/2^{n-1}, \\
    L(I_n) &=2\pi -\delta/2-\delta/2^2-\cdots - \delta/2^n, \\
    L(J_n) & = \delta/2^n.
\end{align*}
Finally we set
\[
         I_{\infty}:=\bigcap_{n=0}^{\infty}\, I_n, \quad J_{\infty}:=\bigcup_{n=1}^{\infty}\,J_n.
\]
Note that $\ca H^1(I_{\infty}) =\lim_{n\to\infty} L(I_n)=2\pi-\delta=\e$
and $\ca H^1(J_{\infty})=\delta$.
The set $I_{\infty}$ is called an $\e$-Cantor set.
\psmall\n
Step 2.\, Inductively define a periodic $C^{\infty}$-function $g_n:\mathbb R\to [0,1]$ 
with period $2\pi$ in such a way that 
\begin{enumerate}
\item $\displaystyle{\supp (g_n|_{[0,2\pi]}) = \bigcup_{k=1}^nJ_n;}$
\item $|g_n''|\le c$, where $c$ is a uniform positive constant independent of $n;$
\item $g_n=g_{n-1}$ on $\displaystyle{\bigcup_{k=1}^{n-1} J_k};$
\end{enumerate}
Let $D_n$ be the domain bounded by the curve $y=g_n(x)$ and the line $y=-1/n$,
and let $M_n$ be the quotient of $D_n$ by the infinite cyclic group 
$\Gamma$ generated by $\gamma(x,y)=(x+2\pi, y)$.
The second fundamental form of $\pa M_n$ 
satisfies $|\Pi_{\pa M_n}|\le \lambda=\lambda(c)$,
and therefore $M_n\in\ca M_b(2,0,\lambda, \pi+2)$.
As $n\to\infty$, $g_n$ converges to a $C^1$-function $g_{\infty}$,
and $M_n$ converges to $N=D_{\infty}/\Gamma$,
where $D_{\infty}$ is the closed set bounded by $y=g_{\infty}(x)$ and $y=0$.
Let $\pi:D_{\infty}\to N$ be the projection.
Obviously $N_0^1$ coincides with the $\pi$-image of 
$\{ (x,y)\,|\,x\in J_{\infty}, \text{$y=0$ or $y=g_{\infty}(x)$}\}$,
which is open and dense in $N_0$.
Moreover, $\ca S^1$ is empty and  $\ca S^2=\pa N_0^2=N_0^2 = \pi(I_{\infty})$
Thus we conclude that 
\[
     \ca H^1({\ca S}^2) = {\ca H}^1(I_{\infty}) =2\pi -\delta.
\]
It also  follows from the above condition (2) that $\lim_{\delta\to 0}\ca H^1(N_0)=2\pi$
and therefore 
\[
                 \lim_{\delta\to 0}\, \ca H^1(\ca S^2)/\ca H^1(N_0)=1.
\]
\end{ex}

\pmed
As  the following example shows,  the set $\ca S^2$ is much smaller than $\ca S^1$ in some cases.
\psmall 
\begin{ex} \label{ex:smallS2}
For any positive integer $n$, consider the finite set
\[
   Q_n :=\left\{ (1/k, \ell/k)\in\mathbb R^2\,|\,\ell\in\mathbb Z, k\in\mathbb N,  |\ell|\le k, 1\le k\le n\,\right\}.
\]
Let $r$ denote the reflection of $\mathbb R^2$ with respect to $x=1$.
Let $g_n:[0,2]\times\R \to \R_+$ be a smooth function such that
\benu
 \item $(g_n|_{[0,2]\times [-1,1]}^{-1}(0)=Q_n\cup r(Q_n)\,;$
 \item $|\nabla\nabla g_n|\le C$ for some uniform constant $C\,;$
 \item $g_n(x, y+2)=g_n(x,y)\,;$
 \item $g_n(2-x, y)=g_n(x,y)$ for all $0\le x\le 2\,;$
 \item $g_n$ takes a constant $\e_n$ on $([0, 1/2n]\cup[2-1/2n, 2])\times \mathbb R$,
       where $0<\e_n\ll 1/n$.
\eenu
%%%%%
Set $h_n:= g_n+\e_n$, and consider the following closed domain bounded by $z=0$ and $z=h_n(x,y)$:
\[
L_n:=\left\{ (x,y,z)\in\mathbb R^3\,|\, 0\le z\le h_n(x,y), \frac{1}{3n}\le x\le2-\frac{1}{3n} 
\,\right\}
\]
For $\delta_n=2\e_n/\pi$, 
%%%%%
let us consider the disk $D(\e_n, 2\e_n/\pi)$
defined in Example \ref{ex:basic-disk}
such that $J_n:=\pa_* D(\e_n, 2\e_n/\pi)$ is 
an arc of length $2\e_n$.
%%%%%%%%%%%%
Note  that $\pa L_n$ consists of two copies of $[0,2\e_n]\times \mathbb R$.
Therefore we can glue $L_n$ and two copies of $D(\e_n, 2\e_n/\pi)\times \mathbb R$
along $\pa L_n$ and  $J_n\times \mathbb R$ 
naturally
to get a
complete three-manifold $\hat M_n$ with boundary. 
Let $M_n$ be the quotient of $\hat M_n$ by the isometric $\Z$-action induced by $y\to y+2$.
Note that $M_n$ is a compact manifold diffeomorphic to $D^2\times S^1$,
which belongs to $\ca M_b(3, 0, \lambda, d)$ for 
some $\lambda,d$.
Let $\hat N\subset \mathbb R^3$ and $N$ be the limits of $\hat M_n$ and $M_n$ respectively, and
let $\pi:\hat N\to N$ be the projection.
Let $Q_{\infty}:=\cup_{n=1}^{\infty} Q_n$.
Note that the set of type $2$ singular set $\ca S^2$ of $N_0$ coincides with
$\pi$-image of $(Q_{\infty}\cup r(Q_{\infty}))\times \{ 0\}$, and hence 
$\dim_H \ca S^2 = 0$.
On the other hand,  the set of type $1$ singular set $\ca S^1$ of $N_0$ coincides with
$\pi$-image of the lines $(x,z)=(0,0)$ and $(x,z)=(2,0)$, and hence 
$\dim_H \ca S^1 = 1$.
\end{ex} 
\pmed

%$\partial_{\sharp} X_0$, $\partial_{\#} X_0$, $\partial_0 X_0$
%%%%%%%%%%%%%%%%%%%%%%%%%%%%%%%%%%%%%%


\begin{thebibliography}{9999}

\bibitem{AKP} S. Alexander, V. Kapovitch and A. Petrunin,
Alexandrov geometry: preliminary version no. 1, arXiv:1903.08539


\bibitem{AB} S.~Alexander, R.~Bishop. \textit{Thin Riemannian manifolds with boundary}. Math. Ann.{\bf{311}}no. 1, 55-70, 1998.

%\bibitem{AKKLT}
%Anderson, M; Katsuda, A; Kurylev, Y; Lassas, M; Taylor, M: \textit{
%Boundary regularity for the Ricci equation, geometric convergence, and Gel?fand's inverse boundary problem.}
%Invent. Math. 158 (2004), no. 2, 261-321.

\bibitem{AlBu}
C. D.~Aliprantis, O.~Burkinshaw.
\textit{Principles of Real Analysis}. Third edition. Academic Press, Inc., San Diego, CA, 1998. xii+415 pp.


\bibitem{AKKLT}
M.~Anderson, A.~Katsuda, Y.~Kurylev, M.~Lassas, M.~Taylor. \textit{Boundary regularity for the Ricci equation, geometric convergence, and
Gelfand's inverse boundary problem}. Invent. 
 Math.,{\bf 158}(2), 2004, 261–321.

%\bibitem{BWW19}
%B.~Botvinnik, M.~Walsh, and D.~Wraith.
%\newblock {Homotopy groups of the observer moduli space of Ricci positive metrics}.
%\newblock {\em Geometry $\&$ Topology}, 23 2019, 3003–3040.

%
%\bibitem{BFP}
%M.~Bucher, R.~Frigerio, andC.~Pagliantini. 
%The simplicial volume of 3-manifolds with boundary. J. Topol.  8  (2015),   457–475. 


\bibitem{BBI} Y.~Burago,D.~Burago, and S.~Ivanov.  \textit{A course in metric geometry.} Graduate Studies in Mathematics  2001;  Volume: {\bf 33}.

\bibitem{BGP} Y.~Burago, M.~Gromov,  and G.~Perelman.  \textit{A. D. Alexandrov spaces with curvature bounded below.}
 Uspekhi Mat. Nauk. {\bf{42}}:2, 3-51, 1992.


\bibitem{Buy} S. V.~Bujalo. 
\textit{Shortest paths on convex hypersurfaces of a Riemannian space.} Zap.
Naucn. Sem. Leningrad. Otdel. Mat. Inst. Steklov. (LOMI), 66:114–132, 207, 1976. Studies in topology, II.

\bibitem{FMY} T.~Fujioka, A.~Mitsuishi,
T.~Yamaguchi.
Lipschitz homotopy convergence of Alexandrov spaces II, to appear in J. Topol. Anal., arXiv:2304.12515

\bibitem{GoChe} A.A.~G\'omez. M.~Che. 
\textit{Gromov–Hausdorff convergence of metric pairs and metric tuples.}
Differ. Geom. Appl. {\bf 94} (2024), 102135


%
%\bibitem{Fuk} K.~Fukaya. Collapsing of Riemannian manifolds and eigenvalues of Laplace operator. Invent. Math. {\bf{87}}, 517–547 (1987).



%\bibitem{ChCo1}Cheeger, J; Colding, T:
%\textit{On the structure of spaces with Ricci curvature bounded below. I.}
%J. Differential Geom. 46 (1997), no. 3, 406-480.
%
%\bibitem{ChCo2}Cheeger, J; Colding, T. \textit{ On the structure of spaces with Ricci curvature bounded below. II.} J. Differential Geom. 54 (2000), no. 1, 13-35.
%
%
%\bibitem{CFG} Cheeger, J; Fukaya, K; Gromov, M: \textit{Nilpotent structures and invariant metrics on collapsed manifolds}.
%J. Amer. Math. Soc. 5 (1992), no. 2, 327-372.
%
%\bibitem{CoNa}Colding, T; Naber, A: \textit{Sharp H$\ddot{o}$lder continuity of tangent cones for spaces with a lower Ricci curvature bound and applications. }
%Ann. of Math. (2) 176 (2012), no. 2, 1173-1229.
%
%\bibitem{FY2} Fukaya, K; Yamaguchi, T:\textit{The fundamental groups of almost non-negatively curved manifolds.}
%Ann. of Math. (2) 136 (1992), no. 2, 253-333.
%
%
%\bibitem{Fuj} T.~Fujioka. Regular points of extremal subsets in Alexandrov spaces,
%Journal of the Mathematical Society of Japan,
%74 (2022), 1245-1268.






%\bibitem{FY1}Fukaya, K; Yamaguchi, T:\textit{Isometry groups of singular spaces.} Volume: 216, Issue: 1, Mathematische Zeitschrift (1994) page 31-44.
%
%\bibitem{G:synthetic}   
%Gromov, M:  \textit{Synthetic geometry in Riemannian manifolds.} Proceeding of ICM, Helsinki, (1978)i  31-44.

\bibitem{G:synthetic}
M.~Gromov.  \textit{Synthetic geometry in Riemannian manifolds.} Proceeding of ICM, Helsinki, (1978)i  31-44.

\bibitem{GLP} M.~Gromov. \textit{Structures m\'etriques pour les 
   vari\'et\'es riemanniennes},  Edited by J. Lafontaine and P. Pansu,
   Textes Math\'ematiques [Mathematical Texts],{\bf 1}
   CEDIC, Paris, 1981.

%\bibitem{G:vol} M.~Gromov. Volume and bounded cohomology. Publ. Math. IHES 56 (1982), 5--99.


%
%\bibitem{GP} K.~Grove and P.~Petersen.
%A radius sphere theorem, Invent. Math. 112
%(1993), 577-583.


%
%\bibitem{HK} E.~Heintze and H.~Karcher.
% A general comparison theorem with
%applications to volume estimates for submanifolds, Ann. Sci. Ec. Norm. Sup.
% 11 (1978),  451–470.


\bibitem{HY}Z.~Huang, T.~Yamaguchi,
Inradius collapsed manifolds with a lower Ricci curvature bound, in preparation.



%
%\bibitem{HS} Harvey, J;   Searle, C: \textit{Orientation and symmetries of Alexandrov spaces with applications in positive curvature.}  arXiv:1209.1366 % [math.DG] 
%
%
\bibitem{Kap} V.~Kapovitch.  \textit{ Regularity of limits of noncollapsing sequences of manifolds}. 
Geom. Funct. Anal.  {\bf 12}  (2002),  no. 1, 121–137. 

%
\bibitem{Kap:stab} V.~Kapovitch. \textit{Perelman's stability theorem}.  Surveys in differential geometry. Vol. XI,  103-136, Surv. Differ. Geom., {\bf 11}, Int. Press, Somerville, MA, 2007.



\bibitem {Kn} K.~Knox.
\textit{A compactness theorem for Riemannian manifolds with boundary and applications}, arXiv:1211.6210


 
\bibitem {Kodani} S.~Kodani. \textit{Convergence theorem for Riemannian manifolds with boundary}. Compositio Math., {\bf 75}(2) (1990), 171–192.



%
%\bibitem{KPT}Kapovitch, V; Petrunin, A; Tuschmann, W:
%\textit{Nilpotency, almost nonnegative curvature, and the gradient flow on Alexandrov spaces.}Ann. of Math. (2) 171 (2010), no. 1, 3430-373.
%
%\bibitem{KaWi}Kapovitch, V; Wilking, B:\textit{Structure of fundamental groups of manifolds with Ricci curvature bounded below}. preprint.
%
%\bibitem{Kod} Kodani, S, \textit{
%Convergence theorem for Riemannian manifolds with boundary}.
%Compositio Math. 75 (1990), no. 2, 171-192.

\bibitem{Kos} N.~Kosovski$\breve{i}$. \textit{Gluing of Riemannian manifolds of curvature $\geq\kappa$}. Algebra i Analiz {\bf 14}:3 (2002), 140-157.

%
%\bibitem{MY:dim3closed}
%Mitsuishi, A; Yamaguchi, T:\textit{Collapsing three-dimensional closed Alexandrov spaces with a lower curvature bound}. Trans. Amer. Math. Soc. 367 (2015), no. 4, 2339-2410.

\bibitem{M} A.~Mitsuishi. 
\textit{Self and partial gluing theorems for Alexandrov spaces with a lower curvature bound}, arXiv:1606.02578


%%
%\bibitem{MY:SLC} A.~Mitsuishi, T.~Yamaguchi.
%\textit{ Locally strongly Lipschitz contractibility of 
%Alexandrov spaces}, Pacific J. 270 (2014),393—421.

%\bibitem{MY:stability} A.~Mitsuishi, T.~Yamaguchi.
%Stability of strongly Lipschitz contractible balls in Alexandrov spaces,  Math. Z. 277(2014),995-1009.


%
%\bibitem{MY:good} 
% A. Mitsuishi and T. Yamaguchi.
%\textit{Good covering of  Alexandrov spaces},
%    Trans. AMS 372(2019), 8107-8130.

\bibitem{MY2:dim3bdy}
A.~Mitsuishi, T.~Yamaguchi.\textit{Collapsing three-dimensional  Alexandrov spaces with boundary.} Trans. Amer. Math. Soc. {\bf 378}(025), 4173-4223. 

\bibitem{OS} Y.~Otsu, T.~Shioya.
\textit{The Riemannian structure of Alexandrov spaces.} 
J. Differential Geom. {\bf 39} (1994), no. 3, 629--658

\bibitem{Perales}R.~Perales.
 \textit{Volumes and limits of manifolds with
Ricci curvature and mean curvature bounds}.
Differ. Geom. Appl. {\bf 48} (2016), no. 03, 23-37.

\bibitem{Perales2}R.~Perales.
\textit{Convergence of manifolds and metric spaces with boundary}.
arXiv:1505.01792 

\bibitem{PerSor} R.~Perales, C.~Sormani. \textit{
Sequences of open Riemannian manifolds with boundary}.
Pacific J. Math. {\bf 270} (2014), no. 2, 423-471.


\bibitem{Pr:alexII} G.~Perelman.\textit{Alexandrov spaces with curvature bounded below II}. preprint, 1994.

\bibitem{Per} G.~Perelman.  \textit{Elements of Morse theory on Alexandrov spaces.} 
St. Petersburg Math. J. {\bf 5} (1994) 207--214.

%\bibitem{P97}
%G.~Perelman.
%\newblock {Construction of manifolds of positive Ricci curvature with big volume and large Betti numbers}.
%\newblock In {\em Comparison Geometry}, Mathematical Sciences Research  Institute Berkeley, Calif: Mathematical Sciences Research Institute publications, pages 157--163. Cambridge University Press, 1997.


%
%\bibitem{PP QG} G.~Perelman and A.~Petrunin.
%\textit{Quasigeodesics and gradient curves in Alexandrov spaces}, preprint.
%
%\bibitem{Petrunin0}A.~Petrunin.
%\textit{Applications of quasigeodesics and gradient curves. Comparison geometry} (Berkeley, CA, 1993), 203-219,
%Math. Sci. Res. Inst. Publ., 30, Cambridge Univ. Press, Cambridge, 1997.
%
%\bibitem{Petrunin} A.~Petrunin. \textit{Parallel Transportation for Alexandrov space with Curvature Bounded Below}. GAFA, (1998) 123-148.


\bibitem{PtPt:extremal} G.~Perelman, A.~Petrunin.
\textit{Extremal subsets in Aleksandrov spaces and the generalized Liberman theorem}. (Russian)  Algebra i Analiz  {\bf 5}  (1993),  no. 1, 242--256;  translation in  St. Petersburg Math. J.  {\bf 5}  (1994),  no. 1, 215–227. 

%\bibitem{Peter}
%P. Petersen V, A finiteness theorem for metric spaces, J. Differential Geom. 31 (1990), no. 2, 387--395.



\bibitem{Pet} A. Petrunin, \textit{Semiconcave functions in Alexandrov's geometry.}
Metric and comparison geometry,
Surveys in Comparison Geometry, (2007),137 – 202.


\bibitem{Shm} K. Shiohama. \textit{
An Introduction to the Geometry of Alexandrov Spaces.}
Seoul National University, Research Institute of Mathematics, Global Analysis Research Center, 1993.


\bibitem{SW} C.~Sormani, S.~Wenger.
\textit{The intrinsic flat distance between Riemannian manifolds and other integral current spaces},
J. Differential Geom. {\bf 87} (2011),117-199. 
%
%\bibitem{SY:3mfd}Shioya, T; Yamaguchi, T:
%\textit{Collapsing three-manifolds under a lower curvature bound}.
%J. Differential Geom. 56 (2000), no. 1, 1-66.
%
%\bibitem{SY:volume}Shioya, Takashi; Yamaguchi, Takao:
%Volume collapsed three-manifolds with a lower curvature bound.
%Math. Ann. 333 (2005), no. 1, 131-155.
%
\bibitem{wong1} J.~Wong. \textit{Collapsing manifolds with boundary.} PhD thesis, University of Illinois at urbana-Champaign, 1-81, 2006

\bibitem{wong0} J.~Wong. \textit{An extension procedure for manifolds with boundary.} Pacific J. {\bf{235}} (2008),
173-199.

\bibitem{wong2} J.~Wong.  \textit{Collapsing manifolds with boundary.} Geom Dedicata. {\bf{149}} (2010), 291-334.

\bibitem{Worn} A. W\"orner.
\textit{A splitting theorem for nonnegatively curved Alexandrov spaces.}
Geom Topol. {\bf{16}} (2012), 2391-2426.





\bibitem{Xu:precpt} S.  Xu. \textit{
Precompactness of domains with lower Ricci curvature bound under Gromov-Hausdorff topology}, arXiv:2311.05140, to appear in J. reine angew. Math.



%
%\bibitem{Ya:pinching} T.~Yamaguchi. \textit{Collapsing and pinching under a lower curvature bound.} Ann. Math. {\bf{133}}(2) 317-357, 1991

%\bibitem{Ya:conv} T.~Yamaguchi. \textit{A convergence theorem in the geometry of Alexandrov spaces.} Actes
%de la Table Ronde de G\'eom\'etrie Diff\'erentielle, Luminy 1992. S\'emin. Congr., vol. 1,
%601-642, Soc. Math. France, Paris (1996)

%\bibitem{Ya:simp}   T.~Yamaguchi. \textit{Simplicial volumes of Alexandrov spaces}. 
%Kyushu J. Math. 51 (1997) 273--296.

\bibitem{Ya:four} T.~Yamaguchi.\textit{Collapsing 4-manifolds with a lower curvature bound}, arXiv:1205.0323 


\bibitem{YZ:inrdius} T.~Yamaguchi,  and Z. Zhang. \textit{Inradius collapsed manifolds}, 
Geometry and Topology, {\bf 23}-6 (2019), 2793--2860.


\bibitem{YZ:partII} T.~Yamaguchi,  and Z. Zhang. \textit{Limits of manifolds with boundary II--
Local structure and global convergence}, arXiv:2504.05497

\end{thebibliography}
\end{document}